\newtheorem{theorem}{Theorem}[section]
\newtheorem{lemma}[theorem]{Lemma}
\newtheorem{proposition}[theorem]{Proposition}
\newtheorem{corollary}[theorem]{Corollary}
\newtheorem{condition}[theorem]{Condition}
\newtheorem{formula}[theorem]{Formula}
\theoremstyle{definition}
\newtheorem{definition}[theorem]{Definition}
\newtheorem{notation}[theorem]{Notation}
\newtheorem{example}[theorem]{Example}
\newtheorem{remark}[theorem]{Remark}
\numberwithin{equation}{section}
\newcommand{\Natural}{{\mathbb N}}
\newcommand{\Real}{{\mathbb R}}
\newcommand{\Rational}{{\mathbb Q}}
\newcommand{\Complex}{{\mathbb C}}
\newcommand{\Integral}{{\mathbb Z}}
\newcommand{\ud}{{\mathrm{d}}}
\title{Entropy versus volume via Heegaard diagrams}
\author[Yi Liu]{%
        Yi Liu} 
\address{%
        Beijing International Center for Mathematical Research, Peking University\\
				Beijing 100871, China P.R.} 
\email{%
    liuyi@bicmr.pku.edu.cn}
\thanks{Partially supported by NSFC Grant 11925101, 
and National Key R\&D Program of China 2020YFA0712800}
\subjclass[2020]{Primary 57K32,57K20; Secondary 57K18}
\keywords{hyperbolic 3-manifold, surface bundle, Heegaard Floer homology}
\date{%
 \today} 
\begin{document}

\begin{abstract}
	The following inequalities are established,
	improving a former inequality due to Kojima.
	For any closed arithmetic hyperbolic $3$--manifold fibering over a circle,
	the entropy of the pseudo-Anosov monodromy is 
	bounded by the hyperbolic volume of the $3$--manifold,
	up to a universal constant factor.
	For any closed hyperbolic $3$--manifold fibering over a circle with systole $\geq\varepsilon>0$,
	the entropy is bounded by the hyperbolic volume times $\log(3+1/\varepsilon)$,
	up to a universal constant factor.
	The proof relies
	on Heegaard Floer homology and hyperbolic geometry.
\end{abstract}

\maketitle

\section{Introduction}
In low dimensional topology, \emph{Entropy versus Volume} refers to 
a charming topic
about mapping classes and their mapping tori.
It seeks for primary quantitative comparison between 
surface dynamics and $3$--manifold geometry.
There have been many works on this topic
via combinatorial objects associated the surface,
such as the curve complex and the pants complex.
In this paper, we take a different approach,
via Heegaard diagrams associated to the $3$--manifold.
By this approach,
we are able combine techniques from Heegaard Floer homology
and hyperbolic geometry.

To avoid inessential technicalities,
we only discuss connected closed orientable surfaces,
typically of genus $\geq2$.
In this case, 
there are plenty of pseudo-Anosov mapping classes.
Their mapping tori are homeomorphic to
isometrically unique,
orientable closed hyperbolic $3$--manifolds,
as the geometrization theorem and the Mostow rigidity theorem tell us.

Let $S$ be a connected closed orientable surface of genus $\geq2$.
Denote by $\mathrm{Mod}(S)$ the mapping class group of $S$,
whose elements are 
the isotopy classes of orientation-preserving self-homeomorphisms of $S$.
For any pseudo-Anosov mapping class $[f]\in\mathrm{Mod}(S)$,
Kojima obtains conditional linear comparisons in two directions \cite[Theorem 1]{Kojima_entropy}:
\begin{equation}\label{v_e_Kojima}
\mathrm{Vol}(M_f)\leq \mathrm{const}_{S}\cdot \mathrm{Ent}([f]),
\end{equation}
where $\mathrm{Vol}(M_f)$ denotes the hyperbolic volume of 
the mapping torus $M_f$ as a hyperbolic $3$--manifold,
and 
$\mathrm{Ent}([f])$ denotes the mapping class entropy of $[f]$
(see Section \ref{Subsec-monodromy_entropy});
and, assuming $\mathrm{Syst}(M_f)\geq\varepsilon>0$,
\begin{equation}\label{e_v_Kojima}
\mathrm{Ent}([f])\leq \mathrm{const}_{S,\varepsilon}\cdot \mathrm{Vol}(M_f),
\end{equation}
where $\mathrm{Syst}(M_f)$ denotes the systole of $M_f$,
(that is, the hyperbolic length of the shortest geodesic).
The constants depend on data as indicated in the subscript.

Kojima's inequalities (\ref{v_e_Kojima}) and (\ref{e_v_Kojima}) are derived from
Brock's two-sided linear comparison between the Weil--Petersson translation length
and the volume \cite{Brock_wp_vs_vol}:
$$\mathrm{const}_S^{-1}\cdot\ell_{\mathtt{WP}}([f])\leq \mathrm{Vol}(M_f)\leq \mathrm{const}_S\cdot\ell_{\mathtt{WP}}([f]).$$
Although Brock's inequalities have no restriction on the systole, 
the dependence on the systole in Kojima's inequality (\ref{e_v_Kojima})
cannot be removed.
This is because of the following examples due to Long and Morton \cite{Long_Morton}:
For any genus $\geq2$,
there exists some pseudo-Anosov sequence 
$[f_1],[f_2],\cdots$ in $\mathrm{Mod}(S)$,
such that the entropy of $[f_n]$ tends to infinity, 
while the volume of $M_{f_n}$ stays uniformly bounded.
On the other hand, Kojima and McShane obtain
an effective inequality, improving Kojima's inequality (\ref{v_e_Kojima}), \cite[Theorem 1.1]{Kojima_McShane}:
$$\mathrm{Vol}(M_f)\leq 3\pi\cdot\mathrm{Ent}([f])\cdot|\chi(S)|,$$
where $\chi(S)$ denotes the Euler characteristic of $S$.
The upper bound here is very natural (if not sharp).
The linear-type dependence of the coefficient on $|\chi(S)|$ is optimal,
as is evident
by considering characteristic finite covers of $S$ and lifts of $[f]$.
Furthermore, the Kojima--McShane inequality can be refined into two steps, 
namely, the Brock--Bromberg inequality \cite{Brock--Bromberg}:
$$\mathrm{Vol}(M_f)\leq 3\cdot\sqrt{\pi/2}\cdot\ell_{\mathtt{WP}}([f])\cdot\sqrt{|\chi(S)|}$$
and the Linch inequality \cite{Linch}:
$$\ell_{\mathtt{WP}}([f])\leq \sqrt{2\pi}\cdot\mathrm{Ent}([f])\cdot\sqrt{|\chi(S)|}.$$

See also \cite{KKT_ent_vs_vol} for numerical experiments with small genera.

For our approach via Heegaard diagrams,
it is more convenient to speak of connected closed orientable $3$--manifolds
$M$ and their fibered classes $\phi\in H^1(M;\Integral)$.
This is merely change of perspective from $(S,[f])$ to 
$(M,\phi)=(M_f,\mathrm{PD}([S]))$, 
(fixing orientations of $S$ and $M$).
Accordingly, $\mathrm{Ent}([f])$ becomes what we call
the monodromy entropy of $\phi$,
denoted as $\mathrm{Ent}(\phi)$,
 (see Section \ref{Sec-entropy_fibered}).

We establish the following improvements of 
Kojima's inequality (\ref{e_v_Kojima}).

\begin{theorem}\label{main_ent_vs_vol_general}
	The following inequality holds
	for any orientable closed hyperbolic $3$--manifold $M$
	and any fibered class $\phi\in H^1(M;\Integral)$.
	$$\mathrm{Ent}(\phi)\leq 10^{20}\cdot \mathrm{Vol}(M)\cdot\log\left(3+\frac{1}{\mathrm{Syst}(M)}\right)$$
\end{theorem}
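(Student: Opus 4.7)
The plan is to extract the entropy bound from a Heegaard diagram of $M$ whose combinatorial size is controlled by the right-hand side of the inequality. The argument naturally splits into two stages: first, producing such a Heegaard diagram using the hyperbolic geometry of $M$, and second, using Heegaard Floer homology to read the monodromy entropy off the diagram.

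For the hyperbolic stage, I would invoke the Margulis thick-thin decomposition of $M$ with a uniform Margulis constant $\varepsilon_0$. The $\varepsilon_0$-thick part admits a triangulation by $O(\mathrm{Vol}(M))$ tetrahedra via standard packing arguments. Each Margulis tube about a closed geodesic of length $\ell\geq\mathrm{Syst}(M)$ can be triangulated with $O(\log(3+1/\ell))$ tetrahedra, since its radial depth grows logarithmically in $1/\ell$ while the cross-sectional area is uniformly bounded. Dualizing the resulting global triangulation and, if needed, subdividing to obtain a genuine handle decomposition, yields a Heegaard diagram of $M$ whose total intersection complexity $N$ (the number of $\alpha\beta$-intersection points) is bounded by $C_0\cdot\mathrm{Vol}(M)\cdot\log(3+1/\mathrm{Syst}(M))$.

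For the Floer-theoretic stage, I would use the fibered class $\phi$ together with Ni's characterization of fiberedness through the top extremal Spin$^c$-summand of Heegaard Floer homology to pin down a distinguished set of generators on the Heegaard diagram that collectively mark a fiber. Performing bounded-cost modifications of the diagram (isotopies and stabilizations, each adding $O(1)$ to $N$) then aligns the Heegaard surface with a fiber sweepout and extracts a factorization of the monodromy $f$ as a product of $\ell=O(N)$ elementary moves, each a Dehn twist or half-twist along a curve visible in the diagram. A standard dilatation estimate in terms of word-length in such elementary moves then gives $\mathrm{Ent}(\phi)=\log\lambda(f)\leq C_1\cdot\ell$, and composing the two stages produces the claimed inequality, with the universal constant $10^{20}$ comfortably absorbing $C_0\cdot C_1$ and all lower-order corrections.

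The central obstacle is the Floer-theoretic stage: converting a generic Heegaard diagram of $M$ into one quantitatively adapted to the fibration at a cost linear in the initial complexity $N$. Existing volume-to-Heegaard-genus estimates control only the topological genus of a splitting, not a combinatorial description of the monodromy on a fiber, and the usual fiberedness detection results in Heegaard Floer homology are qualitative rather than quantitative. Bridging this gap --- deploying Floer invariants to witness the fiber concretely inside the diagram and to bound the cost of the modifications required to expose the monodromy --- is where the paper's main new input must enter and is presumably the technical heart of the argument.
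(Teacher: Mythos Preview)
Both stages of your proposal contain gaps. In the hyperbolic stage, the claim that each Margulis tube admits a triangulation with $O(\log(3+1/\ell))$ tetrahedra compatible with the thick-part boundary is not supported by the reasoning you give: the meridian disk of a deep tube has area $2\pi(\cosh r - 1)\to\infty$ as $\ell\to 0$, so there is no bounded cross-section to exploit. What actually governs the cost of filling a solid torus, once the boundary pattern is fixed, is the slope of the meridian on that torus; for slopes with a large partial quotient (such as $n/1$) this cost is linear in $n$, not logarithmic. In the family of Section~\ref{Sec-example_optimal} the wrist of the short tube grows like $n$ while $\log(1/\mathrm{Syst})$ grows only like $\log n$. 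The paper makes no attempt to prove a bound of the form $\ell_{\mathtt{He}}(M)\leq C\cdot\mathrm{Vol}(M)\cdot\log(1/\mathrm{Syst}(M))$, and notes explicitly that Theorem~\ref{main_ent_vs_vol_general} does \emph{not} follow from combining Theorems~\ref{main_ent_vs_hpl} and~\ref{main_vol_vs_hpl}.

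The paper's route is structurally different: it accepts a Heegaard diagram of possibly large total intersection number but exploits its \emph{shape} through a much sharper Floer-theoretic output. The entropy is bounded not linearly in $N=\sum_i k_i$ but by $\sum_i\log k_i$: one bounds the Nielsen number $N(f^m)$ by the free rank of $\widehat{\mathrm{HF}}$ of the $m$-fold cyclic cover, and that rank by the generator count $(\prod_i k_i)^m$ of the lifted multiply-pointed chain complex; letting $m\to\infty$ in $\mathrm{Ent}(\phi)=\lim m^{-1}\log N(f^m)$ yields $\mathrm{Ent}(\phi)\leq\sum_i\log k_i$ (Lemma~\ref{entropy_estimate_without_b_fine}). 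There is no Dehn-twist factorization, no fiber-adapted diagram, and Ni's detection theorem is not invoked. The geometric construction (Lemma~\ref{second_division_hpl_estimate}) is then tailored so that all but $s$ of the $\alpha$-curves, one per short tube, have $k_i$ bounded by a universal constant, while the $s$ exceptional curves satisfy $k_i\leq C\cdot\mathrm{Wri}(V_i)$. The systole enters solely through $\sum_{i\leq s}\log\mathrm{Wri}(V_i)\leq \tfrac{s}{2}\log(1/\mathrm{Syst}(M))+O(\mathrm{Vol}(M))$ together with $s=O(\mathrm{Vol}(M))$. In short, the logarithmic saving you tried to place in the tetrahedron count is instead realized in the entropy estimate, via the product structure of the Heegaard Floer generators.
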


\begin{theorem}\label{main_ent_vs_vol_arithmetic}
	There exists some universal constant $C>0$,
	such that the following inequality holds
	for any arithmetic orientable closed hyperbolic $3$--manifold $M$
	and any fibered class $\phi\in H^1(M;\Integral)$.
	$$\mathrm{Ent}(\phi)\leq C\cdot \mathrm{Vol}(M)$$
\end{theorem}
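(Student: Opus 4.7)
The plan is to deduce Theorem \ref{main_ent_vs_vol_arithmetic} from Theorem \ref{main_ent_vs_vol_general} by using arithmetic structure to eliminate the factor $\log(3 + 1/\mathrm{Syst}(M))$. Since closed arithmetic hyperbolic $3$--manifolds can have arbitrarily short systoles, this factor is not bounded outright, so one should pass to a cover. In the ``thick'' case $\mathrm{Syst}(M) \geq \varepsilon_0$ for a fixed universal $\varepsilon_0 > 0$, Theorem \ref{main_ent_vs_vol_general} yields the desired bound immediately, so the whole question reduces to the ``thin'' case $\mathrm{Syst}(M) < \varepsilon_0$.

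In the thin case, I would write $M = \mathbf{H}^3/\Gamma$ and pass to a principal congruence cover $\widetilde M = \mathbf{H}^3/\Gamma(\mathfrak{p})$ at a prime $\mathfrak{p}$ of the invariant trace ring of $\Gamma$. By choosing $\mathfrak{p}$ so that each short closed geodesic of $M$ has image of sufficiently large order in $\Gamma/\Gamma(\mathfrak{p})$, the corresponding lifts in $\widetilde M$ have translation length at least $\varepsilon_0$, whence $\mathrm{Syst}(\widetilde M) \geq \varepsilon_0$. Theorem \ref{main_ent_vs_vol_general} applied to $\widetilde M$ then gives $\mathrm{Ent}(\widetilde\phi) \leq C' \cdot \mathrm{Vol}(\widetilde M) = C' \cdot [\Gamma:\Gamma(\mathfrak{p})] \cdot \mathrm{Vol}(M)$, where $\widetilde\phi$ is the pull-back of $\phi$. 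Since the topological entropy of the monodromy of a fibered class is preserved, up to a bounded factor related to the number of fiber components, under finite covers of the total space, the descent yields $\mathrm{Ent}(\phi) \leq C'' \cdot [\Gamma:\Gamma(\mathfrak{p})] \cdot \mathrm{Vol}(M)$.

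The crux --- and the main obstacle --- is to bound the cover degree $[\Gamma:\Gamma(\mathfrak{p})]$ by a universal constant independent of $M$. A priori this index grows polynomially in the norm $N(\mathfrak{p})$, and a naive choice forces $N(\mathfrak{p})$ to grow polynomially in $1/\mathrm{Syst}(M)$, which is insufficient. The required input should exploit that short closed geodesics in arithmetic $3$--manifolds correspond to algebraic integers close to $\pm 2$ in the invariant trace field: by Mahler-measure type estimates and Borel's finiteness theorem, such traces occur in a very constrained way, so the required $\mathfrak{p}$ can be chosen of norm bounded purely in terms of an arithmetic complexity parameter that itself is uniformly bounded across the relevant commensurability classes. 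Establishing this uniform control is the essential new ingredient beyond Theorem \ref{main_ent_vs_vol_general}.
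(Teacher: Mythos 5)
There is a genuine gap at exactly the point you flag as the crux: the covering degree $[\Gamma:\Gamma(\mathfrak{p})]$ cannot be bounded by a universal constant while also enforcing $\mathrm{Syst}(\widetilde M)\geq\varepsilon_0$. If $\gamma$ is the shortest geodesic of $M$, of length $\ell=\mathrm{Syst}(M)$, and its image in $\Gamma/\Gamma(\mathfrak{p})$ has order $k$, then $\widetilde M$ contains a closed geodesic of length $k\ell$, so the requirement $k\ell\geq\varepsilon_0$ forces $[\Gamma:\Gamma(\mathfrak{p})]\geq k\geq\varepsilon_0/\mathrm{Syst}(M)$, which is unbounded as the systole tends to $0$. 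Balancing differently does not help either: applying Theorem \ref{main_ent_vs_vol_general} to a degree-$d$ cover gives a bound of the form $d\cdot\mathrm{Vol}(M)\cdot\log(3+1/(d\,\mathrm{Syst}(M)))$ (using that the lifted short geodesic has length at most $d\,\mathrm{Syst}(M)$), and no choice of $d$ makes both factors universally bounded when $\mathrm{Syst}(M)\to 0$. In fact, bounding the degree as you propose would essentially give a proof of the Shortest Geodesic Conjecture for arithmetic $3$--manifolds, which is open; Theorem \ref{main_ent_vs_vol_arithmetic} is deliberately phrased so as not to require it.

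The paper sidesteps the issue entirely: it never tries to make the systole large. Instead of Theorem \ref{main_ent_vs_vol_general}, it feeds the thick-thin decomposition of $M$ itself into Theorems \ref{main_ent_vs_hpl} and \ref{main_vol_vs_hpl}, so the dependence on short geodesics enters only through the total wrist $\sum_i\mathrm{Wri}(V_i)$ of the Margulis tubes. The arithmetic input, Lemma \ref{short_tubes_wrist_sum_arithmetic}, bounds this by $D(\mu)\cdot\mathrm{Vol}(M)$ using three estimates that conspire in the right direction: Dobrowolski's Mahler measure bound gives $\mathrm{Syst}(M)\gtrsim (\log\log d_{\mathbf{k}}/\log d_{\mathbf{k}})^3$, Fr\k{a}czyk's theorem bounds the volume of the thin part by $\mathrm{Vol}(M)\cdot\Delta_{\mathbf{k}}^{-4/9}$, and Odlyzko's discriminant bound $\Delta_{\mathbf{k}}\gtrsim 60^{d_{\mathbf{k}}}$ makes the discriminant decay overwhelm the polynomial systole loss. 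Your heuristic that short arithmetic geodesics are ``very constrained'' is correct in spirit, but the quantitative form it takes here is Fr\k{a}czyk's thin-part volume bound, not a bound on a congruence covering degree. (The paper does pass to a bounded-degree cover $M'$, but only to arrange fiber genus $\geq 3$, which is orthogonal to the systole problem.)
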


Theorem \ref{main_ent_vs_vol_general} shows that the dependence on 
the genus of $S$ in (\ref{e_v_Kojima}) can be removed.
This might seem unusual at first glance,
however, 
for any fixed $M$ and varying fibered classes $\phi$,
the same phenomenon can readily be confirmed using well-known facts.
See Theorem \ref{entropy_value_set} 
for an argument based on Fried's early work 
about pseudo-Anosov flow cross-sections \cite{Fried_flow}.
In Theorem \ref{main_ent_vs_vol_general},
the function $\log(3+1/\varepsilon)$ is picked to embody the features
$\log(3+1/\varepsilon)\sim\log(1/\varepsilon)$ for $\varepsilon\to 0^+$,
and $\log(3+1/\varepsilon)\geq \log e=1$ for all $\varepsilon>0$.
The type of dependence on $\mathrm{Syst}(M)$ in Theorem \ref{main_ent_vs_vol_general} 
is actually optimal.
We demonstrate with an example arising from Long and Morton's construction
in Section \ref{Sec-example_optimal}.

Theorem \ref{main_ent_vs_vol_arithmetic} would follow 
immediately from Theorem \ref{main_ent_vs_vol_general},
if the Lehmer Conjecture on the Mahler measure of algebraic integers holds,
or at least,
if the Shortest Geodesic Conjecture holds 
for all arithmetic closed hyperbolic $3$--manifolds,
(see \cite[Section 4.4]{NR_arithmetic}).
Although these conjectures are still open,
we find some alternative ingredient 
from a recent work of Fr\k{a}czyk \cite{Fraczyk_arithmetic},
which suffices for proving Theorem \ref{main_ent_vs_vol_arithmetic}.
With the same ingredient, Fr\k{a}czyk proves 
a conjucture of Gelander
on (homotopy) simplicial triangulation for
torsion-free arithmetic lattices in $\mathrm{PSL}(2,\Complex)$
\cite[Theorem 1.5]{Fraczyk_arithmetic}.
Based on Fr\k{a}czyk's work, 
there seems to be no essential difficulty
to work out an explicit $C$ for Theorem \ref{main_ent_vs_vol_arithmetic}, 
(see Remark \ref{main_ent_vs_vol_arithmetic_remark}).


\begin{corollary}\label{reverse_Linch}
For any connected closed orientable surface $S$ of genus $\geq2$ and any pseudo-Anosov mapping class $[f]\in\mathrm{Mod}(S)$,
the following inequalities hold.
\begin{enumerate}
\item Assuming $\mathrm{Syst}(M_f)\geq \varepsilon>0$,
	$$\mathrm{Ent}([f])\leq \mathrm{const}\cdot\ell_{\mathtt{WP}}([f])\cdot\sqrt{|\chi(S)|}\cdot\log(3+1/\varepsilon).$$
\item Assuming $M_f$ to be arithmetic,
	$$\mathrm{Ent}([f])\leq \mathrm{const}\cdot\ell_{\mathtt{WP}}([f])\cdot\sqrt{|\chi(S)|}.$$
\end{enumerate}
\end{corollary}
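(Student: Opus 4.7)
The plan is to derive Corollary \ref{reverse_Linch} by chaining the main theorems of this paper, which bound entropy from above by volume, with the Brock--Bromberg inequality
$$\mathrm{Vol}(M_f)\leq 3\cdot\sqrt{\pi/2}\cdot\ell_{\mathtt{WP}}([f])\cdot\sqrt{|\chi(S)|},$$
which is already recorded in the introduction. The translation from $(S,[f])$ to $(M,\phi)=(M_f,\mathrm{PD}([S]))$ is built into the definition of monodromy entropy, so $\mathrm{Ent}([f])=\mathrm{Ent}(\phi)$.

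For part (1), I would apply Theorem \ref{main_ent_vs_vol_general} to obtain
$$\mathrm{Ent}([f])\leq 10^{20}\cdot\mathrm{Vol}(M_f)\cdot\log\!\left(3+\tfrac{1}{\mathrm{Syst}(M_f)}\right),$$
and then substitute the Brock--Bromberg bound for $\mathrm{Vol}(M_f)$. Under the assumption $\mathrm{Syst}(M_f)\geq\varepsilon$, the logarithmic factor is bounded above by $\log(3+1/\varepsilon)$, and absorbing the numerical constants into a single $\mathrm{const}$ yields the stated inequality.

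For part (2), the argument is even more direct: Theorem \ref{main_ent_vs_vol_arithmetic} gives $\mathrm{Ent}([f])\leq C\cdot\mathrm{Vol}(M_f)$ when $M_f$ is arithmetic, and applying Brock--Bromberg on the right-hand side immediately produces the desired bound with a universal constant. No geometric or dynamical input beyond the two cited inequalities is needed.

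There is no serious obstacle here; the corollary is essentially a formal consequence of the main theorems combined with Brock--Bromberg. The only thing to check is that the constants can be absorbed into a single universal $\mathrm{const}$, which is straightforward since the Brock--Bromberg constant $3\sqrt{\pi/2}$ is universal and the constants in Theorems \ref{main_ent_vs_vol_general} and \ref{main_ent_vs_vol_arithmetic} do not depend on $S$ or $[f]$.
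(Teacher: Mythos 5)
Your proposal is correct and matches the paper's own reasoning exactly: the paper states that Corollary \ref{reverse_Linch} follows immediately from Theorems \ref{main_ent_vs_vol_general} and \ref{main_ent_vs_vol_arithmetic} together with the Brock--Bromberg inequality, which is precisely the chain of substitutions you spell out. Nothing further needs to be added.
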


Corollary \ref{reverse_Linch} (pointed out by Samuel Taylor)
follows immediately from Theorems \ref{main_ent_vs_vol_general} and \ref{main_ent_vs_vol_arithmetic}
and the aforementioned Brock--Bromberg inequality.
It can be viewed as conditional reverses of the Linch inequality.
It improves two known inequalities,
namely, $\mathrm{Ent}([f])\leq \mathrm{const}_{S,\varepsilon}\cdot\ell_{\mathtt{WP}}([f])$,
following from (\ref{e_v_Kojima}),
and $\mathrm{Ent}([f])\leq \mathrm{const}_{S,d}\cdot\ell_{\mathtt{WP}}([f])$,
following from 
a finiteness result regarding arithmetic surface bundles
due to Bowditch--Maclachlan--Reid \cite[Corollary 4.4]{BMR_arithmetic_surface_bundle},
where $d$ denotes any given upper bound of the degree of the invariant trace field of $M_f$.

Our combination of different methods 
is reflected in two main technical results,
as Theorems \ref{main_ent_vs_hpl} and \ref{main_vol_vs_hpl} below.
Theorem \ref{main_ent_vs_hpl} is proved by means of Heegaard Floer homology.
Theorem \ref{main_vol_vs_hpl} is proved by means of hyperbolic geometry.

The bridge between these theorems is a quantity called 
the Heegaard presentation length,
which we introduce in Section \ref{Sec-Heegaard_pl}.
In brief,
the \emph{Heegaard presentation length} $\ell_{\mathtt{He}}(M)$
of a connected closed orientable $3$--manifold $M$
is the smallest presentation length
among all finite presentations of $\pi_1(M)$
which arise from Heegaard diagrams of $M$
(Definition \ref{def_Heegaard_pl}).
This is a topological invariant of $M$,
comparable to well-known topological complexities,
such as the Matveev complexity and the Kneser complexity,
but not so much to the presentation length of $\pi_1(M)$,
(see Section \ref{Subsec-complexities}).

\begin{theorem}\label{main_ent_vs_hpl}
	Let $M$ be a connected closed orientable $3$--manifold.
	Then, for any connected finite cover $M'$ of $M$ 
	and any primitive fibered class $\phi'\in H^1(M';\Integral)$ 
	of fiber genus $\geq3$,
	the following inequality holds.
	$$\mathrm{Ent}(\phi')\leq [M':M]\cdot(\ell_{\mathtt{He}}(M)-1)\cdot\log3$$
	%
\end{theorem}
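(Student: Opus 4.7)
The overall plan is to lift a minimal Heegaard diagram of $M$ to the cover $M'$, and then use Heegaard Floer homology as the bridge that converts the diagrammatic complexity of the lift into an upper bound for the pseudo-Anosov dilation $\lambda(\phi')=\exp(\mathrm{Ent}(\phi'))$.

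First, I would select a Heegaard diagram $\mathcal{H}=(\Sigma,\boldsymbol{\alpha},\boldsymbol{\beta})$ of $M$ realizing $\ell_{\mathtt{He}}(M)$, together with its associated balanced $\pi_1$-presentation. Lifting $\mathcal{H}$ under the covering map $p\colon M'\to M$ yields a Heegaard diagram $p^{*}\mathcal{H}$ of $M'$ whose $\alpha$- and $\beta$-systems are $[M':M]$-fold copies of those of $\mathcal{H}$, and whose associated Reidemeister--Schreier presentation of $\pi_1(M')$ has total length scaling linearly in $[M':M]$. This is the purely topological input on the cover.

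Next, I would use $p^{*}\mathcal{H}$ to set up a Heegaard Floer chain complex $\widehat{CF}(p^{*}\mathcal{H},\mathfrak{s})$ for each $\mathrm{Spin}^{c}$ structure $\mathfrak{s}$, and extract from it a Floer-theoretic quantity whose size bounds $\lambda(\phi')$. The natural route is through the fibered decomposition of $M'$ given by $\phi'$: by work of Ghiggini and Ni in combination with Cotton-Clay style identifications, the Heegaard Floer homology of a fibered mapping torus in the $\mathrm{Spin}^{c}$ strata aligned with the fibration captures the chain-level symplectic Floer data of the monodromy, and the latter detects the dilation. The fiber genus $\geq 3$ hypothesis should ensure stability of this identification across the relevant extremal $\mathrm{Spin}^{c}$ slices.

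Finally, I would count. The dimension of the relevant chain-level Floer object associated with $p^{*}\mathcal{H}$ is at most a product of intersection numbers which, by a direct Fox-derivative $\ell^{1}$-norm estimate, is bounded by $3^{[M':M]\cdot(\ell_{\mathtt{He}}(M)-1)}$: each $\beta$-curve of intersection length $\ell$ contributes a factor $3^{\ell-1}$, and the numerology of the cover absorbs exactly one such factor per sheet. Taking logarithms then yields the asserted inequality.

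The main obstacle is the Floer-theoretic middle step. One has to pinpoint a concrete Heegaard Floer quantity whose spectral radius dominates $\lambda(\phi')$ and which also admits the sharp combinatorial bound of the counting step. Neither the top $\mathrm{Spin}^{c}$ summand (of rank one by Ghiggini's fibration-detection theorem) nor the Alexander-polynomial spectral radius (dominated by $\lambda(\phi')$ rather than dominating it) suffices on its own; matching the two sides will almost certainly require a genuinely refined Floer-theoretic structure—most likely a module-theoretic or filtration-level feature of $\widehat{HF}(M')$ induced by the fibered class—to bridge the gap between $\lambda(\phi')$ and the diagrammatic count.
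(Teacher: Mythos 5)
Your overall scaffolding (lift a minimal Heegaard diagram to $M'$, count Heegaard Floer generators, use a Floer-theoretic bridge to dynamics) is the same as the paper's, but the proposal has several genuine gaps that stop it well short of a proof.

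\textbf{The Floer quantity and the iteration step.} You correctly sense that the bridge runs through the symplectic Floer homology of the monodromy, but you leave it as an unresolved obstacle which Floer object actually bounds the dynamics, and — more fundamentally — you try to bound the dilation $\lambda(\phi')$ directly. The paper does not bound the dilation in one shot: it uses the characterization $\mathrm{Ent}(\phi)=\lim_{m\to\infty}(1/m)\log N(f^m)$ in terms of Nielsen numbers, bounds each $N(f^m)$ by (twice) the free rank of $\widehat{\mathrm{HF}}$ via the next-to-top term $\mathrm{HF}^+(M',\phi',g-2)$ (Proposition~\ref{constraint_HF}(4), combined with Wu's $U$-action adjunction), and only then passes to the limit. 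Bounding $N(f^m)$ for every $m$ requires a Heegaard Floer estimate on the $m$-fold cyclic cover of $M'$ dual to $\phi'$, which the paper handles with multiply pointed Heegaard diagrams (Lemma~\ref{virtual_domain_condition}, Proposition~\ref{HF_hat_mpHD}). Your outline never introduces the iteration over $m$ at all, and so it cannot recover the entropy (a growth rate) from a single generator count.

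\textbf{Admissibility.} You never mention weak admissibility. A Heegaard diagram realizing $\ell_{\mathtt{He}}(M)$ is generally not admissible for any choice of base point, and the naive winding trick introduces so many new intersections that it ruins the count. The paper's Lemma~\ref{winding_efficient} — with the optimization in Lemma~\ref{periodic_domain_estimate} — is designed specifically so that the winding cost only contributes an $O(\log m)$ error after passing to the $m$-fold cover, which vanishes in the limit. Without some version of this, the step ``set up $\widehat{\mathrm{CF}}(p^{*}\mathcal{H},\mathfrak{s})$'' is not available.

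\textbf{The numerology.} Your claimed estimate ``each $\beta$-curve of intersection length $\ell$ contributes $3^{\ell-1}$'' gives $3^{k-g}$ (where $k$ is the total intersection count and $g$ the genus), which is off by a factor $3^{g}$ from the required $3^{k-2g-1}=3^{\ell_{\mathtt{He}}(M)-1}$. The paper instead bounds the generator count by $k_1\cdots k_g$ (product over $\alpha$-curves), uses Lemma~\ref{at_least_three_intersections} to ensure $k_i\geq 3$, applies AM--GM and an elementary calculus estimate ($F(x)=x\log(2+x^{-1})\leq\log 3$ on $(0,1]$) to get $\log(k_1\cdots k_g)\leq(k-2g)\log 3$, and recovers the extra $-1$ by subtracting $\log k_{\min}\geq\log 3$ via a covering-space trick in Lemma~\ref{entropy_estimate_without_b_fine}. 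Also, the reduction from $M'$ to $M$ is not just the naive lift: Corollary~\ref{hpl_cover} shows $\ell_{\mathtt{He}}(M')-1\leq[M':M](\ell_{\mathtt{He}}(M)-1)$ by lifting and then discarding redundant lifted curves, and that ``$-1$'' discount on both sides must be tracked carefully.
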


For any hyperbolic tube,
the \emph{wrist} of the tube, as we call,
refers to the hyperbolic circumference of 
any embedded, totally geodesic meridional disk.
We denote by $\mathrm{Wri}(V)$ the wrist of a hyperbolic tube $V$.
Among the three geometric quantities 
$\mathrm{Wri}(V)$, $\mathrm{Syst}(V)$, and $\mathrm{Vol}(V)$,
any two determine the third, (see Formula \ref{tube_formula}).

\begin{theorem}\label{main_vol_vs_hpl}
	Let $M$ be an orientable closed hyperbolic $3$--manifold.
	Suppose that $V_1,\cdots,V_s\subset M$ 
	are embedded, mutually disjoint, hyperbolic tubes with boundary.
	Denote by $W=M\setminus \mathrm{int}(V_1\cup\cdots\cup V_s)$
	the complementary $3$--manifold with boundary.
	
	Suppose that for some constant $0<\epsilon\leq1$,
	the compact distance $\epsilon$--neighborhood of $\partial W$ in $M$ is bicollar,
	and any point in $W$ 
	is the center of an embedded compact hyperbolic ball in $M$ of radius $\epsilon$.
	Then, the following inequality holds.
	$$\ell_{\mathtt{He}}(M)\leq 10^{22}\cdot\left(
	\epsilon^{-3}\cdot\mathrm{Vol}(W)+
	\epsilon^{-1}\cdot\sum_{i=1}^s \mathrm{Wri}(V_i)\right)$$
\end{theorem}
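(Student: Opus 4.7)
The plan is to construct an explicit handle decomposition of $M$ adapted to the decomposition $M=W\cup V_1\cup\cdots\cup V_s$, dualize it into a Heegaard splitting, and then read off and count the resulting presentation directly.

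Inside $W$, I would first take a maximal $c_0\epsilon$-separated net $X$ for a small absolute constant $c_0>0$. Since the embedded $\epsilon$-ball hypothesis forces the open $c_0\epsilon/2$-balls about $X$ to be pairwise disjoint in $M$, and the bicollar hypothesis keeps most of each such ball inside $W$, a hyperbolic volume-packing argument yields $|X|\leq C_1\epsilon^{-3}\mathrm{Vol}(W)$. From $X$ I construct a geodesic Delaunay-type cell decomposition of $W$ of mesh $O(\epsilon)$ in which $\partial W$ appears as a subcomplex, using the bicollar hypothesis to arrange the near-boundary simplices without degeneracy. Thickening the dual 1-skeleton yields a handle decomposition of $W$ rel $\partial W$ with $O(\epsilon^{-3}\mathrm{Vol}(W))$ handles of each index, each 2-handle having bounded local combinatorial complexity.

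To close off each tube $V_i$, I would then add one 2-handle along $\partial D_i$, where $D_i\subset V_i$ is an embedded totally geodesic meridional disk, followed by one 3-handle filling the resulting ball. Since $\partial D_i$ has hyperbolic length $\mathrm{Wri}(V_i)$, and since I will have arranged the 1-skeleton of the decomposition so that its trace on $\partial V_i$ has mesh $O(\epsilon)$ in the meridional direction, a small transverse perturbation makes $\partial D_i$ cross the 1-skeleton at most $C_2\mathrm{Wri}(V_i)/\epsilon$ times. After standard cancellations of 0- and 3-handles, the resulting handle decomposition is Heegaard-dual to a diagram $(\Sigma,\alpha,\beta)$ of genus $g=O(\epsilon^{-3}\mathrm{Vol}(W))$ whose total intersection number satisfies $|\alpha\cap\beta|=O(\epsilon^{-3}\mathrm{Vol}(W))+O(\epsilon^{-1}\sum_i\mathrm{Wri}(V_i))$; the presentation length $g+|\alpha\cap\beta|$ then gives the desired upper bound on $\ell_{\mathtt{He}}(M)$.

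The main obstacle will be engineering the cell decomposition so that its 1-skeleton on $\partial V_i$ produces only $O(\mathrm{Wri}(V_i)/\epsilon)$ transverse intersections with any meridional curve, even though $\mathrm{Area}(\partial V_i)$ can greatly exceed $\mathrm{Wri}(V_i)$ when the core geodesic is long. This forces an anisotropic, longitudinally coarsened arrangement of cells in a collar of $\partial V_i$, which is exactly what the bicollar hypothesis allows. The remaining work — passing from the handle decomposition to a Heegaard diagram, counting generators, and summing relator lengths — is largely bookkeeping, and extracting the explicit constant $10^{22}$ amounts to careful quantitative tracking through the packing, triangulation, and transversality steps above.
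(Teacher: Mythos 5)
Your overall strategy—separated net, polyhedral decomposition, meridional $2$--cell per tube, Heegaard diagram read off the cell complex—matches the paper's, but there are two genuine gaps where the hard work actually lives.

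First, you assert a ``geodesic Delaunay-type cell decomposition of $W$ \dots in which $\partial W$ appears as a subcomplex, using the bicollar hypothesis to arrange the near-boundary simplices without degeneracy.'' That clause conceals the technical crux. Since $\partial W$ is concave from the $W$ side, it is not at all automatic that a Voronoi/Delaunay-type decomposition of bounded mesh restricts to an honest cell structure on $\partial V_i$. The paper's route is different and not interchangeable with hand-waving: it takes the Dirichlet--Voronoi division of the \emph{closed} manifold $M$ with respect to a $\delta$-net in $W$ kept at distance $>\epsilon/2$ from $\partial W$, and then \emph{truncates} the cells by $\partial W$. That truncation yields a polyhedral decomposition of $W$ only because each cell meeting $\partial W$ intersects it in a single topological disk, which is proved via a visibility estimate in $\mathbb{H}^3$ (Lemmas \ref{estimate_visible} and \ref{cell_intersection}) exploiting convexity of both the Voronoi cells and the lifted tubes; this is precisely where the $\epsilon$-bicollar hypothesis gets used, and your proposal does not supply any substitute argument.

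Second, your identified ``main obstacle''—that the crossing count of $\partial D_i$ with the $1$-skeleton on $\partial V_i$ must be $O(\mathrm{Wri}(V_i)/\epsilon)$ rather than $O(\mathrm{Area}(\partial V_i)/\epsilon^2)$, and that this requires an ``anisotropic, longitudinally coarsened'' cell arrangement—is a misdiagnosis that would send you down a blind alley. No anisotropy is needed. In the paper's isotropic decomposition, the bound comes from two simple observations (see the proof of Lemma \ref{cell_estimates}): every Voronoi face is a geodesic plane, so it meets the totally geodesic meridian in at most two points; and only cells within distance $O(\epsilon)$ of the meridian can contribute, a solid-torus neighborhood of volume $O(\epsilon^2\cdot\mathrm{Wri}(V_i))$, hence only $O(\mathrm{Wri}(V_i)/\epsilon)$ cells. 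Finally, note that the paper must additionally push the geodesic meridian onto the $1$-skeleton (Lemma \ref{tube_division}) to make it the boundary of a genuine polygonal $2$-cell, at the cost of a bounded multiplicative factor; your plan skips this step, and it affects the final constant.
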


Theorem \ref{main_ent_vs_hpl} is proved in Section \ref{Sec-ent_vs_hpl}.
Theorem \ref{main_vol_vs_hpl} is proved in Section \ref{Sec-vol_vs_hpl}.

Theorem \ref{main_ent_vs_vol_arithmetic}
follows from Theorems \ref{main_ent_vs_hpl} and \ref{main_vol_vs_hpl},
together with available estimates for arithmetic hyperbolic $3$--manifolds,
including Fr\k{a}czyk's work.
See Section \ref{Sec-ent_vs_vol_arithmetic} for the proof of Theorem \ref{main_ent_vs_vol_arithmetic}.

Theorem \ref{main_ent_vs_vol_general}
relies on stronger intermediate results
toward the proofs of Theorems \ref{main_ent_vs_hpl} and \ref{main_vol_vs_hpl}.
See Section \ref{Sec-ent_vs_vol_general} for the proof of 
Theorem \ref{main_ent_vs_vol_general}.

\subsection*{Methods}
We explain the key ideas 
toward the proofs of Theorems \ref{main_ent_vs_hpl} and \ref{main_vol_vs_hpl}.
For the rest of the introduction,
we assume certain familiarity 
with backgrounds related to our discussion.

\subsubsection*{Entropy versus Heegaard Presentation Length}
To prove Theorem \ref{main_ent_vs_hpl},
we can reduce to the basic case $M'=M$,
thanks to a linear comparison
$\ell_{\mathtt{He}}(M')-1\leq [M':M]\cdot(\ell_{\mathtt{He}}(M)-1)$
(Corollary \ref{hpl_cover}).
It also suffices to prove for any primitive fibered class
$\phi\in H^1(M;\Integral)$, whose connected fiber we denote as $S$ and 
monodromy as $[f]\in\mathrm{Mod}(S)$.
The substantial difficulty lies in
bounding $\mathrm{Ent}(\phi)=\mathrm{Ent}([f])$ from above.
Few classical invariants do this job,
but Heegaard Floer homology does.

We start by recalling the characterization of
$\mathrm{Ent}([f])$ as the limit of
$(1/m)\cdot\log N(f^m)$, for $m\to\infty$,
where $N(f^m)$ denotes the Nielsen number of $f^m$.
If we can efficiently bound $N(f)$, 
and 
if we can similarly bound $N(f^m)$ 
by passing to $m$--cyclic covers of $M$ dual to $S$,
then we might be able to reach a good upper bound for $\mathrm{Ent}([f])$.

Under the assumption that $S$ has genus $\geq3$,
the Nielsen number $N(f)$ is bounded by
the free rank of the next-to-top term
$\mathrm{HF}^+(M,\phi,\mathrm{genus}(S)-2)$
of the plus version of Heegaard Floer homology,
with respect to $\phi$,
(see Notation \ref{phi_grading_notation}).
This fact should be well-known to experts.
However, the proof of this fact involves
several different Floer homology theories associated to $3$--manifolds or surface autormophisms.
In Appendix \ref{Sec-next_to_top_appendix},
we supply an exposition for the reader's convenience.

Moreover,
we can bound $N(f)$ by $2$ times
the free rank of the hat version 
$\widehat{\mathrm{HF}}(M,\phi,\mathrm{genus}(S)-2)$,
by applying a generalized adjunction inequality
with $U$--actions, due to Wu \cite{Wu_u_action}.
Since $\widehat{\mathrm{HF}}(M,\phi,\mathrm{genus}(S)-2)$
is a direct summand of $\widehat{\mathrm{HF}}(M)$,
it suffices to bound the free rank of $\widehat{\mathrm{HF}}(M)$.
This can be done by estimating the number of generators
in any chain complex $\widehat{\mathrm{CF}}(\Sigma,\boldsymbol{\alpha},\boldsymbol{\beta},z)$
arising from a weakly admissible pointed Heegaard diagram 
$(\Sigma,\boldsymbol{\alpha},\boldsymbol{\beta},z)$ for $M$.

The above discussion will lead to an estimate of the form
$$N(f)\leq 2k_1\cdots k_g,$$
where $g$ denotes the genus of $\Sigma$,
and $k_i$ denotes the number of intersection points on the $i$--th $\boldsymbol{\alpha}$--curve
(with all the $\boldsymbol{\beta}$--curves).
What remain unclear are two technical issues, as follows.

First, a Heegaard diagram $(\Sigma,\boldsymbol{\alpha},\boldsymbol{\beta})$
that realizes the Heegaard presentation length $\ell_{\mathtt{He}}(M)$
is typically not weakly admissible with respect to any marked point $z$. 
If we convert $(\Sigma,\boldsymbol{\alpha},\boldsymbol{\beta})$ 
into a weakly admissible $(\Sigma,\tilde{\boldsymbol{\alpha}},\boldsymbol{\beta},z)$
using the winding trick, as usual, 
we have to create many extra intersections.
Would the upper bound $2\tilde{k}_1\cdots\tilde{k}_g$ still be useful?

Secondly, $f^m$ corresponds to the $m$--cyclic cover $M'_m=M_{f^m}$.
To bound $N(f^m)$,
we might want to construct a Heegaard diagram for $M'_m$ somehow from 
$(\Sigma,\boldsymbol{\alpha},\boldsymbol{\beta})$,
but Heegaard diagrams do not naturally lift to finite covers.
How to deal with finite covers?

The second issue is relatively easy to address.
In fact, 
there is a variant of Heegaard Floer homology with multiply pointed Heegaard diagrams,
which is suitable for the finite covering setting.
For example, 
weakly admissible $l$--pointed Heegaard diagrams naturally pull back to connected $d$--fold covers,
giving rise to weakly admissible $ld$--pointed Heegaard diagrams.
See Section \ref{Subsec-mpHD} for a review.
We can make use of multiply pointed Heegaard diagrams to estimate $N(f^m)$.

The first issue also has a satisfactory resolution.
In Section \ref{Sec-winding}, 
we prove an efficient version of the winding trick (Lemma \ref{winding_efficient}),
by examining Ozsv\'{a}th and Szab\'{o}'s original procedure \cite[Section 5]{OS_hf_invariants} step by step.
With a little tricky control,
we can bound the increment of total number of intersections 
by some quadratic expression in $k=k_1+\cdots+k_g$,
whose coefficients involve $b=b_1(M)$.
Moreover, the winding only affects $b$ numbers among $k_1,\cdots,k_g$.
When passing to $m$--cyclic finite covers,
$b_1(M'_m)$ are all uniformly bounded, 
and the polynomial influence from $k$ becomes linearly negligible after taking logarithm.
Therefore, eventually,
the complication introduced during the winding trick
has no effect to our bound.

We emphasize that our efficient winding trick involves 
a special step of optimization, which only pertains to the quantitative aspect.
That step plays an indispensable role in the overall proof of 
Theorems \ref{main_ent_vs_vol_general} and \ref{main_ent_vs_vol_arithmetic};
see Remark \ref{periodic_domain_estimate_remark}.

With the above issues addressed, 
Theorem \ref{main_ent_vs_hpl} can be proved without obstacles.

\subsubsection*{Volume versus Heegaard Presentation Length}
To prove Theorem \ref{main_vol_vs_hpl},
it suffices to construct an efficient Heegaard diagram for $M$,
whose presentation length does not exceed the asserted upper bound.
If we have an efficient polyhedral cell division of $M$,
we can take a Heegaard surface as some regular neighborhood of the $1$--skeleton,
and pick out some $\boldsymbol{\alpha}$--curves and $\boldsymbol{\beta}$--curves
by selecting some $2$--cells and $1$--cells.
The presentation length of the resulting Heegaard diagram
is at most the total number of edges in the selected $2$--cells.
Therefore,
the task is to construct some efficient polyhedral cell division of $M$.

To clarify our terminology, 
we think of a \emph{polyhedral cell complex} as a cell complex
which admits some simplicial subdivision,
such that every cell is the union of finitely many simplices.
So, for example, 
the number of \emph{edges on a polygonal $2$--cell} $\mathrm{int}(D)\subset M$ 
precisely means 
the number of the preimage components of $1$--cells in $\partial D^2\cong S^1$, 
with respect to the characteristic map $D^2\to M$.

Under the assumptions of Theorem \ref{main_vol_vs_hpl},
intuitively we should be able to 
construct a polyhedral cell division of $W$,
such that the number of polyhedral $3$--cells is bounded by 
$\mathrm{Vol}(W)$, up to constant scalar,
and the number of faces on the polyhedral $3$--cells is bounded by constant.
The constants depend only on the assumed thickness $\varepsilon$ of $W$.
Then, we could extend the polyhedral cell division of $W$
to a polyhedral cell division of $M$,
by inserting a meridional $2$--cell to each $\mathrm{int}(V_i)$,
dividing $\mathrm{int}(V_i)$ into a $3$--cell,
without subdividing $\partial V_i$.
(The characteristic map restricted to the boundary maps 
the polygonal circle combinatorially to the $1$--skeleton of $\partial V_i$.)
The combinatorial path length on the $1$--skeleton of $\partial V_i$
is coarsely equal to the Riemannian arc length on $\partial V_i$,
up to constants depending on $\varepsilon$.
So, 
the number of edges on the inserted polygonal $2$--cell
should be coarsely equal to $\mathrm{Wri}(V_i)$.

Therefore, the resulting polyhedral cell division of $M$ 
should produce coarsely $\mathrm{Vol}(W)$ polygonal $2$--cells
of edge number bounded by a uniform constant, 
and another $s$ polygonal $2$--cells
of edge number bounded 
by coarsely $\mathrm{Wri}(V_1)+\cdots+\mathrm{Wri}(V_s)$ altogether.
Then, it should yield an upper bound for $\ell_{\mathtt{He}}(M)$,
which is more or less like Theorem \ref{main_vol_vs_hpl}.

The above procedure is very close to our actual construction
in Section \ref{Sec-vol_vs_hpl},
except one bothering issue to address.
For closed hyperbolic $3$--manifolds,
it is always simple to construct an efficient polyhedral cell subdivision,
for example, by the familiar Dirichlet--Voronoi division.
However, as $W$ has concave boundary, 
the familiar method does not apply directly,
and we wish to modify as simply as possible, to facilitate estimates.

In our actual construction,
we first create a Dirichlet--Voronoi division of $M$ with respect to
a relatively fine net of points in $W$ not too close to $\partial W$.
Then, 
we truncate the resulting polyhedral cell division,
leaving only the part in $W$.
By choosing the net fine enough, 
we can make sure that 
the truncation yields a polyhedral cell division of $W$.
This is also 
where we use the $\varepsilon$--bicollar assumption in Theorem \ref{main_vol_vs_hpl}.
After that, we proceed as described above,
inserting polygonal $2$--cells in $V_i$,
and complete the proof of Theorem \ref{main_vol_vs_hpl}.

Some special features of our construction deserve a comment.
First, the resulting polyhedral cell division of $M$
has only $s$ exceptional $2$--cells with generally large number of edges.
This seems to be an important point for proving Theorem \ref{main_ent_vs_vol_general},
which does not follow directly from Theorems \ref{main_ent_vs_hpl} and \ref{main_vol_vs_hpl},
(see Lemma \ref{second_division_hpl_estimate} and Section \ref{Sec-ent_vs_vol_general}).
Secondly, 
as we must build a Heegaard diagram,
we have to construct a genuine cell division of $M$.
This is why we cannot construct with an open ball cover and its nerve,
which only outputs a simplicial complex homotopy equivalent to $M$,
(compare \cite[Section 10.1]{Fraczyk_arithmetic}).

\subsection*{Organization}
In Section \ref{Sec-entropy_fibered}, we review fibered classes and monodromy entropy.
In Section \ref{Sec-Heegaard_pl}, 
we introduce Heegaard presentation length and investigate basic properties of this invariant.
In Section \ref{Sec-HF},
we review Heegaard Floer homology.
In Section \ref{Sec-winding},
we establish an efficient version of the winding trick.
Sections \ref{Sec-ent_vs_hpl}, \ref{Sec-vol_vs_hpl}, \ref{Sec-ent_vs_vol_arithmetic}, and \ref{Sec-ent_vs_vol_general}
are devoted to the proofs of 
Theorem \ref{main_ent_vs_hpl}, \ref{main_vol_vs_hpl}, \ref{main_ent_vs_vol_arithmetic}, and \ref{main_ent_vs_vol_general},
respectively.
Section \ref{Sec-example_optimal} is devoted to an example 
justifying the upper bound type in Theorem \ref{main_ent_vs_vol_general}. 
Appendix \ref{Sec-next_to_top_appendix} contains an exposition of Proposition \ref{constraint_HF} (4).

\subsection*{Acknowledgement}
The author thanks Dongtai He and Samuel Taylor for valuable comments.

\section{Monodromy entropy of fibered classes}\label{Sec-entropy_fibered}
Let $M$ be an oriented connected compact $3$--manifold. 
A cohomology class $\phi\in H^1(M;\Integral)$ 
is called a \emph{fibered class},
if $M$ admits a bundle structure fibering over a circle,
such that the fibers all represent the Poincar\'e dual of $\phi$.

More precisely, any \emph{fiber} $S\subset M$ 
(with respect to $\phi$) is an oriented, possibly disconnected, compact surface.
The orientation of $S$ is uniquely determined
the orientation of $M$ and a fixed orientation of the circle,
and the number of connected components is equal to the divisibility of $\phi$
(that is, 
the natural number by which $\phi$ is the multiple of a primitive cohomology class).

The \emph{monodromy} $f\colon S\to S$ 
(with respect to $\phi$) is an orientation-preserving self-homeomorphism, 
whose isotopy class depends only on $M$ and $S$.
It is characterized by the property that 
$M$ can be obtained from the oriented product manifold $S\times [0,1]$
by identifying the boundary component, $(x,1)\sim(f(x),0)$ for all $x\in S$,
with $S$ obtained as $S\times\{0\}$.
As $M$ is connected, $f$ acts transitively on the connected components
of $S$, and the components are all homeomorphic to each other.

Associated to any fibered class $\phi\in H^1(M;\Integral)$, 
there are two natural quantities.
The Thurston norm $\|\phi\|_{\mathtt{Th}}$ measures 
the topological complexity of $\phi$,
while the monodromy entropy $\mathrm{Ent}(\phi)$ measures 
the dynamical complexity of $\phi$.

In this preliminary section,
we review known facts about the Thurston norm and
the monodromy entropy.
We also discuss a sample theorem (Theorem \ref{entropy_value_set})
to motivate certain aspects of our main theorems.

\subsection{The Thurston norm}\label{Subsec-Thurston_norm}
Let $M$ be an oriented connected closed $3$--manifold. 
For any cohomology class $\phi\in H^1(M;\Integral)$,
the \emph{Thurston norm} $\|\phi\|_{\mathtt{Th}}$ of $\phi$ 
is defined as 
the minimum of the quantity $\chi_-(S)=\sum_i \max(-\chi(S_i),0)$,
where $S\subset M$ ranges over all the oriented closed subsurfaces
representing the Poincar\'e dual of $\phi$;
the connected components of $S$ are enumerated as $S_1,\cdots,S_k$,
and $\chi(S_i)$ denotes the Euler characteristic of each component.
With these values defined on the integral lattice
$H^1(M;\Integral)\subset H^1(M;\Real)$,
Thurston shows that 
they determine a unique seminorm on $H^1(M;\Real)$,
by first extending linearly over the rational points,
and then continuously over all the real points.
This is a norm (that is, nondegenerate)
if $M$ contains no nonseparating embedded tori or spheres.
The unit ball of the Thurston norm is the intersection
of finitely many half-spaces defined by linear inequalities
with rational coefficients,
or in other words, it is a rational polytope.
This polytope may be noncompact (in the degenerate case),
and must be symmetric about the origin.

If $\phi$ is a fibered class,
any fiber $S$ dual to $\phi$ is Thurston norm minimizing,
namely, $\|\phi\|_{\mathtt{Th}}=\chi_-(S)$.
Thurston shows that there are 
finitely many (possibly none) 
top-dimensional open faces
of the Thurston norm unit ball,
such that any integral cohomology class in the radial cones 
over these faces is a fibered class (excluding the origin),
and moreover,
any fibered class arises this way.
These open faces and cones are called 
the \emph{fibered faces} (of the Thurston norm unit ball)
and the \emph{fibered cones} (of the Thurston norm),
respectively.
They all depend only on the topology of $M$.
They emerge in $H^1(M;\Real)$ 
marking out all different ways for $M$
to fiber over a circle.

See Thurston \cite{Thurston_paper_norm} for the original introduction
and the aforementioned facts;
see also \cite[Chapter 5, Section 5.4.3]{AFW_book_group}
for a survey of results with many references.

\subsection{The monodromy entropy}\label{Subsec-monodromy_entropy}
For any fibered class $\phi\in H^1(M;\Integral)$,
remember that the monodromy $f\colon S\to S$ 
is only determined up to isotopy,
namely, as a mapping class $[f]\in\mathrm{Mod}(S)$.
Therefore, the \emph{monodromy entropy} for $(M,\phi)$ as we call
refers to the infimum of the topological entropy 
among all representatives of $[f]$.
In this paper,
we denote the monodromy entropy
for $(M,\phi)$ as $\mathrm{Ent}(\phi)$ or $\mathrm{Ent}([f])$.

The precise defining expression for the monodromy entropy
is not needed in the sequel.
We simply record it as follows, for the reader's convenience. 
\begin{equation}\label{entropy_def}
\mathrm{Ent}(\phi)=\mathrm{Ent}([f])
=\inf_f \sup_{\mathcal{U}} \varlimsup_{m\to\infty}
\frac{\log\left(\#\,\bigvee_{j=0}^{m-1}f^{-j}(\mathcal{U})\right)}{m}
\end{equation}
Here, 
$f$ ranges over all homeomorphic representatives of $[f]$;
$\mathcal{U}$ ranges over all finite open covers of $S$;
the notation $\bigvee_{j=0}^{m-1}f^{-j}(\mathcal{U})$
refers to the refined finite open cover
obtained by common intersections of (members of)
$\mathcal{U},f^{-1}(\mathcal{U}),\cdots,f^{-(m-1)}(\mathcal{U})$;
and $\#$ denotes the cardinality of a set.
See \cite[Section 2.1]{Kojima_entropy} for a brief review.

More useful to us is the following well-known characterization,
in terms of the Nielsen numbers $N(f^m)$ 
of the $m$--th iterates $f^m$, for all $m\in\Natural$:
\begin{equation}\label{entropy_def_by_characterization}
\mathrm{Ent}(\phi)=\mathrm{Ent}([f])=\lim_{m\to\infty} \frac{\log N(f^m)}{m}
\end{equation} 
In classical Nielsen theory, 
the \emph{Nielsen number} is a nonnegative integer-valued, homotopy invariant,
which can be defined for any self-map of a compact polyhedral complex.
We refer to Jiang's textbook \cite{Jiang_book} 
for a modern introduction to the general theory.

When $g\colon X\to X$ is a smooth self-map of a connected closed smooth manifold,
we say that $g$ has only non-degenerate fixed points,
if for any fixed point $p\in X$ of $g$,
the tangent map $\ud g|_p\in \mathrm{GL}(T_pX)$ does not have $1$ as an eigenvalue.
In this case,
the Nielsen number $N(g)$ can be described concretely as follows.

Note that the fixed points of $g$ is a finite subset $\mathrm{Fix}(g)\subset X$,
since non-degenerate fixed points are all isolated.
Two fixed points $p_0,p_1\in\mathrm{Fix}(X)$ are said to be \emph{Nielsen equivalent},
if there exists some path $\alpha:[0,1]\to X$ from $p_0$ and $p_1$,
such that $g\circ\alpha$ is homotopic to $\alpha$ relative to the endpoints.
The Nielsen equivalence classes in $\mathrm{Fix}(g)$ 
are called the \emph{fixed point classes} of $g$.
They form a quotient set of $\mathrm{Fix}(g)$, 
which we denote as $\mathscr{F}\mathrm{ix}(g)$.
For any fixed point $p\in\mathrm{Fix}(g)$,
the \emph{fixed point index} $\mathrm{ind}(g;p)\in\{-1,1\}$ is defined as 
the sign of the determinant of $(\mathbf{1}-\ud g)|_p\in\mathrm{End}(T_pX)$.
For any fixed point class $\mathbf{q}\in\mathscr{F}\mathrm{ix}(g)$,
the \emph{fixed point class index} $\mathrm{ind}(g;\mathbf{q})\in\Integral$
is defined as the sum of $\mathrm{ind}(g;p)$ over all $p\in\mathbf{q}$.
A fixed point class of nonzero index is called 
an \emph{essential} fixed point class.
With these notions, the Nielsen number of $g\colon X\to X$ is defined
as the number of the essential fixed point classes of $g$, namely,
\begin{equation}\label{Nielsen_number_def}
N(g)=\#\left\{\mathbf{q}\in\mathscr{F}\mathrm{ix}(g)\colon \mathrm{ind}(g;\mathbf{q})\neq0\right\}
\end{equation}
By classical Nielsen theory,
$N(g)$ is independent of the choice of the representative $g$
in its homotopy class; see \cite[Chapter I]{Jiang_book}.

Therefore, in (\ref{entropy_def_by_characterization}),
one may compute each $N(f^m)$ 
by choosing a diffeomorphic representative of $[f]^m\in\mathrm{Mod}(S)$,
with only non-degenerate fixed points.
In fact, one may obtain some generic representative $f$,
such that $f^m$ has only non-degenerate fixed points for all $m\in\Natural$.

\begin{proposition}\label{entropy_virtual}
	Let $M$ be an oriented connected compact $3$--manifold.
	The following statements hold for any fibered class $\phi\in H^1(M;\Integral)$.
	\begin{enumerate}
	\item For any $m\in\Natural$, 
	the multiple $m\phi\in H^1(M;\Integral)$ is fibered, and
	$$\mathrm{Ent}(m\phi)= \mathrm{Ent}(\phi)/m.$$
	\item For any connected finite cover $M'$ of $M$,
	the pullback $\phi'\in H^1(M';\Integral)$ is fibered, and
	$$\mathrm{Ent}(\phi')=\mathrm{Ent}(\phi).$$
	\end{enumerate}
\end{proposition}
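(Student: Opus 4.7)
The plan is to describe explicitly the fiber and monodromy of $m\phi$ and of $\phi'$ in terms of those of $\phi$, and then to compute entropies via the Nielsen number characterization (\ref{entropy_def_by_characterization}).

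For part (1), I would model $M$ as the mapping torus $M_f=S\times[0,1]/(x,1)\sim(f(x),0)$, where a representative $f$ of the monodromy acts on the (possibly disconnected) fiber $S$, and take $\pi([x,t])=e^{2\pi it}$ to represent $\phi$. Composing $\pi$ with the degree $m$ self-cover of $S^1$ gives a smooth fiber bundle representing $m\phi$; its fiber is $\bigsqcup_{j=0}^{m-1}S\times\{j/m\}$, and its monodromy $F$ cyclically permutes the $m$ components, acting by the identity $S\times\{j/m\}\to S\times\{(j+1)/m\}$ for $j<m-1$ and by $f$ from $S\times\{(m-1)/m\}$ back to $S\times\{0\}$. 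Hence $F^k$ has no fixed points when $m\nmid k$, while for $k=ml$ it restricts on each component to a conjugate of $f^l$, so $N(F^{ml})=m\cdot N(f^l)$. Applying (\ref{entropy_def_by_characterization}) along the subsequence $k=ml$ yields $\mathrm{Ent}(m\phi)=\mathrm{Ent}(\phi)/m$.

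For part (2), let $p\colon M'\to M$ be a connected finite cover of degree $n$. Over any simply connected open subset $U\subset S^1$, the preimage $(\pi\circ p)^{-1}(U)$ is a disjoint union of trivial bundles over $U$, so $\pi\circ p$ is a fiber bundle and $\phi'=p^*\phi$ is fibered. Its fiber $S'$ is a finite cover of $S$ which need not be connected; because $M'$ is connected, the monodromy $f'$ acts transitively by cyclic permutation on the $c$ components $S'_1,\ldots,S'_c$ of $S'$, each $S'_i$ is a connected degree $n/c$ cover of $S$, and the return map $\tilde f:=(f')^c|_{S'_1}$ is a lift of $f^c$ to $S'_1$. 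Repeating the bookkeeping of part (1), $N((f')^k)=0$ when $c\nmid k$ and $N((f')^{cl})=c\cdot N(\tilde f^l)$, so $\mathrm{Ent}(\phi')=\mathrm{Ent}(\tilde f)/c$ by (\ref{entropy_def_by_characterization}).

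The remaining step is to show $\mathrm{Ent}(\tilde f)=c\cdot\mathrm{Ent}(\phi)$. The restriction $p|_{S'_1}$ is a finite covering that semiconjugates $\tilde f$ to $f^c$; invoking the classical invariance of topological entropy under finite-to-one factor maps (Bowen), together with $h(g^c)=c\cdot h(g)$, yields $h(\tilde f)=c\cdot h(f)$, and the identity passes to the mapping class entropy upon taking infima over homotopy representatives. Combining gives $\mathrm{Ent}(\phi')=\mathrm{Ent}(\phi)$. The main obstacle is precisely this last identification---everything else is routine once the fibration structures are written down---and the cleanest input is the entropy invariance under finite-to-one covers; a purely Nielsen-theoretic version would require matching essential fixed point classes of $\tilde f^l$ and $f^{cl}$ through the cover, which is technically heavier but equally standard.
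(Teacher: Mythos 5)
The paper offers no detailed proof here --- it merely asserts that both formulas are ``evident by (\ref{entropy_def_by_characterization}) and our description of (\ref{Nielsen_number_def})'' --- so your write-up is a genuine fleshing-out of what the paper leaves to the reader, and it is in the same spirit (Nielsen numbers and the mapping-torus picture). Part (1) is correct: $F^{ml}$ acts as $f^l$ on each of the $m$ components, $N(F^{ml})=m\,N(f^l)$, and passing to the subsequence $k=ml$ in (\ref{entropy_def_by_characterization}) is the right way to handle the degenerate $N(F^k)=0$ values for $m\nmid k$.

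The one place that deserves more care is the last step of part (2). Bowen's theorem gives the clean identity $h_{\mathrm{top}}(\tilde g)=h_{\mathrm{top}}(g^c)=c\,h_{\mathrm{top}}(g)$ for any particular representative $g\simeq f$ and its lift $\tilde g$. Taking infima over $g$, this only yields the inequality $\mathrm{Ent}([\tilde f])\leq c\cdot\mathrm{Ent}([f])$, because the set of lifted representatives $\{\tilde g: g\simeq f\}$ is a priori a proper subset of the mapping class $[\tilde f]$: a general representative $h\simeq\tilde f$ need not descend through the finite cover $S'_1\to S$, so Bowen cannot be applied in the other direction. To close the gap you should say why the infimum on the covering side is attained inside that subset. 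Two standard ways: (i) the Nielsen--Thurston normal form achieves $\mathrm{Ent}([f])$, and pseudo-Anosov maps (resp.\ periodic, resp.\ the reduced pieces) lift through finite covers to maps of the same type with the same stretch factors, so the lifted normal form realizes $\mathrm{Ent}([\tilde f])$; or (ii) compare the exponential growth rates of free homotopy classes of closed orbits of the suspension flows on $M'$ and $M$, which agree because $M'\to M$ is a finite cover and orbits lift/project finite-to-one --- this is the route most directly suggested by the paper's appeal to (\ref{Nielsen_number_def}). You flag exactly this issue at the end, and either of these observations makes the argument complete.
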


The formulas in Proposition \ref{entropy_virtual}
are evident by (\ref{entropy_def_by_characterization}) and
our description of (\ref{Nielsen_number_def}).

\begin{example}\label{entropy_JSJ}
Let $[f]\in \mathrm{Mod}(S)$ be a mapping class of a connected closed orientable surface
of genus at least $2$.
\begin{enumerate}
\item If $[f]$ is pseudo-Anosov, there exist a pair of measured foliations
$(\mathscr{F}^{\mathtt{s}},\mu^{\mathtt{s}})$ and $(\mathscr{F}^{\mathtt{u}},\mu^{\mathtt{u}})$
on $S$, and a constant $\lambda>1$, 
such that some representative $f\colon S\to S$ has the property
$f_*(\mathscr{F}^{\mathtt{s}},\mu^{\mathtt{s}})=(\mathscr{F}^{\mathtt{s}},\lambda^{-1}\mu^{\mathtt{s}})$
and $f_*(\mathscr{F}^{\mathtt{u}},\mu^{\mathtt{u}})=(\mathscr{F}^{\mathtt{u}},\lambda\mu^{\mathtt{u}})$.
In this case, $f$ is called a \emph{pseudo-Anosov automorphism}
with \emph{stable/unstable measured foliations} $(\mathscr{F}^{\mathtt{s/u}},\mu^{\mathtt{s/u}})$ 
and \emph{stretching factor} $\lambda$.
Moreover, the following formula holds:
$$\mathrm{Ent}([f])=\log \lambda.$$
In fact, $\log\lambda$ is equal to the topological entropy of 
any pseudo-Anosov automorphism representative $f$,
which is unique up to conjugacy by isotopically trivial self-homeomorphisms of $S$.
\item
In general, $S$ can be decomposed along a collection of 
mutually disjoint, mutually non-parallel essential simple closed curves
into finitely many open components $S_1,\cdots,S_n$.
Moreover, for some representative $f\colon S\to S$ and some sufficiently divisible $k\in\Natural$,
the $k$--th iterate $f^k$ preserves each $S_i$, and the restriction of $f^k$ to each $S_i$
is isotopic to either the identity or a pseudo-Anosov automorphism of stretch factor $\lambda_i^k$.
This is essentially the content of the Nielsen--Thurston classification.
With these notations, 
$\mathrm{Ent}([f])$ is equal to the maximum among all $\log \lambda_i$,
or $0$ if there are no pseudo-Anosov components.
\end{enumerate}
See \cite[Section 2]{Kojima_entropy} and references therein for more detail;
see also \cite[Chapter 1, Section 1.10]{AFW_book_group} for the dictionary 
between the Nielsen--Thurston decomposition and the geometric decomposition.
\end{example}

\subsection{Monodromy entropy on fibered classes}\label{Subsec-value_set}
We conclude this preliminary section
with a sample theorem about
value distribution of the monodromy entropy,
as a function on the set of fibered classes for any fixed $3$--manifold
(Theorem \ref{entropy_value_set}).
Its conclusion should be well-known to experts,
and we sketch a proof for the reader's convenience.
For any fibered $3$--manifold,
Theorem \ref{entropy_value_set} 
implies a uniform upper bound of the monodromy entropy
for all fibered classes,
which depends on the topology of the $3$--manifold
in an inefficient way.

We mention this sample theorem,
so as to illustrate two helpful points
for understanding our main theorems.
First, Theorem \ref{entropy_value_set} suggests
that an upper bound of the monodromy entropy
might be independent of the Thurston norm of the fibered class.
This is indeed the case,
as confirmed by 
Theorems \ref{main_ent_vs_vol_general} and \ref{main_ent_vs_vol_arithmetic}.
Secondly,
our sample proof of Theorem \ref{entropy_value_set}
relies essentially on different flow structures associated to different fibered cones,
and on the fact that any fixed $3$--manifold has only finitely many fibered cones.
By contrast, the Heegaard diagram approach allows us to 
dispose all fibered classes simultaneously,
yielding an efficient uniform estimate as in Theorem \ref{main_ent_vs_hpl}.

Our proofs of the main theorems are logically independent of Theorem \ref{entropy_value_set},
so the reader may safely skip this part.
For the sake of generality,
we (exceptionally) allow nonempty boundary in Theorem \ref{entropy_value_set}.

\begin{theorem}\label{entropy_value_set}
Let $M$ be an oriented connected compact $3$--manifold
with empty or tori boundary. 
For any $\epsilon>0$, 
the monodromy entropy $\mathrm{Ent}(\phi)$ 
as a function on the set of fiber classes $\phi\in H^1(M;\Integral)$
take at most finitely many distinct values above $\epsilon$.
\end{theorem}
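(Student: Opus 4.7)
The plan is to combine Thurston's finiteness of fibered faces with Fried's theorem on cross sections to pseudo-Anosov flows. First I would invoke Thurston's theorem \cite{Thurston_paper_norm} to reduce to a single cone: there are only finitely many top-dimensional fibered cones $C_1,\dots,C_k$ in $H^1(M;\Real)$, and every integer fibered class lies in exactly one of them, so it suffices to fix one fibered cone $C=C_j$ and prove that $\mathrm{Ent}(\phi)$ attains only finitely many distinct values above $\epsilon$ on $C \cap H^1(M;\Integral)$.

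Next I would apply Fried's theorem \cite{Fried_flow}: associated to $C$ there is a single pseudo-Anosov flow $\varphi_t$ on $M$ such that for every primitive integer class $\phi \in C$, the fiber $S_\phi$ is a connected cross section of $\varphi_t$ whose first return map is isotopic to the monodromy $f_\phi$. Combining Abramov's formula for the entropy of the return map relative to the flow's Bowen--Margulis measure with Schwartzman's representation of the mean return time as a pairing against an asymptotic cycle $\alpha \in H_1(M;\Real)$, one obtains an expression of the form
$$\mathrm{Ent}(\phi) \,=\, h(\varphi_t)/\phi(\alpha),$$
valid for all integer $\phi \in C$ by homogeneity of degree $-1$. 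I would then argue that $\phi(\alpha) \in \Integral_{>0}$ for every primitive integer $\phi \in C$; given this, $\mathrm{Ent}(\phi)$ falls in the discrete set $\{h(\varphi_t)/n : n \in \Integral_{>0}\}$, of which only the terms with $n < h(\varphi_t)/\epsilon$ exceed $\epsilon$. Summing over the finitely many cones then gives the theorem.

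The hard part will be the integrality of the Schwartzman cycle $\alpha$. On a non-canonical cross section the flow's Bowen--Margulis measure does not directly restrict to the Bowen--Margulis measure of the return pseudo-Anosov map, so relating the two topological entropies requires the variational principle and topological pressure rather than a naive Abramov calculation. I would expect the integrality of $\alpha$, and hence of $\phi(\alpha)$, to come from the fact that every primitive integer $\phi \in C$ is actually realized by a connected cross section of $\varphi_t$: the asymptotic rate at which the flow crosses any such section ought to be a positive integer under a canonical flow parametrization. Verifying this discreteness rigorously from Fried's construction is the main technical point on which the argument hinges; the remaining bookkeeping (including non-primitive $\phi$ via homogeneity and summing over the cones) should then be routine.
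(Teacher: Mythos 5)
Your first reduction to finitely many fibered cones is exactly the paper's opening move, but the formula you then rely on is wrong. If $\mathrm{Ent}(\phi)=h(\varphi_t)/\phi(\alpha)$ held with a \emph{fixed} class $\alpha\in H_1(M;\Real)$, then $1/\mathrm{Ent}(\phi)=\phi(\alpha)/h(\varphi_t)$ would be a \emph{linear} function of $\phi$ on the fibered cone. But Fried's theorem (the same \cite{Fried_flow} you cite, Theorems E and F) says $1/\mathrm{Ent}$ extends to a homogeneous degree-one function that is \emph{strictly concave} on each fibered face, tending to $0$ at the face boundary; a linear function restricted to an affine slice is affine, never strictly concave unless the face is a single point. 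So for $b_1(M)\geq 2$ your proposed identity contradicts the very theorem you invoke. The underlying error is an Abramov misstep: for the flow's Bowen--Margulis measure $\mu$, the induced measure $\mu_S$ on a cross-section $S_\phi$ is generally \emph{not} the measure of maximal entropy of the return map $f_\phi$, so Abramov gives only $h_{\mu_S}(f_\phi)=h(\varphi_t)\cdot\phi(\alpha)\leq \mathrm{Ent}(\phi)$ — a lower bound in the wrong direction. To get an upper bound you would have to suspend the (\emph{$\phi$-dependent}) maximal-entropy measure of $f_\phi$, producing a mean return time $\bar\tau_{\nu_\phi}$ that varies with $\phi$ and is not a pairing with a single Schwartzman cycle. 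Separately, even granting the formula, the asserted integrality $\phi(\alpha)\in\Integral_{>0}$ has no basis: the Schwartzman cycle of a pseudo-Anosov flow's Bowen--Margulis measure is a real homology class, and mean return times are generically irrational; the ``canonical flow parametrization'' you gesture at does not rescue this.

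The paper's argument circumvents all of this and you should compare carefully: it uses Fried's strict concavity directly. Since $1/\mathrm{Ent}$ is strictly concave on the face $F$, vanishing on $\partial F$, it has a unique interior maximizer $\vec w$, and for any $0<r<1$ every $\vec u$ in the shrunken copy $r\vec w+(1-r)F$ satisfies $1/\mathrm{Ent}(\vec u)>r/\mathrm{Ent}(\vec w)$. Combined with homogeneity, every class $\phi$ in the translated cone $\vec v+\mathcal{C}_F$ (chosen to contain all integer fibered classes) with $\|\phi\|_{\mathtt{Th}}>x$ satisfies $\mathrm{Ent}(\phi)<\mathrm{Ent}(\vec w)/(xr)$, which is $<\epsilon$ for $x$ large. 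Finitely many integer classes in $\mathcal{C}_F$ have Thurston norm $\leq x$, so only finitely many entropy values exceed $\epsilon$. The discreteness you were hoping for is not used (and is not true); boundedness-away-from-the-cone-boundary plus homogeneity is what does the work.
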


\begin{proof}
	We may assume that the set of fibered classes is nonempty and there are no disk or sphere fibers,
	for otherwise there is nothing to prove. 
	Then $M$ admits nontrivial JSJ decomposition into Seifert fibered pieces
	and hyperbolic pieces. We may further assume that there exists some hyperbolic piece.
	Since $\mathrm{Ent}(\phi)$ is the maximum of $\mathrm{Ent}(\phi_J)$,
	where $J$ ranges over the finitely many JSJ pieces of $M$,
	it suffices to show that for each $J$ that 
	the monodromy entropy of its fibered classes 
	takes at most finitely many values greater than $\epsilon$.
	This is trivial if $J$ is Seifert fibered, since $h$ is constant zero.
	See Kojima \cite{Kojima_entropy}; the bounded case is similar to
	the closed case as recalled in Example \ref{entropy_JSJ}.
	Therefore, it remains to argue for $J$ hyperbolic.
	
	Without loss of generality, we assume that $M$ is hyperbolic.	
	For each fibered face $F$ of the Thurston norm unit ball,
	we show that the monodromy entropy $\mathrm{Ent}(\phi)$ 
	takes at most finitely many possible values
	if $\phi$ ranges over the fibered classes in the fibered cone $\mathcal{C}_F$ over $F$.
	
	Since $\mathcal{C}_F$ has codimension zero in $H^1(M;\Real)$,
	there is a finite collection of rational vectors 
	$\vec{v}_1,\cdots,\vec{v}_b\in H^1(M;\Rational)$
	pointing along extreme rays on the closure of $\mathcal{C}_F$,
	and spanning $H^1(M;\Rational)$ over $\Rational$.
	Possibly after rational rescaling,
	we may assume that they span a sublattice $L$ of $H^1(M;\Rational)$ over $\Integral$,
	such that $2L$ contains $H^1(M;\Integral)$.
	It follows that the translated cone $\vec{v}+\mathcal{C}_F$ 
	of $\mathcal{C}_F$ by $\vec{v}=\vec{v}_1+\cdots+\vec{v}_b$ 
	still contains all the fibered classes $\mathcal{C}_F\cap H^1(M;\Integral)$.
	Fix such a vector $\vec{v}\in \mathcal{C}_F$.
	
	The function $\phi\mapsto 1/\mathrm{Ent}(\phi)$ extends 
	radial-linearly over $\mathcal{C}_F\cap H^1(M;\Rational)$
	and then continuously over $\mathcal{C}_F$.
	The resulting function $1/\mathrm{Ent}$ is strictly concave on $F$,
	and tends to $0$ as $\vec{x}\in F$ approaches $\partial F$.
	See Fried \cite[Theorems E and F]{Fried_flow};
	see also McMullen \cite[Corollary 5.4]{McMullen_polynomial} 
	for an alternative proof.	
	Denote by $\vec{w}\in F$ the unique maximal point of $1/\mathrm{Ent}$ on $F$.
	
	The strict concavity implies $1/\mathrm{Ent}(\vec{u})>r/\mathrm{Ent}(\vec{w})$
	for any $0<r<1$ and any $\vec{u}$	
	in the subregion $r\vec{w}+(1-r)F$ of $F$.
	One may observe this region 
	as the intersection of the cone $r\vec{w}+\mathcal{C}_F$ 
	with $F$.
	For any $x>0$,
	we obtain 
	$$1/\mathrm{Ent}(x\vec{u})=x/\mathrm{Ent}(\vec{u})>xr/\mathrm{Ent}(\vec{w}),$$
	or equivalently, 
	$$\mathrm{Ent}(x\vec{u})<\mathrm{Ent}(\vec{w})/xr,$$
	for any $\vec{u}$ in the intersection	of $r\vec{w}+\mathcal{C}_F$ with $F$.
	
	For any $\epsilon>0$, we can first choose some $0<r<1$ small enough,
	such that $r\vec{w}+\mathcal{C}_F$ contains $\vec{v}+\mathcal{C}_F$,
	then choose some $x>0$ large enough,
	such that 
	$$\mathrm{Ent}(\vec{w})/xr<\epsilon.$$
	Then, 
	we see that any fibered class $\phi\in H^1(M;\Integral)\cap \mathcal{C}_F$
	with monodromy $\mathrm{Ent}(\phi)\geq\epsilon$ 
	must have Thurston norm $\|\phi\|_{\mathtt{Th}}\leq x$.
	There are at most finitely many fibered classes in $\mathcal{C}_F$
	with Thurston norm bounded by $x$.
	They give rise to at most finitely many different values 
	of the monodromy entropy that are $\geq\epsilon$.
	This proves the asserted finiteness.
\end{proof}

\section{Heegaard presentation length}\label{Sec-Heegaard_pl}
In this section, we introduce the notion of Heegaard presentation length
for orientable connected closed $3$--manifolds.
We study its first properties and 
compare it with other complexities of $3$--manifolds.

\subsection{Presentation length and Heegaard diagrams}\label{Subsec-hpl}
In group theory, the presentation length is
a quantity measuring 
combinatorial complexity of a finitely presentable group.
For any finite presentation
$P= (u_1,\cdots,u_n; w_1,\cdots,w_m)$,
the \emph{length} of $P$ is defined to be
$$\ell(P)=\sum_{j=1}^{m} \max(0,|w_j|-2),$$
where each relator $w_j$ is a word 
in the alphabet $\{u_1^{\pm1},\cdots,u_n^{\pm1}\}$,
and $|w_j|$ denotes the word length of $w_j$.
The \emph{presentation length} of a finitely presentable group $G$
is thereby defined to be
$$\ell(G)=\min_P \ell(P),$$
where $P$ ranges over all finite presentations of $G$.
See Delzant \cite{Delzant_group};
see also \cite{Cooper_pl,Delzant_Potyagailo_complexity} 
for comparison with the hyperbolic volume for 
fundamental groups of finite-volume hyperbolic 3-manifolds.

For any connected closed orientable $3$--manifold of $M$,
a \emph{Heegaard surface} $\Sigma\subset M$ is a connected closed orientable
subsurface which 
bounds a pair of handlebodies $U_\alpha,U_\beta\subset M$
on different sides, namely, $\partial U_\alpha=\partial U_\beta=\Sigma$
and $U_\alpha\cap U_\beta=\Sigma$.
In each of the handlebodies $U_\alpha$ and $U_\beta$, 
choose a finite collection of 
mutually disjoint, properly embedded disks, 
such that cutting the handlebody along the disks 
yields a $3$--ball,
then the boundaries of these disks 
gives rise to a finite collection of mutually disjoint,
simple closed curves on $\Sigma$,
which we denote as $\boldsymbol{\alpha}$ and $\boldsymbol{\beta}$,
accordingly.
We usually fix an ordering of the $\boldsymbol{\alpha}$--curves
and the $\boldsymbol{\beta}$--curves,
denoting $\boldsymbol{\alpha}=(\alpha_1,\cdots,\alpha_g)$
and $\boldsymbol{\beta}=(\beta_1,\cdots,\beta_g)$,
where $g$ is the genus of $\Sigma$.
We also require the $\boldsymbol{\alpha}$--curves
intersect transversely with the $\boldsymbol{\beta}$--curves
on $\Sigma$ anywhere they intersect.
The triple 
$$(\Sigma,\boldsymbol{\alpha},\boldsymbol{\beta})$$
is called a \emph{Heegaard diagram} that presents $M$.

Every Heegaard diagram gives rise to a presentation of the fundamental group $\pi_1(M)$,
upon fixing a choice of orientations for the $\boldsymbol{\alpha}$--curves
and transverse orientations for the $\boldsymbol{\beta}$--curves.
The group presentation follows easily from the van Kampen theorem,
and can be read off explicitly as follows.

For each $\beta_i$, create a generator $u_i$;
for each $\alpha_j$, write down a relator $w_j$
by going once around $\alpha_j$ and recording the intersection pattern,
which is unique up to cyclic permutation.
For example, if $\alpha_j$ intersects the $\boldsymbol{\beta}$--curves
$\beta_{i_1},\cdots,\beta_{i_t}$ in order,
of signs $\epsilon_{i_1},\cdots,\epsilon_{i_t}\in\{+1,-1\}$ 
according as the orientation of $\alpha_j$ agree or disagree
with the transverse orientation of the $\boldsymbol{\beta}$--curves,
then $w_j$ is $u_{i_1}^{\epsilon_{i_1}}\cdots u_{i_t}^{\epsilon_{i_t}}$.
We refer to this presentation of $\pi_1(M)$ 
as the \emph{$U_\beta$--presentation}
associated to a Heegaard diagram $(\Sigma,\boldsymbol{\alpha},\boldsymbol{\beta})$ of $M$,
and denote as $(\mathbf{u}_{\beta};\mathbf{w}_{\alpha})$.
Therefore,
\begin{eqnarray*}
\pi_1(M)&\cong&\left\langle \mathbf{u}_{\beta}\colon \mathbf{w}_{\alpha}=\mathbf{1}\right\rangle\\
&=& \left\langle u_1,\cdots,u_g\colon w_1=\cdots=w_g=1\right\rangle.
\end{eqnarray*}

There is another presentation switching the roles
of the $\boldsymbol{\alpha}$--curves and the $\boldsymbol{\beta}$--curves,
namely, the \emph{$U_\alpha$--presentation}.
In this paper, 
we keep using the $U_\beta$--presentation unless otherwise mentioned,
and constructions are all adapted to the $U_\beta$--presentation.

\begin{definition}\label{def_Heegaard_pl}
	Let $M$ be a connected closed orientable $3$--manifold.
	The \emph{Heegaard presentation length} of $M$ is defined as
	the minimum length of presentations of $\pi_1(M)$
	that arise from Heegaard diagrams, namely,
	$$\ell_{\mathtt{He}}(M)=\min_{(\Sigma,\boldsymbol{\alpha},\boldsymbol{\beta})}\,\ell(\mathbf{u}_\beta;\mathbf{w}_\alpha),$$
	where $(\Sigma,\boldsymbol{\alpha},\boldsymbol{\beta})$
	ranges over all Heegaard diagrams that present $M$.
\end{definition}

\subsection{Simplifying Heegaard diagrams}
The estimate number $3$ in Lemma \ref{at_least_three_intersections} below
is remotely related to the coefficient $\log3$ in Theorem \ref{main_ent_vs_hpl}.

\begin{lemma}\label{at_least_three_intersections}
	Let $M$ be a connected, closed, orientable $3$--manifold.
	Suppose that $M$ does not contain
	any embedded projective plane or any embedded non-separating sphere.
	
	If $(\Sigma,\boldsymbol{\alpha},\boldsymbol{\beta})$
	is a Heegaard diagram presenting $M$ and achieving $\ell_{\mathtt{He}}(M)$,
	and if $(\Sigma,\boldsymbol{\alpha},\boldsymbol{\beta})$
	minimizes the genus of $\Sigma$ subject to the above property,
	then each $\boldsymbol{\alpha}$--curve contains 
	at least $3$ intersection points with the $\boldsymbol{\beta}$--curves.
	Moreover,
	the flipped Heegaard diagram 
	$(\Sigma,\boldsymbol{\beta},\boldsymbol{\alpha})$
	also satisfies the same property.
\end{lemma}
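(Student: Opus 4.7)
The plan is to argue by contradiction against the lexicographic minimality of the pair $(\ell,g)$. Suppose some $\alpha_j$ meets $\boldsymbol{\beta}$ in at most two points; then $|w_j|\le 2$, so $\max(0,|w_j|-2)=0$ and $w_j$ contributes nothing to the $U_\beta$-presentation length $\ell(\mathbf{u}_\beta;\mathbf{w}_\alpha)$. The goal is to produce another Heegaard diagram of $M$ of genus $g-1$ with $U_\beta$-length at most $\ell_{\mathtt{He}}(M)$, contradicting the minimality of $g$ subject to achieving $\ell_{\mathtt{He}}(M)$. Throughout I will use the standing observation that each meridian curve in $\boldsymbol{\alpha}$ (and in $\boldsymbol{\beta}$) is non-separating on $\Sigma$, since otherwise the complement of the meridian disks in the corresponding handlebody could not be a single $3$-ball.

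The analysis divides by $k:=|\alpha_j\cap\boldsymbol{\beta}|\in\{0,1,2\}$. When $k=0$, the curve $\alpha_j$ is disjoint from all $\beta$-meridian disks, so it lies in the $3$-ball obtained by cutting $U_\beta$ along those disks, and hence bounds a disk $D_\beta\subset U_\beta$; together with the disk $D_\alpha\subset U_\alpha$ this exhibits an embedded $2$-sphere $S\subset M$ with $S\cap\Sigma=\alpha_j$. The no-non-separating-sphere hypothesis forces $S$ to separate $M$; but then each connected component of $\Sigma\setminus\alpha_j$ lies on a single side of $S$, while the local structure at $\alpha_j$ places the two sides of $\alpha_j$ (within $\Sigma$) on opposite sides of $S$, forcing $\alpha_j$ to separate $\Sigma$---contradicting the non-separability of $\alpha_j$. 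Hence $k=0$ cannot occur. When $k=1$, the unique intersection identifies $(\alpha_j,\beta_k)$ as a destabilization pair, and a geometric destabilization yields a Heegaard diagram of $M$ of genus $g-1$; the corresponding Tietze elimination of $u_k$ using $w_j=u_k^{\pm 1}$ cannot lengthen any surviving relator, so the resulting $U_\beta$-length is at most $\ell$. When $k=2$, I would split by whether the two intersections lie on the same $\beta_k$ (equal or opposite signs) or on distinct $\beta_{k_1},\beta_{k_2}$; in each sub-case a local Heegaard move---bigon removal, a handle-slide of one $\beta$-curve over another along the short arc of $\alpha_j$ between its two intersection points, or an appeal to the sphere theorem (using the no-$\mathbb{RP}^2$ hypothesis) to rule out $u_k^{\pm 2}=1$---reduces $k$ below two, throwing us into the already-handled $k=0$ or $k=1$ situation.

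The main technical obstacle is Case $k=2$: verifying that the handle-slides do not inflate the $U_\beta$-presentation length overall. A band-sum slide of one $\beta$-curve over another introduces new crossings with every $\alpha_i$ that the slid curve now traverses, and one must check, using the truncation $\max(0,|w_i|-2)$ and the savings from $w_j$'s vanishing contribution, that the total change is non-positive. For the ``moreover'' clause, the analogous three-case analysis applies to any $\beta_k$ in the flipped diagram $(\Sigma,\boldsymbol{\beta},\boldsymbol{\alpha})$. The $k=0$ sphere argument is manifestly symmetric in $\alpha$ and $\beta$; for $k=1$ the crucial algebraic point is that a $\beta_k$ meeting only one $\alpha_j$ has $u_k$ appearing only in $w_j$, so the destabilization removes $w_j$ while leaving every other relator untouched, producing a genus-$(g-1)$ Heegaard diagram of $U_\beta$-length $\ell-\max(0,|w_j|-2)\le\ell$---again contradicting the minimality of $(\ell,g)$.
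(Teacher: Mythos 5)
Your global strategy---contradict lexicographic minimality of $(\ell_{\mathtt{He}}(M),g)$ by producing a lower-genus Heegaard diagram whose $U_\beta$-length is no larger---is the same as the paper's, and your $k=0$ argument (producing a non-separating sphere from a disk on each side of $\Sigma$) is essentially the paper's and is correct. However, there are two substantive gaps.

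First, in the $k=1$ case, $(\alpha_j,\beta_k)$ is \emph{not} a destabilization pair in the usual Heegaard-splitting sense: you know $\alpha_j$ meets no $\boldsymbol\beta$--curve other than $\beta_k$, but $\beta_k$ may very well meet other $\boldsymbol\alpha$--curves, so you cannot simply destabilize. This matters precisely because $\ell_{\mathtt{He}}$ is defined by minimizing over \emph{Heegaard} presentations: a Tietze elimination of $u_k$ lowers the abstract presentation length, but you must exhibit the new presentation as arising from an actual Heegaard diagram, which a raw Tietze move does not do. What is needed is either a sequence of handle-slides of the other $\boldsymbol\alpha$--curves over $\alpha_j$ along $\beta_k$ (each such slide removes one extraneous intersection on $\beta_k$ without creating new ones) followed by a genuine destabilization, or equivalently, and more cleanly, the ``erase $\beta_k$ and surge on $\alpha_j$'' move the paper uses, which directly produces a genus-$(g-1)$ Heegaard diagram realizing the Tietze-reduced presentation. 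Without this, the $k=1$ step is incomplete.

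Second, the $k=2$ case is the crux and you have only sketched it, explicitly flagging the length-control problem under handle-slides as unresolved; that is not a small finishing detail but the heart of the matter. In the subcase where $p,q$ lie on distinct $\beta$-curves, your suggested handle-slide of one $\beta$-curve over the other along the arc of $\alpha_j$ is the right move, but you must then verify that the total number of intersections is preserved and follow up by an erase-and-surge to drop the genus. In the subcase where $p,q$ lie on the same $\beta$-curve, ``bigon removal'' does not apply (there is no reason an innermost bigon exists; other $\boldsymbol\alpha$--arcs may separate $p$ from $q$ on $\beta_k$), and ``the sphere theorem'' is not the relevant tool: what is actually needed is the observation that in the $U_\alpha$-presentation the generator corresponding to $\alpha_j$ appears only in the relator for $\beta_k$, giving a $\Integral$ or $\Integral/2\Integral$ free factor of $\pi_1(M)$, hence (by Kneser--Milnor) an $S^1\times S^2$ or $P^3$ connected summand, contradicting the hypothesis that $M$ has no non-separating sphere and no embedded projective plane. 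Finally, the paper's treatment of the ``moreover'' clause is slicker than your symmetric re-run: after showing every $\boldsymbol\alpha$-- and every $\boldsymbol\beta$--curve has $\geq 2$ intersections, one observes that both diagrams have presentation length equal to (total intersections) minus $2g$, so the flipped diagram also achieves $\ell_{\mathtt{He}}(M)$ with minimal genus and the first part of the lemma applies to it verbatim.
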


\begin{proof}
	We argue by ruling out all possibilities of fewer than $3$ intersection points.
	Denote by $M=U_\alpha\cup_\Sigma U_\beta$ the Heegaard splitting associated to 
	$(\Sigma,\boldsymbol{\alpha},\boldsymbol{\beta})$, as usual.
		
	Suppose that some $\boldsymbol{\alpha}$--curve, say $\alpha_1$,
	had empty intersection with the $\boldsymbol{\beta}$--curves.
	Then, $\alpha_1$ bounds disks simultaneously in both 
	$U_\alpha$ and $U_\beta$. 
	These disks make a non-separating sphere in $M$, 
	as $\alpha_i$ is non-separating on $\Sigma$.
	This contradicts the assumption on $M$.
	
	Similarly, there are no $\boldsymbol{\beta}$--curves
	having empty intersection with the $\boldsymbol{\alpha}$--curves.
	
	Suppose that some $\boldsymbol{\alpha}$--curve, say $\alpha_1$,
	had exactly one intersection point $p$ with the $\boldsymbol{\beta}$--curves.
	Without loss of generality, 
	denote by $\beta_1$ the unique $\boldsymbol{\beta}$--curve that intersects $\alpha_1$.
	Then, among the intersection points on $\beta_1$ other than $p$,
	there is some $q$ nearest to $p$, 
	namely, such that some subarc $[p,q]$ of $\beta_1$ joining $p$ and $q$ contains 
	no other intersection points.
	The $\boldsymbol{\alpha}$--curve $\alpha_i$ through $q$ is different from $\alpha_1$,
	so we can handle-slide $\alpha_i$ over $\alpha_1$ 
	by approaching along the path $[p,q]$, resulting in a new curve $\alpha'_i$ in place of $\alpha_i$.
	This replacement does not introduce new intersections, 
	and removes $q$ from the intersections.
	Similarly, we can remove all the other intersection points on $\beta_1$ one by one,
	leaving only the intersection number $p$.
	In the end, we obtain a new Heegaard diagram without introducing new intersections,
	in which $\beta_1$ and $\alpha_1$ are disjoint from all other curves,
	intersecting only at $p$.
	Destabilizing the pair $\alpha_1$ and $\beta_1$,
	the resulting Heegaard diagram still achieves the Heegaard presentation length,
	but genus has decreased by $1$
	This contradicts the genus minimality of $\Sigma$.
	
	In effect, the resulting Heegaard diagram of the above Heegaard moves 
	can also be obtained by erasing $\beta_1$, and surging on $\alpha_1$,
	(that is, cutting $\Sigma$ along $\alpha_1$ and filling up with two disks,
	and forgetting $\alpha_1$). 
	This observation makes the above procedure simpler to remember.
		
	We claim that 
	any $\boldsymbol{\beta}$--curve must also have at least $2$
	intersection points with the $\boldsymbol{\alpha}$--curves.
	In fact,
	we have shown that that every $\boldsymbol{\alpha}$--curve
	in $(\Sigma,\boldsymbol{\alpha},\boldsymbol{\beta})$ has at least $2$
	intersection points with the $\boldsymbol{\beta}$--curves.
	It follows that the Heegaard presentation length $\ell_{\mathtt{He}}(M)$
	is equal to the total number of intersections minus $2g$.
	If there were some $\boldsymbol{\beta}$--curves, say $\beta_1,\cdots,\beta_s$,
	each with exactly $1$ intersection point with the $\boldsymbol{\alpha}$--curves.
	First dealing with $\beta_1$,
	we may suppose that $\alpha_1$ intersects $\beta_1$
	at its unique intersection point $p$.
	Again, we can erase $\alpha_1$ and surge on $\beta_1$,
	then the genus of $\Sigma$ has decreased by $1$.
	However, 
	the total number of intersections has decreased by at least $2$,
	since $\alpha_1$ has at least $2$ intersection points,
	which have all gone.
	Repeat the same procedure with $\beta_2,\cdots,\beta_s$, one after another.
	In the end, the genus has decreased by $s>0$,
	whereas the total number of intersections have decreased by $t\geq 2s$.
	Therefore, 
	the resulting Heegaard diagram $(\Sigma',\boldsymbol{\alpha}',\boldsymbol{\beta}')$
	has either smaller presentation length (if $t>2s$),
	or the same presentation length but smaller genus (if $t=2s$),
	so we again reach a contradiction, proving the claim.
	
	Next, we show that
	there are no $\boldsymbol{\alpha}$--curves with exactly $2$ intersection points.
	To argue by contradiction,
	suppose that some $\boldsymbol{\alpha}$--curve, say $\alpha_1$,
	had exactly $2$ intersection point $p,q$ with the $\boldsymbol{\beta}$--curves.
	Then, $p,q$ either lie on one and the same $\boldsymbol{\beta}$--curve, say $\beta_1$,
	or lie on distinct two distinct $\boldsymbol{\beta}$--curves, say $\beta_1$ and $\beta_2$.
	
	If $p,q$ lie on distinct curves $\beta_1$ and $\beta_2$,
	we can handle-slide $\beta_2$ over $\beta_1$ along a path $[q,p]$ on $\alpha_1$,
	resulting in a new curve $\beta'_2$.
	Then the intersection number of $\beta'_2$ 
	becomes the sum of the intersection numbers of $\beta_1$ and $\beta_2$ minus $2$,
	and the intersection number of $\alpha_1$ becomes $1$.
	Erase $\beta_1$ and surge on $\alpha_1$.
	The resulting Heegaard diagram
	still achieves the Heegaard presentation length,
	as the old contribution from $\beta_1$ has been transferred to $\beta'_2$.
	However, the genus has decreased by $1$ due to destabilization.
	This contradicts the genus minimality of $\Sigma$.
	
	If $p,q$ both lie on $\beta_1$,
	this time, we consider the $U_\alpha$--presentation of $\pi_1(M)$.
	Namely,
	the generators $u_1,\cdots,u_g$ are 
	dual to the $\boldsymbol{\alpha}$--disks bounded by $\alpha_1,\cdots,\alpha_g$,
	and the relators $w_1,\cdots,w_g$ correspond to 
	the $\boldsymbol{\beta}$--curves $\beta_1,\cdots,\beta_g$.
	Then the relator $w_1$ takes the form $u_1^{\nu}$,
	where $\nu\in\{0,\pm2\}$, and any other relator $w_j$
	does not contain the letter $u_1$ or its inverse.
	It follows that $u_1$ generates a free factor of $\pi_1(M)$,
	which is either infinite cyclic (if $\nu=0$)
	or cyclic of order $2$ (if $\nu=\pm2$).
	By standard facts in $3$--manifold topology,
	$M$ has a connected summand,
	either homeomorphic to $S^1\times S^2$, or homeomorphic to 
	$P^3$.
	Then $M$ contains either a non-separating sphere, or a projective plane,
	contrary the assumption on $M$.	
	
	In summary, 
	subject to the condition of achieving the Heegaard presentation length,
	we have shown that any genus-minimizing Heegaard diagram
	must have at least $3$ intersection points on every $\boldsymbol{\alpha}$--curve.
	
	It remains to show that the flipped Heegaard diagram $(\Sigma,\boldsymbol{\beta},\boldsymbol{\alpha})$
	satisfies the same assumptions as $(\Sigma,\boldsymbol{\alpha},\boldsymbol{\beta})$ does.
	In fact, we have already seen that
	every $\boldsymbol{\beta}$--curve contains at least $2$ intersection points.
	It follows that 
	$(\Sigma,\boldsymbol{\beta},\boldsymbol{\alpha})$
	has the same presentation length as that of $(\Sigma,\boldsymbol{\alpha},\boldsymbol{\beta})$,
	both being the total number of intersections minus $2$ times the genus of $\Sigma$.
	Therefore, $(\Sigma,\boldsymbol{\beta},\boldsymbol{\alpha})$
	also presents $M$, achieves $\ell_{\mathtt{He}}(M)$, and minimizes the genus of $\Sigma$.
\end{proof}

\begin{corollary}\label{hpl_cover}
	Let $M$ be a connected, closed, orientable $3$--manifold.
	Suppose that $M$ does not contain
	any embedded projective plane or any embedded non-separating sphere.
	Then, for any connected finite cover $M'$ of $M$,
	$$\ell_{\mathtt{He}}(M')-1\leq [M':M]\cdot (\ell_{\mathtt{He}}(M)-1).$$
\end{corollary}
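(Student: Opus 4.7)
The plan is to lift a Heegaard diagram of $M$ that realizes $\ell_{\mathtt{He}}(M)$ (and has minimum genus among such) to a CW decomposition of $M'$, perform two independent elementary collapses to put that decomposition into Heegaard form, and then read off the presentation length. Lemma \ref{at_least_three_intersections} is the decisive input: it guarantees that every relator in the optimal $U_\beta$--presentation has word length at least $3$, which is precisely what produces the savings needed to match the constant $-(d-1)$ on the right-hand side.

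Concretely, I would fix $(\Sigma,\boldsymbol{\alpha},\boldsymbol{\beta})$ as in Lemma \ref{at_least_three_intersections}, whose $U_\beta$--presentation has $g$ generators and relators $w_1,\dots,w_g$ of lengths $l_j\geq 3$, so $\ell_{\mathtt{He}}(M)=L-2g$ for $L=l_1+\cdots+l_g$. The spine of $U_\beta$ together with the $\boldsymbol{\alpha}$--disks equips $M$ with a CW decomposition of type $(1,g,g,1)$; pulling back along $p\colon M'\to M$ gives a CW decomposition of $M'$ of type $(d,dg,dg,d)$, whose $2$--skeleton $K'$ is connected (since $M'$ is) and whose $d$ top cells are the components of $M'\setminus K'$. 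I would then carry out two independent collapses. In the dual $3$--cell graph on these $d$ components, the non-self-loop subgraph is still connected and hence contains a spanning tree of $d-1$ 2-cells; cancelling each such $2$--cell with its neighbouring $3$--cell merges all $d$ top cells into one, while removing $d-1$ relators of total word length $L_*\geq 3(d-1)$. Independently, contracting a spanning tree of the $1$--skeleton ($d-1$ edges) shortens each surviving relator word by the number $\delta_r$ of contracted-edge letters it carries. The resulting decomposition of $M'$ has cell counts $(1,g',g',1)$ with $g'=dg-d+1$, hence automatically corresponds to a Heegaard diagram of $M'$ in the paper's sense.

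Set $\delta=\sum_r\delta_r$ and $L'=dL-L_*-\delta$, the total surviving relator length. The definition of presentation length gives $\ell(P')=L'-2g'+C$, where $C=\sum_{|w'_r|<2}(2-|w'_r|)\geq 0$ corrects for any survivors whose rewritten length has dropped below $2$. The decisive step will be a short-relator accounting: any such short survivor, being the lift of some $w_j$ with $|w_j|\geq 3$, must have used at least $|w_j|-1\geq 2$ of its letters on tree edges, so that if $s$ survivors are short, then $\delta\geq 2s\geq C$. Combining this with $L_*\geq 3(d-1)$ yields
\begin{equation*}
\ell(P')\leq dL-3(d-1)-2(dg-d+1)=d\,\ell_{\mathtt{He}}(M)-(d-1),
\end{equation*}
equivalent to the claimed inequality. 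The main technical points to work out carefully are this short-relator accounting and the verification that the two collapses produce a bona fide Heegaard diagram of $M'$; the rest is a routine CW computation.
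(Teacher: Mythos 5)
Your proof is correct and takes essentially the same route as the paper: the paper lifts the Heegaard surface $\Sigma$ to a Heegaard surface $\Sigma'\subset M'$ and discards $d-1$ lifted $\boldsymbol{\alpha}$--curves and $d-1$ lifted $\boldsymbol{\beta}$--curves, and your two elementary collapses (cancelling a spanning tree of $d-1$ two-cells against three-cells, and contracting a spanning tree of $d-1$ one-cells) are precisely the cellular translation of those two discardings. Your aggregate accounting with $L_*\geq 3(d-1)$ and the short-relator check $\delta\geq C$ is a reorganization of the paper's per-curve bound on each surviving $\hat{\boldsymbol{\alpha}}'$--curve, relying on Lemma \ref{at_least_three_intersections} in exactly the same way.
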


\begin{proof}
	Let $(\Sigma,\boldsymbol{\alpha},\boldsymbol{\beta})$
	be a Heegaard diagram as in the conclusion of Lemma \ref{at_least_three_intersections}.
	Denoting by $g$ the genus of $\Sigma$, and by $k$ the total number of intersection points
	between the $\boldsymbol{\alpha}$--curves and the $\boldsymbol{\beta}$--curves,
	we obtain $\ell_{\mathtt{He}}(M)=k-2g$ by definition.
	The preimage $\Sigma'$ in $M'$ 
	of the Heegaard surface $\Sigma$ in $M=U_\alpha\cup_\Sigma U_\beta$
	is again a Heegaard surface,
	since both $\pi_1(U_\alpha)$ and $\pi_1(U_\beta)$ surject $\pi_1(M)$.
	Denoting by $d=[M':M]$ the covering degree and $g$ the genus of $\Sigma$,
	the surface $\Sigma'$ has genus $g'=gd-d+1$,
	and each $\boldsymbol{\alpha}$--curve or $\boldsymbol{\beta}$--curve has $d$ lifts in $\Sigma'$.
	We can discard $d-1$ lifted $\boldsymbol{\alpha}$--curves 
	and $d-1$ lifted $\boldsymbol{\beta}$--curves,
	obtaining a Heegaard diagram $(\Sigma',\hat{\boldsymbol{\alpha}}',\hat{\boldsymbol{\beta}}')$
	for $M'$. 
	Denote by $L$ the length of the presentation arising from 
	$(\Sigma',\hat{\boldsymbol{\alpha}}',\hat{\boldsymbol{\beta}}')$.
	For any $\hat{\boldsymbol{\alpha}}'$--curve that contains
	at least $2$ intersection points with $\hat{\boldsymbol{\beta}}'$--curves,
	it contributes at most the same amount to $L$
	as its underlying $\boldsymbol{\alpha}$--curve contributes to $k-2g$,
	since intersections with those discarded lifted $\boldsymbol{\beta}$--curves
	no longer contribute.
	There might also be some $\hat{\boldsymbol{\alpha}}'$--curves that contain
	fewer than $2$ intersection points with $\hat{\boldsymbol{\beta}}'$--curves,
	but they only contribute $0$ to $L$,
	whereas their underlying $\boldsymbol{\alpha}$--curves contribute
	at least $3-2=1$ to $k-2g$ individually.
	Anyways, we obtain an inequality $L\leq d\cdot(k-2g)-(d-1)$,
	where 
	the subtracted $d-1$ comes from the discounted contribution 
	from the $d-1$ discarded lifted $\boldsymbol{\alpha}$--curves,
	each discounting at least $3-2=1$.
	This inequality can be rearranged into $L-1\leq d\cdot(k-2g-1)$.
	We obtain
	$$
	\ell_{\mathtt{He}}(M')-1\leq L-1 \leq	d\cdot(k-2g-1)
	= [M':M]\cdot( \ell_{\mathtt{He}}(M)-1),$$
	as desired.
\end{proof}

\subsection{Comparison with other complexities}\label{Subsec-complexities}
There is an obvious comparison
$$\ell_{\mathtt{He}}(M)\geq \ell(\pi_1(M)).$$
However, 
it seems unclear whether $\ell_{\mathtt{He}}(M)$
is bounded by a linear function of $\ell(\pi_1(M))$ from above.
There is a more satisfactory comparison 
between the Heegaard presentation length and the Matveev complexity.

In general, the Matveev complexity is a topological invariant
for compact $3$--manifolds. 
It takes non-negative integer values.
It is originally defined in terms of what is called spines,
which we refer to Matveev's treatise \cite{Matveev_book} 
for full detail.
For any connected closed irreducible $3$--manifold,
there is a convenient equivalent description,
which we recall as follows.

The Matveev complexity of a connected closed irreducible $3$--manifold $M$
is $0$ if and only if $M$ is homeomorphic to
a $3$--sphere $S^3$, or a projective $3$--space $P^3$,
or a lens space $L(3,1)$.
Otherwise, the Matveev complexity of $M$
is equal to the minimal number of tetrahedra,
such that $M$ can be obtained by gluing them
using affine isomorphisms pairing up all the faces.

According to the above description,
the Matveev complexity of $M$ is bounded by
the Kneser complexity of $M$,
that is, the minimal number of tetrahedra in need
to triangulate $M$ into a simplicial $3$--complex.
By considering the second barycentric subdivision,
one may also bound the Kneser complexity of $M$
by $(4!)^2=576$ times the Matveev complexity of $M$.
For more comparisons between other complexities of $3$--manifolds
that arise from various classical presentations,
we recommend Cha's informative exposition \cite{Cha_complexities}.

\begin{lemma}\label{compare_Matveev}
	Let $M$ be a connected closed orientable irreducible $3$--manifold.
	\begin{enumerate}
	\item
	If $M$ is homeomorphic to $S^3$ or $P^3$, then $\ell_{\mathtt{He}}(M)=0$.
	\item
	If $M$ is homeomorphic to $L(3,1)$, then $\ell_{\mathtt{He}}(M)=1$.
	\item
	In all other cases,
	$$\frac{c_{\mathtt{Ma}}(M)+3}2\leq \ell_{\mathtt{He}}(M)\leq c_{\mathtt{Ma}}(M)+1,$$
	where $c_{\mathtt{Ma}}$ denotes the Matveev complexity.
	\end{enumerate}
\end{lemma}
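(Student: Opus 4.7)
My plan is to handle the three items separately. Items (1) and (2) are dispatched by exhibiting small explicit Heegaard diagrams together with an elementary lower bound. Item (3) is the substantial one and splits into an upper bound, obtained by turning a pseudo-triangulation into a Heegaard diagram, and a lower bound, obtained by running the conversion in reverse.

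For items (1) and (2), I would exhibit the empty genus-$0$ Heegaard diagram for $S^3$; the genus-$1$ diagram with $|\alpha\cap\beta|=2$ for $P^3$, whose relator $u^2$ contributes $\max(0,2-2)=0$; and the genus-$1$ diagram with $|\alpha\cap\beta|=3$ for $L(3,1)$, whose relator $u^3$ contributes $1$. For the matching lower bound in item (2), I would observe that any Heegaard presentation of length $0$ has all relators of length at most $2$, so the resulting group is a free product of cyclic groups of orders in $\{1,2,\infty\}$, which cannot equal $\pi_1(L(3,1))\cong\Integral/3\Integral$.

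For the upper bound in item (3), I would start from a one-vertex pseudo-triangulation $T$ of $M$ with $n=c_{\mathtt{Ma}}(M)$ tetrahedra, available because $M$ is irreducible and outside the three exceptional manifolds; its face vector is $v=1$, $e=n+1$, $f=2n$, $t=n$. A regular neighborhood of the $1$-skeleton gives a handlebody $U_\alpha$ of genus $g=e-v+1=n+1$, with $\boldsymbol{\alpha}$-curves the meridians of its one-handles. In the dual handlebody $U_\beta$, I would delete $n-1$ face disks corresponding to a spanning tree of the dual $1$-skeleton; the remaining $n+1$ faces furnish a valid $\boldsymbol{\beta}$-system, each meeting $\boldsymbol{\alpha}$ transversely in $3$ points (one per edge of the triangle). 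The resulting diagram has $k=3(n+1)$ intersections and presentation length $k-2g=n+1$, giving $\ell_{\mathtt{He}}(M)\leq c_{\mathtt{Ma}}(M)+1$. A small post-processing may be needed if some $\boldsymbol{\alpha}$-curve meets fewer than two $\boldsymbol{\beta}$-curves, but a destabilisation as in Lemma \ref{at_least_three_intersections} only decreases the bound.

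For the lower bound in item (3), I would begin with a Heegaard diagram $(\Sigma,\boldsymbol{\alpha},\boldsymbol{\beta})$ realizing $\ell_{\mathtt{He}}(M)$ and, by Lemma \ref{at_least_three_intersections}, chosen so every $\boldsymbol{\alpha}$- and every $\boldsymbol{\beta}$-curve hits at least three intersections, whence $k\geq 3g$ and $L:=k-2g\geq g$. The goal is to produce a simple spine of $M$ whose true-vertex count is at most $2L-3$, after which Matveev's spine definition of complexity will yield the desired estimate. The natural $2$-complex $\Sigma\cup D_{\boldsymbol{\alpha}}\cup D_{\boldsymbol{\beta}}$ is a spine of $M$ minus two balls; six sheets meet at each $\alpha\beta$-intersection, and a standard local desingularisation splits each intersection into a bounded number of true vertices, while coning the meridian disks tetrahedralises the two complementary balls. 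The main obstacle, and the place where the sharp coefficient $2$ and the constant $-3$ are used, is the bookkeeping that trades contributions from the $k$ original intersections against the cancellations forced by the three-valent hypothesis and by the absorption of the two complementary $3$-balls. I expect this step to parallel the simple-spine estimates in Matveev's book, specialised to the Heegaard-diagram setting.
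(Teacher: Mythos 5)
Items (1)--(2) and the upper bound in item (3) follow essentially the same route as the paper. Two small caveats on the upper bound: you posit a \emph{one-vertex} pseudo-triangulation realizing $c_{\mathtt{Ma}}(M)$, which requires a citation and is not needed --- the paper works with an arbitrary minimal face-identification of $t$ tetrahedra, deriving the genus $t+1$ from $\chi(U_\beta)=-t$ without controlling the vertex count. Also, if you declare the $\boldsymbol{\alpha}$-curves to be edge-duals (as you do), the quantity $k-2g$ only equals the presentation length when every $\boldsymbol{\alpha}$-curve has at least two intersections; the cleaner move (taken by the paper) is to make the $\boldsymbol{\alpha}$-curves the face boundaries, so that each relator has length at most $3$ and the presentation length is directly bounded by $(t+1)\cdot(3-2)$.

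The lower bound in item (3), however, has a genuine gap. You propose to desingularize the $2$-complex $\Sigma\cup D_{\boldsymbol{\alpha}}\cup D_{\boldsymbol{\beta}}$ into a simple spine and invoke Matveev's spine definition of $c_{\mathtt{Ma}}$, but you explicitly defer the bookkeeping that yields the coefficient $2$ and the constant $-3$. That bookkeeping \emph{is} the entire content of the lower bound: a naive desingularization of each $\alpha\beta$-intersection into a bounded number of true vertices gives a bound proportional to $k$, not to $L=k-2g$, and the complement of the complex is two balls rather than one, so a further puncturing step is needed. None of this is routine, and the constants obtained would a priori be worse than claimed. The paper's argument instead builds a pseudo-triangulation directly: from the Heegaard diagram's associated CW structure on $M$, add diagonals to each polygonal $2$-cell --- this produces exactly $\ell_{\mathtt{He}}(M)$ triangles because the bound $\max(0,|w_j|-2)$ is precisely the number of diagonals needed for the $j$-th polygon; the boundary sphere of the $3$-cell thus receives $2\ell_{\mathtt{He}}(M)$ triangular faces; and the key combinatorial lemma is that some vertex of this sphere triangulation lies on at least $3$ distinct triangles, so coning from that vertex yields at most $2\ell_{\mathtt{He}}(M)-3$ tetrahedra. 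You would need to either carry out the spine bookkeeping in full (and verify it gives the same constants) or switch to the coning argument; as written the lower bound is not proven.
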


\begin{proof}
	The manifolds $S^3$ and $P^3$ admit Heegaard diagrams of genus $1$
	where the intersection number between the $\alpha$--curve intersects the $\beta$--curve 
	is at most $2$ points,
	so these Heegaard diagrams 
	achieve $\ell_{\mathtt{He}}(S^3)=0$ and $\ell_{\mathtt{He}}(P^3)=0$.
	
	The manifold $L(3,1)$ has a Heegaard diagram of genus $1$
	and intersection number $3$, implying $\ell_{\mathtt{He}}(L(3,1))\leq 1$.
	On the other hand, 
	any finitely presentable group $G$ with presentation length $\ell(G)=0$
	must be a free product of factors that are infinite cyclic or cyclic of order $2$,
	by simple observation.
	This implies
	$\ell_{\mathtt{He}}(L(3,1))\geq \ell(\pi_1(L(3,1))
	=\ell(\Integral/3\Integral)>0$. Hence, $\ell_{\mathtt{He}}(L(3,1))=1$.
	
	Below, we assume $M$ is a connected closed orientable irreducible $3$--manifold
	other than $S^3$, $P^3$, and $L(3,1)$.
	
	To prove the asserted upper bound, 
	suppose $c_{\mathtt{Ma}}(M)=t$, and realize $M$ by gluing $t$ tetrahedra
	by an affine face pairing. 
	As a pseudo-simplicial triangulation of $M$ 
	(into a $3$--demensional $\Delta$--complex),
	there are $4t/2=2t$ faces, so the dual $1$--skeleton 
	is a $4$--valent graph of $t$ vertices and $2t$ edges.
	A compact regular neighborhood of the dual $1$--skeleton is 
	a handlebody $U_\alpha$ of Euler characteristic $t-2t=-t$,
	and the complement in $M$ of its interior
	is a compact regular neighborhood of the $1$--skeleton of $M$,
	which is also a handlebody $U_\beta$ of the same Euler characteristic.
	Therefore, the boundary $\partial U_\alpha=\partial U_\beta=\Sigma$
	is a Heegaard surface of genus $t+1$.
	We may choose $t+1$ meridional disks in $U_\alpha$,
	each contained in a face 
	(and intersecting its dual edge transversely at a unique point),
	and choose $t+1$ meridional disks in $U_\beta$,
	each intersecting an edge transversely at a unique point
	(and contained in a dual face).
	By choosing the meridional disks without cutting 
	$U_\alpha$ or $U_\beta$ into disconnected components,
	we obtain a Heegaard diagram $(\Sigma,\boldsymbol{\alpha},\boldsymbol{\beta})$
	presenting $M$, where the $\boldsymbol{\alpha}$--curves and the $\boldsymbol{\beta}$--curves
	are the boundaries of the chosen disks.
	As the faces are triangular,
	each $\boldsymbol{\alpha}$--curve contains at most $3$ intersection points with the $\boldsymbol{\beta}$--curves.
	With this Heegaard diagram, 
	we obtain the asserted inequality
	$$\ell_{\mathtt{He}}(M)\leq (t+1)\cdot (3-2)=t+1=c_{\mathtt{Ma}}(M)+1.$$
	
	To prove the asserted lower bound, 
	suppose $M=U_\alpha\cup_\Sigma U_\beta$ where
	$(\Sigma,\boldsymbol{\alpha},\boldsymbol{\beta})$ 
	is a Heegaard diagram that achieves $\ell_{\mathtt{He}}(M)$.
	By Lemma \ref{at_least_three_intersections},
	we can require that every $\boldsymbol{\alpha}$--curve 
	intersects the $\boldsymbol{\beta}$--curves transversely 
	with at least $3$ intersection points.
	Note that $\ell_{\mathtt{He}}(M)\geq1$,
	If $\ell_{\mathtt{He}}(M)=0$,
	implying $\ell(\pi_1(M))=0$, 
	the connected closed orientable $3$--manifold $M$ can only be $S^3$ or $P^3$.
	As we have excluded these possibilities, 
	we observe $\ell_{\mathtt{He}}(M)\geq1$.
	In particular, $\Sigma$ is not a sphere.
	
	Associated to the Heegaard diagram $(\Sigma,\boldsymbol{\alpha},\boldsymbol{\beta})$,
	there is a handle decomposition of $M$,
	with a unique $0$--handle and a unique $3$--handle, 
	such that attaching the $1$--handles to the $0$--handle results in $U_\beta$,
	and the $2$--handles are attached along the $\boldsymbol{\alpha}$--curves.
	Collapsing the handles onto their cores
	yields a cell decomposition of $M$.
	Moreover, the intersection pattern of the $\boldsymbol{\alpha}$--curves
	and the $\boldsymbol{\beta}$--curves gives rise to polyhedral structures of the cells.
	To be precise, the $1$--skeleton of the cell complex is just a wedge of circles,
	dual to the decomposition of $U_\beta$ by $\boldsymbol{\beta}$--disks;
	each $2$--cell can be viewed as a copy of its corresponding $\boldsymbol{\alpha}$--disk,
	and as a polygonal disk,
	such that the vertices are the intersection points on bounding $\boldsymbol{\alpha}$--curve,
	and the edges are attached homeomorphically onto the $1$--cells;
	the $3$--cell can also be viewed as a polyhedral $3$--ball,
	whose boundary sphere is obtained by path-end compactification
	in the cell complex, 
	and hence endowed with a polyhedral disk decomposition 
	from the $2$--skeleton.
	Note that the last point makes use of the fact that $\Sigma$ is not a sphere,
	so there is at least one $2$--cell.
	
	With the above description,
	we can triangulate each $2$--cell by adding diagonals to the polygonal disks.
	Note that by our assumption on $(\Sigma,\boldsymbol{\alpha},\boldsymbol{\beta})$,
	these polygonal disks all have at least $3$ edges, and after triangulation,
	there are exactly $\ell_{\mathtt{He}}(M)$ triangular faces.
	It follows that the $3$--cell also inherits a pseudo-simplicial triangulation
	on its boundary sphere.
	
	Note that the number of triangular disks on the boundary sphere is 
	$2\cdot \ell_{\mathtt{He}}(M)\geq2$,
	since each $2$--cell appears with two lifted copies.
	Moreover, there cannot be only $2$ triangular disks,
	for otherwise $M$ would be homeomorphic to $L(3,1)$,
	which has been excluded.
	We infer that
	there must be at least $4$ triangular disks on the boundary sphere.
	In terms of the dual decomposition on the boundary sphere,
	there must be at least $4$ dual vertices,	each having valence exactly $3$,
	so the dual faces cannot all be bigons, by simple observation.
	We infer that	some vertex on the boundary sphere 
	must be contained in at least $3$ distinct triangular disks.
	
	With the above observation,
	we can further triangulate the $3$--cell
	by picking a vertex as above, 
	and constructing the tetrahedra	as cones 
	over the triangular disks that do not contain that vertex.
	The resulting pseudo-simplicial triangulation
	contains at most $2\cdot\ell_{\mathtt{He}}(M)-3$ tetrahedra.
	Therefore,
	we have constructed a pseudo-simplicial triangulation of $M$
	with at most $2\cdot\ell_{\mathtt{He}}(M)-3$ tetrahedra,
	which yields an upper bound for $c_{\mathtt{Ma}}(M)$.
	This implies the inequality
	$$\frac{c_{\mathtt{Ma}}(M)+3}2\leq \ell_{\mathtt{He}}(M),$$
	as asserted.
\end{proof}

\section{Heegaard Floer homology}\label{Sec-HF}
In this section, we review Heegaard Floer homology.
We summarize various versions of Heegaard Floer homology
for any closed connected oriented $3$--manifold
as introduced by Ozsv\'ath and Szab\'o \cite{OS_hf_invariants,OS_hf_properties}.
We collect important facts regarding constraints coming from
an embedded oriented connected closed subsurface.
We also review another variant of Heegaard Floer homology
defined using Heegaard diagrams with multiple points,
following Lee and Lipshitz \cite{Lee_Lipshitz_covering}.

\subsection{Package information}\label{Subsec-hf_package}
Let $M$ be a closed, connected, oriented $3$--manifold.
The plus, minus, and infinity versions
of the Heegaard Floer homology of $M$
are denoted as
$\mathrm{HF}^+(M)$, $\mathrm{HF}^-(M)$, 
and $\mathrm{HF}^\infty(M)$, respectively.
These are all $\Integral/2\Integral$--graded 
modules over a polynomial ring $\Integral[U]$,
where $U$ is a customary notation of the indeterminate.
Their isomorphism types as $\Integral[U]$--modules
depend only on the homeomorphism type of $M$,
and their (absolute) $\Integral/2\Integral$--gradings
can be fixed upon fixing a homological orientation of $M$
(that is, an orientation of $H_*(M;\Rational)$ as a $\Rational$--vector space).

Recall that the space of all $\mathrm{Spin}^{\mathtt{c}}$ structures on $M$
form an affine $H^2(M;\Integral)$, denoted as $\mathrm{Spin}^{\mathtt{c}}(M)$.
In other words, 
this is an additive coset of the form $\mathfrak{s}_0+H^2(M;\Integral)$,
fixing any $\mathfrak{s}_0\in\mathrm{Spin}^{\mathtt{c}}(M)$ for reference.
For each $\mathfrak{s}\in\mathrm{Spin}^{\mathtt{c}}(M)$,
there is a well-defined first Chern class 
$c_1(\mathfrak{s})\in H^2(M;\Integral)$,
such that
$c_1(\mathfrak{s}+\psi)=c_1(\mathfrak{s})+2\psi$ 
holds for all $\psi\in H^2(M;\Integral)$.
Therefore, the expression $\bar{\mathfrak{s}}=\mathfrak{s}-c_1(\mathfrak{s})$ 
determines an involution $\mathfrak{s}\mapsto \bar{\mathfrak{s}}$
on $\mathrm{Spin}^{\mathtt{c}}(M)$,
with the property $c_1(\bar{\mathfrak{s}})=-c_1(\mathfrak{s})$.
There are several equivalent models 
for representing a $\mathrm{Spin}^{\mathtt{c}}$ structure on $M$,
whose details are unnecessary for our discussion in this paper.
A model with homologous nowhere vanishing vector fields,
called Euler structures as originally introduced by Turaev,
is adopted in \cite[Section 2.6]{OS_hf_invariants}.

Each of the three flavors $\mathrm{HF}^\circ(M)$ splits 
canonically as a direct sum of $\Integral/2\Integral$--graded $\Integral[U]$--submodules
$$\mathrm{HF}^\circ(M)=\bigoplus_{\mathfrak{s}}\,\mathrm{HF}^\circ(M,\mathfrak{s})$$
where $\mathfrak{s}$ ranges over all the $\mathrm{Spin}^{\mathtt{c}}$ structures of $M$,
but there are only finitely many nonvanishing summands.
For each $\mathfrak{s}$, there is an exact triangle
of $\Integral[U]$--module homomorphisms
\begin{equation}\label{exact_triangle_mip}
\xymatrix{
\mathrm{HF}^-(M,\mathfrak{s}) \ar[r]^-{i_*} & 
\mathrm{HF}^\infty(M,\mathfrak{s}) \ar[d]^-{j_*} \\ 
& \mathrm{HF}^+(M,\mathfrak{s}) \ar[lu]^-{d_*}
}
\end{equation}
where $i_*$ and $j_*$ respect the $\Integral/2\Integral$--grading,
and $d_*$ switches the $\Integral/2\Integral$--grading.

In general, 
$\mathrm{HF}^-(M,\mathfrak{s})$ is always finitely generated
over $\Integral[U]$;
the action of $U$ on
$\mathrm{HF}^\infty(M,\mathfrak{s})$ is always invertible,
and $\mathrm{HF}^\infty(M,\mathfrak{s})$
is finitely generated over the Laurent polynomial ring $\Integral[U,U^{-1}]$;
the action of $U$ is nilpotent on any element of $\mathrm{HF}^+(M,\mathfrak{s})$,
however,
$\mathrm{HF}^+(M,\mathfrak{s})$ may be infinitely generated over $\Integral[U]$,
(only if $c_1(\mathfrak{s})\in H^2(M;\Integral)$ is torsion).
In any case, it is an easy exercise of commutative algebra
(provided the above facts) to show that 
the $\Integral[U]$--module 
$\mathrm{Im}(d_*)\cong\mathrm{HF}^+/\mathrm{Ker}(d_*)$,
often denoted as $\mathrm{HF}^+_{\mathtt{red}}(M,\mathfrak{s})$,
must be finitely generated over $\Integral$.
Moreover, 
whenever $\mathrm{HF}^+(M,\mathfrak{s})$ is finitely generated,
$d_*$ will be injective, identifying 
$\mathrm{HF}^+(M,\mathfrak{s})\cong
\mathrm{HF}^+_{\mathtt{red}}(M,\mathfrak{s})$ as 
relatively $\Integral/2\Integral$--graded,
finitely generated $\Integral$--modules.

The hat version of the Heegaard Floer homology $\widehat{\mathrm{HF}}(M)$
is a $\Integral/2\Integral$--graded module over $\Integral$,
which, again,
splits over $\mathrm{Spin}^{\mathtt{c}}$ structures $\mathfrak{s}$
as a direct sum of $\Integral/2\Integral$--graded $\Integral$--modules.
For each $\mathrm{s}$, there is an exact triangle
of $\Integral$--module homomorphisms
\begin{equation}\label{exact_triangle_pph}
\xymatrix{
\mathrm{HF}^+(M,\mathfrak{s}) \ar[r]^-{U} & 
\mathrm{HF}^+(M,\mathfrak{s}) \ar[d] \\ 
& \widehat{\mathrm{HF}}(M,\mathfrak{s}) \ar[lu]
}
\end{equation}
where the multiplication by $U$ 
and the downward arrow respect the $\Integral/2\Integral$--grading,
and the upper-leftward arrow switches the $\Integral/2\Integral$--grading.

In general, $\widehat{\mathrm{HF}}(M,\mathfrak{s})$ is always finitely generated over $\Integral$.
One may actually infer this from the above exact triangles,
by showing 
$\mathrm{Ker}(d_*)\subset \mathrm{Im}(U)$ and
$\mathrm{Ker}(d_*)\cap\mathrm{Ker}(U)=\mathrm{Im}(j_*\circ U^{-1}\circ i_*)$
in $\mathrm{HF}^+(M,\mathfrak{s})$.
Since $\mathrm{HF}^-(M,\mathfrak{s})$ is finitely generated over $\Integral[U]$,
and hence Noetherian,
one can recover $\widehat{\mathrm{HF}}(M,\mathfrak{s})$ as the composite
of three finitely generated $\Integral[U]$--modules
$\mathrm{Coker}(U)\cong\mathrm{Im}(d_*)\,/\,U\,\mathrm{Im}(d_*)$,
and $\mathrm{Ker}(U)\cap \mathrm{Ker}(d_*)$, and $d_*(\mathrm{Ker}(U))$,
all having trivial $U$ action.

All the aforementioned facts can be found in \cite{OS_hf_invariants};
in particular, see Section 4 therein for most of them.
For simplicity, we have omitted extra features such as 
natural invariance and finer grading.
See also \cite{OS_hf_properties} for more properties and applications.

\begin{example}\label{hf_examples}\,
\begin{enumerate}
\item
The standard oriented $3$--sphere $S^3$ has a standard homology orientation
and a unique $\mathrm{Spin}^{\mathtt{c}}$ structure.
There are isomorphisms of $\Integral/2\Integral$--graded $\Integral[U]$--modules
\begin{eqnarray*}
	\mathrm{HF}^+(S^3)&\cong&\Integral[U,U^{-1}]\,/\,U\,\Integral[U],\\
	\mathrm{HF}^\infty(S^3)&\cong&\Integral[U,U^{-1}],\\
	\mathrm{HF}^-(S^3)&\cong& U\,\Integral[U],
\end{eqnarray*}
all supported on the even grading.
Hence,
$$\widehat{\mathrm{HF}}(S^3)\cong\Integral.$$
\item 
The standard oriented product $3$--manifold $S^1\times S^2$ 
has a standard homology orientation
and a unique $\mathrm{Spin}^{\mathtt{c}}$ structure 
$\mathfrak{s}_0$ with $c_1(\mathfrak{s}_0)=0$.
There is an isomorphism of $\Integral/2\Integral$--graded $\Integral$--modules
$$\widehat{\mathrm{HF}}(S^1\times S^2)\cong 
H_*(S^1;\Integral),$$
supported on the direct summand at $\mathfrak{s}_0$.
\end{enumerate}
See \cite[Section 3.1]{OS_hf_properties}.
\end{example}

\subsection{Constraints from subsurfaces}

\begin{notation}\label{phi_grading_notation}
	For any $\phi\in H^1(M;\Integral)$ and any $j\in\Integral$,
	$$\mathrm{HF}^\circ(M,\phi,j)=
	\bigoplus_{\langle c_1(\mathfrak{s}),\mathrm{PD}(\phi)\rangle=2s} \mathrm{HF}^\circ(M,\mathfrak{s})$$
	where $\mathrm{HF}^\circ$ stands for any flavor of Heegaard Floer homology.
\end{notation}

\begin{proposition}\label{constraint_HF}
	Suppose the Poincar\'e dual of 
	$\phi\in H^1(M;\Integral)$
	is represented by a connected, closed, oriented, 
	embedded surface of genus $g\geq1$.
	\begin{enumerate}
	\item
	If $|j|>g-1$,
	$$\mathrm{HF}^+(M,\phi,j)=0.$$
	\item
	If $0<|j|\leq g-1$,
	$$U^{g-|j|}\,\mathrm{HF}^+(M,\phi,j)=0.$$
	\item
	Assuming $g\geq2$, 
	the following top term equality holds if and only if the surface is a fiber:
	$$\dim_\Rational\,\Rational\otimes_\Integral\mathrm{HF}^+(M,\phi,g-1)=1.$$
	\item
	Assuming $g\geq3$,
	the following next-to-top term inequality holds if the surface is a fiber:
	$$\dim_\Rational\,\Rational\otimes_\Integral\mathrm{HF}^+(M,\phi,g-2)\geq N(\phi),$$
	where $N(\phi)$ denotes the Nielsen number of the monodromy of $\phi$.
	\end{enumerate}
\end{proposition}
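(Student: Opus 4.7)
The plan is to compare $\mathrm{HF}^+(M,\phi,g-2)$ with a suitable symplectic Floer homology of the monodromy $f\colon S\to S$, and then to bound the rank of the latter from below by the Nielsen number $N(\phi)=N(f)$.

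First, I would choose a Heegaard diagram for $M$ adapted to the fibration structure, for example one arising from a mapping-torus presentation or an open book decomposition. The deep comparison theorems between Heegaard Floer homology, embedded contact homology, and periodic Floer homology (due to Kutluhan--Lee--Taubes, Colin--Ghiggini--Honda, and Lee--Taubes) identify $\mathrm{HF}^+(M,\phi,j)$ with a periodic Floer homology of the monodromy in relative grading $g-1-j$. In the next-to-top grading $j=g-2$, this matches the degree-one periodic Floer homology, which is known to be isomorphic to the symplectic Floer homology $\mathrm{HF}^{\mathtt{symp}}_*(f)$ of the monodromy. Over $\Rational$, this provides the identification
$$\Rational\otimes_\Integral\mathrm{HF}^+(M,\phi,g-2)\cong\Rational\otimes_\Integral\mathrm{HF}^{\mathtt{symp}}_*(f).$$

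Second, I would bound $\dim_\Rational\Rational\otimes_\Integral\mathrm{HF}^{\mathtt{symp}}_*(f)$ from below by $N(f)$. After choosing a generic representative of $[f]$ whose fixed points are all non-degenerate, the symplectic Floer chain complex is freely generated (over $\Integral$, say) by the fixed points. Its differential counts pseudo-holomorphic strips; each such strip lifts to a cylinder in the mapping torus $M_f$ in a fixed free homotopy class, so the differential preserves the partition of fixed points into Nielsen classes. Within each Nielsen class, the signed count of fixed points equals the fixed point class index; within an essential class this index is nonzero, so at least one generator must survive in homology. Summing over the $N(f)$ essential classes gives the desired inequality.

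The main obstacle will lie in the first step: pinning down the precise identification of the next-to-top summand of $\mathrm{HF}^+(M,\phi,\cdot)$ with $\mathrm{HF}^{\mathtt{symp}}_*(f)$ requires invoking and carefully combining several substantial isomorphism theorems, and tracking the $\mathrm{Spin}^{\mathtt{c}}$ decomposition through the chain of equivalences; paralleling part (3) of the proposition (where the top-term identification reflects the Lefschetz fixed point theorem via the Euler characteristic) is what suggests that the next-to-top term should indeed control individual Nielsen classes rather than just their alternating sum. By contrast, once this identification is in place, the Nielsen-class lower bound in the second step is essentially formal.
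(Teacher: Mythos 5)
Your proposal is correct and follows essentially the same route as the paper: identify the next-to-top summand of $\mathrm{HF}^+$ with symplectic Floer homology via the chain of isomorphisms through monopole/periodic Floer homology (the paper uses Kutluhan--Lee--Taubes and Lee--Taubes, tracking the monotonicity and $\mathrm{Spin}^{\mathtt{c}}$ conditions carefully, which is exactly the subtle step you flag), and then use the Nielsen-class decomposition of the symplectic Floer chain complex together with nonvanishing of the fixed point class index to get the lower bound by $N(f)$. The hypothesis $g\geq3$ is precisely what guarantees the positive-monotonicity needed for these identifications.
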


In Proposition \ref{constraint_HF}, 
the first statement is 
known as the adjunction inequality in Heegaard Floer homology,
asserting an upper bound for the Thurston norm $\|\phi\|_{\mathtt{Th}}$
by the topmost nonvanishing term \cite[Theorem 7.1]{OS_hf_properties};
see also \cite{OS_genus} for the stronger detection result.
The second statement is called 
the $U$--action generalization of the adjunction inequality,
due to Wu \cite[Theorem 1.2 and Remark 4.4]{Wu_u_action}.
The the third statement
is called the detection of fibering by the top term,
due to Ni \cite{Ni_hf_fibered}.

The inequality in fourth statement
can be interpreted as a sort of the Morse inequality,
once we identify the left-hand side
as the symplectic Floer homology
(also known as the fixed point Floer homology)
of the monodromy acting on the fiber.
This follows directly from several deep theorems
identifying various kinds of Floer homologies
for $3$--manifolds.
We expose the detail in 
Appendix \ref{Sec-next_to_top_appendix},
in order to avoid distraction.

\subsection{Multiply pointed Heegaard diagrams}\label{Subsec-mpHD}
Heegaard diagrams without any marked points
have been frequently used
as a way of presenting $3$--manifolds
that are connected, closed, and orientable.
Pointed Heegaard diagrams are 
the setup data for defining Heegaard Floer homology.
Strongly admissible pointed Heegaard diagrams 
with respect to individual $\mathrm{Spin}^{\mathtt{c}}$ structures
are required for constructing chain complexes
of the versions $\mathrm{HF}^+$ and $\mathrm{HF}^\infty$,
whereas a weakly admissible pointed Heegaard diagram 
can always be arranged for constructing chain complexes of
$\mathrm{HF}^-$ and $\widehat{\mathrm{HF}}$,
working simultaneously for all $\mathrm{Spin}^{\mathtt{c}}$ structures.
Multiply pointed Heegaard diagrams were originally considered
in order to define what is called link Floer homology \cite{OS_link}.
It was quickly noticed that
the more general setting is also more convenient 
when passing to finite covers \cite{Lee_Lipshitz_covering}.

We review Heegaard Floer homology with multiply pointed 
Heegaard diagrams, mostly following 
\cite[Section 2]{Lee_Lipshitz_covering}
and \cite[Section 4]{OS_link}.

\begin{definition}\label{def_mpHD}
For any integers $g\geq0$ and $l\geq1$, 
a (balanced, generic) \emph{$l$-pointed Heegaard diagram} of genus $g$
refers to a quadruple $(\Sigma,\boldsymbol{\alpha},\boldsymbol{\beta},\mathbf{z})$,
consisting of the following items.
\begin{itemize}
\item
A connected closed oriented surface $\Sigma$ of genus $g$;
\item
a $(g+l-1)$--tuple $\boldsymbol{\alpha}=(\alpha_1,\cdots,\alpha_{g+l-1})$
of mutually disjoint simple closed curves on $\Sigma$;
\item
a $(g+l-1)$--tuple $\boldsymbol{\beta}=(\beta_1,\cdots,\beta_{g+l-1})$
of curves similarly as above; and
\item
an $l$--tuple 
$\mathbf{z}=(z_1,\cdots,z_l)$
of points on $\Sigma$ 
off the $\boldsymbol{\alpha}$--curves and the $\boldsymbol{\beta}$--curves.
\end{itemize}
Moreover, these items are required to satisfy all the following conditions.
\begin{itemize}
\item The $\boldsymbol{\alpha}$--curves
decomposes $\Sigma$ into exactly $l$ planar surface components, 
such that each component contains one (and hence only one) point in $\mathbf{z}$.
\item The $\boldsymbol{\beta}$--curves 
decomposes $\Sigma$ with similar properties as above.
\item The $\boldsymbol{\alpha}$--curves and the $\boldsymbol{\beta}$--curves
intersect transversely.
\end{itemize}
A $1$--pointed Heegaard diagram is simply called a \emph{pointed Heegaard diagram}.
\end{definition}

Any $l$--pointed Heegaard diagram of genus $g$ 
presents a connected closed oriented $3$--manifold $M=U_\alpha\cup_\Sigma U_\beta$
topologically as follows.
By assumption, one obtains a handlebody $U_\alpha$ of genus $g$ by 
attaching $2$--handles to a copy of $\Sigma\times[0,1]$
on the $\Sigma\times\{1\}$ side along the $\boldsymbol{\alpha}$--curves,
and then filling up the resulting spherical boundaries with $3$--handles;
similarly, one obtains a handlebody $U_\beta$ using the $\boldsymbol{\beta}$ curves;
the $3$--manifold is obtained by gluing $U_\alpha$ and $U_\beta$
by identifying both of their boundaries with $\Sigma\times\{0\}=\Sigma$,
(oriented by orienting $U_\alpha$ with induced boundary orientation agreeing with
the orientation of $\Sigma$, and $U_\beta$ in the opposite way).

The transversality assumption makes sure 
the existence a Morse--Smale function $f$ on $M$
(with respect to an auxiliary choice of a smooth structure and a Riemannian metric),
whose upward gradient flow crosses the subsurface $\Sigma$ transversely.
Along the flow lines, 
the $\boldsymbol{\alpha}$--curves on $\Sigma$ come from
$g+l-1$ distinct index--$1$ critical points in $U_\alpha$,
and the $\boldsymbol{\beta}$--curves on $\Sigma$ go to
$g+l-1$ distinct index--$2$ critical points in $U_\beta$,
and the points in $\mathbf{z}$ come from $l$ distinct index--$0$ critical points in $U_\beta$
and go to $l$ distinct index--$3$ critical points in $U_\alpha$.
These are all the critical points.

Special Heegaard moves of $l$--pointed Heegaard diagrams
are like usual Heegaard moves, 
manipulating the curves, possibly changing the genus,
but keeping away from the marked points.
There are isotopies, handle slides, and (index 1--2) de/stabilization.
Special Heegaard moves does not change the resulting oriented $3$--manifold $M$.
They only change the topological structure of the flow 
away from the flow lines through the marked points,
and the crossing subsurface.
In the literature, 
there is another kind of move called index 0--3 de/stabilization,
which changes the number of marked points,
and is needed for certain completeness of such moves
(see \cite[Proposition 3.3]{OS_link}).
We do not need the last kind in this paper.

To define any version of the Heegaard Floer homology
associated to
an $l$--pointed Heegaard diagram $(\Sigma,\boldsymbol{\alpha},\boldsymbol{\beta},\mathbf{z})$
of genus $g$,
it is instructive to consider the $(g+l-1)$--fold symmetric product
$\mathrm{Sym}^{(g+l-1)}(\Sigma)$,
namely, the cartesian product $\Sigma^{g+l-1}$
quotient by permutations of the components.
Then the tori 
$\mathbb{T}_\alpha=\alpha_1\times\cdots\times\alpha_{g+l-1}$
and $\mathbb{T}_\beta=\beta_1\times\cdots\times\beta_{g+l-1}$
embed into $\mathrm{Sym}^{g+l-1}\,\Sigma$ under the projection of $\Sigma^{g+l-1}$,
and 
$\{z_1,\cdots,z_l\}\times\Sigma^{g+l-2}$
projects a union $\mathbb{V}_{z}$
of $l$ embedded copies of $\mathrm{Sym}^{g+l-2}(\Sigma)$
in $\mathrm{Sym}^{g+l-1}(\Sigma)$.

With the above picture in mind,
the definition of
$\widehat{\mathrm{HF}}(\Sigma,\boldsymbol{\alpha},\boldsymbol{\beta},\mathbf{z})$
fits into the framework of the Lagrangian Floer homology,
upon auxiliary choices of symplectic structures and almost complex structures
that do not affect the result.
As what it means,
the intersection points $\mathbb{T}_\alpha\cap\mathbb{T}_\beta$
should generate the chain complex 
$\widehat{\mathrm{CF}}(\Sigma,\boldsymbol{\alpha},\boldsymbol{\beta},\mathbf{z})$.
The boundary operator $\widehat{\partial}$
should count the pseudo-holomorphic Whitney disks
connecting pairs of generators without intersecting $\mathbb{V}_z$.
Moreover, suitable admissibility conditions should 
be responsible for keeping the counting results finite.

Every element $\mathbf{x}\in\mathbb{T}_\alpha\cap\mathbb{T}_\beta$ 
is explicitly a subset $\mathbf{x}\subset \Sigma$ 
consisting of $g+l-1$ distinct points,
such that each point lies in a distinct $\boldsymbol{\alpha}$--curve
and a distinct $\boldsymbol{\beta}$--curve.
All such elements $\mathbf{x}$ 
generate $\widehat{\mathrm{CF}}(\Sigma,\boldsymbol{\alpha},\boldsymbol{\beta},\mathbf{z})$
as a free $\Integral$--module.
We refer the reader to \cite[Section 4]{OS_link} 
for the precise construction of the boundary operator $\widehat{\partial}$,
since the detail is irrelevant to our application.
We only mention a commonly used sufficient condition 
for ensuring $\widehat{\partial}$ to be well-defined,
as Condition \ref{domain_condition} below.

\begin{definition}\label{def_domain}
Let $(\Sigma,\boldsymbol{\alpha},\boldsymbol{\beta},\mathbf{z})$
be an $l$--pointed Heegaard diagram of genus $g$.
Enumerate by $D_1,\cdots,D_m\subset\Sigma$ 
the closures of the components in $\Sigma$
complementary to the union of all the $\boldsymbol{\alpha}$--curves and $\boldsymbol{\beta}$--curves.
Any element of the abelian group freely generated by $D_1,\cdots,D_m$
is called a \emph{domain}
which we denote as a $\Integral$--linear combination
$$\mathcal{D}=\sum_{i=1}^m n_iD_i.$$
A \emph{positive domain} is a domain with all coefficients $n_i\geq0$.
A \emph{periodic domain} is a domain whose boundary forms
full $\boldsymbol{\alpha}$--curves and the $\boldsymbol{\beta}$--curves,
(namely, the polygonal boundary of the domain
as a $1$--chain in the union of the $\boldsymbol{\alpha}$--curves and the $\boldsymbol{\beta}$--curves
is equal to a sum of $1$--cycles). 
\end{definition}

\begin{condition}\label{domain_condition}
There are no nontrivial positive periodic domain supported away from the marked points.
Or equivalently, adopting the notations in Definition \ref{def_domain},
any periodic domain $\mathcal{P}=\sum_{i=1}^m n_iD_i$
with $n_i=0$ for every $D_i$ that contains some point in $\mathbf{z}$
has some coefficient $n_r>0$ and some coefficient $n_s<0$,
unless $\mathcal{P}=0$.
\end{condition}

For any $l$--pointed Heegaard diagram 
$(\Sigma,\boldsymbol{\alpha},\boldsymbol{\beta},\mathbf{z})$
satisfying Condition \ref{domain_condition},
we denote the induced homology of the chain complex as
$$\widehat{\mathrm{HF}}\left(\Sigma,\boldsymbol{\alpha},\boldsymbol{\beta},\mathbf{z}\right)
=H_*\left(\widehat{\mathrm{CF}}(\Sigma,\boldsymbol{\alpha},\boldsymbol{\beta},\mathbf{z}),\widehat{\partial}\right).$$
This is a finitely generate module over $\Integral$,
based our above description.
Moreover, every generator $\mathbf{x}\in\mathbb{T}_\alpha\cap\mathbb{T}_\beta$ 
of the chain complex specifies
a $\mathrm{Spin}^{\mathtt{c}}$--structure $s(\mathbf{x})$ of 
the presented $3$--manifold $M=U_\alpha\cup_\Sigma U_\beta$.
(A representative of $s(\mathbf{x})$
can be obtained as a nowhere vanishing vector field on $M$,
surgering the gradient field along the flow lines
through the $\mathbf{z}$--points and $\mathbf{x}$--points
and cancelling the critical points in pairs.)
Upon fixing a homology orientation of $M$,
$\mathbf{x}$ can also be assigned with an absolute $\Integral/2\Integral$--grading.
In fact, the boundary operator $\widehat{\partial}$ 
preserves each $\Integral$--submodule 
$\widehat{\mathrm{CF}}(\Sigma,\boldsymbol{\alpha},\boldsymbol{\beta},\mathbf{z};\mathfrak{s})$
as freely generated by 
all $\mathbf{x}\in\mathbb{T}_\alpha\cap\mathbb{T}_\beta$ with $s(\mathbf{x})=\mathfrak{s}$,
and switches the $\Integral/2\Integral$--grading.
Therefore, 
$\widehat{\mathrm{HF}}(\Sigma,\boldsymbol{\alpha},\boldsymbol{\beta},\mathbf{z})$
also carries a $\Integral/2\Integral$--grading and splits as a direct sum
of $\Integral/2\Integral$--graded submodules according to the $\mathrm{Spin}^{\mathtt{c}}$ structures.
It is also known
that special Heegaard moves induce chain homotopy equivalence
of the chain complexes respecting the extra structures,
so the isomorphism type of 
$\widehat{\mathrm{HF}}(\Sigma,\boldsymbol{\alpha},\boldsymbol{\beta},\mathbf{z})$
as a $\Integral/2\Integral$--graded $\Integral$--module
depends only on 
the orientation-preserving homeomorphism of $M$
and the fixed homology orientation.

\begin{proposition}\label{HF_hat_mpHD}
Let $(\Sigma,\boldsymbol{\alpha},\boldsymbol{\beta},\mathbf{z})$
be an $l$--pointed Heegaard diagram satisfying Condition \ref{domain_condition}.
Denote by $M=U_\alpha\cup_\Sigma U_\beta$ the connected, closed, oriented $3$--manifold 
presented by $(\Sigma,\boldsymbol{\alpha},\boldsymbol{\beta},\mathbf{z})$.
Then the following isomorphisms of $\Integral/2\Integral$--graded
$\Integral$--modules hold, upon fixing a homology orientation of $M$.
$$\widehat{\mathrm{HF}}\left(\Sigma,\boldsymbol{\alpha},\boldsymbol{\beta},\mathbf{z}\right)
\cong
\widehat{\mathrm{HF}}\left(M\#(S^1\times S^2)^{\#(l-1)}\right)
\cong
\widehat{\mathrm{HF}}(M)\otimes_\Integral H_*(S^1;\Integral)^{\otimes{(l-1)}}
$$
In particular,
$$
\mathrm{dim}_\Rational\,
\left(
\Rational\otimes_\Integral
\widehat{\mathrm{HF}}\left(\Sigma,\boldsymbol{\alpha},\boldsymbol{\beta},\mathbf{z}\right)
\right)
=
2^{l-1}\cdot
\mathrm{dim}_\Rational\,
\left(
\Rational\otimes_\Integral
\widehat{\mathrm{HF}}(M)
\right)
$$
\end{proposition}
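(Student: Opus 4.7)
The statement consists of two isomorphisms, which the plan addresses in sequence. The second isomorphism follows from iterated application of the Künneth formula for Heegaard Floer homology of connected sums \cite[Proposition 6.1]{OS_hf_properties}, together with the computation $\widehat{\mathrm{HF}}(S^1 \times S^2) \cong H_*(S^1; \Integral)$ from Example \ref{hf_examples}. Since $H_*(S^1; \Integral) \cong \Integral^2$ is free over $\Integral$, all Tor corrections in the Künneth formula vanish.

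For the first isomorphism, the plan follows the multi-pointed Heegaard Floer framework developed in \cite[Section 4]{OS_link} and applied in \cite[Section 2]{Lee_Lipshitz_covering}. The first step is to show that $\widehat{\mathrm{HF}}(\Sigma, \boldsymbol{\alpha}, \boldsymbol{\beta}, \mathbf{z})$ is invariant under the special Heegaard moves recalled in Section \ref{Subsec-mpHD}, so that the homology depends only on $M$ and $l$. This is a routine extension of the 1-pointed invariance theorem of \cite{OS_hf_invariants} to the multi-pointed setting, with Floer-theoretic continuation maps carried out compatibly with the additional basepoints and the admissibility Condition \ref{domain_condition}.

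By invariance, it suffices to compute the homology for a single convenient $l$-pointed Heegaard diagram for $M$. I would construct a standard form starting from a 1-pointed weakly admissible diagram $(\Sigma_0, \boldsymbol{\alpha}_0, \boldsymbol{\beta}_0, z_1)$ for $M$ as follows: for each $i = 2, \ldots, l$, pick a small disk on $\Sigma_0$ disjoint from the existing curves and basepoints; add to $\boldsymbol{\alpha}$ a simple closed curve $\gamma_i$ bounding this disk; add to $\boldsymbol{\beta}$ a slightly perturbed concentric curve $\gamma'_i$, obtained via the winding trick so that $\gamma_i$ and $\gamma'_i$ meet transversely at two points; and place the new basepoint $z_i$ in the common interior region enclosed by both $\gamma_i$ and $\gamma'_i$. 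Since each added curve bounds a disk on $\Sigma_0$, its pushed-in compression disk is trivial (ball-cutting) in the respective handlebody, so cutting the handlebodies along the full disk systems still yields $l$ balls each. Thus the construction yields a weakly admissible $l$-pointed Heegaard diagram still presenting $M$.

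For this standard form, the chain complex factors as a tensor product
$$\widehat{\mathrm{CF}}(\Sigma, \boldsymbol{\alpha}, \boldsymbol{\beta}, \mathbf{z}) \cong \widehat{\mathrm{CF}}(\Sigma_0, \boldsymbol{\alpha}_0, \boldsymbol{\beta}_0, z_1) \otimes_\Integral \mathcal{L}^{\otimes(l-1)},$$
where each local factor $\mathcal{L}$ is the free $\Integral$-module of rank $2$ on the two intersection points of a pair $(\gamma_i, \gamma'_i)$, equipped with the zero differential. The two local Whitney disks (the lens regions between the two intersection points) have zero multiplicity at every basepoint, and cancel with opposite signs just as in the standard genus-one Heegaard diagram for $S^1 \times S^2$. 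The main obstacle is establishing the tensor factorization rigorously: this requires a disk-domain analysis to show that each holomorphic Whitney disk in the symmetric product decouples into a disk in the old part and disks in each of the local parts, which one arranges by making the $\gamma_i$/$\gamma'_i$ configurations sufficiently small so that no contributing domain can cross between pieces. Taking homology then yields
$$\widehat{\mathrm{HF}}(\Sigma, \boldsymbol{\alpha}, \boldsymbol{\beta}, \mathbf{z}) \cong \widehat{\mathrm{HF}}(M) \otimes_\Integral H_*(S^1; \Integral)^{\otimes(l-1)} \cong \widehat{\mathrm{HF}}(M \# (S^1 \times S^2)^{\#(l-1)}),$$
and the dimensional statement follows by taking $\Rational$-ranks, using $\dim_\Rational H_*(S^1; \Rational) = 2$.
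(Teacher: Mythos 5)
The paper does not prove this proposition; it cites \cite[Theorem 4.5]{OS_link} (equivalently \cite[Theorem 2.4]{Lee_Lipshitz_covering}) together with Example \ref{hf_examples}. What you have written is therefore a re-derivation of the cited theorem. The overall strategy -- invariance under multi-pointed Heegaard moves, reduction to a standard form with one original $1$-pointed admissible diagram and $l-1$ small $\gamma_i/\gamma'_i$ pairs each cutting off a disk around $z_i$, and then a chain-level tensor factorization -- is sound and close in spirit to the original argument, so this is a legitimate route. The second isomorphism via the connected-sum K\"unneth formula and Example \ref{hf_examples} is unproblematic.

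The genuine gap is in the step you yourself flag as the ``main obstacle''. The mechanism you propose, ``making the $\gamma_i/\gamma'_i$ configurations sufficiently small so that no contributing domain can cross between pieces,'' is not what makes the factorization work: shrinking the circles does not, by itself, prevent a domain from having nonzero multiplicity on both sides. What actually enforces the decoupling is a combinatorial domain analysis. Take any Maslov-index-one domain $\mathcal{D}$ with $n_{z_j}(\mathcal{D})=0$ for all $j$. Around a pair $(\gamma_i,\gamma'_i)$ there are four local regions; the vanishing condition at $z_i$ kills the inner bigon, and the jump relations of $\partial\mathcal{D}$ across $\gamma_i$ and $\gamma'_i$ then force the remaining two local multiplicities to be either both zero (when the generators agree on $\gamma_i$) or to be a single unit bigon (when they differ). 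When a bigon appears, it already exhausts the Maslov index, and the weak admissibility of the original diagram forces the rest of a positive domain with matching endpoints to vanish. It is this case analysis, not smallness, that yields the tensor decomposition; and for the Maslov index to be additive across pieces without cross-terms at the new corners $a_i,b_i$, you should in fact place the $\gamma_i/\gamma'_i$ pairs in the region containing $z_1$, so that the outer region abutting the new corners is forced to multiplicity $0$. This placement also makes the admissibility verification (which you assert but do not prove) transparent. Finally, invoking ``the winding trick'' here is a misnomer: $\gamma_i$ bounds a disk, so it has no topologically dual curve to wind along; a small generic isotopy of $\gamma'_i$ suffices to produce the two transverse intersection points.
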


See \cite[Theorem 4.5]{OS_link} (or \cite[Theorem 2.4]{Lee_Lipshitz_covering}) and Example \ref{hf_examples}.

\section{The winding trick}\label{Sec-winding}
The winding trick, invented by Ozsv\'ath and Szab\'o \cite[Section 5]{OS_hf_invariants},
is a general procedure
to convert any pointed Heegaard diagram into a weakly or strongly admissible one.
In this section, we establish an efficient version (Lemma \ref{winding_efficient})
of the winding trick with careful control on the number of new intersections,
such that the resulting pointed Heegaard diagram satisfies 
the periodic domain criterion for weak admissibility (Condition \ref{domain_condition}).

Our proof of Lemma \ref{winding_efficient} follows the same procedure
as used in the proof of \cite[Lemma 5.4]{OS_hf_invariants}.
We make an explicit estimate for each of the steps.
Most of the estimates are simply the first idea that one may come up with.
However, there is one tricky point of our controlling,
which matters to the proof of Theorem \ref{main_ent_vs_hpl},
and eventually, 
to Theorems \ref{main_ent_vs_vol_general} and \ref{main_ent_vs_vol_arithmetic}.
This appears in Lemma \ref{periodic_domain_estimate};
see Remark \ref{periodic_domain_estimate_remark} for a detailed comment.

We supply details of 
the verification for Condition \ref{domain_condition} (Lemma \ref{verification_domain_condition}),
which are omitted in \cite[Section 5]{OS_hf_invariants}.
These details are more important in our context,
as they demonstrate sufficiency of our efficient winding.

\begin{lemma}\label{winding_efficient}
	Let $(\Sigma,\boldsymbol{\alpha},\boldsymbol{\beta},z)$ be a pointed Heegaard diagram of genus $g$.
	Denote by $b$ the first Betti number of
	the presented $3$--manifold $M=U_\alpha\cup_\Sigma U_\beta$.
	Denote by $k$ the total number of intersections between
	the $\boldsymbol{\alpha}$--curves and the $\boldsymbol{\beta}$--curves,
	and by $o_\alpha$ and $o_\beta$ the numbers of 
	the $\boldsymbol{\alpha}$--curves and the $\boldsymbol{\beta}$--curves
	without any intersection points, respectively.
	
	Then,
	after a relabeling of the $\boldsymbol{\alpha}$--curves if necessary,
	there exist a pointed Heegaard diagram
	$(\Sigma,\tilde{\boldsymbol{\alpha}},\boldsymbol{\beta},z)$,
	obtained by isotopy of the $\boldsymbol{\alpha}$--curves 
	on $\Sigma$ keeping away from $z$,
	such that the following properties all hold.
	\begin{itemize}
	\item The pointed Heegaard diagram satisfies Condition \ref{domain_condition}.
	\item For each $i=1,\cdots,b$,
	the curve $\tilde{\alpha}_i$ remains the same as $\alpha_i$
	in nearby the intersection points of $\alpha_i$ 
	with the $\boldsymbol{\beta}$--curves,
	and 
	has at most $(k+o_\alpha)(k+o_\beta)\cdot b\cdot 2^{b+1}$ other intersection points
	with the $\boldsymbol{\beta}$--curves in total.
	\item For each $i=b+1,\cdots,g$,
	the curve $\tilde{\alpha}_i$ remains the same as $\alpha_i$.
	\end{itemize}
\end{lemma}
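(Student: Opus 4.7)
The plan is to implement the Ozsv\'ath--Szab\'o winding procedure from \cite[Section 5]{OS_hf_invariants} with explicit quantitative control on the number of intersections introduced at each step.

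First, I would analyse the group of periodic domains. Writing $\mathcal{P}$ for the group of periodic domains (Definition \ref{def_domain}), and $\mathcal{P}_z \subset \mathcal{P}$ for the subgroup of those whose coefficient on the $z$-containing complementary region of $\Sigma \setminus (\boldsymbol{\alpha} \cup \boldsymbol{\beta})$ vanishes, one has $\mathcal{P}_z \cong H_2(M;\Integral) \cong \Integral^b$. I would produce an explicit basis $P_1, \ldots, P_b \in \mathcal{P}_z$ whose coefficient magnitudes are bounded by a universal constant times $(k + o_\alpha)(k + o_\beta)$; such a basis arises from picking simple closed cycles on $\Sigma$ representing a basis of a complement to $\langle \boldsymbol{\alpha}, \boldsymbol{\beta} \rangle$ in $H_1(\Sigma;\Integral)$, and reading off coefficients region-by-region. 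After a relabeling of the $\boldsymbol{\alpha}$-curves, arrange that $\alpha_i$ appears with nonzero multiplicity along $\partial P_i$ for $i = 1, \ldots, b$. For each such $i$, pick an embedded arc $\gamma_i \subset \Sigma$, disjoint from $z$ and from all existing $\boldsymbol{\alpha} \cap \boldsymbol{\beta}$-intersection points, that starts and ends on $\alpha_i$ and transversely crosses one $\boldsymbol{\beta}$-curve appearing on $\partial P_i$ exactly once.

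Winding $\alpha_i$ along $\gamma_i$ a positive integer number $N_i$ of times then produces $\tilde{\alpha}_i$, creating $2 N_i$ new intersections with the $\boldsymbol{\beta}$-curves (two per wind, from the pair of crossings of each new loop with the $\boldsymbol{\beta}$-curve met by $\gamma_i$). The original intersections are preserved locally by construction, and the curves $\alpha_{b+1}, \ldots, \alpha_g$ are left untouched, so the first and third bullets of the conclusion hold for any choice of $N_i$. The substantive task is then to select $N_1, \ldots, N_b$ so that Condition \ref{domain_condition} holds. Any periodic domain of the new diagram supported away from $z$ is a unique $\Integral$-linear combination $\sum c_i \tilde{P}_i$, where $\tilde{P}_i$ denotes the winding-modified $P_i$, and positivity must be ruled out for each of the at most $2^b$ essentially different nontrivial sign patterns of $(c_1, \ldots, c_b)$. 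For each pattern the winding contribution of the index with largest $|c_i|$ must overwhelm the original coefficients of the other $P_j$'s, and an inductive counting argument shows that taking $N_i$ of order $(k + o_\alpha)(k + o_\beta) \cdot b \cdot 2^b$ simultaneously handles every sign pattern in which $i$ is dominant, yielding $2 N_i \leq (k + o_\alpha)(k + o_\beta) \cdot b \cdot 2^{b+1}$ new intersections for each $\tilde{\alpha}_i$.

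The main obstacle will be precisely this last step: arranging a single wind count $N_i$ that works uniformly across every relevant sign pattern, rather than picking a separate wind count per pattern and summing — the latter would scale as $2^{O(b \log b)}$ or worse and would not suffice for the applications in later sections. This simultaneous sign-domination argument, together with the polynomial-in-$k$ control on the coefficients of the basis $P_1, \ldots, P_b$, is the delicate optimization referred to in Remark \ref{periodic_domain_estimate_remark}. Once both are in place, Condition \ref{domain_condition} for the output diagram is immediate from the sign-cancellation built into the choice of $N_i$, and the asserted intersection bound follows by multiplying wind count by intersections-per-wind.
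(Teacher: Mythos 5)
Your proposal correctly identifies the broad strategy (efficient OS winding, passing through a good basis of periodic domains), but it misdescribes the winding itself and, more importantly, misses the specific optimization that the paper flags in Remark \ref{periodic_domain_estimate_remark} as the crux of the argument.

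First, the winding is not along arcs hitting a single $\boldsymbol{\beta}$-curve once. The paper winds $\alpha_i$ along a \emph{closed} dual curve $\gamma_i$ (Lemma \ref{dual_curves_estimate}), and such a $\gamma_i$ unavoidably meets the $\boldsymbol{\beta}$-curves in up to $(k+o_\beta)\cdot 2^{i-1}$ points; that $2^{b}$ is where the exponential factor in the statement of the lemma really comes from, not from a count of ``at most $2^b$ sign patterns'' as you suggest. Second, a single wind direction cannot force Condition \ref{domain_condition}: winding one way produces a monotone arithmetic progression of coefficients in the spiral, so all the new coefficients can stay positive. The paper winds along \emph{two} parallel oriented copies $\gamma_i^{\mathrm{N}}$ and $\gamma_i^{\mathrm{S}}$ of $\gamma_i$ in opposite directions, creating one region where the coefficient is pushed up by $\pm c_m(K+1)R$ and a symmetric region pushed down by $\mp c_m(K+1)R$; the opposite signs at $p^{\mathrm{NE}}_m$ and $p^{\mathrm{SW}}_m$ are what give Condition \ref{domain_condition}. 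Your proposal has no mechanism to produce a coefficient of the opposite sign.

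Third, and most seriously, your claim that a basis $P_1,\ldots,P_b$ of $\mathcal{P}_z$ can be chosen with coefficients bounded by a universal multiple of $(k+o_\alpha)(k+o_\beta)$ is not true. The $\boldsymbol{\alpha}$-boundaries of periodic domains lie in the integral kernel of the $g\times g$ intersection matrix $A$, and any integral basis built from Cramer-style minors has entries of size up to $(k/(g-b))^{g-b}$, which is not polynomial in $k$. This is precisely why the paper cannot simply take ``a small basis'': Lemma \ref{periodic_domain_estimate} instead imposes the maximality of $|\det Q|$ among all $(g-b)\times(g-b)$ minors of the top block of $A$. That condition does not make $R=|\det Q|$ small; it ensures that $\|\partial_\alpha\mathcal{P}_i\|_\infty = R$ is attained at the coefficient on $\alpha_i$ itself and that the other coefficients are dominated by $R$. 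Because the winding contribution and the pre-winding coefficient bound are both proportional to $R$, the (potentially huge) scale $R$ cancels, and the wind count $K=(k+o_\alpha)\cdot b$ is independent of $R$. Without this normalization one would be forced to take $K$ of order $(k+o_\alpha)\cdot(k/(g-b))^{g-b}\cdot b$, which, as Remark \ref{periodic_domain_estimate_remark} explains, would wreck the downstream bounds in Theorems \ref{main_ent_vs_vol_general} and \ref{main_ent_vs_vol_arithmetic}. Your ``simultaneous sign-domination'' framing does not capture this scale-invariance argument, which is the actual delicate point; fixing the proof would require the two-sided winding along dual closed curves and the determinant-maximizing basis.
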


\begin{figure}[htb]
\centering
\resizebox{90mm}{!}{\includegraphics{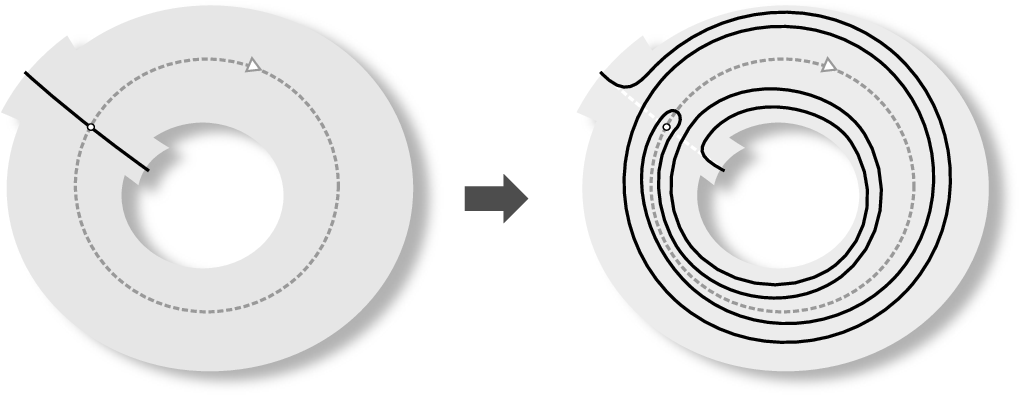}}
\caption{
An illustration of winding.
The picture on the left depicts a neighborhood of an oriented simple closed curve
on a surface, and another (unoriented) simple closed curve
that intersects the oriented curve transversely at a unique point,
only drawn as an arc nearby the intersection.
The picture on the right depicts the resulting arc
after winding along the oriented curve for $2$ full rounds (with a bit more),
starting at the point of intersection.
}\label{figWinding}
\end{figure}

The rest of this section is devoted to the proof of Lemma \ref{winding_efficient}.

We construct the asserted pointed Heegaard diagram $(\Sigma,\tilde{\boldsymbol{\alpha}},\boldsymbol{\beta},z)$
in Lemma \ref{winding_efficient} by winding a subset of the $\boldsymbol{\alpha}$--curves,
relabelled $\alpha_1,\cdots,\alpha_b$,
along disjoint oriented simple closed curves.
Each of the simple closed curves
intersects a unique $\alpha$--curve transversely at a unique point, 
missing the other $\boldsymbol{\alpha}$--curves,
and possibly intersecting $\boldsymbol{\beta}$--curves transversely.
(For each of $\alpha_1,\cdots,\alpha_b$,
we actually need two parallel simple closed curves, 
oriented in opposite directions,
in order to wind $\alpha_i$ simultaneously in two directions.)
In general, along any smooth path on a surface,
one may perform a ``finger move'',
that is,
``pushing'' the initial point along the path up to the terminal point
by a smooth isotopy supported in some small neighborhood of the path.
For our construction, we only need to push the unique intersection points 
along the oriented simple closed curves for some numbers of full rounds.
This is what we call ``winding'' of the $\boldsymbol{\alpha}$--curves;
see Figure \ref{figWinding} for an illustration.
The above sketchy description of winding 
should suffice for our exposition.
To prove Lemma \ref{winding_efficient},
the point is to construct the simple closed curves efficiently,
and control the numbers of winding rounds.

Let $(\Sigma,\boldsymbol{\alpha},\boldsymbol{\beta},z)$ be a pointed Heegaard diagram of genus $g$.
We say that a $g$--tuple $\boldsymbol{\gamma}=(\gamma_1,\cdots,\gamma_g)$
of mutually disjoint simple closed curves on $\Sigma$ is \emph{topologically dual} to $\boldsymbol{\alpha}$,
if every $\gamma_i$ intersects $\alpha_i$ transversely at exactly one point, 
and is disjoint from any other $\alpha_j$.
We also require $\gamma_i$ to keep away from $z$ and keep transverse to any $\boldsymbol{\beta}$--curve.

\begin{lemma}\label{dual_curves_estimate}
	Let $(\Sigma,\boldsymbol{\alpha},\boldsymbol{\beta},z)$ be a pointed Heegaard diagram of genus $g$.
	Denote by $k_\beta$ the total number of components
	as obtained by cutting the $\boldsymbol{\beta}$--curves by all $\boldsymbol{\alpha}$--curves.
	Then,
	there are mutually disjoint simple closed curves $\boldsymbol{\gamma}=(\gamma_1,\cdots,\gamma_g)$
	topologically dual to $\boldsymbol{\alpha}$,
	such that each $\gamma_s$ intersects 
	the $\boldsymbol{\beta}$--curves at no more than $k_\beta\cdot 2^{s-1}$ points in total.
\end{lemma}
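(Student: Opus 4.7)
The plan is to construct the dual system inductively, working in the planar surface obtained by cutting $\Sigma$ along the $\boldsymbol{\alpha}$-curves, and carefully controlling how many new $\boldsymbol{\beta}$-components each successive cut can introduce.

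First I would cut $\Sigma$ along $\boldsymbol{\alpha}$ to obtain a connected planar surface $P$, namely a sphere with $2g$ holes, whose boundary circles come in pairs $\alpha_s^+,\alpha_s^-$ (one for each side of $\alpha_s$). The $\boldsymbol{\beta}$-curves restrict to a system of properly embedded arcs and simple closed curves in $P$, comprising $k_\beta$ components in total by definition. For each $s$ I fix a basepoint $p_s\in\alpha_s$ with lifts $p_s^\pm\in\alpha_s^\pm$, and I aim to produce mutually disjoint proper arcs $\eta_s\subset P$ from $p_s^+$ to $p_s^-$, chosen to miss $z$ and stay transverse to $\boldsymbol{\beta}$. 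Regluing along $\boldsymbol{\alpha}$ will then turn each $\eta_s$ into a simple closed curve $\gamma_s\subset\Sigma$ crossing $\alpha_s$ transversely once at $p_s$, missing the other $\alpha_j$, and disjoint from the previously constructed $\gamma_j$, because their endpoints sit on different $\alpha$-curves and so regluing cannot create new intersections.

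For the inductive step, let $P_s$ denote $P$ cut open along $\eta_1,\dots,\eta_{s-1}$, and let $k_s$ be the number of components of $\boldsymbol{\beta}\cap P_s$, so $k_1=k_\beta$. I would verify that $P_s$ stays connected and planar and that $\alpha_s^+,\alpha_s^-$ remain distinct boundary circles: each prior cut merges only the pair of boundary circles $\alpha_j^\pm$ for some $j<s$ and therefore leaves $\alpha_s^\pm$ undisturbed. To choose $\eta_s$ efficiently I would form the dual graph $G_s$ whose vertices are the components of $P_s\setminus\boldsymbol{\beta}$ and whose edges are the $k_s$ components of $\boldsymbol{\beta}\cap P_s$, with each edge joining the vertices on its two sides. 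Connectedness of $P_s$ forces $G_s$ to be connected, and a simple path in $G_s$ from the vertex containing $p_s^+$ to the one containing $p_s^-$ uses at most $k_s$ distinct edges; realizing this path by an embedded arc in $P_s$ that crosses each used $\boldsymbol{\beta}$-edge transversely once yields $|\eta_s\cap\boldsymbol{\beta}|\le k_s$.

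Finally, cutting $P_s$ along $\eta_s$ can split each of the at most $k_s$ crossed $\boldsymbol{\beta}$-components in two and leaves the rest untouched, so $k_{s+1}\le k_s+|\eta_s\cap\boldsymbol{\beta}|\le 2k_s$. Iterating gives $k_s\le 2^{s-1}k_\beta$, and hence $|\gamma_s\cap\boldsymbol{\beta}|=|\eta_s\cap\boldsymbol{\beta}|\le 2^{s-1}k_\beta$, as required. The main points needing a little care are the invariants that $P_s$ stays connected and that $\alpha_s^\pm$ remain distinct boundary circles (so that the inductive arc really has two distinct endpoints rather than degenerating into a separating loop), together with the standard genericity adjustments to keep each $\gamma_s$ away from $z$ and transverse to $\boldsymbol{\beta}$; I do not anticipate substantive obstacles beyond these bookkeeping issues.
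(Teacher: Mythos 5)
Your proof is correct and follows essentially the same strategy as the paper's: cut along the $\boldsymbol{\alpha}$-curves together with the previously constructed dual curves, pass to the dual graph with $\boldsymbol{\beta}$-components as edges, extract a simple $\boldsymbol{\beta}$-path joining the two sides of $\alpha_s$, and observe that each new cut at most doubles the number of $\boldsymbol{\beta}$-components. The only cosmetic difference is that you cut along $\alpha_s$ itself and build a proper arc between the two resulting boundary circles, whereas the paper leaves $\alpha_s$ uncut and closes the $\boldsymbol{\beta}$-path with an $\alpha_s$-edge to produce the loop directly, but the bound and the inductive structure are the same.
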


\begin{proof}
	First cut $\Sigma$ along all the $\boldsymbol{\alpha}$--curves except $\alpha_1$,
	obtaining a connected, open surface 
	$\Sigma_1=\Sigma\setminus(\alpha_2\cup\cdots\cup\alpha_g)$.
	There is a cellular graph $\Gamma_1$ 
	dual to the combinatorial decomposition of $\Sigma_1$ by 
	the curve $\alpha_1$ together with all the $\boldsymbol{\beta}$--arcs.
	The vertices of $\Gamma_1$ are the components of
	$\Sigma\setminus(\alpha_1\cup\cdots\cup\alpha_g\cup\beta_1\cup\cdots\cup\beta_g)$,
	and the edges of $\Gamma_1$ are the components of
	$\alpha_1\setminus(\beta_1\cup\cdots\cup\beta_g)$
	and the components of all $\beta_j\setminus(\alpha_1\cup\cdots\cup\alpha_g)$,
	indicating adjacency relation between the vertices.
	We simply call the different kinds of edges $\alpha_1$--edges or $\boldsymbol{\beta}$--edges.
	
	Note that $\Gamma_1$ has at most $k_\beta$ $\boldsymbol{\beta}$--edges,
	and their union is a connected subgraph of $\Gamma_1$,
	since $\Sigma_1\setminus \alpha_1$ is connected.
	Therefore, we can find a simple path of $\boldsymbol{\beta}$--edges in $\Gamma_1$,
	such that it connects a pair of vertices that are the endpoints of some $\alpha_1$--edge.
	The union of that simple $\boldsymbol{\beta}$--edge path and that $\alpha_1$--edge 
	can be realized as a simple closed curve $\gamma_1$ in $\Sigma_1$,
	intersecting $\alpha_1$ transversely at exactly one point.
	By construction, $\gamma_1$ intersects the $\boldsymbol{\beta}$--curves transversely 
	in at most $k_\beta$ points.
	
	Next, we cutting $\Sigma$ along all the $\boldsymbol{\alpha}$--curves except $\alpha_2$,
	and also cut along $\gamma_1$, 
	obtaining $\Sigma_2=\Sigma\setminus(\alpha_1\cup\gamma_1\cup\alpha_3\cup\cdots\cup\alpha_g)$.
	By similar argument, we can find a simple closed curve $\gamma_2$ in $\Sigma_2$,
	intersecting $\alpha_2$ transversely exactly once.
	This time, 
	the number of $\boldsymbol{\beta}$--edges in the dual graph $\Gamma_2$
	is at most $k_\beta+k_\beta=2k_\beta$, so $\gamma_2$ intersects the $\boldsymbol{\beta}$--curves
	in at most $2k_\beta$ points.
	
	For the subsequent steps, we proceed similarly.
	For constructing $\gamma_s$, 
	we cut $\Sigma$ along all the $\boldsymbol{\alpha}$--curves except $\alpha_s$,
	and also cut along $\gamma_1,\cdots,\gamma_{s-1}$.
	Then the resulting $\gamma_s$ intersects $\alpha_s$ transversely,
	disjoint from any other $\boldsymbol{\alpha}$--curve or any constructed $\gamma_j$,
	and the $\boldsymbol{\beta}$--curves transversely in at most $k_\beta\cdot 2^{s-1}$ points.
	Finish while $s>g$.
\end{proof}

For any pointed Heegaard diagram $(\Sigma,\boldsymbol{\alpha},\boldsymbol{\beta},z)$ of genus $g$,
we denote the $\ell^\infty$ norm of any domain $\mathcal{D}=n_1 D_1+\cdots+ n_m D_m$ as
$\|\mathcal{D}\|_\infty=\max\{|n_1|,\cdots,|n_m|\}$.
If $\mathcal{P}$ is a periodic domain, its boundary takes the form
$$\partial\mathcal{P}=\partial_\alpha\mathcal{P}+\partial_\beta\mathcal{P},$$
where $\partial_\alpha\mathcal{P}=x_1\alpha_1+\cdots+\cdots+x_g\alpha_g$
and $\partial_\beta \mathcal{P}=y_1\beta_1+\cdots+y_g\beta_g$,
fixing auxiliary orientations of the $\boldsymbol{\alpha}$--curves and the $\boldsymbol{\beta}$--curves.
In this case,
we denote $\|\partial_\alpha\mathcal{P}\|_\infty=\max\{|x_1|,\cdots,|x_g|\}$
and $\|\partial_\beta\mathcal{P}\|_\infty=\max\{|y_1|,\cdots,|y_g|\}$.

\begin{lemma}\label{norm_comparison_periodic}
	Let $(\Sigma,\boldsymbol{\alpha},\boldsymbol{\beta},z)$ be a pointed Heegaard diagram of genus $g$.
	Denote by $k_\alpha$ the total number of components
	as obtained by cutting the $\boldsymbol{\alpha}$--curves by all $\boldsymbol{\beta}$--curves.
	If $\mathcal{P}$ is a periodic domain and if $\mathcal{P}$ has coefficient $0$
	at the region containing $z$, then
	$$(1/2)\cdot\|\partial_\beta\mathcal{P}\|_{\infty}\leq 
	\|\mathcal{P}\|_\infty\leq 
	k_\alpha\cdot\|\partial_\alpha\mathcal{P}\|_\infty.$$
\end{lemma}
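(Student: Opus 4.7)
The plan is to track how the coefficient $n_i$ of a region $D_i$ changes as we cross the $\boldsymbol{\alpha}$-- and $\boldsymbol{\beta}$--curves, and to exploit the special role of the $z$--region (where $\mathcal{P}$ has coefficient $0$).

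For the lower bound, fix an index $j$ and pick a point $p$ on $\beta_j$ away from all intersections with the $\boldsymbol{\alpha}$--curves. The two regions $D_l,D_r$ meeting $\beta_j$ along a small neighborhood of $p$ have coefficients whose difference equals $\pm y_j$ (this is precisely the definition of $\partial_\beta\mathcal{P}$). By the triangle inequality, $\max(|n_l|,|n_r|)\geq|y_j|/2$, so $\|\mathcal{P}\|_\infty\geq|y_j|/2$. Taking the maximum over $j$ gives $\|\mathcal{P}\|_\infty\geq (1/2)\,\|\partial_\beta\mathcal{P}\|_\infty$.

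For the upper bound, consider the open punctured surface $\Sigma'=\Sigma\setminus(\beta_1\cup\cdots\cup\beta_g)$. Since $(\Sigma,\boldsymbol{\alpha},\boldsymbol{\beta},z)$ is a (singly) pointed Heegaard diagram, $\Sigma'$ is connected and contains $z$. The $\boldsymbol{\alpha}$--curves restrict in $\Sigma'$ to a disjoint union of at most $k_\alpha$ arcs and closed curves, and cutting $\Sigma'$ along them recovers (the interiors of) the regions $D_1,\ldots,D_m$. Form the dual graph $G$ whose vertices are these regions and whose edges record adjacency across each $\boldsymbol{\alpha}$--arc, each edge being labelled by the index $s\in\{1,\ldots,g\}$ of the $\boldsymbol{\alpha}$--curve it came from. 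Connectedness of $\Sigma'$ forces $G$ to be connected, hence $m-1\leq k_\alpha$ and any simple path in $G$ has at most $k_\alpha$ edges.

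Now, for any $D_i$, choose a simple path in $G$ from the $z$--region (where the coefficient is $0$ by assumption) to $D_i$, and realize it by an actual path in $\Sigma'$ that crosses exactly the corresponding sequence of $\boldsymbol{\alpha}$--arcs. Each crossing of an $\alpha$--arc coming from $\alpha_s$ changes the ambient coefficient by $\pm x_s$ with $|x_s|\leq\|\partial_\alpha\mathcal{P}\|_\infty$. Summing (at most $k_\alpha$) increments yields $|n_i|\leq k_\alpha\cdot\|\partial_\alpha\mathcal{P}\|_\infty$, and taking the maximum over $i$ completes the proof.

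The main point to verify carefully is that one can travel from the $z$--region to any other region within $\Sigma'$ while crossing at most $k_\alpha$ of the $\boldsymbol{\alpha}$--arcs; this is where connectedness of $\Sigma\setminus\boldsymbol{\beta}$ (a feature specific to singly pointed Heegaard diagrams) is essential, and it is what makes the estimate depend only on $\|\partial_\alpha\mathcal{P}\|_\infty$ rather than also on $\|\partial_\beta\mathcal{P}\|_\infty$.
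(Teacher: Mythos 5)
Your proof is correct and takes essentially the same route as the paper: the lower bound comes from the fact that each $\boldsymbol{\beta}$--subarc is adjacent to at most two regions, and the upper bound comes from cutting along the $\boldsymbol{\beta}$--curves (leaving the connected surface $\Sigma_\beta$), forming the dual adjacency graph of regions with $k_\alpha$ edges, and walking a simple path from the $z$--region. Your closing remark about the role of single-pointedness is exactly the point the paper leans on implicitly.
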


\begin{proof}
	The inequality $(1/2)\cdot\|\partial_\beta\mathcal{P}\|_{\infty}\leq \|\mathcal{P}\|_\infty$
	follows immediately 
	from the fact that any $\boldsymbol{\beta}$--subarc is adjacent to at most $2$ regions in $\mathcal{P}$.
	
	To show the inequality $\|\mathcal{P}\|_\infty\leq 	k_\alpha\cdot\|\partial_\alpha\mathcal{P}\|_\infty$,
	consider the connected, open surface obtained by cutting $\Sigma$ along all the $\boldsymbol{\beta}$--curves,
	namely, $\Sigma_\beta=\Sigma\setminus(\beta_1\cup\cdots\cup\beta_g)$.
	The graph $\Gamma_\beta$ dual to the decomposition of $\Sigma_\beta$ by the $\alpha$--curves or arcs
	is connected, and has exactly $k_\alpha$ edges. 
	Starting from the vertex corresponding to the region containing $z$,
	we can reach any other vertex via a simple path of edges,
	which has length at most $k_\alpha$.
	As we cross from one region to another neighboring region,
	the coefficient difference of $\mathcal{P}$ 
	is plus or minus the coefficient of $\partial_\alpha\mathcal{P}$
	at the $\boldmath{\alpha}$--curve that is crossed.
	Therefore, the coefficient of $\mathcal{P}$ at any region is at most
	$k_\alpha\cdot\|\partial_\alpha\mathcal{P}\|_\infty$.
\end{proof}

\begin{lemma}\label{periodic_domain_estimate}
	Let $(\Sigma,\boldsymbol{\alpha},\boldsymbol{\beta},z)$ be a pointed Heegaard diagram of genus $g$.
	Denote by $b$ the first Betti number of
	the presented $3$--manifold $M=U_\alpha\cup_\Sigma U_\beta$.
	Then, 
	after a relabeling of the $\boldsymbol{\alpha}$--curves if necessary,
	there exist periodic domains $\mathcal{P}_1,\cdots,\mathcal{P}_b$,
	and some constant $R>0$,
	such that the following properties hold for all $\mathcal{P}_i$.
	\begin{itemize}
	\item The coefficient of $\mathcal{P}_i$
	at the region containing $z$ is $0$.
	\item
	$\|\partial_\alpha\mathcal{P}_i\|_\infty=R$.
	\item
	The coefficient of $\partial_\alpha\mathcal{P}_i$ at $\alpha_i$ 
	is equal to $\pm R$.
	\item 
	For each $j=1,\cdots,b$ other than $i$,
	the coefficient of $\partial_\alpha\mathcal{P}_i$ at $\alpha_j$ is $0$.
	\end{itemize}
	Indeed, 
	denoting by $k$ the total number of intersections between
	the $\boldsymbol{\alpha}$--curves and the $\boldsymbol{\beta}$--curves,	
	one may require	$0<R\leq (k/(g-b))^{g-b}$,
	unless $g=b$; in the exceptional case,
	one may replace the upper bound with $1$.
\end{lemma}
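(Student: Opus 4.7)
The plan is to encode periodic domains as a sublattice of $\Integral^g$ and then apply Cramer's rule with a maximal-minor choice. Let $M\in\Integral^{g\times g}$ be the intersection matrix $M_{ij}=\alpha_i\cdot\beta_j$, with rows indexed by the $\boldsymbol{\alpha}$-curves and columns by the $\boldsymbol{\beta}$-curves. In $H_1(\Sigma;\Integral)$ the classes $[\alpha_i]$ and $[\beta_j]$ form $\Integral$-bases of the Lagrangian subspaces $L_\alpha,L_\beta$, and standard Heegaard-diagram linear algebra gives $\mathrm{rank}(L_\alpha\cap L_\beta)=b$, whence $\mathrm{rank}(M)=g-b$. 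A periodic domain with $\partial_\alpha\mathcal{P}=\sum x_i\alpha_i$ exists precisely when $\sum x_i[\alpha_i]\in L_\beta$, i.e.\ when $x\in\ker M^T$. Because the $\boldsymbol{\beta}$-curves cut $\Sigma$ into a single planar surface containing $z$, imposing that the coefficient at the $z$-region is $0$ makes the correspondence $\mathcal{P}\mapsto\partial_\alpha\mathcal{P}$ a bijection between the periodic domains in question and $\ker M^T\subset\Integral^g$.

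Next I select $I,J\subset\{1,\ldots,g\}$ with $|I|=|J|=g-b$ so that $|\det M_{I,J}|$ is \emph{maximal} among all $(g-b)\times(g-b)$ minors of $M$, and I relabel the $\boldsymbol{\alpha}$-curves so that $I^c=\{1,\ldots,b\}$. Setting $R=|\det M_{I,J}|$, for each $i\in\{1,\ldots,b\}$ there is a unique rational $x\in\ker M^T\otimes\Rational$ with $x_{I^c}=e_i$. Solving $x_I^T M_{I,J}=-e_i^T M_{I^c,J}$ by Cramer's rule expresses each $(x_I)_j$ as a ratio $\pm\det(\widetilde M)/\det(M_{I,J})$, where $\widetilde M$ is obtained from $M_{I,J}$ by replacing one row with a row of $M_{I^c,J}$; this $\widetilde M$ is still a $(g-b)\times(g-b)$ minor of $M$, so maximality of $R$ forces $|(x_I)_j|\leq1$. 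Then $v_i:=Rx\in\ker M^T\cap\Integral^g$ has all entries of absolute value $\leq R$, equals $R$ at the $i$-th coordinate, and vanishes at the remaining coordinates of $I^c$. To recover $\mathcal{P}_i$, I take the unique integer $y$ with $\sum_k(v_i)_k[\alpha_k]=\sum_j y_j[\beta_j]$ in $H_1(\Sigma)$, bound the resulting nullhomologous $1$-cycle $\sum_k(v_i)_k\alpha_k-\sum_j y_j\beta_j$ by an integer $2$-chain on $\Sigma$, and shift by a multiple of $[\Sigma]=\sum_r D_r$ to kill the coefficient at the $z$-region.

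For the size of $R$, every entry satisfies $|M_{ij}|\leq|\alpha_i\cap\beta_j|$, hence $\sum_{i,j}|M_{ij}|\leq k$. Hadamard's inequality combined with AM--GM then yields
\[
R=|\det M_{I,J}|\leq\prod_{i\in I}\sum_{j\in J}|M_{ij}|\leq\Bigl(\tfrac{k}{g-b}\Bigr)^{g-b},
\]
giving the stated bound. In the exceptional case $g=b$ the rank of $M$ is $0$, so $M=0$, $\ker M^T=\Integral^g$, and I simply take $v_i=e_i$ with $R=1$; the rest of the construction carries over verbatim.

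The crucial quantitative point is the maximal-minor choice in the second paragraph: without it Cramer's rule bounds the ``non-principal'' coordinates of $v_i$ only by some auxiliary minor that can be arbitrarily large relative to $R$, and the required control $\|\partial_\alpha\mathcal{P}_i\|_\infty=R$ would be lost. All the remaining steps are routine consequences of the intersection-form description of $L_\alpha,L_\beta$ together with the Hadamard plus AM--GM bound.
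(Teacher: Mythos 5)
Your argument is correct and follows essentially the same route as the paper's: describe periodic domains (normalized to vanish at the $z$--region) by the kernel of the intersection matrix, pick a basis via Cramer's rule using a \emph{maximal} $(g-b)\times(g-b)$ minor as the common denominator $R$, and bound $R$ by the row-sum estimate plus AM--GM. One small terminological slip: the inequality $|\det M_{I,J}|\leq\prod_{i\in I}\sum_{j\in J}|M_{ij}|$ is not Hadamard's inequality (which uses $\ell^2$ row norms) but the elementary permanent/row-sum bound from the Leibniz expansion; the inequality itself is valid, so the conclusion stands.
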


\begin{proof}
Note that any periodic domain $\mathcal{P}$ can be adjusted by adding
integral multiples of $\Sigma$ (that is, the sum of all the regions)
to make coefficient $0$ at the region containing $z$.
Subject to this normalization, 
two periodic domains $\mathcal{P}$ and $\mathcal{P}'$
are identical if and only if they have identical $\boldsymbol{\alpha}$--boundary.
For example, this is evident from Lemma \ref{norm_comparison_periodic},
applying to $\mathcal{P}-\mathcal{P}'$.

Moreover, a $\Integral$--linear combination $x_1\alpha_1+\cdots+x_g\alpha_g$
occurs as $\partial_\alpha\mathcal{P}$ for some periodic domain $\mathcal{P}$,
if and only if $x_1[\alpha_1]+\cdots+x_g[\alpha_g]=0$ holds in 
$H_1(U_\beta;\Integral)$.
In fact, the ``only if'' direction can be seen 
by capping $\mathcal{P}$ off with $\boldsymbol{\beta}$--disks
in the $\boldsymbol{\beta}$--handlebody $U_\beta$.
The ``if'' direction can be seen from the following topological recipe.
Take an immersed oriented compact surface in $U_\beta$ 
bounded by $x_1\alpha_1+\cdots+x_g\alpha_g$,
without passing through the index--$0$ critical point;
map the surface to the union of $\Sigma$ and the $\boldsymbol{\beta}$--disks
by flowing along the upward Morse trajectories;
then read off the coefficients of a solution $\mathcal{P}$ 
as the mapping degrees over individual regions.

The homological condition 
$x_1[\alpha_1]+\cdots+x_g[\alpha_g]=0$ in $H_1(U_\beta;\Integral)$ 
is equivalent to a linear system of equations 
$$AX=0,$$ 
where $X$ is the column vector transposing $(x_1,\cdots,x_g)$,
and where $A$ is the $g\times g$--matrix whose $(i,j)$--entry is the algebraic intersection number
$I([\beta_i],[\alpha_j])\in\Integral$ of $[\beta_j],[\alpha_i]\in H_1(\Sigma;\Integral)$.
The cokernel of $A$ is isomorphic to $H_1(M;\Integral)$,
so $A$ has rank $g-b$.

In the degenerate case $g=b$, the matrix $A$ is zero,
so we can take $\mathcal{P}_1,\cdots,\mathcal{P}_b$
of coefficient $0$ at the region containing $z$,
such that $\partial_\alpha\mathcal{P}_i=\alpha_i$ holds for each $\mathcal{P}_i$.
This obviously satisfies the asserted properties.

Below we assume $g>b$.
To find a fundamental set of solutions of the linear system of equations,
we may relabel the rows and the columns of $A$,
and assume the upper right $(g-b)\times (g-b)$--block $Q$ invertible.
Denote by $P$ the upper left $(g-b)\times b$--block of $A$,
so $A$ takes the block form
$$A=
\left(\begin{array}{c} A' \\ A''\end{array}\right)=
\left(\begin{array}{cc} P & Q \\ * & *\end{array}\right).$$
In addition, we assume the following maximality condition for $Q$:
\begin{itemize}
\item
Among all invertible $(g-b)\times (g-b)$--blocks $Q'$ in $A'$,
the  maximum of $|\det(Q')|$ is achieved by $|\det(Q)|>0$.
\end{itemize}

A set of fundamental solutions to $AX=0$ can be obtained as the column vectors
of the following $g\times b$--matrix,
$$S=\left(\begin{array}{c} S' \\ S''\end{array}\right),$$
where $S'=\mathrm{det}(Q)\cdot I$ is scalar of size $b\times b$, 
and $S''=-Q^*P=-\mathrm{det}(Q)\cdot Q^{-1}P$ is of size $(g-b)\times b$.
Note that the entries of $Q^*$ are $(g-b-1)\times(g-b-1)$--minors of $Q$,
so the entries of $S$ all lie in $\Integral$.
Observe that every intersection point between the $\boldsymbol{\alpha}$--curves
and the $\boldsymbol{\beta}$--curves contributes $\pm1$ 
exactly once to some entry of $A$.
It follows that the absolute values of the entries of $A$ is at most $k$ in total.
Denoting the block $Q$ as $(q_{ij})_{(g-b)\times(g-b)}$, we estimate
$$|\mathrm{det}(Q)|\leq \prod_{i=1}^{g-b} \sum_{j=1}^{g-b} |q_{ij}|
\leq \left(\frac{1}{g-b}\cdot\sum_{i=1}^{g-b}\sum_{j=1}^{g-b} |q_{ij}|\right)^{1/(g-b)}
\leq (k/(g-b))^{1/(g-b)}.$$
Any entry of $Q^*P$ is actually the determinant of 
another matrix obtained from $Q$ 
by replacing some column with a column of $P$ (by Cramer's rule),
which is again a $(g-b)\times(g-b)$--minor of $A'$, up to sign.
Therefore, entries of $Q^*P$ are all bounded
by $|\mathrm{det}(Q)|$ in absolute value (by our maximality assumption).
In particular,
for each column of $S$, the maximum among
the absolute values of the column entries is achieved
by the diagonal entry of the scalar matrix $S'=\mathrm{det}(Q)\cdot I$.

To summarize, the column vectors of $S$ represent
$b$ $\Integral$--linear combinations of the $\boldsymbol{\alpha}$--curves,
which are null-homologous in $U_\beta$.
Hence, there are $b$ periodic domains $\mathcal{P}_1,\cdots,\mathcal{P}_b$
whose $\boldsymbol{\alpha}$--boundaries are these combinations.
We can make their coefficients $0$ at the region containing $z$,
by adding integral multiples of $\Sigma$.
We take the asserted constant as $R=|\mathrm{det}(Q)|>0$,
satisfying the asserted upper bound.
We have relabeled the $\boldsymbol{\alpha}$--curves since we permuted the columns of $A$.
The three asserted properties regarding 
$\partial_\alpha\mathcal{P}$ follow from 
the maximality of $|\mathrm{det}(Q)|$, 
and the fact $S'=\mathrm{det}(Q)\cdot I$,
and the entry estimates of $S$, as we explained above.
\end{proof}

\begin{remark}\label{periodic_domain_estimate_remark}
The winding trick as in \cite[Lemma 5.4]{OS_hf_invariants} works for any invertible
$Q'$ without assuming the maximality condition as in the proof of Lemma \ref{periodic_domain_estimate}.
However, in that case,
one may have to wind too many times, 
in order to achieve Condition \ref{domain_condition}.
It would cause a replacement of the number of windings
$K$ in the sequel by something like $K'=(k+o_\alpha)\cdot (k/(g-b))^{g-b}\cdot b$.
Eventually,
this would lead to a worse upper bound for $\mathrm{Ent}(\phi)$
(comparable to $\ell_{\mathtt{He}}(M)\cdot\log(\ell_{\mathtt{He}}(M))$, for example),
which is insufficient for proving
Theorems \ref{main_ent_vs_vol_general} and \ref{main_ent_vs_vol_arithmetic}.
\end{remark}

With the above preparations, we prove Lemma \ref{winding_efficient} as follows.

Let $(\Sigma,\boldsymbol{\alpha},\boldsymbol{\beta},z)$ be a pointed Heegaard diagram of genus $g$.
Denote by $b$ the first Betti number of
the presented $3$--manifold $M=U_\alpha\cup_\Sigma U_\beta$.
If the $\boldsymbol{\alpha}$--curves intersects the $\boldsymbol{\beta}$--curves
at $k$ points in total, 
and if there are $o_\alpha$ $\boldsymbol{\alpha}$--curves 
that do not intersect with any $\boldsymbol{\beta}$--curves,
then there will be $k+o_\alpha$ components 
after cutting the $\boldsymbol{\alpha}$ by all $\boldsymbol{\beta}$--curves,
so the number $k_\alpha$ will be $k+o_\alpha$ 
when we apply Lemma \ref{norm_comparison_periodic}.
Similarly, $k_\beta$ will be $k+o_\beta$ when we apply Lemma \ref{dual_curves_estimate}.

We construct the asserted pointed Heegaard diagram
$$(\Sigma,\tilde{\boldsymbol{\alpha}},\boldsymbol{\beta},z)$$ 
as follows.

First,
we obtain $b$ periodic domains $\mathcal{P}_1,\cdots,\mathcal{P}_b$
as in Lemma \ref{periodic_domain_estimate}.
The procedure may have involved a relabeling of 
the $\boldsymbol{\alpha}$--curves.
Next, we obtain a $g$--tuple of mutually disjoint simple closed curves 
$\boldsymbol{\gamma}=(\gamma_1,\cdots,\gamma_g)$,
topologically dual the $\boldsymbol{\alpha}$--curves as in Lemma \ref{dual_curves_estimate}.
Only the first $b$ curves $\gamma_1,\cdots,\gamma_b$
will be useful to us,
so we remember that  
each of them (if $b>0$) intersects the $\boldsymbol{\beta}$--curves 
at no more than $k_\beta\cdot 2^{b-1}\leq(k+o_\beta)\cdot 2^{b-1}$ points in total.
For each $i=1,\cdots,b$ (if $b>0$),
we wind $\alpha_i$ along two nearby parallel copies of $\gamma_i$,
both running around $K$ times,
but in opposite directions, where we take
$$K=(k+o_\alpha)\cdot b.$$

To be precise (and to fix notations),
denote by $p_i\in \Sigma$ be the point where $\alpha_i$ and $\gamma_i$ intersect.
For each $\gamma_i$,
take a regular neighborhood of $\gamma_i$ 
disjoint form any other $\boldsymbol{\alpha}$--curves,
and parametrize as $\gamma_i\times[-1,+1]$,
such that 
$\gamma_i$ is the horizontal curve $\gamma_i\times\{0\}$,
and such that $\alpha_i$ and the $\boldsymbol{\beta}$--curves only intersect
the neighborhood in vertical arcs of the form $*\times[-1,\,+1]$.
Think of $\gamma_i$ as oriented from \underline{W}est to \underline{E}ast, 
and $[-1,1]$ from \underline{S}outh to \underline{N}orth.
Take two nearby points on $\gamma_i$,
named $p_i^{\mathrm{E}}$ and $p_i^{\mathrm{W}}$,
such that the short interval in $\gamma_i$ bounded by $p_i^{\mathrm{W}}$ and $p_i^{\mathrm{E}}$
contains $p_i$ at the center,
and such that no $\boldsymbol{\beta}$--curve crosses this interval;
name the points $p_i^{\mathrm{N}}=(p_i,\,+1)$, $p_i^{\mathrm{NE}}=(p_i^{\mathrm{E}},\,+1)$,
$p_i^{\mathrm{S}}=(p_i,\,-1)$, and $p_i^{\mathrm{SW}}=(p_i^{\mathrm{W}},\,-1)$;
name the curves $\gamma_i^{\mathrm{N}}=\gamma_i\times\{+1\}$
and $\gamma_i^{\mathrm{S}}=\gamma_i\times\{-1\}$.
We wind $\alpha_i$ along $\gamma_i^{\mathrm{N}}$,
starting from $p^{\mathrm{N}}_i$, 
running around $K$ times toward the $p^{\mathrm{NE}}_i$ direction,
and stopping a little ahead at $p^{\mathrm{NE}}_i$.
Similarly, 
we wind $\alpha_i$ along $\gamma_i^{\mathrm{S}}$ the same number of times,
from $p^{\mathrm{S}}_i$ toward $p^{\mathrm{SW}}_i$ and to $p^{\mathrm{SW}}_i$.
After all these windings $\alpha_1,\cdots,\alpha_b$, 
we obtain a new pointed Heegaard diagram 
$(\Sigma,\tilde{\boldsymbol{\alpha}},\boldsymbol{\beta},z)$.

The following two lemmas verify the asserted properties regarding
$(\Sigma,\tilde{\boldsymbol{\alpha}},\boldsymbol{\beta},z)$.

\begin{lemma}\label{verification_intersection}
In the pointed Heegaard diagram,
for each $i=1,\cdots,b$,
the curve $\tilde{\alpha}_i$ remains the same as $\alpha_i$
in nearby the intersection points of $\alpha_i$ 
with the $\boldsymbol{\beta}$--curve,
and 
has at most $(k+o_\alpha)(k+o_\beta)\cdot b\cdot 2^{b+1}$ other intersection points
with the $\boldsymbol{\beta}$--curves in total.
Moreover, $\tilde{\alpha}_i=\alpha_i$ for all $i=b+1,\cdots,g$.
\end{lemma}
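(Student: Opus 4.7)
The plan is to analyze the winding construction's effect on each of the three bulleted assertions separately. The third assertion, $\tilde{\alpha}_i=\alpha_i$ for $i>b$, holds by construction: for these indices no winding is performed. So throughout the remainder, I focus on $1\leq i\leq b$, recalling that the change from $\alpha_i$ to $\tilde{\alpha}_i$ is a pair of finger moves supported in the chosen regular neighborhood $\gamma_i\times[-1,+1]$ of $\gamma_i$, namely thin collars of $\gamma_i^{\mathrm{N}}=\gamma_i\times\{+1\}$ and $\gamma_i^{\mathrm{S}}=\gamma_i\times\{-1\}$.

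For the first assertion, I would exploit the explicit parametrization chosen when setting up the winding: inside the neighborhood, both $\alpha_i$ and every $\boldsymbol{\beta}$--curve appear only as vertical arcs of the form $*\times[-1,+1]$. Two such parallel vertical arcs never meet inside the neighborhood, so every original intersection point of $\alpha_i$ with a $\boldsymbol{\beta}$--curve necessarily lies outside $\gamma_i\times[-1,+1]$. Since the finger moves act as the identity outside the neighborhood, $\tilde{\alpha}_i$ agrees with $\alpha_i$ on a small open neighborhood of each such original intersection, which is precisely the first assertion.

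For the second assertion, the new intersections created between $\tilde{\alpha}_i$ and the $\boldsymbol{\beta}$--curves all lie inside $\gamma_i\times[-1,+1]$ and are produced by the winding. The neighborhood contains exactly $N_i:=|\gamma_i\cap(\beta_1\cup\cdots\cup\beta_g)|$ vertical $\boldsymbol{\beta}$--arcs, and by Lemma~\ref{dual_curves_estimate} we have $N_i\leq k_\beta\cdot 2^{i-1}\leq(k+o_\beta)\cdot 2^{b-1}$. The key geometric observation is that one full trip of $\tilde{\alpha}_i$ around the horizontal circle $\gamma_i^{\mathrm{N}}$ (respectively $\gamma_i^{\mathrm{S}}$) crosses each vertical $\boldsymbol{\beta}$--arc exactly once, since the winding stays in a thin horizontal collar while each such arc spans the full neighborhood. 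Adding $K$ full rounds on each of the two sides, plus the short terminal segments from $p_i^{\mathrm{N}}$ to $p_i^{\mathrm{NE}}$ and from $p_i^{\mathrm{S}}$ to $p_i^{\mathrm{SW}}$ (each contributing at most $N_i$ crossings), yields the estimate
\[
\#\bigl(\tilde{\alpha}_i\cap\boldsymbol{\beta}\bigr)_{\mathrm{new}}\;\leq\; 2(K+1)\,N_i \;\leq\; 4K\,N_i \;\leq\; 4\cdot(k+o_\alpha)b\cdot(k+o_\beta)\,2^{b-1},
\]
where $K=(k+o_\alpha)\cdot b\geq 1$ (valid since we are in the case $b\geq 1$). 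This equals $(k+o_\alpha)(k+o_\beta)\cdot b\cdot 2^{b+1}$, which is the asserted bound.

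The only real technical obstacle is confirming the two bookkeeping points that together yield the per-round count of $N_i$: that each full loop around $\gamma_i^{\mathrm{N}}$ crosses each vertical $\boldsymbol{\beta}$--arc exactly once, and that the small residual arcs connecting $p_i^{\mathrm{N}}$ to $p_i^{\mathrm{NE}}$ (and likewise on the south side) contribute only linearly in $N_i$. Both points reduce to the explicit parametric setup of the construction: the finger moves stay inside a thin collar of $\gamma_i^{\mathrm{N}}\cup\gamma_i^{\mathrm{S}}$, and the short interval in $\gamma_i$ bounded by $p_i^{\mathrm{W}}$ and $p_i^{\mathrm{E}}$ was chosen in advance to be disjoint from every $\boldsymbol{\beta}$--curve. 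With those observations in hand, the two nontrivial bullets follow directly, and together with the trivial third bullet they complete the lemma.
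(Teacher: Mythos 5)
Your handling of the first and third bullets is correct and essentially the paper's, but the count for the second bullet has a genuine error that is only masked by a gratuitous relaxation at the end. The winding is a finger move: after pushing $p_i^{\mathrm{N}}$ along $\gamma_i^{\mathrm{N}}$ for $K$ rounds, the resulting curve $\tilde{\alpha}_i$ near $\gamma_i^{\mathrm{N}}$ consists of \emph{two} nested parallel strands following the finger path (the finger goes out and comes back), so $\tilde{\alpha}_i$ makes $2K$ laps around $\gamma_i^{\mathrm{N}}$, not $K$. Equivalently, each time the finger tip crosses a vertical $\boldsymbol{\beta}$--arc a bigon is born, adding $2$ new transverse intersections, and the tip makes $K$ passes through each of the $N_i$ arcs on each side. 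Either way, the north winding alone creates $2K N_i$ new intersections, and the total is $4K N_i$ (the terminal segments contribute $0$, since they lie within the $\boldsymbol{\beta}$--free interval between $p_i^{\mathrm{W}}$ and $p_i^{\mathrm{E}}$, so your estimate of $N_i$ for each is a harmless overestimate but not the point). Your intermediate inequality
$\#\bigl(\tilde{\alpha}_i\cap\boldsymbol{\beta}\bigr)_{\mathrm{new}}\leq 2(K+1)N_i$
is therefore false whenever $K\geq 2$: the actual count $4KN_i$ exceeds it. You land on the correct asserted bound $(k+o_\alpha)(k+o_\beta)\cdot b\cdot 2^{b+1}=4K N_i$ only because you then loosen $2(K+1)N_i\leq 4K N_i$, which happens to cancel the earlier undercount. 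The paper's argument avoids this: it counts $2K\cdot N_i$ crossing events of the two winding isotopies with the $\boldsymbol{\beta}$--curves, each producing an increment of exactly $2$ intersection points, yielding $4K N_i$ directly. So your geometric observation that one lap crosses each vertical arc once is fine, but you have $2K$ laps per side, not $K$.
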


\begin{proof}
The assertion $\tilde{\alpha}_i=\alpha_i$ for all $i=b+1,\cdots,g$
is obvious from the construction.

For each $i=1,\cdots,b$, 
$\gamma_i$ intersects the $\boldsymbol{\beta}$--curves 
at no more than $(k+o_\beta)\cdot 2^{b-1}$ points,
so the windings of $\alpha_i$ along $\gamma^{\mathrm{N}}_i$ and $\gamma^{\mathrm{S}}_i$
go across the $\boldsymbol{\beta}$--curves 
for at most $2K\cdot (k+o_\beta)\cdot 2^{b-1}=(k+o_\alpha)(k+o_\beta)\cdot b\cdot 2^b$ times altogether,
each time causing an increment of $2$ intersection points.
Therefore, there are at most $(k+o_\alpha)(k+o_\beta)\cdot b\cdot 2^{b+1}$
new intersection points with the $\boldsymbol{\beta}$--curves,
as we isotope $\alpha_i$ to its new position $\tilde{\alpha}_i$,
and the isotopy is supported away from the old intersection points of $\alpha_i$.
\end{proof}

\begin{lemma}\label{verification_domain_condition}
The pointed Heegaard diagram 
$(\Sigma,\tilde{\boldsymbol{\alpha}},\boldsymbol{\beta},z)$ satisfies Condition \ref{domain_condition}.
\end{lemma}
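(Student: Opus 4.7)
The plan is to argue by contradiction: suppose $\tilde{\mathcal{P}}$ is a nontrivial positive periodic domain for $(\Sigma,\tilde{\boldsymbol{\alpha}},\boldsymbol{\beta},z)$ with zero coefficient at the region containing $z$. Since isotopy preserves algebraic intersection numbers, the matrix $A$ from the proof of Lemma \ref{periodic_domain_estimate} is unchanged under the winding process, so $\partial_{\tilde\alpha}\tilde{\mathcal{P}}$ lies in $\ker(A)$. I would express $\tilde{\mathcal{P}}=\sum_{i=1}^{b}c_i\tilde{\mathcal{P}}_i$ with $c_i\in\mathbb{Q}$, where $\tilde{\mathcal{P}}_i$ is the unique periodic domain for the new diagram with zero coefficient at $z$ whose $\tilde{\boldsymbol{\alpha}}$-boundary coefficients match those of $\partial_\alpha\mathcal{P}_i$, and pick $i_0$ with $C:=|c_{i_0}|=\max_j|c_j|>0$.

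The first ingredient is an \emph{agreement principle}: at every region of the new diagram lying within a region of the old diagram (i.e., outside the winding support), the coefficient of $\tilde{\mathcal{P}}$ equals that of $\mathcal{P}:=\sum c_i\mathcal{P}_i$ at the corresponding old region, because a path from $z$ avoiding the winding support records identical signed crossings in both diagrams. Lemma \ref{norm_comparison_periodic} then bounds such coefficients by $(k+o_\alpha)bCR$. The second ingredient is the \emph{ladder structure} created by the windings. The auxiliary curve $\gamma_{i_0}$ must cross some $\boldsymbol{\beta}$-curve $\beta_{j_0}$, for otherwise $\gamma_{i_0}$ would be embedded in the planar complement $\Sigma\setminus(\beta_1\cup\cdots\cup\beta_g)$ and hence separate $\Sigma$, contradicting that $\gamma_{i_0}$ meets $\alpha_{i_0}$ transversely in one point. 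Near such a crossing on $\gamma_{i_0}^{\mathrm{N}}$, the $K$ rotations of $\tilde\alpha_{i_0}$ produce a row of thin rung regions separated by coherently oriented strands of $\tilde\alpha_{i_0}$. Crossing one strand changes the $\tilde{\mathcal{P}}$-coefficient by $\pm c_{i_0}R$, since by Lemma \ref{periodic_domain_estimate} only the $c_{i_0}$ summand contributes to the $\tilde\alpha_{i_0}$-coefficient of $\partial_{\tilde\alpha}\tilde{\mathcal{P}}$. The rung coefficients thus form an arithmetic progression of common difference $\pm c_{i_0}R\neq 0$; the analogous progression near $\gamma_{i_0}^{\mathrm{S}}$ has opposite-sign common difference, because the south winding is performed in the opposite direction.

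To conclude, one of the two progressions is strictly monotone decreasing from outer to inner rung. Its two extreme rungs are adjacent to non-winding regions and hence carry coefficients bounded by $(k+o_\alpha)bCR$ via the agreement principle. Positivity of $\tilde{\mathcal{P}}$ requires every rung to be nonnegative, which forces the total drop of order $K\cdot CR$ across the progression to fit within the bound $(k+o_\alpha)bCR$. Combined with the choice $K=(k+o_\alpha)b$, this collapses into equalities that can only be satisfied if every $c_j$ vanishes, contradicting $c_{i_0}\ne0$. The hardest step will be to track the orientation signs and the exact rung count carefully enough that the chosen $K$ delivers a strict rather than borderline violation; this is precisely where the maximality condition on $|\det(Q)|$ enforced in Lemma \ref{periodic_domain_estimate} (and absent from the classical Ozsv\'ath--Szab\'o construction) becomes indispensable, as flagged in Remark \ref{periodic_domain_estimate_remark}.
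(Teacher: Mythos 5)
Your strategy lines up closely with the paper's: express a would-be nontrivial positive normalized periodic domain $\tilde{\mathcal{P}}$ as $\sum_j c_j\tilde{\mathcal{P}}_j$, pick $m$ with $|c_m|$ maximal, bound $\|\mathcal{P}\|_\infty$ via Lemmas \ref{norm_comparison_periodic} and \ref{periodic_domain_estimate}, and observe that the winding near $\alpha_m$ produces a coefficient displacement on the scale of $K R |c_m|$ which the bound cannot accommodate. Your ``agreement principle'' and your use of the two oppositely oriented windings along $\gamma_m^{\mathrm{N}}$ and $\gamma_m^{\mathrm{S}}$ to get displacements of opposite sign are exactly the mechanisms in the paper's proof. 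However, your justification that $\gamma_{i_0}$ must meet some $\boldsymbol{\beta}$--curve is wrong: a simple closed curve disjoint from all $\boldsymbol{\beta}$--curves does separate the planar surface $\Sigma\setminus(\beta_1\cup\cdots\cup\beta_g)$, but that does not make it separating in $\Sigma$, because the two sides can be reconnected after regluing the two boundary copies of a single $\beta_j$. So ``and hence separate $\Sigma$'' is a non-sequitur; the conclusion is genuinely needed (otherwise winding along $\gamma_i$ is an isotopy supported away from all $\boldsymbol{\beta}$--curves and accomplishes nothing), but it requires a different argument.

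The more serious problem is that your quantitative conclusion is not actually closed. Your rung count gives a total drop of about $K\cdot|c_{i_0}|R$ across a ladder of $K$ rungs, while the extreme rung coefficients are bounded by $(k+o_\alpha)b|c_{i_0}|R$; since $K=(k+o_\alpha)b$, these two quantities are \emph{equal}, so positivity forces only a borderline situation, not a contradiction. You flag this yourself (``strict rather than borderline violation''), but the fix you point to --- maximality of $|\det Q|$ --- is not where the strictness comes from. Maximality (Lemma \ref{periodic_domain_estimate}) guarantees $\|\mathcal{P}_i\|_\infty\leq(k+o_\alpha)R$ while the $\alpha_i$-coefficient of $\partial_\alpha\mathcal{P}_i$ is the full $\pm R$, which is what makes $K=(k+o_\alpha)b$ windings \emph{enough} in the first place (Remark \ref{periodic_domain_estimate_remark}). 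The strictness instead comes from an exact geometric count at the two particular regions containing $p^{\mathrm{NE}}_m$ and $p^{\mathrm{SW}}_m$: there the coefficient of $\tilde{\mathcal{P}}_m$ differs from that of $\mathcal{P}_m$ by exactly $\pm(K+1)R$ (and $\mp(K+1)R$), not $\pm KR$. That extra $+1$, which your rung count misses, pushes $|c_m|(K+1)R$ strictly above the bound $\|\mathcal{P}\|_\infty\leq KR|c_m|$, forcing the new coefficients at those two points to be nonzero and of opposite sign. Without it, your argument yields only equalities, and the final leap ``this collapses into equalities that can only be satisfied if every $c_j$ vanishes'' does not follow --- a periodic domain achieving $\|\mathcal{P}\|_\infty$ somewhere is under no obligation to be zero.
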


\begin{proof}
We need to understand 
the effect of the winding operation on periodic domains.
Every periodic domain $\mathcal{P}$ of $(\Sigma,\boldsymbol{\alpha},\boldsymbol{\beta},z)$ 
determines a periodic domain $\tilde{\mathcal{P}}$ of $(\Sigma,\tilde{\boldsymbol{\alpha}},\boldsymbol{\beta},z)$ 
by the rule that 
the coefficient of $\partial_\alpha \tilde{\mathcal{P}}$ at any $\tilde{\alpha}_i$
is equal to the coefficient of $\partial_\alpha \mathcal{P}$ at any $\alpha_i$,
for all $i=1,\cdots,g$.
During the winding isotopy, say along $\gamma^{\mathrm{N}}_i$,
every time as soon as $\alpha_i$ 
goes across a $\boldsymbol{\beta}$--curve, entering an existing region, 
a bigonal region is born.
If we keep track of a periodic domain $\mathcal{P}$ 
before and after the crossing,
$\mathcal{P}$ gains a new coefficient at the new born region,
which has to be that of the existing region
plus or minus the coefficient of $\partial_\alpha \mathcal{P}$ at $\alpha_i$.
The sign depends only the fixed orientation of $\alpha_i$ and the winding direction.
If it is plus for the winding along $\gamma^{\mathrm{N}}_i$,
then it is minus for the winding along $\gamma^{\mathrm{S}}_i$.
The pattern of regions and their coefficients in $\mathcal{P}$ stay the same
until $\alpha_i$ bumps into the next $\boldsymbol{\beta}$--curve along the winding.

From the above discussion, we see that the winding of $\alpha_i$ 
around $\gamma^{\mathrm{N}}_i$ for $K$ times has caused
a difference of the coefficient of $\tilde{\mathcal{P}}_i$ from that of $\mathcal{P}_i$,
at the region containing the point $p^{\mathrm{NE}}_i$,
by exactly $\pm (K+1)\cdot\|\partial_\alpha\mathcal{P}_i\|_\infty=\pm (K+1)R$.
So, the difference at the region containing $p^{\mathrm{SW}}_i$ is $\mp (K+1)R$,
due to the winding around $\gamma^{\mathrm{S}}_i$.
Meanwhile, for any $j=1,\cdots,b$ other than $i$,
the coefficients of $\tilde{\mathcal{P}}_j$ 
at the regions containing the points $p^{\mathrm{NE}}_i$ and $p^{\mathrm{SW}}_i$
stay invariant, since $\partial_\alpha\mathcal{P}_j$ has coefficient $0$ at $\alpha_i$.
(See Lemma \ref{periodic_domain_estimate}.)

For any periodic domain $\mathcal{P}\neq0$ with coefficient $0$ at the region containing $z$,
possibly after passing to a nonzero integral multiple,
we can write it as a unique $\Integral$--linear combination
$$\mathcal{P}=c_1\,\mathcal{P}_1+\cdots+c_b\,\mathcal{P}_b.$$
(This is because $[\partial_\alpha\mathcal{P}_1],\cdots,[\partial_\alpha\mathcal{P}_b]$
form a basis of $H_1(M;\Rational)$, and 
$[\partial_\alpha \mathcal{P}]\in H_1(M;\Rational)$ uniquely determines $\mathcal{P}$
subject to the coefficient $0$ condition at $z$,
as explained when we prove Lemma \ref{periodic_domain_estimate}.)

Suppose that the maximum among $|c_1|,\cdots,|c_b|$ is achieved by $|c_m|>0$.
By Lemmas \ref{norm_comparison_periodic} and \ref{periodic_domain_estimate},
we obtain
$\|\mathcal{P}_i\|_\infty\leq (k+o_\alpha)\cdot\|\partial_\alpha\mathcal{P}_i\|_\infty
=KR/b$. 
Therefore, the coefficients of $\mathcal{P}$
at the regions containing 
$p^{\mathrm{NE}}_m$ and $p^{\mathrm{SW}}_m$
are both bounded by 
$\|\mathcal{P}\|_\infty\leq
|c_m|\cdot(\|\mathcal{P}_1\|_\infty+\cdots+\|\mathcal{P}_b\|_\infty)
\leq |c_m|\cdot b \cdot KR/b=KR\cdot |c_m|$.
On the other hand,
the coefficient of $\tilde{\mathcal{P}}$ 
at the regions containing 
$p^{\mathrm{NE}}_m$ differs from that of $\mathcal{P}$
by exactly $\pm c_m\cdot(K+1)R$,
since the difference only comes from the $\mathcal{P}_m$ term.
At the region containing $p^{\mathrm{SW}}_m$,
the difference is the same amount of the opposite sign.

Therefore,
the coefficients of $\tilde{\mathcal{P}}$ 
at the regions containing 
$p^{\mathrm{NE}}_m$ and $p^{\mathrm{SW}}_m$
must be nonzero, and have opposite signs.

Every normalized nonzero periodic domain of $(\Sigma,\tilde{\boldsymbol{\alpha}},\boldsymbol{\beta},z)$
arises as $\tilde{\mathcal{P}}$ 
for some normalized nonzero periodic domain $\mathcal{P}$ of 
$(\Sigma,\boldsymbol{\alpha},\boldsymbol{\beta},z)$.
As $\mathcal{P}$ can be arbitrary,
we conclude that Condition \ref{domain_condition} holds for 
the pointed Heegaard diagram $(\Sigma,\tilde{\boldsymbol{\alpha}},\boldsymbol{\beta},z)$,
as asserted.
\end{proof}

By Lemmas \ref{verification_intersection} and \ref{verification_domain_condition},
the pointed Heegaard diagram $(\Sigma,\tilde{\boldsymbol{\alpha}},\boldsymbol{\beta},z)$
as we have constructed satisfies all the asserted properties 
as in the statement of Lemma \ref{winding_efficient}.

This completes the proof of Lemma \ref{winding_efficient}.

\section{Entropy versus Heegaard presentation length}\label{Sec-ent_vs_hpl}

This section is devoted to the proof of Theorem \ref{main_ent_vs_hpl}.

\begin{lemma}\label{virtual_domain_condition}
	Suppose that $(\Sigma,\boldsymbol{\alpha},\boldsymbol{\beta},\mathbf{z})$
	is an $l$--pointed Heegaard diagram of $g$,
	presenting a connected closed oriented $3$--manifold 
	$M=U_\alpha\cup_\Sigma U_\beta$.
	Then, 
	for any connected $d$--fold finite cover $M'$ of $M$,
	the preimages in $M$ of the objects
	$\Sigma,\boldsymbol{\alpha},\boldsymbol{\beta},\mathbf{z}$ in $M'$
	form an $ld$--pointed Heegaard diagram of genus $gd+d-1$,
	denoted as
	$(\Sigma',\boldsymbol{\alpha}',\boldsymbol{\beta}',\mathbf{z}')$.	
	
	Moreover,
	$(\Sigma',\boldsymbol{\alpha}',\boldsymbol{\beta}',\mathbf{z}')$
	satisfies Condition \ref{domain_condition}
	if and only if
	$(\Sigma,\boldsymbol{\alpha},\boldsymbol{\beta},\mathbf{z})$
	satisfies Condition \ref{domain_condition}.
\end{lemma}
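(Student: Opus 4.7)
The proof naturally falls into two parts: verifying that the lifted data $(\Sigma', \boldsymbol{\alpha}', \boldsymbol{\beta}', \mathbf{z}')$ forms an $ld$-pointed Heegaard diagram, and then proving the equivalence of Condition \ref{domain_condition}. For the first part, I would observe that both $\pi_1(U_\alpha) \to \pi_1(M)$ and $\pi_1(U_\beta) \to \pi_1(M)$ are surjective, hence so is $\pi_1(\Sigma) \to \pi_1(M)$, so $\Sigma' = p^{-1}(\Sigma)$ is a connected $d$-fold cover of $\Sigma$. Each $\boldsymbol{\alpha}$-curve bounds a disk in $U_\alpha$, hence is null-homotopic in $M$, so its preimage in $\Sigma'$ consists of $d$ disjoint homeomorphic copies; the same applies to the $\boldsymbol{\beta}$-curves and to the marked points, yielding $d(g+l-1)$ curves in each of $\boldsymbol{\alpha}'$ and $\boldsymbol{\beta}'$, and $ld$ points in $\mathbf{z}'$. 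The genus of $\Sigma'$ follows from Riemann--Hurwitz. The planar decomposition condition for $\boldsymbol{\alpha}'$ and $\boldsymbol{\beta}'$ is automatic since the decomposition of $\Sigma'$ by $\boldsymbol{\alpha}'$ is exactly the $p$-preimage of the decomposition of $\Sigma$ by $\boldsymbol{\alpha}$, and transversality is preserved since $p$ is a local diffeomorphism.

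For the preservation of Condition \ref{domain_condition}, the key is to set up pullback and pushforward of domains. Enumerate the regions $D_1, \dots, D_m$ of $\Sigma$ (in the sense of Definition \ref{def_domain}) and the regions $D'_1, \dots, D'_{m'}$ of $\Sigma'$, where each $D'_j$ is a connected component of $p^{-1}(D_i)$ for some $i$. Define $p^{\ast}\bigl(\sum n_i D_i\bigr) = \sum n_i \sum_{D'_j \subset p^{-1}(D_i)} D'_j$ and dually $p_{\ast}\bigl(\sum n'_j D'_j\bigr) = \sum_i \bigl(\sum_{D'_j \subset p^{-1}(D_i)} n'_j\bigr) D_i$. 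Both maps commute with the polygonal boundary operator, because the local picture near any $\boldsymbol{\alpha}$- or $\boldsymbol{\beta}$-edge of $\Sigma$ lifts to $d$ identical local pictures in $\Sigma'$, so the adjacency data between regions and edges is preserved. Hence $p^{\ast}$ and $p_{\ast}$ send periodic domains to periodic domains, preserve the cone of non-negative domains, and preserve the property of being supported off the marked regions (since marked regions of $\Sigma$ lift to marked regions of $\Sigma'$ and conversely). Moreover, $p^{\ast}$ is injective, and $p_{\ast}$ applied to a nontrivial positive domain remains nontrivial, because the coefficient at the image of any region with a strictly positive coefficient is a sum of non-negative integers including a positive one.

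Combining these observations yields both directions of the equivalence: a nontrivial positive periodic domain of $(\Sigma, \boldsymbol{\alpha}, \boldsymbol{\beta}, \mathbf{z})$ supported off $\mathbf{z}$ pulls back under $p^{\ast}$ to one for the lifted diagram, and conversely such a domain upstairs pushes down under $p_{\ast}$. The only step demanding care is the commutation of the boundary with $p^{\ast}$ and $p_{\ast}$; however, because each $\boldsymbol{\alpha}$- and $\boldsymbol{\beta}$-curve is null-homotopic in $M$ and therefore splits as $d$ disjoint homeomorphic lifts, this verification decouples into $d$ identical local checks, and I anticipate no substantive obstacle.
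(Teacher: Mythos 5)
Your proof takes essentially the same route as the paper's: the first half uses $\pi_1$--surjectivity of the inclusions and the null-homotopy of the $\boldsymbol{\alpha}$-- and $\boldsymbol{\beta}$--curves to lift the diagram, and the second half uses the very same pullback $p^{\ast}$ (constant coefficient on all lifts of a region) and pushforward $p_{\ast}$ (sum of coefficients over lifts) of periodic domains that the paper uses, together with the observation that both maps preserve periodicity, positivity, nontriviality, and support away from the marked points. One small remark: your invocation of Riemann--Hurwitz gives $g'=gd-d+1$, which is the figure used elsewhere in the paper (e.g., in Corollary \ref{hpl_cover} and Lemma \ref{entropy_estimate_with_b}); the ``$gd+d-1$'' appearing in the statement of the lemma is a typo.
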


\begin{proof}
	The preimage $\Sigma'$ of $\Sigma$ in $M'$ is again a Heegaard surface,
	(that is, connected and separating $M'$ into two handlebodies).
	This follows immediately from the fact that the inclusions of $U_\alpha$, $U_\beta$, and $\Sigma$
	into $M$ are all $\pi_1$--surjective. Moreover, the $\boldsymbol{\alpha}$--curves
	and their complementary planar subsurfaces in $\Sigma$ all lift, 
	since the $\boldsymbol{\alpha}$--curves all bound disks in $M$.
	A similar statement holds for the $\boldsymbol{\beta}$--curves.
	It follows that $(\Sigma',\boldsymbol{\alpha}',\boldsymbol{\beta}',\mathbf{z}')$
	is again a multiply pointed Heegaard diagram (see Definition \ref{def_mpHD}).
	The genus of the covering surface 
	follows from the Euler characteristic formula
	$\chi(\Sigma')=d\cdot\chi(\Sigma)$,
	and the number of marked points simply get multiplied by $d$.
	
	Every periodic domain 
	with respect to $(\Sigma',\boldsymbol{\alpha}',\boldsymbol{\beta}',\mathbf{z}')$
	pulls back to a periodic domain
	with respect to $(\Sigma,\boldsymbol{\alpha},\boldsymbol{\beta},\mathbf{z})$,
	by assigning the coefficient at region in the covering diagram 
	to be the coefficient of its projection image.
	In the other direction, every periodic domain upstairs 
	pushes forward to a periodic domain downstairs,
	by summing up the coefficients at all the lifts of each region.
	Therefore, 
	there is a nontrivial positive periodic domain upstairs
	missing all the marked points
	if and only if there is a periodic domain downstairs 
	with the same properties.
	In other words, 
	Condition \ref{domain_condition} either holds for 
	both $(\Sigma,\boldsymbol{\alpha},\boldsymbol{\beta},\mathbf{z})$ 
	and $(\Sigma',\boldsymbol{\alpha}',\boldsymbol{\beta}',\mathbf{z}')$,
	or fails for both of them.
\end{proof}

\begin{lemma}\label{entropy_estimate_with_b}
	Let $(\Sigma,\boldsymbol{\alpha},\boldsymbol{\beta})$ be a Heegaard diagram of genus $g$,
	presenting an orientable closed $3$--manifold $M=U_\alpha\cup_\Sigma U_\beta$.
	Denote by $k_i$ the number of intersection points
	of each curve $\alpha_i$ with the $\boldsymbol{\beta}$--curves in total.
	
	If $k_i\geq1$ holds for all $i=1,\cdots,g$,
	then, for any primitive fibered class $\phi\in H^1(M;\Integral)$
	of fiber genus $\geq3$,
	the following inequality holds.
	$$\mathrm{Ent}(\phi)\leq 
	\log\left(k_1\cdots k_g\right)+b\cdot\log\left(1+b\,2^{b+1} k^2\right)-\log 2,$$
	where we denote $k=k_1+\cdots+k_g$ and $b=\dim H_1(M;\Rational)$.
\end{lemma}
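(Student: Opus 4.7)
The plan is to combine the Nielsen-number characterization of entropy with Heegaard Floer rank bounds fed by the efficient winding trick from Section \ref{Sec-winding}. Write $[f]\in\mathrm{Mod}(S)$ for the monodromy of $\phi$, with fiber $S$ of genus $g_S\geq3$. I would use formula (\ref{entropy_def_by_characterization}) to reduce the task to bounding $N(f^m)$ for every $m\geq1$ and then taking $m\to\infty$. For each $m$, I work in the $m$-cyclic cover $M'_m$ of $M$ dual to $\phi$ (the mapping torus of $f^m$), whose pulled back fibered class $\phi_m$ is primitive with the same fiber $S$, so that $N(f^m)$ is the Nielsen number of the monodromy of $\phi_m$. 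Combining Proposition \ref{constraint_HF}(4) with part (2) applied at $j=g_S-2$ (so that $U^2$ annihilates $\mathrm{HF}^+(M'_m,\phi_m,g_S-2)$) and chasing the exact triangle (\ref{exact_triangle_pph}) yields
$$N(f^m)\;\leq\;\dim_\Rational\bigl(\Rational\otimes\mathrm{HF}^+(M'_m,\phi_m,g_S-2)\bigr)\;\leq\;2\cdot\dim_\Rational\bigl(\Rational\otimes\widehat{\mathrm{HF}}(M'_m)\bigr).$$

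Next, to bound the right-hand side, I start from the given Heegaard diagram $(\Sigma,\boldsymbol{\alpha},\boldsymbol{\beta})$ and apply Lemma \ref{winding_efficient} to convert it into a $1$-pointed Heegaard diagram $(\Sigma,\tilde{\boldsymbol{\alpha}},\boldsymbol{\beta},z)$ satisfying Condition \ref{domain_condition}. The assumption $k_i\geq1$ gives $o_\alpha=0$, and $o_\beta=0$ can be arranged by a preparatory simplification; the winding bound then reads $\tilde{k}_i\leq k_i+b\cdot2^{b+1}\cdot k^2$ for $i\leq b$ with $\tilde{k}_i=k_i$ for $i>b$, so that $k_i\geq 1$ gives
$$\prod_{i=1}^{g}\tilde{k}_i\;\leq\;(k_1\cdots k_g)\cdot\bigl(1+b\cdot2^{b+1}\cdot k^2\bigr)^b.$$
I then pull this diagram back along the $m$-cyclic covering: by Lemma \ref{virtual_domain_condition}, the preimage is an $m$-pointed Heegaard diagram $(\Sigma'_m,\tilde{\boldsymbol{\alpha}}'_m,\boldsymbol{\beta}'_m,\mathbf{z}'_m)$ for $M'_m$ still satisfying Condition \ref{domain_condition}. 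Each lifted $\tilde{\alpha}$-curve inherits $\tilde{k}_i$ intersections with the lifted $\boldsymbol{\beta}$-curves, so picking one intersection point per $\alpha$-curve bounds the number of generators of $\widehat{\mathrm{CF}}(\Sigma'_m,\tilde{\boldsymbol{\alpha}}'_m,\boldsymbol{\beta}'_m,\mathbf{z}'_m)$ by $\prod_{i=1}^g \tilde{k}_i^m$, and Proposition \ref{HF_hat_mpHD} then gives
$$2^{m-1}\cdot\dim_\Rational\bigl(\Rational\otimes\widehat{\mathrm{HF}}(M'_m)\bigr)\;\leq\;\prod_{i=1}^{g}\tilde{k}_i^{m}.$$

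Concatenating the two displayed estimates yields $N(f^m)\leq 2^{2-m}\prod_{i=1}^g\tilde{k}_i^m$. Taking $\log$, dividing by $m$, and letting $m\to\infty$ produces $\mathrm{Ent}(\phi)\leq\log\bigl(\prod_i\tilde{k}_i\bigr)-\log2$, into which substituting the product bound on $\prod_i\tilde{k}_i$ produces exactly the asserted inequality. The substantive difficulty lies not in any single step but in the quantitative quality of Lemma \ref{winding_efficient}: the upper bound on $\tilde{k}_i$ must be polynomial in $k$, because after dividing by $m$ and passing to the limit any superpolynomial dependence on $k$ would survive and spoil the estimate. This is precisely why the maximality step selecting the optimal $|\det(Q)|$ in the proof of Lemma \ref{winding_efficient} is indispensable here (compare Remark \ref{periodic_domain_estimate_remark}).
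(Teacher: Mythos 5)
Your proposal is correct and follows essentially the same route as the paper: wind via Lemma \ref{winding_efficient}, pull back to $m$-cyclic covers with Lemma \ref{virtual_domain_condition}, bound generators by $\prod_i\tilde{k}_i^m$, convert to $\widehat{\mathrm{HF}}(M'_m)$ via Proposition \ref{HF_hat_mpHD}, bound $N(f^m)$ by Proposition \ref{constraint_HF}(4) plus the $U$-action adjunction, and pass to the limit. One small imprecision: $o_\beta=0$ need not be \emph{arranged}; since $M$ carries a fiber of genus $\geq3$ it is irreducible, so a $\boldsymbol{\beta}$-curve with empty intersection (which would yield an embedded nonseparating sphere) is automatically impossible, and this is how the paper disposes of both $o_\alpha$ and $o_\beta$ at once.
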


\begin{proof}
	Note that $M$ is irreducible 
	if has any fiber other than a sphere.
	In this case, 
	the condition $k_i\geq1$ automatically holds for all $i=1,\cdots,g$,
	because any $\boldsymbol{\alpha}$--curve without intersection points
	would give rise to a connected summand homeomorphic to $S^1\times S^2$.
	In other words, the numbers $o_\alpha$ and $o_\beta$ 
	will both be $0$, when we apply Lemma \ref{winding_efficient}.	
		
	Obtain a pointed Heegaard diagram $(\Sigma,\boldsymbol{\alpha},\boldsymbol{\beta},z)$
	by picking an arbitrary point $z$ on $\Sigma$ off the $\boldsymbol{\alpha}$--curves
	and the $\boldsymbol{\beta}$--curves.
	We apply Lemma \ref{winding_efficient}
	to obtain a new pointed Heegaard diagram 
	$(\Sigma,\tilde{\boldsymbol{\alpha}},\boldsymbol{\beta},z)$ for $M$,
	which satisfies Condition \ref{domain_condition};
	In particular, the numbers $\tilde{k}_1,\cdots,\tilde{k}_g$ of 
	intersection points on the curves $\tilde{\alpha}_1,\cdots,\tilde{\alpha}_g$ 
	satisfy 
	\begin{equation}\label{tilde_k_i_constraint}
	\begin{cases}
	\tilde{k}_i \leq k_i+b\,2^{b+1} k^2 & i=1,\cdots,b\\
	\tilde{k}_i = k_i & i=b+1,\cdots,g
	\end{cases}
	\end{equation}
	where $k=k_1+\cdots+k_g$ and $k_i\geq1$ for each $i=1,\cdots,g$.
			
	For any connected $d$--fold finite cover $M'$ of $M$,
	the covering $d$--pointed Heegaard diagram 
	$(\Sigma',\tilde{\boldsymbol{\alpha}}',\boldsymbol{\beta}',\mathbf{z}')$
	of genus $gd-d+1$
	also satisfies Condition \ref{domain_condition}
	(Lemma \ref{virtual_domain_condition}).
	Therefore,
	it is good for defining the chain complex
	$\widehat{\mathrm{CF}}(\Sigma',\tilde{\boldsymbol{\alpha}}',\boldsymbol{\beta}',\mathbf{z}')$.
	Note that the number of $\tilde{\boldsymbol{\alpha}}'$--curves is exactly $gd$,
	(obvious as each $\tilde{\boldsymbol{\alpha}}$--curve has $d$ exactly lifts).
	The chain complex
	$\widehat{\mathrm{CF}}(\Sigma',\tilde{\boldsymbol{\alpha}}',\boldsymbol{\beta}',\mathbf{z}')$
	is free generated over $\Integral$ 
	by all the $gd$--tuples $\mathbf{x}=(x_1,\cdots,x_{gd})$ of points on $\Sigma'$,
	such that each $x_i$ lies on 
	a distinct $\tilde{\boldsymbol{\alpha}}'$--curve and a distinct $\boldsymbol{\beta}'$--curve.
	Therefore, 
	the number of generators
	is at most the product $\tilde{k}'_1\cdots \tilde{k}'_{gd}$,
	where $\tilde{k}'_i$ denotes the number of intersection points on 
	the curve $\tilde{\alpha}'_i$.
	Since $\tilde{k}'_i$ depends only 
	on the $\tilde{\boldsymbol{\alpha}}$--curve on $\Sigma$
	which lifts to $\tilde{\alpha}'_i$,
	the product $\tilde{k}'_1\cdots \tilde{k}'_{gd}$ is equal to
	the product $(\tilde{k}_1\cdots \tilde{k}_g)^d$.
	Using the constraints 
	(\ref{tilde_k_i_constraint}) and the assumption $k_i\geq1$ for all $i=1,\cdots,g$,
	we estimate
	\begin{eqnarray*}
	\dim_\Rational 
	\left(\Rational\otimes_\Integral 
	\widehat{\mathrm{HF}}\left(\Sigma',\tilde{\boldsymbol{\alpha}}',\boldsymbol{\beta}',\mathbf{z}'\right)
	\right)
	&\leq&
	\dim_\Rational 
	\left(\Rational\otimes_\Integral 
	\widehat{\mathrm{CF}}\left(\Sigma',\tilde{\boldsymbol{\alpha}}',\boldsymbol{\beta}',\mathbf{z}'\right)
	\right)\\
	&=& \left(\tilde{k}_1\cdots\tilde{k}_g\right)^d\\
	&\leq& 
	\left(\prod_{i=1}^b(k_i+b\,2^{b+1}k^2)\cdot \prod_{i=b+1}^{g} k_i\right)^d\\
	&\leq& 
	\left(\prod_{i=1}^b \left(k_i\cdot(1+b\,2^{b+1}k^2)\right)\cdot \prod_{i=b+1}^{g} k_i\right)^d\\
	&=&
	\left(k_1\cdots k_g\right)^d\cdot\left(1+b\,2^{b+1}k^2\right)^{bd}\\
	\end{eqnarray*}
	By Proposition \ref{HF_hat_mpHD}, we obtain
	\begin{eqnarray*}
	\dim_\Rational 
	\left(\Rational\otimes_\Integral 
	\widehat{\mathrm{HF}}\left(M'\right)
	\right)
	&=&
	\frac{1}{2^{d-1}}\cdot\dim_\Rational 
	\left(\Rational\otimes_\Integral 
	\widehat{\mathrm{HF}}\left(\Sigma',\tilde{\boldsymbol{\alpha}}',\boldsymbol{\beta}',\mathbf{z}'\right)
	\right)\\
	&\leq&
	\left(k_1\cdots k_g\right)^{d}\cdot\left(1+b\,2^{b+1}k^2\right)^{bd}\cdot(1/2)^{d-1}.
	\end{eqnarray*}
	
	We apply the above estimate to the $m$--fold cyclic covers $M'_m$ of $M$
	dual to the given primitive fibered class $\phi\in H^1(M;\Integral)$,
	for all $m\in\Natural$.
	One may think of $M'_m$ as the mapping torus of the iterate $f^m$ of 
	the monodromy $f\colon S\to S$ associated to $(M,\phi)$.
	Denote by $\phi'_m\in H^1(M'_m;\Integral)$ the primitive class
	obtained as	the pull-back of $\phi$ divided by $m$,
	whose monodromy can be identified with $f^m\colon S\to S$.
	Then, we estimate the Nielsen number of $f^m$ for each $m\in\Natural$:
	\begin{eqnarray*}
	N(f^m)&\leq & 
	\dim_\Rational 
	\left(\Rational\otimes_\Integral 
	\mathrm{HF}^+\left(M',\phi'_m,\,\mathrm{genus}(S)-2\right)
	\right)\\
	&\leq&
	2\cdot\dim_\Rational 
	\left(\Rational\otimes_\Integral 
	\widehat{\mathrm{HF}}\left(M',\phi'_m,\,\mathrm{genus}(S)-2\right)
	\right)\\
	&\leq&
	2\cdot\dim_\Rational 
	\left(\Rational\otimes_\Integral 
	\widehat{\mathrm{HF}}\left(M'\right)
	\right)\\
	&\leq&
	2\cdot\left(k_1\cdots k_g\right)^{m}\cdot\left(1+b\,2^{b+1}k^2\right)^{bm}\cdot(1/2)^{m-1}\\
	&=&
	\left(k_1\cdots k_g\right)^{m}\cdot\left(1+b\,2^{b+1}k^2\right)^{bm}\cdot(1/2)^{m-2}
	\end{eqnarray*}
	Here,
	the first step is direct application
	of the next-to-top term estimate in Proposition \ref{constraint_HF} (4),
	valid under our fiber genus assumption on $\phi$.
	The second step is an easy consequence of 
	the $U$--action adjunction inequality in Proposition \ref{constraint_HF} (2).
	To be precise,
	$L=\mathrm{HF}^+(M',\phi'_m,\,\mathrm{genus}(S)-2)$
	is annihilated by $U^2$,
	according to the $U$--action adjunction inequality;
	the cokernel $L/UL$ injects 
	$\widehat{\mathrm{HF}}(M',\phi'_m,\,\mathrm{genus}(S)-2)$,
	by the exact triangle (\ref{exact_triangle_pph});
	the cokernel $L/UL$ surjects the image $UL=UL/U^2L$ under 
	the natural homomorphism $U\colon L/UL\to UL/U^2L$.
	These facts imply that the (free) rank of $L$, 
	equal to the rank sum of
	$L/UL$ and $UL$,
	is at most the rank of
	$\widehat{\mathrm{HF}}(M',\phi'_m,\,\mathrm{genus}(S)-2)$	times $2$,
	justifying the second step.
	The rest steps are obvious.
	
	Finally, we obtain the estimate for the monodromy entropy of $\phi$:
	\begin{eqnarray*}
	\mathrm{Ent}(\phi)&=&
	\lim_{m\to\infty}\,\frac{1}{m}\cdot\log N(f^m)\\
	&\leq&
	\lim_{m\to\infty}\,\frac{1}{m}\cdot
	\log\left\{\left(k_1\cdots k_g\right)^{m}\cdot\left(1+b\,2^{b+1}k^2\right)^{bm}\cdot(1/2)^{m-2}\right\}\\
	&=&
	\log\left(k_1\cdots k_g\right)+b\cdot\log\left(1+b\,2^{b+1}k^2\right)-\log 2,
	\end{eqnarray*}
	as desired.
\end{proof}

\begin{lemma}\label{entropy_estimate_without_b_fine}
	Let $(\Sigma,\boldsymbol{\alpha},\boldsymbol{\beta})$ be a Heegaard diagram of genus $g$,
	presenting an orientable closed $3$--manifold $M=U_\alpha\cup U_\beta$.
	Denote by $k_i$ the number of intersection points
	of each curve $\alpha_i$ with the $\boldsymbol{\beta}$--curves in total.
	
	If $k_i\geq1$ holds for all $i=1,\cdots,g$,
	then, for any primitive fibered class $\phi\in H^1(M;\Integral)$ of fiber genus $\geq3$,
	the following inequality holds.
	$$\mathrm{Ent}(\phi)\leq \log(k_1\cdots k_g)-\log k_{\mathrm{min}},$$
	where we denote $k_{\mathrm{min}}=\min(k_1,\cdots,k_g)$.
\end{lemma}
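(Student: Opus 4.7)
The plan is to bootstrap Lemma \ref{entropy_estimate_with_b} by applying it to the $m$-fold cyclic covers $M'_m$ of $M$ dual to $\phi$, and then pass to the limit $m\to\infty$. The key enabling observation is that the first Betti number $b_1(M'_m;\Rational)$ admits a bound $B$ independent of $m$: via the Wang sequence for the mapping torus $M'_m=M_{f^m}$, we have $b_1(M'_m;\Rational)=1+\dim\ker(1-f^m_*\colon H_1(S;\Rational)\to H_1(S;\Rational))$, and since $f_*$ has only finitely many eigenvalues, only those which happen to be roots of unity can contribute to $\ker(1-f^m_*)$ for any $m$, so their total algebraic multiplicity serves as a uniform upper bound on $\dim\ker(1-f^m_*)$.

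Next I would construct, for each $m$, a single-pointed Heegaard diagram on $M'_m$ whose product of intersection numbers is bounded by essentially $(k_1\cdots k_g)^m/k_{\min}^{m-1}$. Starting from the pullback of $(\Sigma,\boldsymbol{\alpha},\boldsymbol{\beta})$, which is naturally an $m$-pointed Heegaard diagram with $gm$ curves in each of $\boldsymbol{\alpha}'$ and $\boldsymbol{\beta}'$ and with each lift of $\alpha_i$ inheriting the intersection number $k_i$, I would merge the $m$ marked points into a single one by successively cancelling $m-1$ of the $m$ lifts of the minimum-intersection curve $\alpha_{i_0}$ (with $k_{i_0}=k_{\min}$) together with $m-1$ appropriately paired lifts of a $\boldsymbol{\beta}'$-curve through a sequence of handle-slides and destabilizations. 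Each such cancellation removes one factor of $k_{\min}$ from the generator product, yielding the claimed reduction, while the efficient winding trick of Lemma \ref{winding_efficient} applied to the resulting single-pointed diagram on $M'_m$ only requires winding at most $b_1(M'_m;\Rational)\leq B$ of the remaining $\boldsymbol{\alpha}'$-curves to achieve Condition \ref{domain_condition}.

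Applying Lemma \ref{entropy_estimate_with_b} to the constructed single-pointed Heegaard diagram on $M'_m$ then produces
$$\mathrm{Ent}(\phi'_m)\leq m\log(k_1\cdots k_g)-(m-1)\log k_{\min}+B\cdot O(\log m)-\log 2,$$
where the bounded $B\log(\mathrm{poly}(m))$ correction arises because both the winding multiplier and the total intersection number grow only polynomially in $m$. Using Proposition \ref{entropy_virtual} to identify $\mathrm{Ent}(\phi'_m)=m\cdot\mathrm{Ent}(\phi)$, dividing by $m$, and letting $m\to\infty$, the correction term vanishes and the dominant terms combine to yield $\mathrm{Ent}(\phi)\leq\log(k_1\cdots k_g)-\log k_{\min}$. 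The main obstacle will be executing the merging step rigorously: one must specify a sequence of handle-slides and destabilizations on $M'_m$ that simultaneously merges the $m$ pulled-back marked points into one, cancels $m-1$ lifts of $\alpha_{i_0}$, verifies that the resulting diagram still presents $M'_m$, and controls the intersection numbers on the remaining $\boldsymbol{\alpha}'$-curves so that the total product is at most $(k_1\cdots k_g)^m/k_{\min}^{m-1}$ up to the polynomial correction absorbed by the limit.
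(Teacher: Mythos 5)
Your overall framework coincides with the paper's: pass to $m$-cyclic covers $M'_m$, observe $b_1(M'_m)$ is uniformly bounded (your Wang-sequence argument is fine; the paper uses the cruder bound $1+\mathrm{genus}(S)$), produce a single-pointed Heegaard diagram for $M'_m$ whose product of $\boldsymbol{\alpha}$-intersection numbers is at most $(k_1\cdots k_g)^m/k_{\min}^{m-1}$, feed it into Lemma~\ref{entropy_estimate_with_b}, divide by $m$ via Proposition~\ref{entropy_virtual}, and let $m\to\infty$. The gap is in your construction of that diagram. You propose to ``merge'' the $m$ marked points by cancelling $m-1$ lifts of $\alpha_{i_0}$ with $m-1$ lifts of a $\boldsymbol{\beta}$-curve via handle-slides and destabilizations. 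This cannot work as stated: handle-slides and index 1--2 destabilizations are \emph{special Heegaard moves} and, by construction, they do not change the number of marked points (see the paragraph following Definition~\ref{def_mpHD}; only index 0--3 de/stabilization does, and the paper explicitly declines to use it). So after your moves you would still have an $m$-pointed diagram. Moreover, even setting that aside, the preliminary handle-slides needed to make a lifted $\boldsymbol{\alpha}$-curve and $\boldsymbol{\beta}$-curve meet exactly once would create new intersections whose count you do not control, undermining the clean statement ``each cancellation removes a factor $k_{\min}$.''

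What you actually need is much simpler and requires no Heegaard moves at all, exactly as in the proof of Corollary~\ref{hpl_cover}: just \emph{discard} $m-1$ lifted $\boldsymbol{\alpha}$-curves, $m-1$ lifted $\boldsymbol{\beta}$-curves, and $m-1$ marked points, choosing the discards so that the remaining curves of each type cut the handlebody into a single ball (equivalently, the discarded $\boldsymbol{\alpha}$-curves should correspond to a spanning tree of the dual graph on the $m$ planar complementary regions, and likewise for $\boldsymbol{\beta}$). This converts the $m$-pointed covering diagram of genus $gm-m+1$ into a genuine single-pointed Heegaard diagram for $M'_m$ on the same surface, with $gm-m+1$ curves of each type, and with no new intersections. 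Because \emph{every} lifted $\boldsymbol{\alpha}$-curve has intersection number at least $k_{\min}$, the product over the remaining curves is automatically $\leq (k_1\cdots k_g)^m/k_{\min}^{m-1}$ — you do not need to target lifts of $\alpha_{i_0}$ specifically, and in fact you cannot in general, since the $m$ lifts of a single $\alpha_{i_0}$ need not form a spanning tree of the dual graph. With this fix, the remainder of your argument goes through as you describe.
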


\begin{proof}
	For the same reason as
	in the proof of Lemma \ref{entropy_estimate_with_b},
	we may assume $M$ irreducible.	
	Obtain a pointed Heegaard diagram $(\Sigma,\boldsymbol{\alpha},\boldsymbol{\beta},z)$
	by fixing a base point.	
	
	For each $m\in \Natural$,
	denote by $M'_m$ the $m$--fold cyclic cover of $M$
	dual to the given primitive fibered class $\phi\in H^1(M;\Integral)$.
	Denote by $\phi'_m\in H^1(M'_m;\Integral)$ the primitive class
	obtained as	the pull-back of $\phi$ divided by $m$.	
	
	We obtain the covering $m$--pointed Heegaard diagram 
	$(\Sigma'_m,\boldsymbol{\alpha}'_m,\boldsymbol{\beta}'_m,\mathbf{z}'_m)$
	with respect to each $M'_m$.
	Since $\Sigma'_m$ has genus $gm-m+1$,
	we can obtain another pointed Heegaard diagram
	$(\Sigma'_m,\hat{\boldsymbol{\alpha}}'_m,\hat{\boldsymbol{\beta}}'_m,z'_m)$,
	by discarding $m-1$ curves from $\boldsymbol{\alpha}'_m$,
	and $m-1$ curves from $\boldsymbol{\beta}'_m$, and $m-1$ points from $\mathbf{z}'_m$,
	upon some suitable choice.
	The pointed Heegaard diagram 
	$(\Sigma'_m,\hat{\boldsymbol{\alpha}}'_m,\hat{\boldsymbol{\beta}}'_m,z'_m)$
	still presents $M'_m$.
	
	Every $M'_m$ is again irreducible.
	In particular,
	Lemma \ref{entropy_estimate_with_b}
	will still work for
	$(\Sigma'_m,\hat{\boldsymbol{\alpha}}'_m,\hat{\boldsymbol{\beta}}'_m,z'_m)$.
	Some $\widehat{\boldsymbol{\alpha}}'_m$--curves
	may have fewer intersections than their projection images do,
	and the intersection points on the discarded 
	$\widehat{\boldsymbol{\alpha}}'_m$--curves will no longer contribute.
	This makes $m\cdot\log(k_1\cdots k_g)-(m-1)\cdot\log k_{\mathrm{min}}$
	an upper bound for the first term on the right-hand side.
	Note also that the first Betti numbers of $M'_m$ are uniformly bounded,
	by some $B\geq1$ no more than $1$ plus the genus of the fiber surface associated to $(M,\phi)$.
	
	Applying Lemma \ref{entropy_estimate_with_b},
	we estimate the monodromy entropy of $\phi$:
	\begin{eqnarray*}
	\mathrm{Ent}(\phi)&=& \mathrm{Ent}(\phi'_m)/m\\
	&\leq&
	\frac{1}{m}\cdot
	\left\{
	m\cdot\log\left(k_1\cdots k_g\right)-(m-1)\cdot\log k_{\mathrm{min}}+
	O(\log m)
	\right\}.\\
	\end{eqnarray*}
	where the remainder term $O(\log m)$
	is explicitly
	$B\cdot\log(1+B\,2^{B+1}k^2m^2)-\log 2$,
	growing only logarithmically fast as $m$ tends to infinity.
	
	Passing to limit as $m$ tends to infinity,
	we obtain an improved inequality
	$$\mathrm{Ent}(\phi)\leq \log\left(k_1\cdots k_g\right)-\log k_{\mathrm{min}},$$
	as desired.
\end{proof}

\begin{corollary}\label{entropy_estimate_without_b_fine_corollary}
	In Lemma \ref{entropy_estimate_without_b_fine_corollary},
	assuming instead that $\phi$ is primitive fibered of fiber genus $2$,
	$$\mathrm{Ent}(\phi)\leq 2\cdot(\log(k_1\cdots k_g)-\log k_{\mathrm{min}}).$$
\end{corollary}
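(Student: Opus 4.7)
The plan is to reduce the fiber-genus-$2$ case to Lemma \ref{entropy_estimate_without_b_fine} by passing to a double cover. Concretely, I would construct a connected degree-$2$ cover $\tilde M \to M$ whose restriction to the fiber $S$ is a connected double cover $\tilde S \to S$, so that $\tilde S$ has genus $3$. Such a cover corresponds to an $f_*$-invariant index-$2$ subgroup of $\pi_1(S)$, equivalently to a nonzero $f^*$-fixed vector in $H^1(S;\Field_2) \cong \Field_2^4$. When no such invariant exists for $f$ itself, one can pass to a suitable iterate $f^k$ (with $k$ dividing $|\mathrm{Sp}(4,\Field_2)| = 720$), which always admits one, and work on the cyclic cover $M'_k$ dual to $\phi$; the overhead in $k$ would be absorbed by the asymptotic argument used at the end of the proof of Lemma \ref{entropy_estimate_without_b_fine}.

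Given the double cover $\tilde M \to M$, I would insert a marked point $z$ in $\Sigma$, lift the diagram $(\Sigma,\boldsymbol{\alpha},\boldsymbol{\beta},z)$ to a $2$-pointed Heegaard diagram of $\tilde M$ of Heegaard genus $2g-1$ via Lemma \ref{virtual_domain_condition}, and then reduce it to an ordinary pointed Heegaard diagram by discarding one $\alpha$-lift, one $\beta$-lift, and one marked point, as in the proof of Lemma \ref{entropy_estimate_without_b_fine}. The discarded items should be chosen to optimize the resulting intersection counts $\tilde{k}_j$: since each $\alpha_i$ lifts to two curves whose intersection counts with all lifted $\boldsymbol{\beta}$-curves sum to $2k_i$, their product is at most $k_i^2$ by AM--GM. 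A careful selection of the discarded $\alpha$-lift (carrying the greater share of intersections for one of the $\alpha_i$'s) and $\beta$-lift should give $\prod_j \tilde{k}_j \leq (k_1\cdots k_g)^2/k_{\mathrm{min}}$ together with $\tilde{k}_{\mathrm{min}} \geq k_{\mathrm{min}}$.

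Applying Lemma \ref{entropy_estimate_without_b_fine} to $(\tilde M, \tilde\phi)$ (whose fiber genus is now $3$) then yields $\mathrm{Ent}(\tilde\phi) \leq \log(\prod_j \tilde{k}_j) - \log \tilde{k}_{\mathrm{min}} \leq 2\log(k_1\cdots k_g) - 2\log k_{\mathrm{min}}$; combined with $\mathrm{Ent}(\phi) = \mathrm{Ent}(\tilde\phi)$ from Proposition \ref{entropy_virtual}(2), this proves the corollary. The factor of $2$ over Lemma \ref{entropy_estimate_without_b_fine} arises precisely from the doubling of intersection counts under the degree-$2$ lift. The main technical obstacle is the intersection-count optimization in the reduction step---choosing which lifts to discard so that the AM--GM product bound and the lower bound on $\tilde{k}_{\mathrm{min}}$ can be achieved simultaneously---together with the case-handling when the direct double cover does not exist and one must first pass to $M'_k$ before lifting.
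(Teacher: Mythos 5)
Your overall strategy is the same as the paper's: pass to a finite cover in which the fiber has genus $3$, then apply Lemma \ref{entropy_estimate_without_b_fine}. The paper fixes a connected double cover $\tilde S\to S$, finds $m$ with $f^m$ lifting to $\tilde f\in\mathrm{Mod}(\tilde S)$, and works with the degree-$2m$ cover $M'=M_{\tilde f}$, so it never needs the direct degree-$2$ cover to exist.

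There are two genuine issues in your write-up. First, the AM--GM step rests on a misconception: in a degree-$d$ cover $M'\to M$, each $\alpha_i$ lifts to $d$ disjoint curves, each of which maps \emph{homeomorphically} onto $\alpha_i$ (since $\alpha_i$ bounds a disk in $U_\alpha$ and its preimage in $U'_\alpha$ consists of $d$ disk boundaries). Hence every lift of $\alpha_i$ has \emph{exactly} $k_i$ intersection points with the lifted $\boldsymbol{\beta}$-curves; there is no imbalance to distribute, so ``carrying the greater share'' is not a meaningful selection criterion. Second, and more substantively, the claim $\tilde k_{\mathrm{min}}\geq k_{\mathrm{min}}$ after discarding is not justified. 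The discarded $\beta$-lift may cross the surviving lift of the minimizing $\alpha$-curve, in which case that lift loses intersection points and $\tilde k_{\mathrm{min}}$ drops strictly below $k_{\mathrm{min}}$. You need this inequality to convert $\log\prod\tilde k_j - \log\tilde k_{\mathrm{min}}$ into $2\log(k_1\cdots k_g)-2\log k_{\mathrm{min}}$, so this is the crux of the argument and cannot be waved through. The paper avoids the issue by working at degree $2m$ with $m$ a (large) multiple of the basic lifting period and using the asymptotic mechanism underlying Lemma \ref{entropy_estimate_without_b_fine}: the $\alpha$-discarding contributes a discount $\frac{2m-1}{m}\log k_{\mathrm{min}}\to 2\log k_{\mathrm{min}}$, while the unknown term $-\log\hat k_{\mathrm{min}}$ is simply bounded above by $0$ (irreducibility gives $\hat k_{\mathrm{min}}\geq1$) and becomes negligible after dividing by $m$. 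You mention the asymptotic fallback only for when the direct double cover fails, but in fact it is what rescues the $\tilde k_{\mathrm{min}}$ estimate in all cases; if you restructure so that the limit over covering degree is always taken, the gap closes and your argument essentially reduces to the paper's.
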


\begin{proof}
	In this case, identify $M$ as a mapping torus $M_f$ for some $[f]\in\mathrm{Mod}(S)$
	where $S$ has genus $2$. Take any connected double cover $\tilde{S}$ of $S$,
	then some iterate $f^m$ of $f$ admits a lift $[\tilde{f}]\in\mathrm{Mod}(\tilde{S})$.
	For example, one may take any $f^m$ 
	that induces the trivial automorphism on $H_1(S;\Integral/2\Integral)$.
	Denote by $M'$ the mapping torus $\tilde{f}$, which naturally covers $M$ of degree $2m$.
	Obtain a Heegaard diagram
	$(\Sigma',\hat{\boldsymbol{\alpha}}',\hat{\boldsymbol{\beta}}',z')$ for $M'$
	from $(\Sigma,\boldsymbol{\alpha},\boldsymbol{\beta})$,
	similarly as in the proof of Lemma \ref{entropy_estimate_without_b_fine_corollary}.
	Observe the pullback $\phi'\in H^1(M';\Integral)$ is dual to $m[\tilde{S}]\in H_2(M';\Integral)$,
	and $\tilde{S}$ has genus $3$.
	Then, by Lemma \ref{entropy_estimate_without_b_fine_corollary} and Proposition \ref{entropy_virtual}, we derive
	\begin{eqnarray*}
	\mathrm{Ent}(\phi)&=& \mathrm{Ent}(\phi')/m\\
	&\leq&
	\frac{1}{m}\cdot
	\left\{
	(2m\cdot\log\left(k_1\cdots k_g\right)-(m-1)\cdot\log k_{\mathrm{min}})-\log k_{\mathrm{min}}
	\right\}\\
	&=&
	2\cdot\left(\log\left(k_1\cdots k_g\right)-\log k_{\mathrm{min}}\right),
	\end{eqnarray*}
	as desired.
\end{proof}

\begin{lemma}\label{entropy_estimate_without_b}
	Adopt the same assumptions and notations
	as in Lemma \ref{entropy_estimate_without_b_fine}.
	If $k_i\geq3$ holds for all $i=1,\cdots,g$,
	then, for any primitive fibered class $\phi\in H^1(M;\Integral)$
	of fiber genus $\geq3$,
	the following inequality holds.
	$$\mathrm{Ent}(\phi)\leq (k-2g-1)\cdot\log 3,$$
	where we denote $k=k_1+\cdots+k_g$.
\end{lemma}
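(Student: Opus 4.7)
The plan is to deduce this lemma straightforwardly from Lemma \ref{entropy_estimate_without_b_fine} by means of an elementary numerical inequality that is calibrated exactly to the hypothesis $k_i\geq3$. Since Lemma \ref{entropy_estimate_without_b_fine} applies under the weaker assumption $k_i\geq1$, it gives directly
$$\mathrm{Ent}(\phi)\leq \log(k_1\cdots k_g)-\log k_{\mathrm{min}}.$$
Hence it suffices to bound the right-hand side above by $(k-2g-1)\log 3$ under the stronger hypothesis.

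The key numerical observation I would establish first is that for every real $t\geq 3$,
$$\log t\leq (t-2)\log 3,$$
with equality at $t=3$. This follows by checking the endpoint value and noting that the derivative of $(t-2)\log 3 - \log t$ is $\log 3 - 1/t$, which is strictly positive for $t\geq 3$. Equivalently, $t\leq 3^{t-2}$ for $t\geq 3$, provable by an easy induction on integer values of $t$.

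Next, relabel so that $k_1=k_{\mathrm{min}}$. The bound from Lemma \ref{entropy_estimate_without_b_fine} becomes
$$\mathrm{Ent}(\phi)\leq \sum_{i=2}^g \log k_i,$$
and the elementary inequality above, applied to each $k_i$ with $i\geq2$, yields
$$\sum_{i=2}^g \log k_i\leq \log 3 \cdot\sum_{i=2}^g (k_i-2)=\log 3\cdot\bigl(k-k_1-2(g-1)\bigr).$$
Since $k_1=k_{\mathrm{min}}\geq 3$, we have $k-k_1-2(g-1)\leq k-3-2(g-1)=k-2g-1$, and the asserted inequality follows.

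There is no substantive obstacle here: the proof is essentially a bookkeeping reduction to the finer bound of Lemma \ref{entropy_estimate_without_b_fine}. The only thing worth noting is that the constant $\log 3$ in the statement is determined precisely by the threshold $k_i\geq 3$ coming from Lemma \ref{at_least_three_intersections}, through the tight case $t=3$ of the elementary inequality $\log t\leq (t-2)\log 3$.
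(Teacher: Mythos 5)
Your proof is correct, but it takes a genuinely different route from the paper's. The paper applies the AM--GM inequality to bound $\log(k_1\cdots k_g)\leq g\cdot\log(k/g)$, then rewrites the result as $(k-2g)\cdot F\left(g/(k-2g)\right)-\log 3$ for $F(x)=x\log(2+x^{-1})$, and shows $F$ is increasing on $(0,1]$ with $F(1)=\log 3$ to conclude. You instead apply the elementary pointwise inequality $\log t\leq (t-2)\log 3$ (for $t\geq 3$) termwise to the $g-1$ largest $k_i$, using the subtracted $-\log k_{\mathrm{min}}$ to dispose of the smallest one; the hypothesis $k_{\mathrm{min}}\geq 3$ then absorbs the remaining slack. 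Both approaches ultimately exploit the same tightness at $k_i=3$ that ties $\log 3$ to Lemma \ref{at_least_three_intersections}, but your argument avoids the symmetrization via AM--GM and the auxiliary function $F$ entirely, which makes it noticeably shorter and more transparent about where the constant comes from. The paper's AM--GM reduction has the modest advantage of packaging the estimate as a single one-variable calculus fact, but for this particular bound your termwise approach is cleaner.
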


\begin{proof}
	Applying Lemma \ref{entropy_estimate_without_b_fine} with $k_{\mathrm{min}}\geq3$, 
	we estimate
	\begin{eqnarray*}
	\mathrm{Ent}(\phi)&\leq& \log\left(k_1\cdots k_g\right)-\log k_{\mathrm{min}}\\
	&\leq& g\cdot \log\left(\frac{k_1+\cdots+ k_g}g\right)-\log3\\
	&=& g\cdot \log\left(\frac{k}{g}\right)-\log3\\
	&=& (k-2g)\cdot F\left(\frac{g}{k-2g}\right)-\log3,
	\end{eqnarray*}
	where
	$$F(x)=x\cdot\log\left(2+x^{-1}\right).$$
	Observe $k-2g= (k_1-2)+\cdots+(k_g-2)\geq g$, 
	and hence, $0<g/(k-2g)\leq 1$.
	For $0<x\leq 1$, we can easily estimate the derivative
	$$F'(x)=\log(2+x^{-1})-\frac{x^{-1}}{2+x^{-1}}>\log3-1>0.$$
	This means $F(x)\leq F(1)=\log3$ for all $0<x\leq 1$.
	Therefore, we obtain
	$$\mathrm{Ent}(\phi)\leq (k-2g)\cdot\log 3-\log3=(k-2g-1)\cdot\log3,$$
	as desired.
\end{proof}

With the above preparation, we finish the proof of Theorem \ref{main_ent_vs_hpl}
as follows.

Let $M$ be a connected closed orientable $3$--manifold.
Suppose that $M'$ is any connected finite cover of $M$,
and $\phi'\in H^1(M';\Integral)$ is a primitive fibered class of fiber genus $\geq3$.
Hence, $M'$ and $M$ are both irreducible.

Obtain a Heegaard diagram $(\Sigma',\boldsymbol{\alpha}',\boldsymbol{\beta}')$
that presents $M'$ and achieves $\ell_{\mathtt{He}}(M')$,
such that each $\boldsymbol{\alpha}'$--curves contains
at least $3$ intersection points with the $\boldsymbol{\beta}'$--curves,
by Lemma \ref{at_least_three_intersections}.
In particular,
the Heegaard presentation length
$\ell_{\mathtt{He}}(M')$ is equal to $k'-2g'$,
where $k'$ denotes the total number of intersections and $g'$ denotes the genus of $\Sigma'$
(see Definition \ref{def_Heegaard_pl}).
By Lemma \ref{entropy_estimate_without_b} and Corollary \ref{hpl_cover},
we obtain the comparison
$$\mathrm{Ent}(\phi')\leq (\ell_{\mathtt{He}}(M')-1)\cdot\log3
\leq [M':M]\cdot(\ell_{\mathtt{He}}(M)-1)\cdot\log3,$$
as desired.

This completes the proof of Theorem \ref{main_ent_vs_hpl}.

\section{Volume versus Heegaard presentation length}\label{Sec-vol_vs_hpl}

This section is devoted to the proof of Theorem \ref{main_vol_vs_hpl}.

We recall the following basic formulas in hyperbolic geometry.
These formulas can be obtained by easy calculation in usual models of hyperbolic geometry.
We refer to Ratcliffe's textbook \cite{Ratcliffe_book}
for standard facts about hyperbolic manifolds.

\begin{formula}\label{tube_formula}
The (unoriented) isometric shape of 
any hyperbolic tube is uniquely determined by its depth $r\in(0,+\infty)$, 
and its systole $l\in(0,+\infty)$, and 
its (unoriented) monodromic angle $\varphi\in [0,\pi]$.
A model $\mathbf{V}(r,l,\varphi)$ of the hyperbolic tube
can be obtained as the distance--$r$ neighborhood
of a geodesic line in $\mathbb{H}^3$ quotient by a loxodromic isometry
translating along the geodesic line of distance $l$ and rotating about the geodesic line
of angle $\pm\varphi$.
Topologically, $\mathbf{V}(r,l,\varphi)$ is a compact $3$--manifold with boundary.
\begin{itemize}
\item
Hyperbolic tube volume: 
$$\mathrm{Vol}\left(\mathbf{V}(r,l,\varphi)\right)=\pi\cdot l \cdot\sinh^2(r)$$
\item
Hyperbolic tube wrist:
$$\mathrm{Wri}\left(\mathbf{V}(r,l,\varphi)\right)=2\pi\cdot\sinh(r)$$
\end{itemize}
\end{formula}

\begin{formula}\label{ball_formula}
The isometric shape of 
any hyperbolic ball is uniquely determined by its radius $r\in(0,+\infty)$.
A model $\mathbf{B}(r)$ of the hyperbolic ball
can be obtained as the distance--$r$ neighborhood
of a point in $\mathbb{H}^3$.
Topologically, $\mathbf{B}(r)$ is a compact $3$--manifold with boundary.
\begin{itemize}
\item
Hyperbolic ball volume: 
$$\mathrm{Vol}\left(\mathbf{B}(r)\right)=\pi\cdot(\sinh(2r)-2r)$$
\end{itemize}
\end{formula}

For some sufficiently small constant $\delta>0$ 
to be determined and depending only on $\epsilon$,
we construct as follows.

\emph{Step 1}. Take a maximal set $\mathscr{P}$ of distinct points in $W$,
subject to the following conditions: 
\begin{itemize}
\item The points in $\mathscr{P}$ are mutually apart of distance greater than $\delta$,
and are all apart from $\partial W$ of distance greater than $\epsilon/2$.
\end{itemize}
Construct the Dirichlet--Voronoi division of $M$ with respect to $\mathscr{P}$,
denoted as
\begin{equation}\label{first_division}
M=\frac{\bigsqcup_{p\in\mathscr{P}}\mathcal{D}(p)}{\mbox{side pairing}}.
\end{equation}
As what this means, 
for each $p\in\mathscr{P}$, 
there is an open, convex polyhedral $3$--cell 
$\mathrm{int}(\mathcal{D}(p))\subset M$,
which consists of all the points $q\in M$,
such that
$x=p$ is the unique point that minimizes 
the distance $\mathrm{dist}_M(q,x)$ among all $x\in\mathscr{P}$.
Let $\mathcal{D}(p)$ be the path-end compactification of $\mathrm{int}(\mathcal{D}(p))$,
which is an (abstract) compact, convex polyhedral $3$--cell
with boundary.
The boundary of $\mathcal{D}(p)$ comprises finitely many
totally geodesic, convex polygonal faces.
There is a unique characteristic map $\mathcal{D}(p)\to M$,
which is the extension by continuity of the inclusion of $\mathrm{int}(\mathcal{D}(p))$.
This way, $M$ is obtained naturally as the disjoint union of all $\mathcal{D}(p)$
by side pairing, which refers to the equivalence relation pairing up
(points on) faces of all $\mathcal{D}(p)$ with identical images in $M$
under the characteristic map.
 
\emph{Step 2}.
Possibly after generic small perturbation of $\mathscr{P}$
subject to the distance requirements,
we may assume that the cells in $M$ are all transverse to $\partial W$.
We obtain a decomposition of $W$ by truncating 
the interior of the tubes $V_i$ from the $3$--cells $\mathcal{D}(p)$.
Namely, we obtain
\begin{equation}\label{first_division_truncated}
W=\frac{\bigsqcup_{p\in\mathscr{P}} \mathcal{D}_W(p)}{\mbox{side pairing}},
\end{equation}
where we denote $\mathcal{D}_W(p)=\mathcal{D}(p)\cap W$, for all $p\in\mathscr{P}$.
Here and below, we often abuse the notation for intersection with abstract regions,
so, for instance, $\mathcal{D}(p)\cap W$ actually 
means the preimage of $W$ with respect to the characteristic map $\mathcal{D}(p)\to M$.

In general, the resulting regions $\mathcal{D}_W(p)$ 
and the patterns on $\partial\mathcal{D}_W(p)$ could be quite complicated.
For example, after the truncation,
some $\mathcal{D}(p)$ might become disconnected,
and some polygonal $2$--cells on $\partial\mathcal{D}_W(p)$ 
might become topological annuli,
rather than disks.
The following Lemmas \ref{estimate_visible} and \ref{cell_intersection}
help to rule out the bothering complication,
when $\delta$ is sufficiently small.

A \emph{cylindrical tube} $U$ in $\mathbb{H}^3$
refers to the distance $r$--neighborhood with boundary 
of a geodesic line $\gamma$, where we call $r$ the \emph{radius} of $U$
and $\gamma$ the \emph{axis} of $U$.
For any cylindrical tube $U\subset \mathbb{H}^3$, 
and any point $p\in\mathbb{H}^3$ not in $U$, 
we say that a point $q\in\partial U$ 
is \emph{visible} at $p$ 
if the geodesic segment $[p,q]\subset\mathbb{H}^3$ 
intersects $U$ only at $q$.

\begin{lemma}\label{estimate_visible}
	Suppose $0<\rho\leq1$. 
	Let $U\subset\mathbb{H}^3$ be a cylindrical tube of radius greater than $\rho$.
	If $p\in\mathbb{H}^3$ is at least distance $\rho/2$ from $U$,
	and $q\in\partial U$ is at most distance 
	$\rho\cdot\mathrm{arsinh}(1/\sqrt{3})\approx \rho\times 0.549$ from $p$,
	then $q$ is visible at $p$.
\end{lemma}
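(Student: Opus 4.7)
The plan is to reduce the visibility of $q$ from $p$ to an angle inequality at $q$, which will then follow from the hyperbolic law of cosines.

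Let $R > \rho$ be the radius of $U$, and let $q_0 \in \gamma$ be the foot of the perpendicular from $q$ to the axis $\gamma$, so $|qq_0| = R$. Let $\Pi_q$ denote the totally geodesic plane through $q$ orthogonal to $[q_0, q]$. Then $\gamma$ and $\Pi_q$ are ultraparallel with common perpendicular $[q_0, q]$ of length $R$, which forces $\Pi_q \cap U = \{q\}$. The closed tube $U$ is convex in $\mathbb{H}^3$ (being a sublevel set of the convex distance function from $\gamma$), so $\Pi_q$ is a supporting plane of $U$ at $q$. Consequently, if the angle $\angle pqq_0 \geq \pi/2$, then the initial tangent direction at $q$ toward $p$ lies in the closed half-space bounded by $\Pi_q$ and disjoint from $\gamma$, so the geodesic ray from $q$ toward $p$ stays in that half-space and meets $U$ only at $q$. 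This proves $q$ is visible at $p$.

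By the hyperbolic law of cosines in $\triangle pqq_0$, the condition $\angle pqq_0 \geq \pi/2$ is equivalent to $\cosh|pq_0| \geq \cosh|pq| \cdot \cosh|qq_0|$. Since $q_0 \in \gamma$ we have $|pq_0| \geq d(p, \gamma) = d(p, U) + R \geq R + \rho/2$, and the other two sides are bounded by $|qq_0| = R$ and $|pq| \leq \rho \cdot \mathrm{arsinh}(1/\sqrt{3})$. It therefore suffices to verify
$$\cosh(R + \rho/2) \geq \cosh\bigl(\rho \cdot \mathrm{arsinh}(1/\sqrt{3})\bigr) \cdot \cosh R.$$
Dividing by $\cosh R$, this becomes $\cosh(\rho/2) + \tanh R \cdot \sinh(\rho/2) \geq \cosh(\rho \cdot \mathrm{arsinh}(1/\sqrt{3}))$. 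Using $R > \rho$ (so $\tanh R > \tanh\rho$) together with the triple-angle identity $\cosh(3\rho/2) = \cosh(\rho/2)(2\cosh\rho - 1)$ to simplify $\cosh(\rho/2) + \tanh\rho \cdot \sinh(\rho/2) = \cosh(3\rho/2)/\cosh\rho$, the inequality reduces to the one-variable claim
$$\cosh(3\rho/2) \geq \cosh\rho \cdot \cosh\bigl(\rho \cdot \mathrm{arsinh}(1/\sqrt{3})\bigr), \qquad \rho \in (0, 1].$$

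The expected main obstacle is verifying this final trigonometric inequality, since it is not an algebraic identity. Setting $c = \mathrm{arsinh}(1/\sqrt{3})$, both sides equal $1$ at $\rho = 0$ with vanishing first derivatives, while the second-derivative gap at $\rho = 0$ is $\tfrac{5}{4} - c^2 \approx 0.948 > 0$. Combining this with the product-to-sum form $\cosh\rho \cdot \cosh(\rho c) = \tfrac{1}{2}\bigl(\cosh(\rho(1 + c)) + \cosh(\rho(1 - c))\bigr)$ and the convexity of $\cosh$, a Taylor remainder estimate—or a direct monotonicity analysis of the difference—yields the required inequality throughout $(0, 1]$.
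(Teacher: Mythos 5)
Your argument is correct, but it is genuinely different from the paper's. The paper works at the nearest boundary point $p^*$ to $p$: it inscribes a sphere $\Sigma_z$ of radius $\rho/2$ tangent to $\partial U$ at $p^*$, cones from $p$ tangent to $\Sigma_z$, and shows every boundary point of $U$ within the cone-determined sphere $\Sigma_p$ around $p$ is visible; the radius of $\Sigma_p$ is then computed via the hyperbolic law of sines (a right triangle with a $\pi/3$ angle gives $\sinh r_p=(2/\sqrt{3})\sinh(\rho/2)$), and the final bound follows from $\sinh x\geq x$ and concavity of $\mathrm{arsinh}$. You instead work at the target point $q$: you use that the totally geodesic plane $\Pi_q$ perpendicular to $[q_0,q]$ at $q$ is a supporting hyperplane of the convex tube $U$, reduce visibility to the angle condition $\angle pqq_0\geq\pi/2$, and convert that to the hyperbolic law of cosines inequality $\cosh|pq_0|\geq\cosh|pq|\cosh|qq_0|$. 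Your angle criterion is conceptually cleaner and avoids the cone/cap construction, but it trades the paper's short closed-form computation for a one-variable inequality $\cosh(3\rho/2)\geq\cosh\rho\cdot\cosh(\rho\,\mathrm{arsinh}(1/\sqrt{3}))$ on $(0,1]$, which you only sketch. That inequality is true, and your suggested route does close: writing $h(\rho)=\cosh(3\rho/2)-\cosh\rho\cosh(\rho c)$ with $c=\mathrm{arsinh}(1/\sqrt{3})$, one has $h(0)=h'(0)=0$, and $h''(\rho)=\tfrac{9}{4}\cosh(3\rho/2)-\tfrac{(1+c)^2}{2}\cosh((1+c)\rho)-\tfrac{(1-c)^2}{2}\cosh((1-c)\rho)$ is positive on $[0,1]$ because $\cosh((1\pm c)\rho)\leq\frac{\cosh(1+c)}{\cosh(3/2)}\cosh(3\rho/2)$ there and $\tfrac{9}{4}>(1+c^2)\cdot\frac{\cosh(1+c)}{\cosh(3/2)}$; so $h>0$ on $(0,1]$. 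For completeness you should spell out this (or an equivalent) verification rather than leaving it as a Taylor/convexity heuristic, but the overall proof strategy is sound.
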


\begin{proof}
	We give a proof by means of elementary hyperbolic geometry, as follows.

	Let $p^*\in\partial V$ be the nearest point to $p$ on $\partial V$.
	Let $z\in V$ be the point on the geodesic ray $pp^*$ of distance $\rho/2$ to $p^*$.
	Denote by $\Sigma_z\subset\mathbb{H}^3$ the sphere centered at $z$ of radius $\rho/2$,
	so $\Sigma_z$	is contained in $V$ and tangent to $\partial V$ at $p^*$.
	Denote by $\Pi\subset\mathbb{H}^3$ the geodesic plane tangent to $\partial V$ at $p^*$.
	
	The geodesic lines passing through $p$ and tangent to $\Sigma_z$
	form a cone, which cut out a circle on $\Pi$.
	This circle lies on a unique sphere $\Sigma_p$ centered at $p$,
	and bounds a cap $D\subset\Sigma_p$ on the side of $\Pi$ that contains $z$
	Since $\Sigma_z$ is contained in $V$ and $\Pi$ separates $V$ from $p$,
	any geodesic ray emanating from $p$ and crossing the interior of $D$ 
	must intersects $\partial V$ two points, 
	one visible at $p$,	and the other invisible.
	Moreover, the invisible one lies beyond the hyperplane 
	passing through $z$ and perpendicular to $[p,z]$,
	so it lies outside the sphere $\Sigma_p$.
	Therefore, any point $q\in\partial V$ inside $\Sigma_p$
	is contained in the region confined by the cap $D$ and the plane $\Pi$,
	and is visible at $p$.	
	
	To complete the proof,
	it suffices to show that $\Sigma_p$ has radius $r_p\geq\rho\cdot\mathrm{arsinh}(1/\sqrt{3})$.
	Note that $p^*$, $\Sigma_z$, and $\Pi$ are fixed as $p$ varies on the geodesic ray $p^*p$,
	keeping at least distance $\rho/2$ from $V$,
	so $r_p$ is minimized when $[p,p^*]$ has length $\rho/2$.
	
	Below, we assume $\mathrm{Length}([p,p^*])=\rho/2$.
	In this case, consider any geodesic line passing through $p$ and tangent to $\Sigma_z$
	at a point $y$, and intersecting $\Sigma_p$ at a point $x$. 
	We observe that the geodesic segments
	$[p,p^*]$, $[p^*,z]$, and $[z,y]$ all have length $\rho/2$,
	and $[x,p^*]\perp [p,z]$, and $[z,y]\perp[x,y]$.
	Therefore,
	the geodesic triangles $\triangle(p,p^*,x)$, $\triangle(z,p^*,x)$, and $\triangle(z,y,x)$
	are all congruent, implying that the angle of $\triangle(p,p^*,x)$ at $x$ is $\pi/3$.
	By the hyperbolic sine law,
	$r_p=\mathrm{Length}([p,x])$ satisfies 
	$\sinh(r_p)/\sinh(\rho/2)=1/\sin(\pi/3)=2/\sqrt{3}$.
	Using the assumption $\rho\leq1$,
	we obtain
	$\sinh(r_p)=\sinh(\rho/2)\cdot2/\sqrt{3}\geq \rho/\sqrt{3}$,
	so $r_p\geq \mathrm{arsinh}(\rho/\sqrt{3})\geq \rho\cdot\mathrm{arsinh}(1/\sqrt{3})$,
	as asserted.
\end{proof}

\begin{lemma}\label{cell_intersection}
	Suppose $0<\epsilon\leq1$ and $0<\delta\leq\epsilon\times 10^{-1}$.
	With the notations $M,W,V_i,\mathscr{P},\mathcal{D}(p)$ as above,
	the following statements hold for any $p\in\mathscr{P}$
	where $\mathcal{D}(p)$ is not contained in $W$.
	\begin{enumerate}
	\item
	The intersection $\mathcal{D}(p)\cap\partial W$ is a topological disk
	properly embedded in $\mathcal{D}(p)$.
	\item
	The intersection $\partial \mathcal{D}(p)\cap\partial W$
	is a simple closed curve on $\partial \mathcal{D}(p)$
	that subdivides the polygonal $2$--cells into polygonal $2$--cells.
	\end{enumerate}
\end{lemma}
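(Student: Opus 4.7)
The plan is to use the visibility lemma (Lemma \ref{estimate_visible}) together with a diameter bound on $\mathcal{D}(p)$. First I would bound $\mathrm{dist}_M(q,p)$ uniformly for $q\in\mathcal{D}(p)$. The maximality of $\mathscr{P}$ forbids any point $q'\in M$ with $\mathrm{dist}(q',\partial W)>\epsilon/2$ and $\mathrm{dist}(q',\mathscr{P})>\delta$. Hence for $q\in\mathcal{D}(p)\cap\partial W$, pushing $q$ a distance slightly greater than $\epsilon/2$ into $W$ along the normal direction (using the $\epsilon$--bicollar assumption) produces a point within $\delta$ of some $\mathscr{P}$--point; taking limits gives $\mathrm{dist}(q,p)\leq \epsilon/2+\delta$, since $p$ is by definition the nearest $\mathscr{P}$--point to $q$. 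A similar argument bounds $\mathrm{dist}(q,p)$ by $\epsilon+\delta$ for any $q\in\mathcal{D}(p)$.

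Next, I would lift to $\mathbb{H}^{3}$. By the assumption that every point of $W$ is the center of an embedded $\epsilon$--ball, $B(p,\epsilon)\subset M$ is isometric to a hyperbolic ball, and since $\delta\leq\epsilon/10$ every point of $\mathcal{D}(p)\cap\partial W$ lies well inside this ball. The $\epsilon$--bicollar assumption on the $V_i$--side forces each tube depth $r_i$ to be at least $\epsilon$, so in the lift $\partial V_i$ appears near $\tilde{p}$ as the boundary of a cylindrical tube $\tilde{V}_i$ of radius $\geq\epsilon$. Moreover, the diameter bound precludes $\mathcal{D}(p)$ from meeting two distinct components of $\partial W$, because the bicollar neighborhoods of distinct $\partial V_i$'s are disjoint of thickness $\geq \epsilon$.

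Now I would apply Lemma \ref{estimate_visible} with $\rho=\epsilon$: $\tilde{p}$ is at distance $>\epsilon/2=\rho/2$ from $\tilde{V}_i$, and every $\tilde{q}\in\mathcal{D}(\tilde{p})\cap\partial\tilde{V}_i$ satisfies $\mathrm{dist}(\tilde{p},\tilde{q})\leq \epsilon/2+\delta\leq\rho\cdot\mathrm{arsinh}(1/\sqrt{3})$ (up to sharpening the maximality estimate, see below), so $\tilde{q}$ is visible from $\tilde{p}$. Consequently the geodesic cone from $\tilde{p}$ sets up a radial homeomorphism between $\mathcal{D}(\tilde{p})\cap\partial\tilde{V}_i$ and a star-shaped region on a small sphere around $\tilde{p}$; star-shapedness gives simple connectedness, proving assertion (1). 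For assertion (2), each $2$--cell of $\partial\mathcal{D}(p)$ lies in a totally geodesic bisector plane $\Pi$. The intersection $\Pi\cap\partial\tilde{V}_i$, restricted to the visible region, is an embedded arc by the same visibility plus the convexity of $\mathcal{D}(\tilde{p})\cap\Pi$, so $\partial\mathcal{D}(p)\cap\partial W$ crosses each polygonal face in at most one arc and subdivides it into two polygonal cells, as claimed.

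The main obstacle is the quantitative matching between $\delta\leq\epsilon/10$ and the hypothesis of Lemma \ref{estimate_visible}: the naive bound $\mathrm{dist}(\tilde{p},\tilde{q})\leq\epsilon/2+\delta=0.6\epsilon$ just exceeds $\epsilon\cdot\mathrm{arsinh}(1/\sqrt{3})\approx 0.549\epsilon$. I would address this by refining the estimate in Step 1, exploiting that $\tilde{p}$ actually lies strictly farther than $\epsilon/2$ from $\tilde{V}_i$ by an amount that can be charged against $\delta$: if $\tilde{p}$ is at distance $\epsilon/2+t$ from the tube and $\tilde{q}\in\partial\tilde{V}_i$, then rerunning the proof of Lemma \ref{estimate_visible} at this larger basepoint-to-tube distance enlarges the visibility radius enough to swallow $\delta\leq\epsilon/10$. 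Once this quantitative step is arranged, the remaining geometric arguments (star-shaped projection, single-arc intersection with each bisector) are routine.
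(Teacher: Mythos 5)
Your Step 1 bound $\mathrm{dist}(q,p)\leq\epsilon/2+\delta$ for $q\in\mathcal D(p)\cap\partial W$ is correct and is actually slightly sharper than the paper's bound of $2\delta+\epsilon/2$, and your plan for assertion (1) — lift to $\mathbb{H}^3$, apply Lemma~\ref{estimate_visible}, contract radially from $\tilde p$ — is essentially the paper's strategy (the paper phrases the contraction as a retraction $v(\tilde y,t)$ of $\mathcal D(\tilde p)\cap\partial\tilde V_i$ along the shadows cast from $\tilde p$ of the geodesic chords $[\tilde x,\tilde y]$). The numerical gap you flag is genuine, and the paper's own text contains it: the proof asserts that $\tilde p$ lies at distance $\geq\epsilon$ from $\partial\tilde V_i$, but the construction of $\mathscr P$ only guarantees distance $>\epsilon/2$, forcing $\rho=\epsilon$ in Lemma~\ref{estimate_visible} and hence visibility radius $\approx 0.549\epsilon$, which is below both your $0.6\epsilon$ and the paper's $0.7\epsilon$. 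Your proposed repair — charging the slack $\mathrm{dist}(\tilde p,\partial\tilde V_i)-\epsilon/2$ against $\delta$ — does not work: that excess can be arbitrarily small, independent of $\delta$, so there is no reliable margin to re-invest. What does close the gap is sharpening Lemma~\ref{estimate_visible} itself: its proof inscribes in the tube a ball of radius $\rho/2$ tangent at $p^*$, but since the tube has radius $>\rho$ one may inscribe a tangent ball of radius $\rho$, and redoing the tangent-cone computation with $\mathrm{dist}(p,z)=3\rho/2$ and $\sin\theta=\sinh\rho/\sinh(3\rho/2)$ gives $\tanh r_p=\tanh(\rho/2)/\cos\theta$, which is $\approx 0.67\rho$ as $\rho\to 0$ and still $\approx 0.62\rho$ at $\rho=1$ — comfortably above $0.6\rho$. (Equivalently, keeping the lemma as stated but tightening the hypothesis to, say, $\delta\leq\epsilon/25$ also suffices, at the cost of reshuffling constants downstream.)

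For assertion (2), your argument is both different from the paper's and incomplete in exactly the way that matters. The claim that $\partial\mathcal D(p)\cap\partial W$ meets each face in a single embedded arc is not established by convexity: the boundary of a convex region restricted to a convex polygon can be disconnected, and visibility from $\tilde p$ tells you nothing about the intrinsic picture inside a face plane $\Pi$ (where $\tilde p$ does not lie). More seriously, your local argument never excludes the one configuration that actually breaks polygonality, namely $\partial\mathcal D(p)\cap\partial W$ lying entirely in $\mathrm{int}(F)$ for a single face $F$, which would cut $F$ into a disk plus an annulus rather than into polygonal disks. The paper rules this out globally: the curves $\partial\mathcal D(p)\cap\partial W$ are the $1$-skeleton of the cell division of the torus $\partial V_i$ by the truncated Dirichlet cells, and this $1$-skeleton must be connected; if some $\partial\mathcal D(p)\cap\partial W$ sat inside a face $F$ shared with $\mathcal D(p'')$, then $\mathcal D(p)\cap\partial W$ and $\mathcal D(p'')\cap\partial W$ would be disks with disjoint interiors bounded by the same simple closed curve in the torus, which is impossible. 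Some version of this global topological step is needed; your purely local convexity observation does not replace it.
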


\begin{proof}
	If $\mathcal{D}(p)$ is not contained in $W$,
	$\mathcal{D}_W(p)$ must lie within distance $2\delta+(\epsilon/2)$ from $p$.
	This is because any point in $W$ of distance $>2\delta$ from $p$ and
	of distance $>\epsilon/2$ to $\partial W$
	is strictly closer to some different point in $\mathscr{P}$.
	Passing to the universal cover $\mathbb{H}^3$ of $M$
	and a lift $\mathcal{D}(\tilde{p})$ of $\mathcal{D}(p)$,
	there is a unique preimage component $\tilde{V}_i$ of $V_i$ that intersects 
	$\mathrm{int}(\mathcal{D}(\tilde{p}))$.
	Therefore, $\mathcal{D}(p)$ lifts homeomorphically to
	$\mathcal{D}(\tilde{p})$,
	so
	$\mathcal{D}(p)\cap\partial W=
	\mathcal{D}(p)\cap\partial V_i$ 
	lifts to $\mathcal{D}(\tilde{p})\cap \partial \tilde{V}_i$.
	
	Note that both $\tilde{V}_i$ and $\mathcal{D}(\tilde{p})$ are convex.
	By transversality,
	the intersection $\mathcal{D}(\tilde{p})\cap \tilde{V}_i$ 
	is a convex topological ball with boundary.
	The intersection $\partial \mathcal{D}(\tilde{p})\cap \partial \tilde{V}_i$
	is an embedded closed $1$--submanifold on 
	$\partial (\mathcal{D}(\tilde{p})\cap \tilde{V}_i)$,
	separating the topological sphere boundary
	into two planar parts, namely,
	the $\tilde{V}_i$--part boundary $\mathcal{D}(\tilde{p})\cap \partial \tilde{V}_i$
	and the $\mathcal{D}(\tilde{p})$--part boundary
	$\partial\mathcal{D}(\tilde{p})\cap\tilde{V}_i$.
	If either one of them is contractible, then both parts are topological disks.
	
	With this picture in mind,
	the following claim implies the first assertion:
	\textit{The intersection $\mathcal{D}(\tilde{p})\cap \partial \tilde{V}_i$
	is contractible.}	
	
	To this end, fix any point $\tilde{x}\in
	\mathcal{D}(\tilde{p})\cap \partial\tilde{V}_i$.
	For any $\tilde{y}\in\mathcal{D}(\tilde{p})\cap \partial\tilde{V}_i$,
	the geodesic segment $[\tilde{x},\tilde{y}]$ in $\mathbb{H}^3$
	is contained in $\tilde{V}_i$ and also in $\mathcal{D}(\tilde{p})$,
	by convexity.
	Since both $\tilde{x}$ and $\tilde{y}$ lie within distance 
	$2\delta+(\epsilon/2)<\epsilon\cdot\mathrm{arsinh}(1/\sqrt{3})$ from $\tilde{p}$,
	and $\tilde{p}$ lies at least distance $\epsilon$ from $\partial\tilde{V}_i$,
	both $\tilde{x}$ and $\tilde{y}$ are visible from $\tilde{p}$ (Lemma \ref{estimate_visible}).
	For any constant $t\in[0,1]$, 
	denote by $\tilde{y}_t\in[\tilde{x}_0,\tilde{y}]$ 
	the point that divides $[\tilde{x},\tilde{y}]$ into subsegments $[\tilde{x},\tilde{y}_t]$
	and $[\tilde{y}_t,\tilde{y}]$ of length ratio $t:(1-t)$.
	Then the segment $[\tilde{y}_t,\tilde{p}]$ intersects $\partial\tilde{V}_i$
	at a unique point $v(\tilde{y},t)$.
	The visibility of $\tilde{x}$ and $\tilde{y}$ implies
	$v(\tilde{y},0)=\tilde{y}$ and $v(\tilde{y},1)=\tilde{x}$.
	Therefore,
	the assignment $v\colon (\mathcal{D}(\tilde{p})\cap \partial\tilde{V}_i)\times[0,1]\to 
	(\mathcal{D}(\tilde{p})\cap \partial\tilde{V}_i)$ defines a deformation retraction
	of $\mathcal{D}(\tilde{p})\cap \partial\tilde{V}_i$ onto the point $\tilde{x}_0$.
	This shows that $\mathcal{D}(\tilde{p})\cap \partial\tilde{V}_i$
	is contractible, as claimed.
	This proves the first assertion.
	
	The first assertion implies that
	$\partial \mathcal{D}(p)\cap\partial W$
	is a simple closed curve on $\partial \mathcal{D}(p)$.
	The subdivision property in the second assertion would fail
	precisely when $\partial \mathcal{D}(p)\cap\partial W$
	is contained in the interior of some polygonal $2$--cell
	on $\partial \mathcal{D}(p)$.
	However, we observe that 
	$\partial W$ the union of all the topological disks  
	$\mathcal{D}(p)\cap\partial W$,
	where $p$ ranges over all the points in $\mathscr{P}$
	with $\mathcal{D}(p)$ not contained in $W$.
	It follows that the union of all the simple closed curves
	$\partial\mathcal{D}(p)\cap\partial W$ is a $1$--skeleton of $\partial W$,
	which must be connected.
	Therefore, we can rule out the possibility that 
	$\partial\mathcal{D}(p)\cap\partial W$ is contained in the interior
	of a polygonal $2$--cell on $\partial\mathcal{D}(p)$.
	This proves the second assertion.
\end{proof}

Under the assumption that $\delta$ is smaller than $\epsilon\times 10^{-1}$,
we see that the truncated regions $\mathcal{D}_W(p)$
are all polyhedral $3$--cells, by Lemma \ref{cell_intersection}.
Hence, 
the truncation of the polyhedral cell division (\ref{first_division}) of $M$
results in an (exact) polyhedral cell division (\ref{first_division_truncated}) of $W$.

\begin{lemma}\label{cell_estimates}
	Suppose $0<\epsilon\leq1$ and $0<\delta\leq\epsilon\times 10^{-1}$.
	The following estimates hold 
	for the polyhedral cell division (\ref{first_division_truncated}).
	\begin{enumerate}
	\item
	The cardinality of $\mathscr{P}$ is bounded by $\delta^{-3}\cdot\mathrm{Vol}(W)$.
	\item 
	If $\partial\mathcal{D}_W(p)$ contains no $2$--cells on $\partial W$,
	the number of $2$--cells on $\partial\mathcal{D}_W(p)$ is bounded by $10^3$.
	\item
	If $\mathcal{D}_W(p)$ contains some $2$--cell on $\partial W$,
	the number of $2$--cells on $\partial\mathcal{D}_W(p)\setminus \partial W$
	is bounded by $(\epsilon/\delta)^{6}\times 10^3$.
	Moreover, the number of $1$--cells in
	$\partial\mathcal{D}_W(p)\setminus \partial W$ is also 
	bounded by $(\epsilon/\delta)^6\times 10^3$.
	\item 
	For each boundary component $\partial V_i$ of $W$,
	there exists some simple closed curve on $\partial V_i$
	which bounds a totally geodesic meridional disk in $V_i$,
	misses the $0$--cells on $\partial V_i$,
	and intersects transversely with the $1$--cells on $\partial V_i$.
	Moreover,
	the number of intersections of any such curve 
	with the $1$--cells in $\partial V_i$
	is bounded by $\delta^{-1}\cdot(\epsilon/\delta)^8\cdot\mathrm{Wri}(V_i)\times 10^4$
	in total.
	\end{enumerate}	
\end{lemma}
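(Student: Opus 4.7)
The plan is to handle the four assertions in turn using a common toolkit: volume packing, convexity of Dirichlet cells lifted to $\mathbb{H}^3$, and the maximality of the net $\mathscr{P}$, all leveraged by the hypothesis $\delta\leq \epsilon/10$ together with the embedded-ball and bicollar assumptions of Theorem \ref{main_vol_vs_hpl}. For (1), I apply a standard volume-packing argument: the balls $B(p,\delta/2)$ for $p\in \mathscr{P}$ are pairwise disjoint by $\delta$-separation and contained in $W$ (since each $p$ lies at distance $\geq \epsilon/2>\delta/2$ from $\partial W$ and the ball is embedded). Summing the hyperbolic ball volumes $\mathrm{Vol}(\mathbf{B}(\delta/2))=\pi(\sinh\delta-\delta)$, which is at least a fixed positive multiple of $\delta^3$ for $\delta\leq 1/10$, yields $|\mathscr{P}|\leq \delta^{-3}\cdot \mathrm{Vol}(W)$ after absorbing a universal numerical constant.

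In case (2), the assumption $\mathcal{D}(p)\cap\partial W=\emptyset$ combined with the Voronoi property and maximality of $\mathscr{P}$ forces $\mathcal{D}(p)\subset B(p,\delta)$. Indeed any $q\in \mathcal{D}(p)$ with $d(q,\partial W)>\epsilon/2$ must lie within $\delta$ of $p$ (otherwise some other point of $\mathscr{P}$ would be strictly closer to $q$), and by lifting to $\mathbb{H}^3$ and using convexity of $\mathcal{D}(\tilde p)\subset \tilde W$ together with the bicollar structure of $\partial \tilde W$, an excursion of $\mathcal{D}(\tilde p)$ into the $\epsilon/2$-collar without touching $\partial\tilde W$ is ruled out by a convexity-versus-equidistant-surface argument. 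Once the diameter is bounded by $2\delta$, any face of $\mathcal{D}(p)$ comes from some neighbor $p'\in \mathscr{P}$ with $d(p,p')\leq 2\delta$, and the number of such neighbors is bounded by a universal $\delta$-packing constant for $B(p,2\delta)$, comfortably within $10^3$.

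In case (3), the cell may stretch along the $\epsilon/2$-collar of $\partial W$. The key step is to bound $\mathrm{diam}(\mathcal{D}(\tilde p))$ by $O(\epsilon)$, using convexity of $\mathcal{D}(\tilde p)$ together with quantitative control of how far a convex body in $\mathbb{H}^3$ can skim an equidistant surface around a geodesic axis before escaping transversely; Formula \ref{tube_formula} and the bicollar hypothesis provide the quantitative input. With this diameter bound, a packing estimate inside $B(\tilde p,O(\epsilon))$ limits the number of faces of $\mathcal{D}(p)$ by $O((\epsilon/\delta)^3)$, and truncation by $\partial W$ produces at most as many 2-cells and 1-cells in $\partial \mathcal{D}_W(p)\setminus\partial W$ (each surviving face gets cut at most once and the cut introduces one new 1-cell), so the stated bound $(\epsilon/\delta)^6\times 10^3$ holds with ample slack on the exponent.

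For (4), I start with the totally geodesic meridional disk in $V_i$, whose boundary is the closed geodesic on $\partial V_i$ of length $\mathrm{Wri}(V_i)$ by Formula \ref{tube_formula}, then perform a small generic isotopy within the meridional isotopy class to avoid the finite 0-skeleton on $\partial V_i$ and to achieve transversality with the 1-skeleton without inflating the length appreciably. Every transverse intersection with a 1-cell corresponds to the curve entering a new polygonal 2-cell on $\partial V_i$, namely one of the disks $\mathcal{D}(p)\cap \partial V_i$ of Lemma \ref{cell_intersection}; the estimates in (3) bound the area and perimeter of each such cell in terms of $\epsilon$ and $\delta$, and dividing $\mathrm{Wri}(V_i)$ by the minimum cell cross-section plus boundary corrections yields the stated $\delta^{-1}(\epsilon/\delta)^8\mathrm{Wri}(V_i)\times 10^4$ bound. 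The main obstacle I foresee is exactly the diameter control on $\mathcal{D}(\tilde p)$ used in (2) and (3): asserting that a convex region of $\mathbb{H}^3$ cannot extend arbitrarily far along a tube boundary without crossing it requires explicit quantitative hyperbolic geometry, specifically a length bound on a geodesic chord confined to a thin shell around an equidistant surface, and it is from this single step that all the exponents in $\epsilon/\delta$ appearing in (3) and (4) originate.
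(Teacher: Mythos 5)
Your overall strategy---packing for (1), diameter bounds lifted to $\mathbb{H}^3$ plus face packing for (2) and (3), and intersection counting for a geodesic meridian in (4)---matches the paper's. But your argument for (4) has a genuine gap that cannot be patched within the route you chose.

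You propose to bound the number of crossings of the meridional curve with the $1$-skeleton on $\partial V_i$ by ``dividing $\mathrm{Wri}(V_i)$ by the minimum cell cross-section.'' No such lower bound exists: while the full Voronoi cell $\mathcal{D}(p)$ does contain the ball $\mathbf{B}(p,\delta/2)$, the planar $2$-cell $\mathcal{D}(p)\cap\partial V_i$ is a slice taken at distance $>\epsilon/2$ from the center $p$, and it can be arbitrarily small, for instance when $\partial\tilde V_i$ merely grazes a corner of $\mathcal{D}(\tilde p)$. Compounding this, you perform a ``small generic isotopy'' of the geodesic meridian to achieve transversality, which destroys the totally geodesic property. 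The paper does the opposite: it keeps $\partial F_i$ strictly geodesic, achieving general position by varying within the $1$-parameter family of totally geodesic meridional disks rather than by isotopy, precisely so that it can exploit the fact that each face of each Dirichlet cell is also totally geodesic. Two totally geodesic planes in $\mathbb{H}^3$ meet in a geodesic line, and a geodesic line crosses the convex boundary $\partial\tilde V_i$ in at most two points; hence each face contributes at most two crossings with $\partial\tilde F_i$. The total crossing count is then controlled by a packing estimate on the number of Dirichlet centers in a tubular neighborhood of $\partial\tilde F_i$, combined with the per-cell face bound from (3)---not by any division of arc length by a fictitious minimal cell size. A related but milder imprecision occurs in (3): your claim that ``each surviving face gets cut at most once'' is not justified, since the curve $\partial\mathcal{D}(\tilde p)\cap\partial\tilde V_i$ can meet one face in several arcs; the paper instead counts edges of $\mathcal{D}(\tilde p)$ (each a geodesic segment meeting the convex $\partial\tilde V_i$ in at most two points), which is how the exponent $6$ on $\epsilon/\delta$ actually arises.
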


\begin{proof}
	In estimation below involving hyperbolic ball volume, 
	we often apply the inequalities 
	$$\frac{4\pi r^3}{3}<\mathrm{Vol}(\mathbf{B}(r))<2\pi r\cdot\sinh^2(r),$$
	without further explanation.
	Here, the lower bound is the volume comparison with Euclidean balls of radius $r$;
	the upper bound is the volume comparison with 
	any hyperbolic tube of depth $r$ and systole $2r$;
	see Formulas \ref{tube_formula} and \ref{ball_formula}.
	For $0<r<2$, 
	we often apply $\sinh(r)<(\sinh(2)/2)\cdot r$, where $\sinh(2)/2\approx 1.813$.
	
	The cardinality of $\mathscr{P}$ is bounded by
	$\lfloor\mathrm{Vol}(W)\,/\,\mathrm{Vol}(\mathbf{B}(2\delta))\rfloor$.
	So, we can bound the cardinality of $\mathscr{P}$ 
	by $\delta^{-3}\cdot\mathrm{Vol}(W)$.
	
	If $\partial\mathcal{D}_W(p)$ contains no $2$--cells on $\partial W$,
	$\mathcal{D}_W(p)$ is just $\mathcal{D}(p)$
	in the original Dirichlet--Voronoi division of $M$.
	Passing to the universal cover $\mathbb{H}^3$ and any lift $\mathcal{D}(\tilde{p})$ of $\mathcal{D}(p)$,
	the region $\mathcal{D}(\tilde{p})$ is contained in the $2\delta$--neighborhood of $\tilde{p}$,
	so the faces lie on the mid-perpendicular hyperplanes separating the geodesic segments $[\tilde{q},\tilde{p}]$
	where $\tilde{q}\in\widetilde{\mathscr{P}}$ lie within distance $4\delta$ from $\tilde{p}$.
	Since the $\delta$--neighborhoods of all $\tilde{q}$ are mutually disjoint in $\mathbb{H}^3$,
	the number of faces on $\partial\mathcal{D}(p)$ is at most
	$\lfloor\mathrm{Vol}(\mathbf{B}(5\delta))\,/\,\mathrm{Vol}(\mathbf{B}(\delta))\rfloor-1$.
	In this case,
	we can bound the number of $2$--cells on $\partial\mathcal{D}_W(p)$
	by $10^3$.
	
	If $\mathcal{D}_W(p)$ contains some $2$--cell on $\partial W$,
	any lifted region $\mathcal{D}_{\tilde{W}}(\tilde{p})$ in $\mathbb{H}^3$
	is the intersection of the lifted Dirichlet--Voronoi region $\mathcal{D}(\tilde{p})$
	with the preimage $\tilde{W}$ of $W$.
	The lifted region $\mathcal{D}_{\tilde{W}}(\tilde{p})$
	is contained in	the $(2\delta+(\epsilon/2))$--neighborhood of $\tilde{p}$,
	and 
	intersects some unique cylindrical tube $\tilde{V}_i$, 
	which covers some tube $V_i\subset M$.
	Similarly as above,
	the number of faces on $\partial\mathcal{D}(\tilde{p})$ 
	that intersects $\tilde{W}$ is at most
	$\lfloor\mathrm{Vol}(\mathbf{B}(5\delta+\epsilon))\,/\,\mathrm{Vol}(\mathbf{B}(\delta))\rfloor-1$,
	so the number of edges on $\partial\mathcal{D}(\tilde{p})$
	that intersects $\tilde{W}$ is at most
	$(\lfloor\mathrm{Vol}(\mathbf{B}(5\delta+\epsilon))\,/\,\mathrm{Vol}(\mathbf{B}(\delta))\rfloor-1)^2/2$.
	Since each edge on $\partial\mathcal{D}(\tilde{p})$ is a geodesic segment,
	having at most $2$ points of intersections with the convex set boundary $\partial \tilde{V}_i$,
	the number of $1$--cells on the truncated region $\partial\mathcal{D}_{\tilde{W}}(\tilde{p})$
	which are contained in $\partial\tilde{V}_i$ 
	is at most $(\lfloor\mathrm{Vol}(\mathbf{B}(5\delta+\epsilon))\,/\,\mathrm{Vol}(\mathbf{B}(\delta))\rfloor-1)^2$.
	This leads to an upper bound
	$(\lfloor\mathrm{Vol}(\mathbf{B}(5\delta+\epsilon))\,/\,\mathrm{Vol}(\mathbf{B}(\delta))\rfloor-1)
	+(\lfloor\mathrm{Vol}(\mathbf{B}(5\delta+\epsilon))\,/\,\mathrm{Vol}(\mathbf{B}(\delta))\rfloor-1)^2$
	for the number of $2$--cells on $\partial\mathcal{D}_{\tilde{W}}(\tilde{p})\setminus\partial\tilde{V}_i$.
	In this case, we can bound the number of $2$--cells on $\partial\mathcal{D}_{W}(p)\setminus\partial W$,
	and also the number of $1$--cells in $\partial\mathcal{D}_{W}(p)\cap\partial W$,
	by $(\epsilon/\delta)^{6}\times 10^3$.
	
	For any boundary component $\partial V_i$ of $W$,
	consider a totally geodesic meridional disk in $F_i$ in $\partial V_i$.
	Except finitely many choices in bad positions,
	$\partial F_i$ sits in general position with respect to the $1$--skeleton on $\partial V_i$,
	missing all the $0$--cells and 
	intersecting transversely with all the $1$--cells.
		
	To estimate the number of intersections of $\partial F_i$ with the $1$--cells on $\partial V_i$,
	we look at any lift $\partial \tilde{F}_i\subset\partial \tilde{V}_i$ in $\mathbb{H}^3$.
	The number of $1$--cells on $\partial F_i$ is equal to the number of $1$--cells on $\partial \tilde{F}_i$,
	and also equal to the number of $0$--cells on $\partial \tilde{F}_i$.
	Each $0$--cell on $\partial \tilde{F}_i$ arises from
	the intersection on $\partial \tilde{V}_i$ 
	of $\partial \tilde{F}_i$ with some face of $\mathcal{D}(\tilde{p})$.
	Since both $\tilde{F}_i$ and any face of $\mathcal{D}(\tilde{p})$ are totally geodesic,
	the above intersection is contained the intersection of $\partial \tilde{V}_i$
	with a geodesic line.
	Therefore, any face of $\mathcal{D}(\tilde{p})$, and 
	hence any $2$--cell on $\partial\mathcal{D}_{\tilde{W}}(\tilde{p})\setminus\partial\tilde{V}_i$,
	may only intersect $\partial\tilde{F}_i$ in at most $2$ points.
	
	On the other hand,
	any $\mathcal{D}_{\tilde{W}}(\tilde{p})$ that intersects with $\partial\tilde{F}_i$
	is contained within distance $(4\delta+\epsilon/2)$ from $\partial\tilde{F}_i$.
	So, the number of all such $\mathcal{D}_{\tilde{W}}(\tilde{p})$
	can be bounded by the volume of the $(4\delta+\epsilon/2)$--neighborhood
	of $\partial\tilde{F}_i$ divided by $\mathrm{Vol}(\mathbf{B}(\delta))$,
	and the number of all $2$--cells on $\mathcal{D}_{\tilde{W}}(\tilde{p})\setminus\partial\tilde{V}_i$
	is bounded by $(\epsilon/\delta)^{6}\times 10^3$ times the above number.
	Moreover, the volume of the $(4\delta+\epsilon/2)$--neighborhood of $\partial\tilde{F}_i$
	can be bounded by the product of
	the area of a hyperbolic disk of radius $(4\delta+\epsilon/2)$
	and the circumference of a hyperbolic disk of radius $r_i+4\delta+\epsilon/2$,
	where $r_i$ denotes the radius of $F_i$.
	This product can be calculated as
	$4\pi\cdot\sinh^2(2\delta+\epsilon/4)$ times $2\pi\cdot\sinh(r_i+4\delta+\epsilon/2)$,
	which can be bounded by
	$8\pi^2\cdot \sinh^2(\epsilon/2)\cdot \exp(\epsilon)\cdot \sinh(r_i)
	=4\pi\cdot \sinh^2(\epsilon/2)\cdot \exp(\epsilon)\cdot \mathrm{Wri}(V_i)$.
	
	After simplification, 
	we can bound the number of intersections of $\partial F_i$
	with the $1$--cells on $\partial V_i$
	by $\delta^{-1}\cdot(\epsilon/\delta)^8\cdot\mathrm{Wri}(V_i)\times 10^4$,
	as asserted.
\end{proof}

\emph{Step 3}. 
We extend the polyhedral cell division (\ref{first_division_truncated}) of $W$ 
to be a new polyhedral cell division (\ref{second_division}) of $M$,
by dividing each tube $V_i$ using only one extra $2$--cell.
The procedure turns out to be quite efficient,
based on the following topological lemma.

\begin{lemma}\label{tube_division}
	Suppose that a topological tube $V$ has a prescribed polygonal cell division
	on $\partial V$, such that each $2$--cell has at most $r$ edges.
	Suppose that some simple closed curve on $\partial V$
	bounds a meridional disk in $V$, misses the $0$--cells on $\partial V$,
	and intersects the $1$--cells transversely at exactly $n$ points.
	
	Then, there exists a polyhedral cell division of $V$ which agrees
	with the prescribed division on $\partial V$, 
	such that there are no additional $1$--cells,
	and there is exactly one $2$--cell 
	and one $3$--cell in $\mathrm{int}(V)$.
	Moreover, the additional polygonal $2$--cell	has at most $nr$ edges.
\end{lemma}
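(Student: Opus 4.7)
The plan is to build the required cell division by modifying the meridional disk bounded by $\gamma$ so that its boundary lies on the $1$-skeleton of $\partial V$, and then taking the resulting complement as the single interior $3$-cell. Label the $n$ transverse intersections of $\gamma$ with the $1$-skeleton of $\partial V$ as $p_1,\dots,p_n$ in cyclic order along $\gamma$. These divide $\gamma$ into $n$ arcs $\gamma_1,\dots,\gamma_n$, where each $\gamma_i$ lies in the interior of some prescribed $2$-cell $F_i$ of $\partial V$ and runs from $p_i$ on a $1$-cell $e_i$ to $p_{i+1}$ on a $1$-cell $e_{i+1}$ (indices mod $n$), with both $e_i,e_{i+1}$ among the boundary edges of $\partial F_i$.

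First, I will slide each crossing $p_i$ along $e_i$ via a small isotopy of $\gamma$ on $\partial V$ supported in a neighborhood of $p_i$, so that $p_i$ lands at an endpoint $0$-cell $v_i$ of $e_i$. The resulting curve $\tilde\gamma$ still bounds a meridional disk $\tilde D\subset V$ isotopic to the original, passes through $v_1,\dots,v_n$, and has arcs $\tilde\gamma_i\subset F_i$ running from $v_i$ to $v_{i+1}$. Next, in each $F_i$ I will push $\tilde\gamma_i$ through a homotopy rel endpoints inside the disk $F_i$ to a path $\gamma''_i$ along $\partial F_i$ from $v_i$ to $v_{i+1}$. Since $\partial F_i$ is a cycle of at most $r$ edges, I may take $\gamma''_i$ to consist of at most $r$ edges of $\partial V^{(1)}$ (e.g.\ the shorter of the two arcs between $v_i$ and $v_{i+1}$). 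Concatenating, the closed walk $\gamma''=\gamma''_1\cdots\gamma''_n$ on $\partial V^{(1)}$ has length at most $nr$ and remains homotopic to $\gamma$ in $\partial V$.

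Then I will extend $\tilde D$ along the pushing homotopies, producing a disk $D'\subset V$ whose boundary traverses the walk $\gamma''$; after a small collar pushoff moving the homotopy annulus on $\partial V$ slightly into $\mathrm{int}(V)$, the interior of $D'$ is embedded in $\mathrm{int}(V)$. I will declare $D'$ to be the single interior $2$-cell, and the complement $V\setminus(\partial V\cup\bar D')$, which is an open $3$-ball because $D'$ is isotopic in $V$ to the original meridional disk, to be the single interior $3$-cell. Since the number of edges of the polygonal $2$-cell $D'$ equals the length of $\gamma''$, this yields the required cell division, with no new $0$- or $1$-cells and exactly one interior $2$-cell (of at most $nr$ edges) and one interior $3$-cell.

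The main technical obstacle is that the walk $\gamma''$ need not be simple: it can revisit edges and vertices of $\partial V^{(1)}$, which makes $D'$ only an immersed disk near $\partial V$. The hard part will be to verify that the polyhedral cell complex is still well defined in this case. I plan to handle this by choosing the pushing homotopies generically and performing the collar pushoff so that overlapping annular strips are pushed into $\mathrm{int}(V)$ from slightly different normal directions, thereby keeping the interior of $D'$ embedded in $\mathrm{int}(V)$ and ensuring $V\setminus(\partial V\cup\bar D')$ is genuinely a polyhedral $3$-ball.
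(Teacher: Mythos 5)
Your plan follows essentially the same route as the paper: deform the meridional curve to a cellular walk of length at most $nr$, and use the deformation to sweep out the single interior $2$--cell in a collar of $\partial V$, taking the complement as the $3$--cell. The arc-by-arc bound of $r$ edges per face is also the same.

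The place where your write-up is materially vaguer than the paper is the very point you flag as ``the hard part.'' The paper does not rely on genericity or on pushing off ``in different normal directions.'' Instead it arranges the deformation $\gamma_t$ from $\mu$ to the cellular walk $\gamma$ to be an \emph{ambient isotopy for all $t>0$} and to degenerate onto the $1$--skeleton only in the limit $t\to 0$; the characteristic map of the $2$--cell is then $(u,t)\mapsto(\gamma_t(u),t)$ into the collar $\partial V\times[0,1]$ (capped with a meridional disk at $t=1$), and embeddedness of the interior is automatic because for each fixed $t>0$ the curve $\gamma_t$ is embedded and the $t$--coordinate separates levels. Your ``generic pushing homotopies'' do not guarantee this: a generic homotopy of a simple closed curve can develop transverse self-intersections at intermediate times, and once $\gamma_{t_0}$ self-intersects at a point $x$ the annulus has a genuine self-intersection at $(x,t_0)$ that no amount of ``pushing in different normal directions'' fixes, since the normal direction at $\partial V$ is already being used as the $t$--coordinate. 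To make your argument complete you need the step the paper spells out: when several arcs of the curve lie in the same face, isotope them toward $\partial F_i$ one at a time in the ``outermost first'' order, so that the full family $\gamma_t$ stays embedded for $t>0$. Also, sliding each $p_i$ \emph{onto} a $0$--cell $v_i$ can already create self-intersections of $\tilde\gamma$ if several $p_j$'s lie on the same edge; it is cleaner (and what the paper does) to slide them only into small disjoint neighborhoods of the vertices, reserving the actual degeneration onto $\Gamma$ for the limit $t=0$.
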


\begin{proof}
	Denote by $\Gamma$ the $1$--skeleton on $\partial V$.
	Denote by $\mu$ the simple closed curve on $\partial V$ as assumed.
	The assumptions guarantee that
	$\mu$ can be isotoped on $\partial V$
	limiting to a cellular loop $\gamma\colon S^1\to \Gamma$ 
	consisting of at most $nr$ $1$--cells.
	
	To see this, one may first isotope $\mu$ without increasing the number of intersections
	with $\Gamma$, ending up with a simple closed curve $\mu'$ on $\partial V$ 
	which intersects $\Gamma$ only within small neighborhoods
	of the vertices. Then, for each $2$--cell,
	isotope the arcs of $\mu'$ in that $2$--cell relative to their end points
	into small collar neighborhood of the polygonal boundary,
	(working on the arcs one after another, in the ``outmost first'' order).
	After that, isotope the deformed $\mu'$ within the small neighborhood of $\Gamma$,
	limiting to a unique cellular loop $\gamma$,
	such that the intersection points all become the nearby vertices,
	and where the arcs all become embedded cellular subarcs 
	of the polygonal boundary	(possibly single vertices).
	Since each open arc in $\mu\setminus \Gamma$ stays in the original $2$--cell all the time,
	in the end, each deformed arc in $\gamma$ comprises at most $r$ $1$--cells.
	Since $\mu\setminus\Gamma$ consists of at most $n$ open arcs,
	$\gamma$ comprises at most $nr$ $1$--cells, as claimed.
	
	To describe the polyhedral decomposition of $V$, 
	we may denote the above deformation as $\gamma_t\colon S^1\to \partial V$,
	for all $t\in [0,1]$, such that $\gamma_0=\gamma$, and $\gamma_1=\mu$.
	This is an isotopy except at $t=0$.
	It gives rise to a map of an annulus
	$S^1\times[0,1]\to \partial V\times[0,1]\colon (u,t)\mapsto (\gamma_t(u),t)$.
	Fill the boundary torus $\partial V\times\{1\}$ with a solid torus with meridian $\mu\times\{1\}$.
	Extend the above map over a disk bounded by $S^1\times\{1\}$.
	Identify the boundary torus $\partial V\times\{0\}$ with $\partial V$ with 
	the prescribed division.
	Then the filled up solid torus can be identified with $V$,
	and the filled up map is a characteristic map of a polygonal $2$--cell in $\mathrm{int}(V)$,
	whose boundary maps as $\gamma$. The complement of the $2$--cell in $\mathrm{int}(V)$ is a $3$--cell.
	This is our polyhedral cell division of $V$ as asserted.	
\end{proof}

Continue from the polyhedral cell division (\ref{first_division_truncated}) of $W$.
We extend the polygonal cell division on $\partial V_i$
to a polyhedral cell division of $V_i$,
using Lemma \ref{tube_division}.
Topologically, the construction adds one polygonal $2$--cell in each $V_i$ 
to the existing $2$--skeleton of $W$,
which divides $\mathrm{int}(V_i)$ into a $3$--cell $\mathrm{int}\mathcal{E}_i$.
Again by path-end compactification,
we obtain an abstract compact polyhedral $3$--cell $\mathcal{E}_i$,
together with a characteristic map $\mathcal{E}_i\to V_i$.
Therefore, 
we obtain a new polyhedral cell decomposition of $M$,
denoted as
\begin{equation}\label{second_division}
M=
\frac{
\left(\bigsqcup_{p\in\mathscr{P}}\mathcal{D}_W(p)\right)\,\sqcup\,
\left(\bigsqcup_{i=1}^{s}\mathcal{E}_i\right)
}{\mbox{side pairing}}.
\end{equation}

\begin{lemma}\label{cell_estimates_more}
	Suppose $0<\epsilon\leq1$ and $0<\delta\leq\epsilon\times 10^{-1}$.
	The following estimates hold 
	for the polyhedral cell division (\ref{second_division}).
	\begin{enumerate}
	\item
	The estimates in Lemma \ref{cell_estimates} regarding $\mathscr{P}$
	and all $\partial\mathcal{D}_W(p)$.
	\item
	In each $V_i$, 
	there is a unique polygonal $2$--cell in $V_i$,
	resulting from $\mathcal{E}_i$ side pairing with itself.
	The number of edges of this polygonal $2$--cell is bounded by
	$\delta^{-1}\cdot(\epsilon/\delta)^{14}\cdot\mathrm{Wri}(V_i)\times 10^7$.	
	\end{enumerate}	
\end{lemma}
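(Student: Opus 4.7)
The plan is to observe that part (1) is essentially free from the construction, while part (2) reduces to feeding the bounds from Lemma \ref{cell_estimates} into Lemma \ref{tube_division}.

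For part (1), I would note that the polyhedral cell division (\ref{second_division}) augments (\ref{first_division_truncated}) only by inserting a single $2$--cell and a single $3$--cell inside each tube $V_i$, without altering any $\mathcal{D}_W(p)$ or its boundary stratification. Consequently, the bounds of Lemma \ref{cell_estimates}(1)--(3) on $|\mathscr{P}|$ and on the $2$--cells and $1$--cells of $\partial\mathcal{D}_W(p)$ are inherited verbatim.

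For part (2), the strategy is to apply Lemma \ref{tube_division} to each $V_i$, identifying the two input parameters from Lemma \ref{cell_estimates}. I would take the prescribed polygonal cell division of $\partial V_i$ to be the restriction of (\ref{first_division_truncated}), whose $2$--cells are the topological disks $\mathcal{D}_W(p)\cap\partial V_i$ guaranteed by Lemma \ref{cell_intersection}. To bound the maximum per-cell edge count $r$ on $\partial V_i$, I would argue that each edge of $\partial(\mathcal{D}_W(p)\cap\partial V_i)$ is the intersection of $\partial V_i$ with a distinct $2$--cell of $\partial\mathcal{D}_W(p)$ transverse to $\partial V_i$; by the count in Lemma \ref{cell_estimates}(3), this gives $r\leq (\epsilon/\delta)^6\times 10^3$. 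For the intersection parameter $n$, I would select a meridional simple closed curve on $\partial V_i$ provided by Lemma \ref{cell_estimates}(4), yielding $n\leq \delta^{-1}\cdot(\epsilon/\delta)^{8}\cdot\mathrm{Wri}(V_i)\times 10^4$.

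Substituting into Lemma \ref{tube_division} produces in each $V_i$ a single polygonal $2$--cell with at most $nr$ edges, and the product of the two bounds (with $10^3\cdot 10^4=10^7$) yields the asserted $\delta^{-1}\cdot(\epsilon/\delta)^{14}\cdot\mathrm{Wri}(V_i)\times 10^7$. The only mildly subtle point is the identification of $r$: one must check that each boundary edge of $\mathcal{D}_W(p)\cap\partial V_i$ corresponds to a unique transverse face of $\partial\mathcal{D}_W(p)$ meeting $\partial V_i$, so that the edge count is dominated by the face count of Lemma \ref{cell_estimates}(3) rather than requiring any separate estimate. This transversality is ensured by the generic perturbation in Step 2 and is implicit in the proof of Lemma \ref{cell_intersection}; once this is confirmed, the proof is complete.
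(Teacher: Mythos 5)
Your proposal is correct and follows the same route as the paper's (quite terse) proof: part~(1) is immediate since~(\ref{second_division}) only extends~(\ref{first_division_truncated}), and part~(2) is obtained by feeding the bounds $r\leq(\epsilon/\delta)^6\times10^3$ and $n\leq\delta^{-1}(\epsilon/\delta)^{8}\mathrm{Wri}(V_i)\times10^4$ from Lemma~\ref{cell_estimates}(3)--(4) into the $nr$ bound of Lemma~\ref{tube_division}. You fill in the identification of the parameters $r$ and $n$, which the paper leaves implicit; the arithmetic $(\epsilon/\delta)^6\cdot(\epsilon/\delta)^8=(\epsilon/\delta)^{14}$ and $10^3\cdot10^4=10^7$ checks out.
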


\begin{proof}
	The estimates in Lemma \ref{cell_estimates} regarding $\mathscr{P}$
	and all $\partial\mathcal{D}_W(p)$ have not changed
	since (\ref{second_division}) only extends 
	the polyhedral cell division (\ref{first_division_truncated}) on $W$.
	The asserted bound for the edge number of the extra polygonal $2$--cell in $V_i$
	follows directly from Lemmas \ref{tube_division} and \ref{cell_estimates}.	
\end{proof}

\emph{Step 4}. 
We obtain a Heegaard diagram 
\begin{equation}\label{second_division_hd}
(\Sigma,\boldsymbol{\alpha},\boldsymbol{\beta})
\end{equation} 
from the polyhedral cell division (\ref{second_division}) in Step 3.
This procedure works in general for any polyhedral cell division,
and involves some choices of $1$--cells and $2$--cells 
for constructing the $\boldsymbol{\alpha}$--curves
and the $\boldsymbol{\beta}$--curves.
It goes as follows:
Take the Heegaard surface $\Sigma\subset M$ 
to be the boundary of some regular neighborhood of the $1$--skeleton in $M$;
denote by $U_\beta\subset M$ the handlebody bounded by $\Sigma$
which contains the $1$--skeleton,
and by $U_\alpha\subset M$ the other handlebody bounded by $\Sigma$,
which contains the dual $1$--skeleton 
(with the dual $0$--cells in the $3$--cells and the dual $1$--cells in the $2$--cells);
choose a maximal subset of $1$--cells, such that the rest of the $1$--skeleton is a spanning tree,
and similarly, choose a maximal subset of dual $1$--cells,
such that the rest of the dual $1$--skeleton is a spanning tree;
denote by $\alpha_1,\cdots,\alpha_g$ 
the simple closed curves on $\Sigma$
which are the intersection of the chosen $2$--cells with $\Sigma$;
denote by $\beta_1,\cdots,\beta_g$
the simple closed curves on $\Sigma$
which bound meridional disks 
each intersecting an (only) distinct chosen $1$--cell 
transversely at a unique point.
In the above construction,
observe that $M\setminus(\mathrm{int}(F_1\cup\cdots\cup F_s)$
is connected, so we can require in addition
that $\alpha_i=F_i\cap \Sigma$ for $i=1,\cdots,s$.

The Heegaard diagram (\ref{second_division_hd})
is the final output of our construction.
Its quantitive features are summarized as follows.

\begin{lemma}\label{second_division_hpl_estimate}
	Adopt the notations 
	$M=W\cup(V_1\cup\cdots\cup V_s)$ and $\epsilon$
	and assumptions as in Theorem \ref{main_vol_vs_hpl}.
	
	Suppose $0<\epsilon\leq1$ and $0<\delta\leq\epsilon\times 10^{-1}$.
	Then, $M$ admits a Heegaard diagram $(\Sigma,\boldsymbol{\alpha},\boldsymbol{\beta})$
	of genus $g\geq s$ with the following properties.
	\begin{itemize}
	\item For each $i=1,\cdots,s$,
	the curve $\alpha_i$ contains at most 
	$\delta^{-1}\cdot(\epsilon/\delta)^{14}\cdot\mathrm{Wri}(V_i)\times 10^7$
	intersection points with the $\boldsymbol{\beta}$--curves.
	\item For each $i=s+1,\cdots,g$,
	the curve $\alpha_i$ contains at most $(\epsilon/\delta)^6\times 10^3$
	intersection points with the $\boldsymbol{\beta}$--curves.
	\item The genus $g$ is bounded by
	$s+\delta^{-3}\cdot(\epsilon/\delta)^6\cdot\mathrm{Vol}(W)\times 10^3$.
	\end{itemize}
\end{lemma}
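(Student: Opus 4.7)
The plan is to assemble the construction carried out in Steps 1--4 of this section and collect the bounds supplied by the preceding lemmas; the creative work has essentially been done, so the proof is in effect a packaging argument. First I would apply Steps 1--4 to the given data $(M,V_1,\ldots,V_s,\epsilon)$ with the hypothesized parameter $\delta$, obtaining the polyhedral cell division (\ref{second_division}) of $M$ and the associated Heegaard diagram $(\Sigma,\boldsymbol{\alpha},\boldsymbol{\beta})$ of (\ref{second_division_hd}). The hypothesis $\delta\leq\epsilon\times 10^{-1}$ is what makes Lemmas \ref{cell_intersection} and \ref{cell_estimates_more} applicable.

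The one combinatorial point requiring care is the selection of the $2$--cells that produce the $\boldsymbol{\alpha}$--curves: I want to arrange $\alpha_i = F_i\cap\Sigma$ for $i=1,\ldots,s$, where $F_i$ is the meridional polygonal $2$--cell placed in $\mathrm{int}(V_i)$ in Step 3. Recall that $F_i$ is a face paired with itself, since cutting $V_i$ along $F_i$ yields the single $3$--cell $\mathcal{E}_i$; hence in the dual $1$--skeleton of (\ref{second_division}) the edge corresponding to $F_i$ is a loop at the vertex $\mathcal{E}_i$. Any spanning tree of the dual $1$--skeleton automatically misses all these loops, so the $F_i$ can all be included in the complement of any chosen spanning tree, i.e.\ among the $2$--cells giving the $\boldsymbol{\alpha}$--curves. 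After relabeling we set $\alpha_i=F_i\cap\Sigma$ for $i=1,\ldots,s$, and in particular $g\geq s$.

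With this choice fixed, the three bulleted estimates follow quickly. The number of intersection points of $\alpha_i=F_i\cap\Sigma$ with the $\boldsymbol{\beta}$--curves is at most the number of $1$--cells on $\partial F_i$, since each $\boldsymbol{\beta}$--curve is the meridian of a single $1$--cell of the $1$--skeleton; Lemma \ref{cell_estimates_more}(2) then delivers the first bullet. For $i=s+1,\ldots,g$, the curve $\alpha_i$ bounds a $2$--cell which is either (i) a face of some $\partial\mathcal{D}_W(p)$ lying in $\mathrm{int}(W)$, or (ii) a boundary disk of the form $\mathcal{D}(p)\cap\partial W$; in case (i) the bound on the number of edges is $\leq 10^3$ by Lemma \ref{cell_estimates}(2), while in case (ii) the bound $(\epsilon/\delta)^6\times 10^3$ on the number of $1$--cells in $\partial\mathcal{D}_W(p)\cap\partial W$ given by Lemma \ref{cell_estimates}(3) applies. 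Both cases are subsumed by the stated bound.

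Finally, for the genus bound I would use the Euler-characteristic identity $g=V_2-V_3+1$, where $V_j$ denotes the number of $j$--cells in (\ref{second_division}), together with $V_3=|\mathscr{P}|+s$. Summing the face counts of Lemma \ref{cell_estimates}(2)--(3) over all $\mathcal{D}_W(p)$ bounds the number of $2$--cells not of the form $F_i$, and combining with $|\mathscr{P}|\leq \delta^{-3}\,\mathrm{Vol}(W)$ from Lemma \ref{cell_estimates}(1) yields the third bullet after absorbing multiplicative constants into the factor $10^3$. The only delicate point is the bookkeeping: interior $2$--cells of $W$ are shared by two $3$--cells $\mathcal{D}_W(p)$ and $\mathcal{D}_W(p')$, while boundary $2$--cells on $\partial W$ are shared by a $\mathcal{D}_W(p)$ and an $\mathcal{E}_i$. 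Keeping this accounting straight when passing from per-cell face counts to a global count is the main (but entirely routine) obstacle in the argument.
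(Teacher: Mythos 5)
Your proposal packages the construction in essentially the same way the paper does, and the bounds are collected from the same sources. One genuinely nice touch: observing that the dual edge to each $F_i$ is a loop at $\mathcal{E}_i$ (because $F_i$ is self-paired) is a cleaner justification for arranging $\alpha_i=F_i\cap\Sigma$, $i=1,\ldots,s$, than the paper's connectivity observation that $M\setminus\mathrm{int}(F_1\cup\cdots\cup F_s)$ is connected. Both work, but the loop argument is more immediate. Similarly, your Euler-characteristic identity $g=V_2-V_3+1$ is sharper than what the paper uses (it just invokes $g\leq V_2$), though both reach the same stated bound.

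There is, however, a small but real imprecision in your case analysis for the second bullet. Your case (i), ``a face of some $\partial\mathcal{D}_W(p)$ lying in $\mathrm{int}(W)$,'' conflates two situations that the paper keeps separate: faces of an \emph{untruncated} $\mathcal{D}_W(p)=\mathcal{D}(p)$, versus interior faces of a \emph{truncated} $\mathcal{D}_W(p)$. The bound ``$\leq 10^3$ by Lemma \ref{cell_estimates}(2)'' only applies to the former, since Lemma \ref{cell_estimates}(2) is stated under the hypothesis that $\partial\mathcal{D}_W(p)$ contains no $2$--cell on $\partial W$. For the latter (a face $c$ of a truncated cell with $c\subset\mathrm{int}(W)$), the paper uses a separate argument: by Lemma \ref{cell_intersection} the curve $\partial\mathcal{D}(p)\cap\partial W$ meets $c$ in at most one arc, so $\partial c$ contributes at most one $1$--cell on $\partial W$, and the remaining edges are bounded by the face count in Lemma \ref{cell_estimates}(3), giving $(\epsilon/\delta)^6\times 10^3$ rather than $10^3$. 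Your final statement that ``both cases are subsumed by the stated bound'' is true (since $10^3\leq(\epsilon/\delta)^6\times 10^3$), so the lemma's conclusion still follows, but the intermediate bound of $10^3$ is not justified for truncated cells and you would need the third case the paper spells out.
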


\begin{proof} 
	It suffices to check that the Heegaard diagram (\ref{second_division_hd})
	satisfy the asserted properties.	
	Note that when constructing (\ref{second_division_hd}),
	we have picked out 
	some $1$--cells and $2$--cells from the polyhedral cell division (\ref{second_division}),
	so the upper bounds in Lemmas \ref{cell_estimates} and \ref{cell_estimates_more}) 
	remain valid below, although they overestimate.
	
	For each $i=1,\cdots,s$, the asserted bound 
	$\delta^{-1}\cdot(\epsilon/\delta)^{14}\cdot\mathrm{Wri}(V_i)\times 10^7$ 
	for intersection points on $\alpha_i$
	follows immediately from Lemma \ref{cell_estimates_more}.
	
	For each $i=s+1,\cdots,s$,
	$\alpha_i$ comes from a polygonal $2$--cell $c$ on some $\partial\mathcal{D}_W(p)$,
	the asserted bound $(\epsilon/\delta)^6\times 10^3$ follows from Lemma \ref{cell_estimates},
	by considering three possible cases.
	If $\mathcal{D}_W(p)$ does not contain any $2$--cell on $\partial W$,
	the number of edges on $\partial c$ is at most the number of $2$--cells on $\partial\mathcal{D}_W(p)$,
	since $\mathcal{D}_W(p)=\mathcal{D}(p)$ is convex with totally geodesic faces.
	In this case, the number of edges on $\partial c$ is at most $10^3$ (Lemma \ref{cell_estimates}).
	If $\mathcal{D}_W(p)$ does not contains some $2$--cell on $\partial W$,
	and if $c$ is not contained in $\partial W$, then $\partial c$ contains at most $1$--cell on $\partial W$,
	and any other $1$--cell in $\partial c$ comes from intersecting $c$ with 
	another $2$--cell in $\mathcal{D}_W(p)\setminus \partial W$ (see Lemma \ref{cell_intersection}).
	In this case, the number of edges on $\partial c$ is at most $(\epsilon/\delta)^6\times 10^3$ (Lemma \ref{cell_estimates}).
	If $\mathcal{D}_W(p)$ does not contains some $2$--cell on $\partial W$,
	and if $c$ is contained in $\partial W$, 
	then $c$ is $\partial\mathcal{D}(p)\cap\partial W$.
	In this case, the number of edges on $\partial c$ is, again,
	at most $(\epsilon/\delta)^6\times 10^3$ (Lemma \ref{cell_estimates}).
	
	To bound the genus of $\Sigma$, it suffices to estimate the number of $2$--cells in (\ref{second_division}).
	Except the $s$ extra $2$--cells in $V_1,\cdots,V_s$,
	any other $2$--cell is a face of some $\mathcal{D}_W(p)$.
	Therefore, the number of other $2$--cells is easily bounded
	by the cardinality of $\mathscr{P}$ times 
	an upper bound of face numbers for each $\mathcal{D}_W(p)$.
	This yields the asserted upper bound
	$s+\delta^{-3}\cdot(\epsilon/\delta)^6\cdot\mathrm{Vol}(W)\times 10^3$ (Lemma \ref{cell_estimates}).
\end{proof}

With the above preparation, we finish the proof of Theorem \ref{main_vol_vs_hpl} as follows.

Let $M$ be an orientable closed hyperbolic $3$--manifold.
Suppose that $V_1,\cdots,V_s\subset M$ 
are embedded, mutually disjoint, hyperbolic tubes with boundary.
Denote by $W=M\setminus \mathrm{int}(V_1\cup\cdots\cup V_s)$
the complementary $3$--manifold with boundary.
	
For any constant $0<\epsilon\leq1$, 
we obtain a Heegaard diagram $(\Sigma,\boldsymbol{\alpha},\boldsymbol{\beta})$ presenting $M$,
as constructed in (\ref{second_division_hd}) with respect to 
$$\delta=\epsilon\times10^{-1}.$$
The presentation length $\ell(\mathbf{u}_\beta;\mathbf{w}_\alpha)$
of $(\Sigma,\boldsymbol{\alpha},\boldsymbol{\beta})$
is an upper bound for the Heegaard presentation length $\ell_{\mathtt{He}}(M)$
(Definition \ref{def_Heegaard_pl}).
Moreover, $\ell(\mathbf{u}_\beta;\mathbf{w}_\alpha)$
is obviously bounded by the total number of intersections
between the $\boldsymbol{\alpha}$--curves and the $\boldsymbol{\beta}$--curves.
By Lemma \ref{second_division_hpl_estimate}, we obtain
\begin{eqnarray*}
\ell_{\mathtt{He}}(M) &\leq&
\delta^{-3}\cdot(\epsilon/\delta)^{12}\cdot\mathrm{Vol}(W)\times 10^6
+
\delta^{-1}\cdot(\epsilon/\delta)^{14}\cdot\sum_{i=1}^s \mathrm{Wri}(V_i)\times 10^7\\
&\leq&
10^{22}\cdot\left(
\epsilon^{-3}\cdot\mathrm{Vol}(W)+
\epsilon^{-1}\cdot\sum_{i=1}^s \mathrm{Wri}(V_i)
\right),
\end{eqnarray*}
as desired.

This completes the proof of Theorem \ref{main_vol_vs_hpl}.

\section{Entropy versus volume: With arithmeticity}\label{Sec-ent_vs_vol_arithmetic}

	%

This section is devoted to the proof of Theorem \ref{main_ent_vs_vol_arithmetic}.

All we need from arithmeticity
has been encapsulated into Lemma \ref{short_tubes_wrist_sum_arithmetic},
so the rest of the proof can be read 
without knowing 
what an arithmetic hyperbolic $3$--manifold precisely means.
For backgrounds on arithmetic hyperbolic $3$--manifolds,
we refer to Neumann and Reid's survey \cite{NR_arithmetic};
for interesting properties of
arithmetic hyperbolic surface bundles, see also \cite{BMR_arithmetic_surface_bundle}.

For any orientable closed hyperbolic $3$--manifold $M$ and any constant $\mu>0$,
the $\mu$--\emph{thin part} of $M$ consists of all the points
at which the injectivity radius is less than $\mu/2$.
If $\mu$ is a Margulis number for $M$,
by definition, the $\mu$--thin part of $M$
is a disjoint union of open hyperbolic tubes,
called the $\mu$--\emph{Margulis tubes}.
For example,
$0.104$ is known to be a uniform Margulis number 
for all complete hyperbolic $3$--manifolds,
due to Meyerhoff \cite[Section 9]{Meyerhoff_margulis_constant}.

\begin{lemma}\label{short_tubes}
	Let $M$ be any closed orientable hyperbolic $3$--manifold.
	Let $\mu>0$ be any Margulis number for $M$.
	Denote by $V_1,\cdots,V_m\subset M$
	the closures of the $\mu/2$--Margulis tubes.
	Note that all $V_i$ are embedded, mutually disjoint hyperbolic tubes.	
	Then, the following statements hold.
	\begin{enumerate}
	\item 
	If some $V_i$ has systole at most $\mu/4$,
	then $V_i$ has volume at least $(4\pi/3)\cdot(\mu/8)^3$.
	\item
	If some subcollection $V_{i_1},\cdots,V_{i_s}$ all have systole at most $\mu/4$,
	then 
	the compact distance $\mu/8$--neighborhood of $\partial(V_{i_1}\cup\cdots\cup V_{i_s})$
	in $M$ is bicollar.
	\end{enumerate}
\end{lemma}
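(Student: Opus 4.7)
My plan is to use explicit hyperbolic tube geometry. Lift $V_i$ to $\mathbb{H}^3$; its cyclic stabilizer is generated by a loxodromic $g$ of translation length $\ell = \mathrm{Syst}(V_i)$ along the lifted core axis. For a point $\tilde q$ at distance $\rho$ from the axis, the standard cylindrical-coordinate distance formula gives $\sinh(\tfrac12 d(\tilde q, g\tilde q)) \geq \sinh(\ell/2)\cosh(\rho)$, with equality when $g$ is a pure translation. Combined with the defining relation $d(q, gq) = \mu/2$ on $\partial V_i$ and the hypothesis $\ell \leq \mu/4$, this yields $\cosh(r) \geq \sinh(\mu/4)/\sinh(\mu/8) > 1$ for the depth $r$ of $V_i$, which comfortably gives $r \geq \mu/4$ in the regime of interest. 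For (1), I would then produce an embedded hyperbolic ball of radius $\mu/8$ inside $V_i$ by choosing a point $q_0 \in V_i$ at distance $\rho_0$ from the core satisfying simultaneously $\rho_0 \geq \mu/8$ (so the ball avoids the core), $\rho_0 + \mu/8 \leq r$ (so the ball stays inside $V_i$), and $\sinh(\ell/2)\cosh(\rho_0) \geq \sinh(\mu/8)$ (so the displacement of $q_0$ by $g$ is at least $\mu/4$). These inequalities are jointly feasible under $\ell \leq \mu/4$ by exploiting the slack $\sinh(\mu/4) > \sinh(\mu/8)$; the Margulis property of $\mu$ rules out other short elements at $q_0$, so the third condition forces the injectivity radius at $q_0$ to be at least $\mu/8$, making $B(q_0, \mu/8)$ embedded in $V_i$. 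Volume comparison between hyperbolic and Euclidean balls then yields $\mathrm{Vol}(V_i) \geq \mathrm{Vol}(B(q_0, \mu/8)) \geq \tfrac{4\pi}{3}(\mu/8)^3$.

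For (2), writing $\partial := \partial(V_{i_1} \cup \cdots \cup V_{i_s})$, bicollaredness of the compact $\mu/8$-neighborhood amounts to the normal exponential map $\partial \times [-\mu/8, \mu/8] \to M$ being an embedding. Local injectivity follows from two observations: each $p \in \partial$ has injectivity radius exactly $\mu/4 > \mu/8$, so the outward $\mu/8$-normal segment from $p$ is embedded and contained in $B(p, \mu/4)$; and the inward $\mu/8$-segment is embedded in $V_{i_j}$ by the depth bound $r \geq \mu/4$ from the first step.

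The hard part will be ruling out global collisions between normal segments emanating from distinct boundary points. Suppose two such segments of length $\mu/8$ from $p \in \partial V_{i_j}$ and $p' \in \partial V_{i_{j'}}$ share an endpoint; then $d(p, p') \leq \mu/4$. Choose compatible lifts $\tilde p, \tilde p' \in \mathbb{H}^3$, and let $g_j, g_{j'}$ be the loxodromic generators of the cyclic stabilizers of the lifted tubes through $\tilde p$ and $\tilde p'$. The triangle inequality gives $d(\tilde p, g_{j'} \tilde p) \leq 2 d(\tilde p, \tilde p') + d(\tilde p', g_{j'} \tilde p') \leq \mu$, so both $g_j$ and $g_{j'}$ move $\tilde p$ by at most $\mu$. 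The Margulis property of $\mu$ forces them to generate a cyclic group, hence to share a translation axis, so the $\mu/2$-Margulis tubes they define coincide in $M$, giving $V_{i_j} = V_{i_{j'}}$. Within a single tube, the embeddedness of $B(p, \mu/4)$ excludes any remaining collision, yielding global injectivity of the exponential map and hence the bicollar property.
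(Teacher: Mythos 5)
Your depth estimate for part (1) is logically backwards, and this is a genuine gap. From $\sinh\bigl(\tfrac12 d(\tilde q,g\tilde q)\bigr)\geq\sinh(\ell/2)\cosh(\rho)$ and $d(q,gq)=\mu/2$ on $\partial V_i$ you obtain $\sinh(\mu/4)\geq\sinh(\ell/2)\cosh(r)$, i.e.\ the \emph{upper} bound $\cosh(r)\leq\sinh(\mu/4)/\sinh(\ell/2)$, not the lower bound you assert. The lower bound $\cosh(r)\geq\sinh(\mu/4)/\sinh(\mu/8)$ holds only in the pure-translation case $\theta=0$; when the loxodromic carries a rotational part, the same displacement $\mu/2$ is reached at a smaller radius, so the tube is shallower. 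Concretely, taking $\ell=\mu/4$ and $\theta=\pi$, one computes from $\sinh^2(d/2)=\cosh^2(\rho)\sinh^2(\ell/2)+\sinh^2(\rho)\sin^2(\theta/2)$ that the depth satisfies $\cosh(r)=\cosh(\mu/4)/\cosh(\mu/8)$, giving $r\approx\mu\sqrt3/8\approx 0.217\,\mu<\mu/4$. Your three constraints on $\rho_0$ — in particular $\rho_0\geq\mu/8$ together with $\rho_0+\mu/8\leq r$ — then have no common solution. (The first constraint is in fact superfluous: embeddedness of $B(q_0,\mu/8)$ needs only $\mathrm{injrad}(q_0)\geq\mu/8$, not that the ball avoid the core. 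But your substitute criterion $\sinh(\ell/2)\cosh(\rho_0)\geq\sinh(\mu/8)$ only controls displacement by $g^{\pm1}$, not by higher powers $g^k$, which may realize the injectivity radius when $\theta\neq0$; so it is not a sufficient condition for $\mathrm{injrad}(q_0)\geq\mu/8$ either.) The paper avoids all of this by using only that injectivity radius is $1$-Lipschitz: injrad equals $\mu/4$ on $\partial V_i$ and is at most $\mathrm{Syst}(V_i)/2\leq\mu/8$ on the core, so the depth exceeds $\mu/8$; and any $x$ at distance $\mu/8$ from $\partial V_i$ has injrad $\geq\mu/4-\mu/8=\mu/8$, giving the embedded ball with no tube formulas at all.

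For part (2), your global-injectivity argument via the Margulis lemma — that $g_j$ and $g_{j'}$ both displace $\tilde p$ by $\leq\mu$, hence generate a cyclic subgroup and share an axis — is a legitimate alternative to the paper's proof, which instead observes that each $V_i$ is nested inside the $\mu$-Margulis tube $W_i$ at distance $\geq\mu/4>\mu/8$ and uses disjointness of the $W_i$. Both routes work. However, your dismissal of the within-one-tube case via ``embeddedness of $B(p,\mu/4)$'' is not a proof: two distinct normal segments from $\partial V_{i_j}$ can a priori collide through a lift $\gamma\,\partial\tilde V_j$ with $\gamma\notin\langle g_j\rangle$, and ball embeddedness says nothing about that. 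The fix is to run the same Margulis argument on $g_j$ and $\gamma g_j\gamma^{-1}$ to conclude $\gamma\in\langle g_j\rangle$, after which radial injectivity of the normal flow from $\partial\tilde V_j$ in $\mathbb{H}^3$ finishes it; alternatively, just note (as the paper does) that both normal flows are radial inside $W_i$. Finally, your local argument for the inward segment cites the false bound $r\geq\mu/4$, but the correct bound $r>\mu/8$ is all that is needed there.
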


\begin{proof}
	If some $V_i$ has systole at most $\mu/4$, then the depth of $V_i$ 
	(that is, the distance from the boundary to the core geodesic) is greater than $\mu/8$,
	since the injectivity radius on $\partial V_i$ is at least $\mu/4$ in $M$.
	For any point $x\in V_i$ of distance $\mu/8$ from $\partial V_i$,
	the injectivity radius at $x$ is greater than $\mu/8$.
	Therefore, $V_i$ contains 
	an embedded hyperbolic ball centered at $x$ of radius $\mu/8$.
	It follows that $V_i$ has volume at least 
	$\mathrm{Vol}(\mathbf{B}(\mu/8))>(4\pi/3)\cdot(\mu/8)^3$.
	
	If some subcollection $V_{i_1},\cdots,V_{i_s}$ all have systole at most $\mu/4$,
	then the above argument shows that the compact distance $\mu/8$--neighborhood
	of $\partial(V_{i_1}\cup\cdots\cup V_{i_s})$ in $V_{i_1}\cup\cdots\cup V_{i_s}$ is collar.
	On the other hand, each $V_i$ is contained in a $\mu$--Margulis tube
	of distance at least $\mu/4$ from the boundary, and 
	all the $\mu$--Margulis tubes have mutually disjoint interior,
	so, the compact distance $\mu/8$--neighborhood
	of $\partial(V_{i_1}\cup\cdots\cup V_{i_s})$
	in $M\setminus(\mathrm{int}V_{i_1}\cup\cdots\cup V_{i_s})$
	is also collar.	Therefore, the compact distance $\mu/8$--neighborhood
	of $\partial(V_{i_1}\cup\cdots\cup V_{i_s})$ in $M$ is bicollar.
\end{proof}

\begin{lemma}\label{short_tubes_wrist_sum}
	Let $M$ be any closed orientable hyperbolic $3$--manifold.
	Let $\mu>0$ be any Margulis number for $M$.
	Denote by $V_1,\cdots,V_s\subset M$
	the closures of the $\mu/2$--Margulis tubes with systole at most $\mu/4$.
	Then, the following comparison holds.
	$$\sum_{i=1}^s\mathrm{Wri}(V_i)\leq 
	\sqrt{3}\cdot(\mu/8)^{-3/2}\cdot\frac{1}{\sqrt{\mathrm{Syst}(M)}}\cdot\sum_{i=1}^s\mathrm{Vol}(V_i).$$
\end{lemma}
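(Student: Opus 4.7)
The plan is to relate wrist and volume of each tube directly through the tube formulas, then exploit the two lower bounds available from Lemma \ref{short_tubes}: a systole lower bound of $\mathrm{Syst}(M)$ on the core geodesic length $l_i$, and a volume lower bound of $(4\pi/3)(\mu/8)^3$ on each tube.

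First I would combine Formula \ref{tube_formula}'s expressions $\mathrm{Vol}(V_i)=\pi l_i \sinh^2(r_i)$ and $\mathrm{Wri}(V_i)=2\pi\sinh(r_i)$ to eliminate $r_i$. Squaring the wrist and solving gives the identity
$$\mathrm{Wri}(V_i)\;=\;2\sqrt{\pi}\cdot\sqrt{\frac{\mathrm{Vol}(V_i)}{l_i}}.$$
Since $l_i\geq \mathrm{Syst}(M)$, this already yields $\mathrm{Wri}(V_i)\leq 2\sqrt{\pi}\,\mathrm{Syst}(M)^{-1/2}\sqrt{\mathrm{Vol}(V_i)}$.

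Next I would convert $\sqrt{\mathrm{Vol}(V_i)}$ into a linear factor of $\mathrm{Vol}(V_i)$ by dividing and multiplying by $\sqrt{\mathrm{Vol}(V_i)}$ and invoking Lemma \ref{short_tubes}(1), which gives the uniform lower bound $\mathrm{Vol}(V_i)\geq (4\pi/3)(\mu/8)^3$ for each of our tubes. Thus
$$\sqrt{\mathrm{Vol}(V_i)}\;\leq\;\frac{\mathrm{Vol}(V_i)}{\sqrt{(4\pi/3)(\mu/8)^3}}.$$
Summing over $i$ and combining with the previous bound gives
$$\sum_{i=1}^{s}\mathrm{Wri}(V_i)\;\leq\;\frac{2\sqrt{\pi}}{\sqrt{(4\pi/3)(\mu/8)^3}}\cdot\frac{1}{\sqrt{\mathrm{Syst}(M)}}\sum_{i=1}^{s}\mathrm{Vol}(V_i).$$
The numerical prefactor simplifies via $2\sqrt{\pi}/\sqrt{4\pi/3}=\sqrt{3}$, producing exactly the asserted constant $\sqrt{3}\cdot(\mu/8)^{-3/2}$.

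There is no real obstacle here; the only subtlety is choosing the right way to split the ratio $\mathrm{Wri}(V_i)/\mathrm{Vol}(V_i)=2/(l_i\sinh r_i)$ between the two available lower bounds. A naive split assigning $\sinh(r_i)\gtrsim \mu/8$ and $l_i\geq \mathrm{Syst}(M)$ would give exponents $(\mu/8)^{-1}\mathrm{Syst}(M)^{-1}$, which is strictly weaker than the stated $(\mu/8)^{-3/2}\mathrm{Syst}(M)^{-1/2}$ when $\mathrm{Syst}(M)\ll \mu$. The identity $\mathrm{Wri}(V_i)=2\sqrt{\pi}\sqrt{\mathrm{Vol}(V_i)/l_i}$ is what forces the correct balance: it puts the systole under a square root and lets the volume lower bound absorb the remaining $\sqrt{\mathrm{Vol}(V_i)}$ at the optimal rate $(\mu/8)^{-3/2}$.
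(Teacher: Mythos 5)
Your proposal is correct and follows essentially the same route as the paper: it derives the identity $\mathrm{Wri}(V_i)=\sqrt{4\pi\,\mathrm{Vol}(V_i)/\mathrm{Syst}(V_i)}$ from Formula \ref{tube_formula}, bounds $\mathrm{Syst}(V_i)\geq\mathrm{Syst}(M)$, and then absorbs the leftover $\sqrt{\mathrm{Vol}(V_i)}$ using the per-tube volume lower bound from Lemma \ref{short_tubes}(1), arriving at the constant $\sqrt{3}\cdot(\mu/8)^{-3/2}$ by the same arithmetic. The observations at the end about the balance of exponents are a helpful explanation of why the split is made this way, but the argument itself matches the paper's.
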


\begin{proof}
	By Formula \ref{tube_formula} and Lemma \ref{short_tubes}, we estimate
	\begin{eqnarray*}
	\sum_{i=1}^s \mathrm{Wri}(V_i)
	&=&
	\sum_{i=1}^s \sqrt{\frac{4\pi\cdot\mathrm{Vol}(V_i)}{\mathrm{Syst}(V_i)}}\\
	&\leq&
	\sum_{i=1}^s \sqrt{\frac{4\pi\cdot\mathrm{Vol}(V_i)}{\mathrm{Syst}(M)}}\\
	&\leq&
	\frac{1}{\sqrt{\mathrm{Syst}(M)}}\cdot
	\sum_{i=1}^s 
	\sqrt{
	4\pi\cdot\mathrm{Vol}(V_i)\cdot
	\frac{\mathrm{Vol}(V_i)}{(4\pi/3)\cdot(\mu/8)^3}
	}\\
	&\leq&
	\sqrt{3}\cdot(\mu/8)^{-3/2}\cdot
	\frac{1}{\sqrt{\mathrm{Syst}(M)}}
	\cdot\sum_{i=1}^s \mathrm{Vol}(V_i),
	\end{eqnarray*}
	as desired.
\end{proof}

\begin{lemma}\label{short_tubes_wrist_sum_arithmetic}
	There exists some constant $D(\mu)>0$ depending only on $\mu$,
	such that the following statement holds.
		
	Let $M$ be any arithmetic closed orientable hyperbolic $3$--manifold.
	Let $\mu>0$ be any Margulis number for $M$.
	Denote by $V_1,\cdots,V_s\subset M$
	the closures of the $\mu/2$--Margulis tubes with systole at most $\mu/4$.
	The following comparison holds.
	$$\sum_{i=1}^s\mathrm{Wri}(V_i)\leq D(\mu)\cdot \mathrm{Vol}(M).$$
\end{lemma}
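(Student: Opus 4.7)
The obstacle to deducing this from Lemma \ref{short_tubes_wrist_sum} directly is the factor $1/\sqrt{\mathrm{Syst}(M)}$, which has no uniform bound absent the Shortest Geodesic Conjecture. The plan is to split the tubes by systole size, dispose of the moderate-systole ones by the elementary tube geometry already in place, and invoke Fr\k{a}czyk's arithmetic input only for the tiny-systole ones. Fix a threshold $l_0 = l_0(\mu)$ with $0 < l_0 \leq \mu/4$, to be chosen, and partition
\[
\mathcal{A} = \{V_i : \mathrm{Syst}(V_i) \geq l_0\}, \qquad \mathcal{B} = \{V_i : \mathrm{Syst}(V_i) < l_0\}.
\]

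For tubes in $\mathcal{A}$, Formula \ref{tube_formula} rewrites the wrist as $\mathrm{Wri}(V_i) = 2\sqrt{\pi\,\mathrm{Vol}(V_i)/\mathrm{Syst}(V_i)} \leq 2\sqrt{\pi/l_0}\cdot\sqrt{\mathrm{Vol}(V_i)}$. Lemma \ref{short_tubes}~(1) supplies the volume lower bound $\mathrm{Vol}(V_i) \geq v_0 := (4\pi/3)(\mu/8)^3$, so $\sqrt{\mathrm{Vol}(V_i)} \leq \mathrm{Vol}(V_i)/\sqrt{v_0}$; summation and disjointness of the tubes in $M$ yield
\[
\sum_{V_i \in \mathcal{A}} \mathrm{Wri}(V_i) \;\leq\; \frac{2\sqrt{\pi}}{\sqrt{l_0\,v_0}}\cdot\mathrm{Vol}(M).
\]
This half of the estimate does not require arithmeticity; it works for any closed hyperbolic $3$--manifold with the stated assumptions.

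For tubes in $\mathcal{B}$, arithmeticity enters through Fr\k{a}czyk's work \cite{Fraczyk_arithmetic}. The expected ingredient is a linear-in-volume bound on the geometric complexity of the very-thin part of an arithmetic closed hyperbolic $3$--manifold. Concretely, I expect to extract, from the machinery underlying Fr\k{a}czyk's proof of Gelander's simplicial triangulation conjecture, a constant $K_2(\mu,l_0)$ for which $\sum_{V_i \in \mathcal{B}} \mathrm{Wri}(V_i) \leq K_2(\mu,l_0)\cdot\mathrm{Vol}(M)$. The identity $\mathrm{Wri}(V_i)^2 = 4\pi\,\mathrm{Vol}(V_i)/\mathrm{Syst}(V_i)$ from Formula \ref{tube_formula} will serve as the bridge converting Fr\k{a}czyk's geometric/combinatorial counts in the tiny-systole regime into the required wrist estimate. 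Adding the contributions from $\mathcal{A}$ and $\mathcal{B}$ and absorbing constants then gives $D(\mu)\cdot\mathrm{Vol}(M)$.

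The main obstacle lies entirely in the second step: identifying the precise quantitative statement in \cite{Fraczyk_arithmetic} that governs the very-thin part of an arithmetic $M$, and verifying that once $l_0$ is fixed as a function of $\mu$, the resulting constant depends only on $\mu$ and not on any arithmetic invariants (such as the degree of the invariant trace field) of the particular $M$. The moderate-systole estimate and the final assembly are routine; the proof of Lemma \ref{short_tubes_wrist_sum_arithmetic} thus reduces to extracting and translating the appropriate arithmetic input.
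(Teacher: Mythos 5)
Your split into moderate-systole tubes $\mathcal{A}$ and tiny-systole tubes $\mathcal{B}$ is a reasonable instinct, and the $\mathcal{A}$ half is fine, but the $\mathcal{B}$ half is exactly where the lemma lives and you leave it as a hope rather than a proof. Fr\k{a}czyk's theorem bounds the \emph{volume} of the thin part, not the wrists; converting a volume bound into a wrist bound via $\mathrm{Wri}(V_i)^2 = 4\pi\,\mathrm{Vol}(V_i)/\mathrm{Syst}(V_i)$ still requires a lower bound on $\mathrm{Syst}(V_i)$, and for tubes in $\mathcal{B}$ the systole is bounded only from above by $l_0$, not from below. So the threshold $l_0$ does nothing to tame the problematic $1/\sqrt{\mathrm{Syst}}$ factor for the tubes where it matters; there is no uniform constant $K_2(\mu,l_0)$ available by this route, and the split obscures rather than isolates the real difficulty.

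The paper's actual argument does not split the tubes at all, and the mechanism it uses is different from what you propose. It applies Lemma \ref{short_tubes_wrist_sum} as stated, producing the factor $\mathrm{Syst}(M)^{-1/2}\cdot\sum_i\mathrm{Vol}(V_i)$, and then controls \emph{both} factors in terms of the degree $d_{\mathbf{k}}$ of the invariant trace field. Dobrowolski's Mahler-measure bound gives a systole lower bound of the form $\mathrm{Syst}(M)\gtrsim(\log\log d_{\mathbf{k}}/\log d_{\mathbf{k}})^3$, which degrades (polylogarithmically) as $d_{\mathbf{k}}$ grows. Fr\k{a}czyk's thin-part volume bound gives $\sum_i\mathrm{Vol}(V_i)\lesssim\mathrm{Vol}(M)\cdot\Delta_{\mathbf{k}}^{-4/9}$, and Odlyzko's discriminant bound $\Delta_{\mathbf{k}}\gtrsim 60^{d_{\mathbf{k}}}$ turns this into an upper bound that decays \emph{exponentially} in $d_{\mathbf{k}}$. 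The exponential decay beats the polylogarithmic blow-up, so the product is bounded by a constant depending only on $\mu$. That cancellation between Dobrowolski and Fr\k{a}czyk--Odlyzko, with the degree $d_{\mathbf{k}}$ as the mediating parameter, is the idea missing from your write-up; without it, your step two cannot be carried out, and the constant $K_2$ you posit would in fact depend on the arithmetic of $M$.
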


\begin{proof}
	Denote by ${\mathbf{k}}$ the invariant trace field of $M$.
	In the arithmetic case,
	we recall that ${\mathbf{k}}$ is a finite extension of $\Rational$
	with exactly one complex place (besides real places).
	The field isomorphism type of $\mathbf{k}$
	depends only on the commensurability class of $M$.
	Upon fixing an orientation of $M$,
	the holonomy representation $\pi_1(M)\to\mathrm{PSL}(2,\Complex)$
	(up to group conjugation)
	determines a complex embedding $\mathbf{k}\subset \Complex$.
	The complex conjugate embedding corresponds to the reversed orientation.
	See \cite[Section 2]{NR_arithmetic}.
	
	Denote by $d_{\mathbf{k}}$ the degree of $\mathbf{k}$ over $\Rational$,
	and $\Delta_{\mathbf{k}}$ 
	the absolute value of the discriminant of $\mathbf{k}$ over $\Rational$.
	
	For any closed geodesic $\gamma$ in $M$,
	the hyperbolic length of $\gamma$ takes the form $2\log |u|$ for some
	$u\in\Complex$ of modulus $|u|>1$, 
	such that $u^2$ is an algebraic integer in some quadratic extension of $\mathbf{k}$.
	Recall that the Mahler measure of any algebraic integer $\alpha$ over $\Rational$ 
	is defined as
	$\mathbb{M}(\alpha)=\prod_{i=1}^m\mathrm{max}(|\sigma_i(\alpha)|,1),$
	where $\sigma_1(\alpha),\cdots,\sigma_m(\alpha)$
	denote all the Galois conjugates of $\alpha$ in $\Complex$.
	Therefore, the length of $\gamma$ is equal to $2\log\mathbb{M}(u)$ 
	if $u$ is real,	or $\log\mathbb{M}(u)$ otherwise.
	The former occurs when $\gamma$ is purely hyperbolic,
	and the latter occurs when $\gamma$ is strictly loxodromic.
	See \cite[Section 4.4]{NR_arithmetic} or \cite[Section 3.1]{BMR_arithmetic_surface_bundle}.
	
	Since $M$ is closed and arithmetic, there exists 
	some explicit universal constant $c_1>0$,
	such that the following estimate holds:
	\begin{equation}\label{systole_estimate_arithmetic}
	\mathrm{Syst}(M)\geq c_1\cdot \left(\frac{\log\log d_{\mathbf{k}}}{\log d_{\mathbf{k}}}\right)^3.
	\end{equation}
	This follows immediately from Dobrowolski's celebrated lower bound for 
	the Mahler measure of algebraic integers with bounded degree \cite{Dobrowolski_bound} 
	(see also \cite[Theorem 3.5]{BMR_arithmetic_surface_bundle}),
	and the length formula recalled above.

	Since $V_1,\cdots,V_s$ are all contained in the $\mu/2$--thin part of $M$,
	there exists some constant $c_2(\mu)>0$ depending only on $\mu$,
	such that the following estimate holds:
	\begin{equation}\label{thin_part_volume_estimate_arithmetic}
	\sum_{i=1}^{s}\mathrm{Vol}(V_i)\leq c_2(\mu)\cdot \mathrm{Vol}(M)\cdot\Delta_{\mathbf{k}}^{-4/9}
	\end{equation}
	This follows from a remarkable recent result due to Fr\k{a}czyk \cite[Theorem 3.1]{Fraczyk_arithmetic}.
	In fact, Fr\k{a}czyk proves a much stronger inequality,
	bounding the volume of 
	the $(\rho+\eta\,d_{\mathbf{k}})$--thin part of
	any (closed or cusped) arithmetic hyperbolic $3$--manifold $M$,
	where $\eta>0$ is some universal constant
	and $\rho>0$ can be any constant;
	the upper bound takes the same form,
	only replacing $c_2(\mu)$ with some constant depending on $\rho$;
	moreover, if $M$ is congruence, 
	then $\mathrm{Vol}(M)$ can be replaced with $\mathrm{Vol}(M)^{11/12}$.

	Since ${\mathbf{k}}$ has exactly one complex place,
	there exists some explicit universal constant $c_3>0$,
	such that the following estimate holds:
	\begin{equation}\label{discriminant_estimate_arithmetic}
	\Delta_{\mathbf{k}}\geq c_3\cdot 60^{d_{\mathbf{k}}}.
	\end{equation}
	This follows from a well-known discriminant lower bound
	due to Odlyzko \cite{Odlyzko_bound}.
	In fact, Odlyzko proves 
	$\Delta_{\mathbf{k}}^{1/d_{\mathbf{k}}}
	=60^{r_1/d_{\mathbf{k}}}\cdot 22^{2r_2/d_{\mathbf{k}}}+o(1)$,
	for $d_{\mathbf{k}}=r_1+2r_2$ tending to infinity,
	where $r_1$ and $r_2$ denote the numbers of real and complex places of $\mathbf{k}$,
	as usual. In our case, we apply to $r_2=1$.
	For proving Lemma \ref{short_tubes_wrist_sum_arithmetic},
	we could also use Minkowski's lower bound 
	$\Delta_{\mathbf{k}}\geq 
	(\pi/4)^{2r_2}\cdot d_{\mathbf{k}}^{2d_{\mathbf{k}}}/ (d_{\mathbf{k}}!)^2$,
	which would make it 
	easier to extract an explicit universal coefficient like $c_3$.

	For all $d_{\mathbf{k}}\geq3$, 
	it is elementary to estimate, 
	with some universal constant $c_4>0$,
	$$60^{-4d_{\mathbf{k}}/9}\cdot\left(\frac{\log d_{\mathbf{k}}}{\log\log d_{\mathbf{k}}}\right)^{3/2}\leq c_4.$$
	
	With the above facts, we apply Lemma \ref{short_tubes_wrist_sum_arithmetic} to estimate
	\begin{eqnarray*}
	\sum_{i=1}^s\mathrm{Wri}(V_i)
	&\leq&
	\sqrt{3}\cdot(\mu/8)^{-3/2}\cdot\frac{1}{\sqrt{\mathrm{Syst}(M)}}\cdot\sum_{i=1}^s\mathrm{Vol}(V_i)\\
	&\leq&
	D(\mu)\cdot\mathrm{Vol}(M),	
	\end{eqnarray*}
	where we can set 
	$$D(\mu)=\sqrt{3}\cdot c_1c_3c_4\cdot (\mu/8)^{-3/2}\cdot c_2(\mu),$$
	as asserted.
\end{proof}

With the above preparation, we finish the proof of Theorem \ref{main_ent_vs_vol_arithmetic}
as follows.

Let $M$ be an arithmetic orientable closed hyperbolic $3$--manifold.
Suppose that $\phi\in H^1(M;\Integral)$ be a fibered class.
For any connected finite cover $M'$ of $M$, 
the pullback $\phi'\in H^1(M';\Integral)$ of $\phi$ to $M'$
is some positive integral multiple $m'\psi'$ 
of a primitive fibered class $\psi'\in H^1(M';\Integral)$.
We can choose some $M'$ of covering degree $2m$,
such that $\psi'$ is dual to a fiber of genus $\geq3$.
(For example, see the proof of Corollary \ref{entropy_estimate_without_b_fine_corollary}.)

%

Let $V_1,\cdots,V_s$ the hyperbolic tubes
as in Lemma \ref{short_tubes_wrist_sum},
with respect to $\mu$ and $M$.
Denote by $W=M\setminus\mathrm{int}(V_1\cup\cdots\cup V_s)$
the complementary $3$--manifold with boundary.
By Lemma \ref{short_tubes},
$V_1,\cdots,V_s$ and $W$ satisfy the assumption of Theorem \ref{main_vol_vs_hpl}
with respect to $\epsilon=\mu/8$.
Applying Theorems \ref{main_ent_vs_hpl} and \ref{main_vol_vs_hpl} 
and Lemma \ref{short_tubes_wrist_sum},
we estimate
\begin{eqnarray*}
\mathrm{Ent}(\phi)&=&\mathrm{Ent}(\psi')/m'\\
&\leq& \mathrm{Ent}(\psi)\\
&\leq& 2\cdot(\ell_{\mathtt{He}}(M)-1)\cdot\log3\\
&\leq& 
2\cdot(\log3)\cdot
10^{22}\cdot\left(
\epsilon^{-3}\cdot\mathrm{Vol}(W)+
\epsilon^{-1}\cdot\sum_{i=1}^s \mathrm{Wri}(V_i)
\right)\\
&\leq&
2\cdot(\log3)\cdot
10^{22}\cdot\left(
\epsilon^{-3}\cdot\mathrm{Vol}(M)+
\epsilon^{-1}\cdot D(8\epsilon)\cdot\mathrm{Vol}(M)
\right)\\
&\leq&
C\cdot\mathrm{Vol}(M_f).
\end{eqnarray*}
For example,
we can set
$$C=10^{23}\cdot \left((\mu/8)^{-3}+(\mu/8)^{-1}\cdot D(\mu)\right).$$
where $D$ is 
as declared in Lemma \ref{short_tubes_wrist_sum_arithmetic},
and $\mu$ is set to be $0.104$.

This completes the proof of Theorem \ref{main_ent_vs_vol_arithmetic}.

\begin{remark}\label{main_ent_vs_vol_arithmetic_remark}\,
\begin{enumerate}
\item
In our proof of Theorem \ref{main_ent_vs_vol_arithmetic} as above,
the only implicit part to construct the asserted constant $C$
comes from the function $D(\mu)$ in Lemma \ref{short_tubes_wrist_sum_arithmetic}.
From the proof of Lemma \ref{short_tubes_wrist_sum_arithmetic},
it is evident that the universal constants $c_1$, $c_3$, and $c_4$ 
can be made explicit.
It seems possible to work out an explicit expression 
of the function $c_2(\mu)$,
by following all the steps of Fr\k{a}czyk's argument in \cite{Fraczyk_arithmetic},
which is constructive in principle.
Then, in principle,
Theorem \ref{main_ent_vs_vol_arithmetic}
can be made efficient by writing down an explicit universal constant $C$.
\item
It is known that there exist arithmetic closed hyperbolic surface bundles
of fiber genus $2$ \cite{Reid_non-Haken}.
Therefore, 
in our proof of Theorem \ref{main_ent_vs_vol_arithmetic},
we cannot apply Theorem \ref{main_ent_vs_hpl} directly to $M$ in general.
This justifies our trick of passing to $M'$.
However, one may conjecture that
Theorem \ref{main_ent_vs_hpl} holds for the case with fiber genus $2$ as well.
\end{enumerate}
\end{remark}

\section{Entropy versus volume: With systole}\label{Sec-ent_vs_vol_general}


This section is devoted to the proof of Theorem \ref{main_ent_vs_vol_general}.

The idea of the proof is similar to the arithmetic case (Section \ref{Sec-ent_vs_vol_arithmetic}).
The main difference is that 
we use the stronger intermediate results, Lemma \ref{entropy_estimate_without_b_fine}
(plus Corollary \ref{entropy_estimate_without_b_fine_corollary}) 
and Lemma \ref{second_division_hpl_estimate},
in place of Theorem \ref{main_ent_vs_hpl} and Theorem \ref{main_vol_vs_hpl},
respectively.

Let $M$ be an orientable closed hyperbolic $3$--manifold.
Suppose that $\phi\in H^1(M;\Integral)$ be a fibered class.

%

Let $V_1,\cdots,V_s$ the hyperbolic tubes
as in Lemma \ref{short_tubes_wrist_sum},
with respect to $\mu=0.104$ and $M$.
Denote by $W=M\setminus\mathrm{int}(V_1\cup\cdots\cup V_s)$
the complementary $3$--manifold with boundary.
By Lemma \ref{short_tubes},
$V_1,\cdots,V_s$ and $W$ satisfy the assumption of Theorem \ref{main_vol_vs_hpl}
with respect to $\epsilon=\mu/8$.
Set $\delta=\epsilon\times 10^{-1}$.

We present $M$ with a Heegaard diagram $(\Sigma,\boldsymbol{\alpha},\boldsymbol{\beta})$
of genus $g\geq s$, as provided by Lemma \ref{second_division_hpl_estimate}.
For each $i=1,\cdots,g$,
denote by $k_i$ the number of intersections of $\alpha_i$ 
with the $\boldsymbol{\beta}$--curves in total.
The irreducibility of $M$ implies $k_i\geq1$ for all $i=1,\cdots,g$.
By Lemma \ref{entropy_estimate_without_b_fine}
and Corollary \ref{entropy_estimate_without_b_fine_corollary},
we can estimate
\begin{eqnarray*}
\mathrm{Ent}(\phi) 
&\leq& 
2\cdot(\log(k_1\cdots k_g)-\log k_{\mathrm{min}})\\
&\leq& 
2\cdot\log(k_1\cdots k_g)\\
&=& 
2\cdot\left(\sum_{i=s+1}^g \log k_i + \sum_{i=1}^s \log k_i\right)\\
&\leq& 
2\cdot\left(
(g-s)\cdot \log\left(10^3\cdot(\epsilon/\delta)^6\right) 
+
\sum_{i=1}^s \log \left(10^{7}\cdot \delta^{-1}\cdot (\epsilon/\delta)^{14}\cdot\mathrm{Wri}(V_i)\right)
\right)
\\
&=& 
2\cdot\left(
(g-s)\cdot \log 10^9
+ s\cdot \log\left(10^{22}\cdot (\mu/8)^{-1}\right)
+ \sum_{i=1}^s \log \mathrm{Wri}(V_i)
\right)
\\
&\leq&
2\cdot\left(30\,(g-s)+60\,s+ \sum_{i=1}^s \log \mathrm{Wri}(V_i)\right).
\end{eqnarray*}
By Lemma \ref{entropy_estimate_without_b_fine}, we estimate
$$g-s
\leq 10^3\cdot\delta^{-3}\cdot (\epsilon/\delta)^6\cdot\mathrm{Vol}(W)
\leq 10^{18}\cdot\mathrm{Vol}(W).$$
By Lemma \ref{short_tubes}, we estimate
$$s\leq \frac{1}{(4\pi/3)\cdot(\mu/8)^3}\cdot \mathrm{Vol}(M_f)
\leq 10^3\cdot \mathrm{Vol}(M_f).$$
By Formula \ref{tube_formula}, we estimate
\begin{eqnarray*}
	\sum_{i=1}^s \log\mathrm{Wri}(V_i)
	&=&
	\sum_{i=1}^s \log\sqrt{\frac{4\pi\cdot\mathrm{Vol}(V_i)}{\mathrm{Syst}(V_i)}}\\
	&\leq&
	\sum_{i=1}^s \log\sqrt{\frac{4\pi\cdot\mathrm{Vol}(V_i)}{\mathrm{Syst}(M)}}\\
	&=&
	\frac{s}2\cdot\log\left(\frac{1}{\mathrm{Syst}(M)}\right)+\frac{1}{2}\cdot\sum_{i=1}^s \log\left(4\pi\cdot\mathrm{Vol}(V_i)\right)\\
	&\leq&
	\frac{s}2\cdot\log\left(3+\frac{1}{\mathrm{Syst}(M)}\right)+2\pi\cdot\sum_{i=1}^s \mathrm{Vol}(V_i).
\end{eqnarray*}
Putting the above estimates together, we can easily bound
\begin{eqnarray*}
\mathrm{Ent}(\phi) 
&\leq& 
62\cdot 10^{18}\cdot \mathrm{Vol}(M)+s\cdot\log\left(3+\frac{1}{\mathrm{Syst}(M)}\right)\\
&\leq& 
62\cdot 10^{18}\cdot \mathrm{Vol}(M)+10^3\cdot\mathrm{Vol}(M)\cdot\log\left(3+\frac{1}{\mathrm{Syst}(M)}\right)\\
&\leq&
10^{20}\cdot\mathrm{Vol}(M)\cdot\log\left(3+\frac{1}{\mathrm{Syst}(M)}\right),
\end{eqnarray*}
as desired.

This completes the proof of Theorem \ref{main_ent_vs_vol_general}.

\section{Example}\label{Sec-example_optimal}
We conclude our discussion with an example,
justifying the form of the upper bound 
in Theorem \ref{main_ent_vs_vol_general}.
Indeed, for any connected closed oriented surface $S$ of genus $\geq2$,
we construct a sequence of pseudo-Anosov mapping classes $[f_n]\in\mathrm{Mod}(S)$,
with the following properties. 
For some constant $K>0$ and for all but finitely many $n$,
$$\mathrm{Ent}([f_n])> K^{-1}\cdot \mathrm{Vol}(M_n)\cdot \log\left(3+\frac{1}{\mathrm{Syst}(M_n)}\right),$$
and
$$\lim_{n\to\infty} \mathrm{Syst}(M_n)=0.$$
Here, $M_n$ denotes the mapping torus of $[f_n]$.
Our construction is as follows.

Let $S$ be any connected closed oriented surface of genus $g\geq2$.
Fix a symplectic basis $\xi_1,\eta_1,\cdots,\xi_g,\eta_g\in H_1(S;\Integral)$.
Namely, the algebraic intersection pairing takes the form
$\langle \xi_i, \xi_j\rangle=0$, and $\langle \eta_i, \eta_j\rangle=0$,
and $\langle \xi_i, \eta_j\rangle=\delta_{ij}$.

Pick a pair of simple closed curves $x$ and $y$ on $S$,
fixing orientations, such that $\xi_1=[x]$ and $\eta_1=[y]$.
Moreover, we assume that $x$ and $y$ together fill $S$,
(that is,
after isotoping $x$ and $y$ to any trasverse position
minimizing the number of intersections,
each complementary component of $x\cup y$ is an open disk).

Denote by $T_x,T_y\in\mathrm{Mod}(S)$ 
the (right-hand) Dehn twists along $x$ and $y$,
respectively.
We construct a sequence of mapping classes $[f_n]\in\mathrm{Mod}(S)$, for all $n\in\Natural$, 
as the composites
\begin{equation}\label{ent_fastest_sequence}
[f_n]=T_x^nT_y^{-1}.
\end{equation}
In particular, this is an instance of Penner's construction,
so the mapping classes $[f_n]$ are all pseudo-Anosov.
It is also a special case of families appeared in Long and Morton \cite{Long_Morton}.

The induced linear automorphism $(f_n)_*\colon H_1(S;\Rational)\to H_1(S;\Rational)$
is represented 
over the fixed symplectic basis
as a square matrix of size $2g$ (acting on column coordinate vectors):
$$(f_n)_*=\left(
\begin{array}{cccc}
{\begin{array}{cc} n+1& n \\ 1& 1\end{array}} & & & \\
&{\begin{array}{cc} 1& 0\\ 0& 1\end{array}}& & \\
& & \ddots & \\
& & & {\begin{array}{cc} 1& 0\\ 0& 1\end{array}}\\
\end{array}
\right)$$ 
The matrix $(f_n)_*$ has an eigenvalue $1$ of multiplicity $(2g-2)$,
and another pair of positive simple eigenvalues $(n+2\pm\sqrt{n^2+4n})/2$.
Hence, $(f_n)_*$ has spectral radius $(n+2+\sqrt{n^2+4n})/2$,
whose logarithm is a lower bound for $\mathrm{Ent}([f_n])$.
This yields 
\begin{equation}\label{ent_log_growth}
\mathrm{Ent}([f_n])\geq \log\left(\frac{n+2+\sqrt{n^2+4n}}{2}\right)>\log n.
\end{equation}

For all $n\in\Natural$,
denote by $M_n$ the mapping torus of $[f_n]$.
Topologically, $M_n$ is homeomorphic to 
the product $3$--manifold $S\times(\Real/\Integral)$
by doing a $1/n$--surgery along $x\times [1/3]$
and a $(-1)$--surgery along $y\times[2/3]$.
To be precise, we think of any simple closed curve $z\times[t]$
on any slice $S\times[t]$ 
as framed by the forward normal vectors to $S\times[t]$,
so a $p/q$--surgery along an oriented $z\times[t]$ 
means removing from $S\times(\Real/\Integral)$
a tube with core $c\times[t]$ and refilling with another tube,
such that the slope on the removed tube
parellel to $p$ times the meridian plus $q$ times the longitude
bounds a disk in the refilled tube;
the longitude is oriented according to the orientation of $c\times[t]$,
and the meridian is oriented compatibly.

Geometrically, the sequence of closed hyperbolic $3$--manifolds $M_n$
converges in the Gromov--Hausdorff sense
to a finite-volume hyperbolic $3$--manifold $M_\infty$ with one cusp,
fixing a Margulis number $\mu>0$ for all $M_n$
and a base point for each $M_n$ in the $\mu$--thick part.
The manifold $M_\infty$ is homeomorphic
to $S\times(\Real/\Integral)$
with $(-1)$--surgery along $y\times[2/3]$
and with $x\times[1/3]$ drilled.
Moreover, it is known that 
\begin{equation}\label{vol_decrease_filling}
\mathrm{Vol}(M_n)<\mathrm{Vol}(M_\infty)
\end{equation}
holds for all $n$.
For all but finitely many $n$,
there are $\mu$--Margulis tubes $V_n\subset M_n$ 
containing the surged core curves,
and they converge to 
the $\mu$--Margulis horocusp $V_\infty\subset M_\infty$.
It follow that the wrist of $V_n$ grows asymptotically as
$$\mathrm{Wri}(V_n)\sim n\cdot w_\infty$$
for $n$ tending to infinity, where $w_\infty>0$ 
denotes the length of the Euclidean geodesic meridian on $\partial V_\infty$.
Formula \ref{tube_formula} implies
$$\mathrm{Syst}(M_n)=\mathrm{Syst}(V_n)
=\frac{4\pi\cdot\mathrm{Vol}(V_n)}{\mathrm{Wri}(V_n)^2}
\sim n^{-2}\cdot \mathrm{Vol}(V_\infty)\cdot\frac{4\pi}{w_\infty^2},$$
for $n$ tending to infinity.
Therefore, we obtain
\begin{equation}\label{syst_log_growth}
\log\left(3+\frac{1}{\mathrm{Syst}(M_n)}\right)=2\,\log n + o(\log n),
\end{equation}
for $n$ tending to infinity.

By (\ref{ent_log_growth}), (\ref{vol_decrease_filling}), and (\ref{syst_log_growth}),
the sequence (\ref{ent_fastest_sequence}) satisfies the asserted properties,
where we can take $K=1+2\cdot\mathrm{Vol}(M_\infty)$.

\appendix

\section{Next-to-top rank versus Nielsen number}\label{Sec-next_to_top_appendix}
This appendix section supplies an exposition of Proposition \ref{constraint_HF} (4).
The asserted inequality regarding the next-to-top term in Heegaard Floer homology
is obtained through identifications with 
certain versions of the monopole Floer homology, 
the periodic Floer homology,
and the symplectic Floer homology.
Each of the identifications holds under certain monotonicity condition.
We review relevant known facts, 
and explain how to derive the asserted inequality.
Our exposition expands an outline appeared in \cite[Section 1.2]{Cotton-Clay_sfh}.

For a similar argument in terms of knot Floer homology,
see \cite[Theorem 1.2]{Ni_hfk_fp} and references therein.
To compare that result with Proposition \ref{constraint_HF} (4),
construct a surface bundle as the $0$--surgery along a fibered knot in a $3$--manifold
(pointed out by Dongtai He).

\subsection{Monopole Floer homology}\label{Subsec-hm}
Monopole Floer homology is a kind of Floer homology
for oriented closed smooth $3$--manifolds.
It is originally developed from the study of Seiberg--Witten equation
(also known as the monopole equation) in $4$--dimensional differential topology.
For a thorough exposition on this topic,
we refer to the book of Kronheimer and Mrowka \cite{KM_book}.
Below, 
we mention a few subtle points that one needs to know,
in order to understand the connection with other Floer homologies.

From a bird's eye view,
there are three flavors $\widecheck{\mathrm{HM}}_*$ (the ``to'' version), 
$\widehat{\mathrm{HM}}_*$ (the ``from'' version), 
and $\overline{\mathrm{HM}}_*$ (the ``bar'' version),
in many ways
like the Heegaard Floer homology versions $\mathrm{HF}^+$, $\mathrm{HF}^-$, and $\mathrm{HF}^\infty$.
Out of a similar exact triangle as (\ref{exact_triangle_mip}),
one may extract another reduced version $\mathrm{HM}_*$,
which is like $\mathrm{HF}^+_{\mathrm{red}}$.
In \cite{KM_book},
Kronheimer and Mrowka also consider negative completion of these versions
(over the formal power series ring $\Integral[[U]]$).
The completed versions are more suitable 
for discussion regarding extra product structures and dualities.
Those completed versions are denoted with a subscript $\bullet$ instead of $*$.
Both the usual or completed versions split into direct summands
with respect to the $\mathrm{Spin}^{\mathtt{c}}$ structures.
See \cite[Chapter I, Section 3]{KM_book} for detailed summary.

More generally,
there are monopole Floer homology with perturbations,
as introduced in \cite[Chapter VIII]{KM_book}.
The perturbation data appears as a closed differential $2$--form
added to the monopole equation,
satisfying certain conditions and leading to a perturbed chain complex.
Moreover,
the resulting homology depends only on the de Rham cohomology class.
As special cases,
the above usual or completed versions are 
monopole Floer homology with exact perturbations.
However, non-exact perturbations are relevant to our subsequent discussion.

We focus on the perturbed ``to'' version of monopole Floer homology,
as it suffices for our application.
Let $M$ be a connected, closed, oriented $3$--manifold,
and $\mathfrak{s}$ be a $\mathrm{Spin}^{\mathtt{c}}$ structure on $M$.

We say that a cohomology class $c\in H^2(M;\Real)$ is \emph{balanced},
with respect to $\mathfrak{s}$,
if $c=-2\pi^2\,c_1(\mathfrak{s})$ (as real cohomology classes),
or \emph{positively monotone} if $c=2\pi^2\,(t-1)\,c_1(\mathfrak{s})$
for some $t>0$, 
or \emph{negatively monotone} if $c=2\pi^2\,(t-1)\,c_1(\mathfrak{s})$
for some $t<0$
\cite[Definition 29.1.1]{KM_book}.
In particular, the \emph{exact} class $c=0$ is positively monotone.
In all these cases,
the \emph{monopole Floer homology}
$\widecheck{\mathrm{HM}}_*(M,\mathfrak{s},c)$
(with $\Integral$ coefficients) can be defined,
as a $\Integral/2\Integral$--graded module over $\Integral[U]$,
where $U$ is a fixed indeterminant.
In the exact case $c=0$, 
we simply denote $\widecheck{\mathrm{HM}}_*(M,\mathfrak{s})$.

\begin{lemma}\label{perturbed_hm_to}
		Let $M$ be a connected, closed, oriented $3$--manifold,
		and $\mathfrak{s}$ be any $\mathrm{Spin}^{\mathtt{c}}$ structure on $M$.
		If $c_1(\mathfrak{s})\in H^2(M;\Integral)$ is not torsion,
		and if $c\in H^2(M;\Real)$ is balanced or positively monotone with respect to $\mathfrak{s}$,
		then there exists an isomorphism of $\Integral/2\Integral$--graded $\Integral[U]$--modules
		$$\widecheck{\mathrm{HM}}_*(M,\mathfrak{s},c)\cong
		\widecheck{\mathrm{HM}}_*(M,\mathfrak{s}).$$
\end{lemma}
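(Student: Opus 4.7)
The plan is to invoke the perturbation-invariance machinery developed by Kronheimer--Mrowka in \cite{KM_book}, Chapter VIII and the monotonicity discussion of Chapter VIII/Chapter 29--31. The key point is that the ``to'' version $\widecheck{\mathrm{HM}}_*(M,\mathfrak{s},c)$ is defined via a Morse-type chain complex on the blow-up of the configuration space, and its isomorphism type depends only on the perturbation $c$ through its chamber structure in $H^2(M;\Real)$. The non-torsion assumption on $c_1(\mathfrak{s})$ is what lets us avoid the complications caused by reducible solutions, which are exactly the source of chamber dependence.

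First I would recall that for the ``to'' flavor, reducible solutions of the (perturbed) Seiberg--Witten equations contribute to the chain complex through boundary-unstable critical points; their presence and contribution is controlled by the signed comparison between the perturbing class $c$ and $-2\pi^2\,c_1(\mathfrak{s})$. When $c_1(\mathfrak{s})$ is non-torsion, one can use Kronheimer--Mrowka's wall-crossing analysis (e.g.\ the monotonicity discussion around \cite[Proposition~29.1.2]{KM_book} and the chamber results in \cite[Section 31]{KM_book}) to show that the positively monotone regime and the balanced wall together constitute a single chamber for the ``to'' version, so that $\widecheck{\mathrm{HM}}_*(M,\mathfrak{s},c)$ is independent of $c$ inside this regime.

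Second I would construct a continuation map between $\widecheck{\mathrm{HM}}_*(M,\mathfrak{s},c)$ and $\widecheck{\mathrm{HM}}_*(M,\mathfrak{s})$ by choosing a smooth path $\{c_t\}_{t\in[0,1]}$ of closed $2$--forms joining a representative of $c$ to $0$, with $c_t$ positively monotone for all $t\in[0,1)$. Because $c_1(\mathfrak{s})$ is non-torsion, no reducibles appear along this family, so the usual parameterized moduli spaces produce a chain homotopy equivalence between the two perturbed chain complexes; this yields an isomorphism of $\Integral/2\Integral$--graded $\Integral[U]$--modules. For the balanced case $c=-2\pi^2 c_1(\mathfrak{s})$, one either absorbs the balanced wall into the positively monotone chamber (as the ``to'' version is continuous from that side) or approximates by a one-parameter family of strictly positively monotone perturbations approaching the balanced class and uses compactness together with the non-torsion assumption to pass to the limit.

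The hard part will be ensuring that the continuation map is well-defined and an isomorphism uniformly along the path, particularly as $c_t$ approaches the balanced wall from the monotone side; this requires verifying the compactness of the perturbed moduli spaces and the absence of reducibles throughout the deformation, which is exactly where the non-torsion hypothesis on $c_1(\mathfrak{s})$ enters decisively. Once this is in place, all the remaining work is a direct application of the continuation-map machinery in \cite[Chapters 24 and 31]{KM_book}, and the $\Integral[U]$--module and $\Integral/2\Integral$--grading structures are preserved automatically by these continuation maps.
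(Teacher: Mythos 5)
The paper's proof is essentially a two-line citation: Lemma~\ref{perturbed_hm_to} is ``a special case of \cite[Theorems 31.1.1 and 31.1.2]{KM_book},'' with the one substantive observation being that those theorems are stated for the \emph{completed} groups $\widecheck{\mathrm{HM}}_\bullet$ over $\Integral[[U]]$, whereas the lemma as stated concerns $\widecheck{\mathrm{HM}}_*$ over $\Integral[U]$; the bridge is that for the ``to'' flavor the completed and uncompleted groups coincide, because every generator of the chain complex is killed by a large power of $U$. Your proposal instead re-narrates the continuation-map and reducible-avoidance mechanism that underlies Chapter 31 of \cite{KM_book}, which is unnecessary work and, more to the point, does not close the gap the paper actually has to close.

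Concretely, the one thing your argument needs to address and does not is the $\Integral[U]$ versus $\Integral[[U]]$ issue. You assert that ``the $\Integral[U]$--module and $\Integral/2\Integral$--grading structures are preserved automatically by these continuation maps,'' but the machinery you are invoking is set up for the negatively completed versions $\widecheck{\mathrm{HM}}_\bullet$, $\widehat{\mathrm{HM}}_\bullet$, $\overline{\mathrm{HM}}_\bullet$ over $\Integral[[U]]$. To get the lemma's conclusion you must separately argue that $\widecheck{\mathrm{HM}}_\bullet(M,\mathfrak{s},c)=\widecheck{\mathrm{HM}}_*(M,\mathfrak{s},c)$, which is the only piece of the proof the paper actually spells out. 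There are also smaller inaccuracies worth flagging: for the ``to'' flavor it is the \emph{boundary-stable} (not boundary-unstable) reducible critical points that enter the chain group; and your claim that ``no reducibles appear along this family'' needs care precisely at the balanced endpoint, which is the wall where reducibles of the unblown-up equation do occur --- this is why Theorem 31.1.2 is a separate statement from 31.1.1 in \cite{KM_book} rather than a corollary of a single continuation argument. Citing those two theorems and adding the completed-vs-uncompleted remark is both shorter and more reliable than re-deriving the chamber analysis by hand.
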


This is a special case of \cite[Theorems 31.1.1 and 31.1.2]{KM_book}.
In fact, the similar conclusion holds for 
$\widecheck{\mathrm{HM}}_\bullet$, $\widehat{\mathrm{HM}}_\bullet$, and $\overline{\mathrm{HM}}_\bullet$,
only over $\Integral[[U]]$.
However,
$\widecheck{\mathrm{HM}}_\bullet$ is always identical to
$\widecheck{\mathrm{HM}}_*$,
since any generator of their chain complexes
is annihilated by some sufficiently large power of $U$.

\begin{lemma}\label{hm_and_hf}
		Let $M$ be a connected, closed, oriented $3$--manifold,
		and $\mathfrak{s}$ be any $\mathrm{Spin}^{\mathtt{c}}$ structure on $M$.
		Then, there exists an isomorphism of $\Integral/2\Integral$--graded $\Integral[U]$--modules
		$$\widecheck{\mathrm{HM}}_*(M,\mathfrak{s},c_{\mathtt{b}})\cong
		\mathrm{HF}^+(M,\mathfrak{s}),$$
		where $c_{\mathtt{b}}=-2\pi\,c_1(\mathfrak{s})$ 
		denotes the balanced class
		in $H^2(M;\Real)$ with respect to $\mathfrak{s}$.
\end{lemma}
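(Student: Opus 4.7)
The plan is to reduce the identification to the Kutluhan--Lee--Taubes (or equivalently Colin--Ghiggini--Honda) equivalence between the unperturbed ``to'' version of monopole Floer homology and the plus version of Heegaard Floer homology, and then upgrade to the balanced perturbation using Lemma \ref{perturbed_hm_to}. Concretely, the assertion will be obtained by splicing together two isomorphisms of $\Integral/2\Integral$--graded $\Integral[U]$--modules, one of which is the deep input and the other is essentially formal in view of material already cited.

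First I would separate into two cases according to whether $c_1(\mathfrak{s})\in H^2(M;\Integral)$ is a torsion class. In the torsion case, the balanced class $c_{\mathtt{b}}=-2\pi^2\,c_1(\mathfrak{s})$ vanishes in $H^2(M;\Real)$, so the balanced perturbation is de Rham cohomologous to the exact (zero) perturbation; by the de Rham invariance of the perturbed monopole Floer homology recalled from \cite[Chapter VIII]{KM_book}, we obtain
\[
\widecheck{\mathrm{HM}}_*(M,\mathfrak{s},c_{\mathtt{b}})\cong
\widecheck{\mathrm{HM}}_*(M,\mathfrak{s})
\]
as $\Integral/2\Integral$--graded $\Integral[U]$--modules. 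In the non-torsion case, the same conclusion holds but now as a nontrivial consequence of Lemma \ref{perturbed_hm_to}, whose hypothesis is satisfied precisely because we chose the balanced, rather than a generic positively monotone, class.

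Second, I would invoke the Kutluhan--Lee--Taubes isomorphism, which gives a canonical identification
\[
\widecheck{\mathrm{HM}}_*(M,\mathfrak{s})\cong \mathrm{HF}^+(M,\mathfrak{s})
\]
of $\Integral/2\Integral$--graded $\Integral[U]$--modules, summand-by-summand over $\mathrm{Spin}^{\mathtt{c}}(M)$. Composing this with the previous step yields the asserted isomorphism. Since $\widecheck{\mathrm{HM}}_\bullet$ coincides with $\widecheck{\mathrm{HM}}_*$ (as noted after Lemma \ref{perturbed_hm_to}, every chain generator is $U$--nilpotent), no distinction between the completed and uncompleted versions intervenes.

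The main obstacle is not really a mathematical one but a bookkeeping one: verifying that the two identifications are compatible with the $\Integral/2\Integral$--gradings and the $\Integral[U]$--module structures as stated in \cite{KM_book} and in the KLT papers, and in particular that the $\mathrm{Spin}^{\mathtt{c}}$--summand decomposition is preserved. The genuine analytic content lives entirely in Lemma \ref{perturbed_hm_to} and in the KLT equivalence, both of which we cite as black boxes; our task here is simply to confirm that the balanced perturbation falls inside the range of $c$ for which Lemma \ref{perturbed_hm_to} applies, and that in the torsion case no perturbation is needed at all.
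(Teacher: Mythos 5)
Your plan rests on a mis-citation of the Kutluhan--Lee--Taubes main theorem. You assert that KLT give ``a canonical identification $\widecheck{\mathrm{HM}}_*(M,\mathfrak{s})\cong \mathrm{HF}^+(M,\mathfrak{s})$'' in the unperturbed case, and then you propose to transport this to the balanced perturbation via Lemma \ref{perturbed_hm_to}. But what KLT actually prove is the balanced-perturbation isomorphism
$$\widecheck{\mathrm{HM}}_*(M,\mathfrak{s},c_{\mathtt{b}})\cong \mathrm{HF}^+(M,\mathfrak{s}),$$
which is precisely the target statement; the paper's proof is simply to cite \cite[Main Theorem]{KLT_hf_hm_i} and stop. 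The telltale sign that the balanced perturbation is built into KLT's statement (rather than being a detour one upgrades to afterwards) is the companion isomorphism $\overline{\mathrm{HM}}_*(M,\mathfrak{s},c_{\mathtt{b}})\cong\widehat{\mathrm{HF}}(M,\mathfrak{s})$, which the paper records immediately after the lemma: for the unperturbed (exact) monopole theory the bar flavor would instead match $\mathrm{HF}^\infty$, so the pairing of $\overline{\mathrm{HM}}$ with $\widehat{\mathrm{HF}}$ only makes sense with $c_{\mathtt{b}}$.

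As a result, your argument is circular as written: the unperturbed identification $\widecheck{\mathrm{HM}}_*(M,\mathfrak{s})\cong \mathrm{HF}^+(M,\mathfrak{s})$ is not an independent black box but is itself the composite of KLT's balanced-perturbation theorem with Lemma \ref{perturbed_hm_to}, and you then invoke Lemma \ref{perturbed_hm_to} a second time to travel back to the balanced case. The torsion/non-torsion case split and the $U$-nilpotence remark are correct observations, but they are not needed here at all; the shortest and correct proof is to note that the balanced class $c_{\mathtt{b}}$ is exactly the perturbation appearing in the KLT theorem and to cite it directly.
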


This is consequence of deep works due to Kutluhan, Lee, and Taubes 
\cite{KLT_hf_hm_i,KLT_hf_hm_ii,KLT_hf_hm_iii,KLT_hf_hm_iv,KLT_hf_hm_v}.
In the same series of papers,
they also prove the isomorphisms
$\widehat{\mathrm{HM}}_*(M,\mathfrak{s},c_{\mathtt{b}})\cong\mathrm{HF}^-(M,\mathfrak{s})$
and 
$\overline{\mathrm{HM}}_*(M,\mathfrak{s},c_{\mathtt{b}})\cong\widehat{\mathrm{HF}}(M,\mathfrak{s})$,
fitting into parallel exact triangles.
See \cite[Main Theorem]{KLT_hf_hm_i}.

\subsection{Periodic Floer homology}\label{Subsec-hp}
Periodic Floer homology is a kind of Floer homology
for mapping classes of oriented closed surfaces.
Upon an auxiliary choice of a smooth area form
and a generic area-preserving representative of the mapping class, 
the chain complex is generated by finite collection of periodic orbits with multiplicity.
Passing to the mapping torus, one may interprete the generators
as collections of periodic trajectories.
The product of $\Real$ with the mapping torus is equipped with 
a naturally induced symplectic structure.
The differential operator is defined by certain counting
pseudo-holomorphic curves in the product symplectic $4$--manifold
connecting between different generators drawn on the $-\infty$ and $+\infty$ ends,
upon a choice of a tame almost complex structure.
In this setting, there is a notion of monotonicity,
which serves as an admissibility condition for ensuring finiteness of the counting.
See \cite[Section 2]{Hutchings_Sullivan_hp}
for a more detailed review of periodic Floer homology;
see also \cite[Section 1.1]{Lee_Taubes_hp_swf} and Remark \ref{hp_and_hm_remark}.

We need some notations to recall relevant facts to our discussion.
Let $S$ be a connected, closed, orientable, smooth surface, 
equipped with an area form $\omega_S$ and the induced orientation.
Let 
$$f\colon S\to S$$ 
be an area-preserving diffeomorphism,
such that $f^m$ has only non-degenerate fixed points for all $m\in\Natural$,
(see Section \ref{Subsec-monodromy_entropy}).
Denote by $M_f$ the mapping torus of $f$.
For consistency of this paper, we adopt the dynamical convention,
namely, $M_f$ is the quotient of $S\times\Real$ by the infinite cyclic group action
$(x,r)\mapsto(f^{-1}(x),r+1)$,
(compare Remark \ref{hp_and_hm_remark}).
Under the setting $(S,\omega_S,f)$,
there is a distinguished second cohomology class $[w_f]\in H^2(M_f;\Real)$,
with the property
$$\langle [w_f],[S]\rangle=\int_S\omega_S.$$
As a de Rham cohomology class, $[w_f]$ is represented by 
a closed $2$--form $w_f$ on $M_f$,
obtained as the pull-back of $\omega_S$ via the factor projection $S\times\Real\to S$ 
descending to $M_f$.

There is a distinguished $\mathrm{Spin}^{\mathtt{c}}$ structure on $M_f$,
which we denote as $\mathfrak{s}_\theta$.
This distinguished $\mathrm{Spin}^{\mathtt{c}}$ structure
is represented by the velocity field of the (forward) suspension flow 
$\theta_t\colon M_f\to M_f$,
(namely, $\theta_t[x,r]=[x,r+t]$ for all $t\in \Real$ and $[x,r]\in M_f$).
Therefore, any other $\mathrm{Spin}^{\mathtt{c}}$ structure on $M_f$
can be written as 
$\mathfrak{s}_{\Gamma}=\mathfrak{s}_\theta+\mathrm{PD}(\Gamma)$ 
for some $\Gamma\in H_1(M_f;\Integral)$,
where $\mathrm{PD}(\Gamma)\in H^2(M_f;\Integral)$ 
denotes the Poincar\'{e} dual of $\Gamma$.
The first Chern class $c_1(\mathfrak{s}_\theta)\in H^2(M_f;\Integral)$
satisfies the relation 
$\langle c_1(\mathfrak{s}_\theta), [S]\rangle=\chi(S)$,
implying
\begin{equation}\label{s_Gamma_chern}
\langle c_1(\mathfrak{s}_\Gamma), [S]\rangle=\chi(S)+2\,\langle \mathrm{PD}(\Gamma),[S]\rangle
\end{equation}
for all $\Gamma\in H_1(M_f;\Integral)$.

For any $\Gamma\in H_1(M_f;\Integral)$,
we say that $\Gamma$ is \emph{positively monotone}
with respect to $[w_f]$, 
if $[w_f]=-\tau\,c_1(\mathfrak{s}_\Gamma)$ holds in $H^2(M_f;\Real)$
for some $\tau>0$.
In this case, 
the \emph{periodic Floer homology}
$\mathrm{HP}(f,S,\omega_S,\Gamma)$
(with $\Integral$ coefficients) can be defined,
as a $\Integral/2\Integral$--graded module over $\Integral$.
We also simply denote $\mathrm{HP}(f,\Gamma)$ 
when $(S,\omega_S)$ is assumed in the context.

\begin{lemma}\label{hp_and_hm}
		Let $S$ be a connected, closed, orientable, smooth surface, 
		equipped with an area form $\omega_S$.
		Let $f\colon S\to S$ be an area-preserving diffeomorphism,
		such that $f^m$ has only non-degenerate fixed points for all $m\in\Natural$.
		Then, for any $\Gamma\in H_1(M_f;\Integral)$ that is positively monotone
		with respect to $[w_f]$,
		there exists an isomorphism of $\Integral/2\Integral$--graded $\Integral$--modules
		$$\mathrm{HP}(f,\Gamma)\cong
		\widecheck{\mathrm{HM}}_*(M_f,\mathfrak{s}_\Gamma).$$
\end{lemma}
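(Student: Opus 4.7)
The plan is to identify Lemma \ref{hp_and_hm} as essentially a citation of the Lee--Taubes isomorphism between periodic Floer homology and perturbed Seiberg--Witten Floer cohomology \cite{Lee_Taubes_hp_swf}, followed by a bookkeeping check that the monotonicity condition we have imposed on $\Gamma$ matches the hypotheses there. Concretely, Lee--Taubes (building on Taubes' SW=Gr framework and the ECH=HM identification) construct, for an area-preserving surface diffeomorphism $f\colon S\to S$ with nondegenerate iterates, an isomorphism of $\Integral/2\Integral$--graded $\Integral$--modules between $\mathrm{HP}(f,\Gamma)$ and a version of the ``to'' flavor of monopole Floer homology of $M_f$, in the $\mathrm{Spin}^{\mathtt{c}}$ structure $\mathfrak{s}_\Gamma$, perturbed by a closed 2-form whose de Rham class is a positive multiple of $[w_f]$. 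So the first step is to invoke that theorem and obtain
\[
\mathrm{HP}(f,\Gamma)\;\cong\;\widecheck{\mathrm{HM}}_*\bigl(M_f,\mathfrak{s}_\Gamma,r[w_f]\bigr)
\]
for every sufficiently large $r>0$, where the right-hand side uses the notation for perturbed monopole Floer homology recalled in Section \ref{Subsec-hm}.

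The second step is to verify that the perturbation class $r[w_f]$ is positively monotone with respect to $\mathfrak{s}_\Gamma$ in the sense of \cite[Definition 29.1.1]{KM_book}, and that it deforms through positively monotone classes to the exact class $0$. By our standing hypothesis, $[w_f]=-\tau\, c_1(\mathfrak{s}_\Gamma)$ in $H^2(M_f;\Real)$ for some $\tau>0$, so
\[
r[w_f]\;=\;-r\tau\, c_1(\mathfrak{s}_\Gamma)\;=\;2\pi^2(t-1)\,c_1(\mathfrak{s}_\Gamma)
\]
with $t=1-\tfrac{r\tau}{2\pi^2}$. For $r$ small this gives $t>0$ (positive monotonicity), and letting $r\to 0^+$ we approach $t=1$, i.e.\ the exact class. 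For $r$ large, $t$ may become negative or pass through the balanced value $t=0$, and here the key point is that $c_1(\mathfrak{s}_\Gamma)$ need not be torsion: by (\ref{s_Gamma_chern}), $\langle c_1(\mathfrak{s}_\Gamma),[S]\rangle=\chi(S)+2\langle\mathrm{PD}(\Gamma),[S]\rangle$, and this together with the monotonicity assumption forces $[w_f]\neq 0$, hence $c_1(\mathfrak{s}_\Gamma)$ is nontrivial in $H^2(M_f;\Real)$ (in particular nontorsion).

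The third step, and the one I expect to require the most care, is to combine Lemma \ref{perturbed_hm_to} with the deformation argument to conclude that the perturbation parameter can be moved at will along the positively monotone ray. Concretely, Lemma \ref{perturbed_hm_to} guarantees that $\widecheck{\mathrm{HM}}_*(M_f,\mathfrak{s}_\Gamma,c)$ is independent of $c$ as $c$ varies over the positively monotone cone (together with the balanced class), once $c_1(\mathfrak{s}_\Gamma)$ is not torsion. Thus for the large $r$ output by Lee--Taubes and for $r=0$, the perturbed groups coincide:
\[
\widecheck{\mathrm{HM}}_*\bigl(M_f,\mathfrak{s}_\Gamma,r[w_f]\bigr)\;\cong\;\widecheck{\mathrm{HM}}_*(M_f,\mathfrak{s}_\Gamma).
\]
Chaining this with the Lee--Taubes isomorphism yields the asserted identification. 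The remaining bookkeeping is to match gradings: both sides carry canonical $\Integral/2\Integral$--gradings fixed by a homological orientation of $M_f$, and the identifications in \cite{Lee_Taubes_hp_swf} and \cite[Chapter~31]{KM_book} are $\Integral/2\Integral$--graded, so no additional work is needed there. The main obstacle is really a careful reading of the monotonicity conventions across the three sources (periodic Floer, perturbed monopole, and the interpolating invariance statement), since sign and factor conventions differ between \cite{Hutchings_Sullivan_hp,Lee_Taubes_hp_swf} and \cite{KM_book}; the substantive content is already contained in those cited theorems.
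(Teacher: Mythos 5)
Your high-level strategy is the right one, and the two ingredients you invoke (the Lee--Taubes isomorphism, then the perturbation-invariance of Lemma \ref{perturbed_hm_to}) are indeed what the paper uses. However, there is a genuine gap in Step 1: you state the Lee--Taubes theorem as an isomorphism from $\mathrm{HP}(f,\Gamma)$ to a ``to''-flavor perturbed monopole Floer \emph{homology} group $\widecheck{\mathrm{HM}}_*(M_f,\mathfrak{s}_\Gamma,r[w_f])$ on the mapping torus in the paper's (dynamical) convention. What Lee--Taubes actually prove (and what the paper's Remark \ref{hp_and_hm_remark} records) is an isomorphism to monopole Floer \emph{cohomology} $\mathrm{HM}^{-*}$ on the mapping torus $M^!_f$ built with the \emph{topological} convention $(x,r)\mapsto(f(x),r+1)$. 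Getting from that statement to the one you want requires two additional substantive moves that your argument omits: (a) an orientation-reversing diffeomorphism $\sigma\colon M_f\to M^!_f$ to translate between the two mapping-torus conventions, together with the attendant transport of $[w_f]$, of the $\mathrm{Spin}^{\mathtt{c}}$ structures, and of the monotonicity conditions under $\sigma^*$ and $\sigma_*$; and (b) the duality isomorphism $\mathrm{HM}^{-*}(M^!_f,\mathfrak{s})\cong\mathrm{HM}_*(M_f,\sigma_*\mathfrak{s})$ from \cite[Corollary 22.5.11]{KM_book}, followed by the observation that $\mathrm{HM}_*=\widecheck{\mathrm{HM}}_*$ here because $c_1$ is nontorsion. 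Neither of these is mere bookkeeping: the duality step changes which $\mathrm{Spin}^{\mathtt{c}}$ structure appears, and the orientation reversal is precisely what makes the sign of the monotonicity condition come out correctly (positive monotonicity with respect to $\sigma^*[w_f]$ on $M^!_f$ versus with respect to $[w_f]$ on $M_f$).

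A secondary issue is in your Step 2/3 deformation. Lee--Taubes work at a \emph{large} perturbation parameter $r$; as you note, this puts $t=1-\tfrac{r\tau}{2\pi^2}$ into the negatively monotone range, but Lemma \ref{perturbed_hm_to} as stated covers only the balanced and positively monotone cone. You cannot directly deform from large $r$ to $r=0$ along the $[w_f]$-ray using only Lemma \ref{perturbed_hm_to}. In the paper's route this problem does not arise because the cited Lee--Taubes statement already allows \emph{any} positively monotone perturbation class $c_+$ with respect to $\mathfrak{s}_{\Gamma^!}$ on $M^!_f$ (including the exact class $c_+=0$), so one can set $c_+=0$ directly before applying duality. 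If you wish to keep a self-contained deformation argument on the $M_f$ side, you would need to also invoke the negatively monotone invariance statement from \cite[Section 31]{KM_book}, which the paper's Lemma \ref{perturbed_hm_to} does not record.

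So: right ingredients, but the cohomology-to-homology duality and the convention translation under $\sigma$ are missing steps, not cosmetic conventions, and the perturbation-deformation as written does not close.
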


This is a weaker statement of a deep theorem
due to Lee and Taubes \cite[Theorem 1.1]{Lee_Taubes_hp_swf}.
Note that the isomorphism of \cite[Theorem 1.1]{Lee_Taubes_hp_swf}
is formulated in terms of monopole Floer cohomology 
adopting the topological convention for mapping tori.
In the statement of Lemma \ref{hp_and_hm},
we have reformulated in terms of monopole Floer homology
adopting the dynamical convention for mapping tori.
See Remark \ref{hp_and_hm_remark} for details.

\begin{remark}\label{hp_and_hm_remark}
The mapping torus in \cite{Lee_Taubes_hp_swf}
is constructed as the quotient of $S\times\Real$
by the infinite cyclic group action $(x,r)\mapsto (f(x),r+1)$.
Denoting it temporarily as $M^!_f$,
we still orient $M^!_f$, and obtain $\theta^!_t$,
and $w^!_f$ using the same objects on $S\times\Real$ as we did with $M_f$.
Note $\langle c_1(\mathfrak{s}_{\theta^!}),[S]\rangle = \chi(S)$.
There is an orientation-reversing isomorphism $\sigma\colon M_f\to M^!_f$, 
defined as $\sigma([x,r])=[x,-r]$.
We observe $[w^!_f]=\sigma^*[w_f]$, and 
$c_1(\mathfrak{s}_{\Gamma^!})=\sigma^*c_1(\mathfrak{s}_{\sigma_*\Gamma^!})$.
With these notations, our positive monotonicity condition is equivalent 
to saying that $\Gamma^!$ is positively monotone with respect to $\sigma^*[w_f]$
in the sense of \cite[Definition 1.1]{Lee_Taubes_hp_swf},
if and only if $\sigma_*\Gamma^!$ is positively monotone with respect to $[w_f]$
in our terms.
In this case,
\cite[Theorem 1.1]{Lee_Taubes_hp_swf} asserts an isomorphism
$$\mathrm{HP}(f,\sigma_*\Gamma^!)\cong
\mathrm{HM}^{-*}(M^!_f,\mathfrak{s}_{\Gamma^!},c_+),$$
where $c_+\in H^2(M^!_f;\Real)$ is any positively monotone perturbation class 
with respect to $\mathfrak{s}_{\Gamma^!}$.
In particular, this applies to the exact pertrubation case $c_+=0$,
and the right-hand side becomes
$$\mathrm{HM}^{-*}(M^!_f,\mathfrak{s}_{\Gamma^!})
\cong\mathrm{HM}_*(M_f,\mathfrak{s}_{\sigma_*\Gamma^!})
\cong\widecheck{\mathrm{HM}}_*(M_f,\mathfrak{s}_{\sigma_*\Gamma^!}).$$
Here, the first isomorphism is by duality \cite[Corollary 22.5.11]{KM_book};
the second isomorphism follows from the fact that
$c_1(\mathfrak{s}_{\sigma_*\Gamma^!})\in H^2(M_f;\Integral)$ is not torsion,
since the positive monotonicity 
implies $\langle c_1(\mathfrak{s}_{\sigma_*\Gamma^!}),[S]\rangle <0$,
(see \cite[Theorems 31.1.1 and 31.5.1]{KM_book}).
Rewriting $\sigma_*\Gamma^!$ as $\Gamma$,
we derive the statement of Lemma \ref{hp_and_hm} 
from the above isomorphisms.
\end{remark}

\subsection{Symplectic Floer homology}
Periodic Floer homology generalizes what is called symplectic Floer homology,
recovering the latter as the ``$1$--periodic'' part.
Their connection is summarized below.
We refer to \cite{Seidel_sfh} for Seidel's original definition of symplectic Floer homology;
see also \cite[Section 2.1]{Cotton-Clay_sfh} for a review.

Let $S$ be a connected, closed, orientable, smooth surface, 
equipped with an area form $\omega_S$ and the induced orientation.
Let 
$$f\colon S\to S$$ 
be an area-preserving diffeomorphism with only non-degenerate fixed points.
The symplectic Floer homology of $(f,S,\omega_S)$ 
is constructed following the same procedure as
used in constructing the periodic Floer homology,
except for two major differences.
First, the chain complex of the symplectic Floer homology
is only generated by the fixed points of $f$.
Secondly, the differential operator is defined
under a weaker monotonicity condition,
which has no requirement on a prescribed homology class.

To be precise, denote by $\mathrm{Fix}(f)\subset S$ the fixed point set of $f$.
The chain complex $\mathrm{CF}(f)=\mathrm{CF}(f,S,\omega_S)$
is freely generated by $\mathrm{Fix}(f)$ over $\Integral$.
We endow $\mathrm{CF}(f)$ with a $\Integral/2\Integral$--grading,
by assigning a degree $\epsilon(p)\in\Integral/2\Integral$ of each generator
$p\in\mathrm{Fix}(f)$, such that $(-1)^{\epsilon(p)}$ is the fixed point index
$\mathrm{ind}(f;p)$ of $f$ at $p$.
Furthermore, 
for any fixed point class $\mathbf{q}\in\mathscr{F}\mathrm{ix}(f)$,
we denote by $\mathrm{CF}(f,\mathbf{q})$ the $\Integral/2\Integral$--graded $\Integral$--submodule
of $\mathrm{CF}(f)$ freely generated by all $p\in\mathbf{q}$.
So, $\mathrm{CF}(f)$ splits as the direct sum of all  
$\mathrm{CF}(f,\mathbf{q})$. 
(See Section \ref{Subsec-monodromy_entropy}.)

We say that $f$ is \emph{monotone}
if $[w_f]=-\tau\,c_1(\mathfrak{s}_\theta)$ holds for some $\tau\in\Real$,
where $[w_f],c_1(\mathfrak{s}_\theta)\in H^2(M_f;\Real)$
are the same as in Section \ref{Subsec-hp};
(see \cite[Definition 1.1]{Cotton-Clay_sfh}).
In this case,
the differential operator $\partial\colon \mathrm{CF}(f)\to \mathrm{CF}(f)$
is well-defined,
roughly speaking, 
by counting pseudo-holomorphic cylinders in $M_f\times\Real$
connecting between $1$--periodic trajectories on $M_f\times\{-\infty\}$ and $M_f\times\{+\infty\}$.
By the mapping torus characterization of Nielsen equivalence,
the $1$--periodic trajectories of the suspension flow in $M_f$ 
correspond bijectively with the fixed points of $f$,
and their free homotopy classes in $M_f$ correspond bijectively with
the Nielsen equivalence classes of the fixed points of $f$,
\cite[Chapter I, Theorem 1.10]{Jiang_book}.
In particular,
for any generator $p\in \mathrm{Fix}(f)$,
the coefficient of $\partial p$ is nonzero at $q\in\mathrm{Fix}(f)$
only if $q$ is Nielsen equivalent to $p$, (see Section \ref{Subsec-monodromy_entropy}).
Moreover, $\partial$ switches the $\Integral/2\Integral$--grading.
So, the \emph{symplectic Floer homology} 
$\mathrm{HF}(f)=\mathrm{HF}(f,S,\omega_S)$ 
is defined as the homology of the chain complex $(\mathrm{CF}(f),\partial)$.
It is a $\Integral/2\Integral$--graded $\Integral$--module, and splits into 
a direct sum of $\Integral/2\Integral$--graded $\Integral$--submodules:
$$\mathrm{HF}(f)=\bigoplus_{\mathbf{q}\,\in\,\mathscr{F}\mathrm{ix}(f)}\,\mathrm{HF}(f,\mathbf{q}),$$
where $\mathrm{HF}(f,\mathbf{q})$ denotes the homology of
the chain subcomplex $(\mathrm{CF}(f,\mathbf{q}),\partial)$.

\begin{lemma}\label{sfh_and_hp}
		Let $S$ be a connected, closed, orientable, smooth surface, 
		equipped with an area form $\omega_S$.
		Let $f\colon S\to S$ be an area-preserving diffeomorphism
		with only non-degenerate fixed points.
		If $f$ is monotone,
		then for any $\Gamma\in H_1(M_f;\Integral)$
		with $\langle \mathrm{PD}(\Gamma),[S]\rangle=1$,
		the same construction as in the positively monotone case
		is valid for the setting $(f,\Gamma)$,
		resulting in the same periodic Floer homology 
		$\mathrm{HP}(f,\Gamma)$ up to natural isomorphism.
		In this case, the following identification 
		of $\Integral/2\Integral$--graded $\Integral$--modules holds.
		$$\mathrm{HF}(f)=\bigoplus_{\langle \mathrm{PD}(\Gamma),[S]\rangle=1}\, \mathrm{HP}(f,\Gamma)$$
\end{lemma}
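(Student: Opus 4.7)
The plan is to set up the claimed identification term by term, matching generators with Nielsen classes of fixed points and then showing that the two differentials count the same pseudo-holomorphic cylinders in $\Real\times M_f$.

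First, I would exploit the numerical constraint $\langle \mathrm{PD}(\Gamma),[S]\rangle=1$ at the level of orbit sets. Any orbit set of the suspension flow $\theta$ on $M_f$ representing $\Gamma$ has total multiplicity-weighted intersection number $1$ with $S$, so it must consist of a single simple closed orbit meeting $S$ transversely in one point. Under the mapping torus characterization of Nielsen equivalence (\cite[Chapter I, Theorem 1.10]{Jiang_book}), such simple closed orbits correspond bijectively to fixed points of $f$, and two fixed points determine the same homology class $\Gamma$ precisely when they are Nielsen equivalent. This identifies, as a $\Integral/2\Integral$--graded free $\Integral$--module, the generators of the putative PFH chain complex at such $\Gamma$ with $\mathrm{CF}(f,\mathbf{q})$ for the corresponding Nielsen class $\mathbf{q}\in\mathscr{F}\mathrm{ix}(f)$; the gradings match because both reduce to the sign of the fixed-point index $\mathrm{ind}(f;p)$.

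Next, I would argue that the differentials agree. A holomorphic curve in $\Real\times M_f$ counted by the PFH differential between two such period--$1$ orbit sets is necessarily a cylinder asymptotic at $\pm\infty$ to simple closed $\theta$--orbits through fixed points. Seidel's symplectic Floer differential for $f$, interpreted via the standard mapping-torus model, counts exactly such cylinders with the same sign conventions, so the two boundary operators coincide.

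The main obstacle is the well-definedness of the PFH chain complex under the weaker hypothesis that only $f$ (and not necessarily $\Gamma$) is monotone. In general, positive monotonicity of $\Gamma$ with respect to $[w_f]$ is used to bound ECH-type actions and obtain compactness of moduli spaces, but it can fail when $\langle \mathrm{PD}(\Gamma),[S]\rangle =1$ since then $\langle c_1(\mathfrak{s}_\Gamma),[S]\rangle=\chi(S)+2$ need not be proportional to $\langle[w_f],[S]\rangle$ with the correct sign. I would address this by noting that between any two period--$1$ orbit sets, a relative second homology class projects to $H_2(M_f,\alpha\cup\alpha';\Integral)$ where the pairing of $[w_f]$ on such relative classes is controlled directly by $\int_S\omega_S$ and the action difference of the associated fixed points in the symplectic Floer setting. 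Thus the energy bound, ordinarily obtained from positive monotonicity, here follows from the monotonicity of $f$ itself, and Seidel's compactness argument for $\mathrm{CF}(f)$ transfers verbatim to this PFH-style interpretation. Invariance of the resulting homology under choices of almost complex structures and small perturbations is then standard.

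Finally, summing over the finitely many classes $\Gamma$ with $\langle\mathrm{PD}(\Gamma),[S]\rangle=1$ partitions $\mathrm{Fix}(f)$ into Nielsen classes. Since the differential preserves the class $\Gamma$ (cylinders connect orbits in the same free homotopy class), the chain complex $(\mathrm{CF}(f),\partial)$ splits as the direct sum of its Nielsen-class summands, and the homology-level decomposition
\[
\mathrm{HF}(f)\;=\;\bigoplus_{\langle\mathrm{PD}(\Gamma),[S]\rangle=1}\mathrm{HP}(f,\Gamma)
\]
follows as claimed.
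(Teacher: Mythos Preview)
The paper does not give its own proof of this lemma: it simply cites \cite[Appendix 7.2]{Lee_Taubes_hp_swf} with the one-line remark ``This is elaborated in \ldots''. So your sketch is not being compared against an argument in the paper but against the content of that reference. With that understood, your outline is broadly on the right track, but two points deserve correction.

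First, you conflate Nielsen classes with homology classes. You write that ``two fixed points determine the same homology class $\Gamma$ precisely when they are Nielsen equivalent,'' and later that summing over $\Gamma$ ``partitions $\mathrm{Fix}(f)$ into Nielsen classes.'' This is false: Nielsen equivalence corresponds to free homotopy of the suspension orbits, which is strictly finer than homology in general. Several Nielsen classes can share a common $\Gamma$, so $\mathrm{HP}(f,\Gamma)$ is identified not with a single $\mathrm{HF}(f,\mathbf{q})$ but with the direct sum of $\mathrm{HF}(f,\mathbf{q})$ over all $\mathbf{q}$ whose orbit class has homology $\Gamma$. This does not damage the final direct-sum statement (both sides still sum to $\mathrm{HF}(f)$), but your intermediate identification is misstated.

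Second, and more substantively, you assert that a curve counted by the PFH differential between two period-$1$ orbit sets ``is necessarily a cylinder,'' and then import Seidel's compactness argument. In PFH the differential counts embedded pseudo-holomorphic curves of ECH index $1$, with no a priori restriction on genus or connectedness. That such a curve between simple period-$1$ orbits must in fact be an index-$1$ cylinder (so that the PFH differential literally coincides with Seidel's) is the actual technical content one has to extract from the ECH/PFH index formula in the degree-$1$ setting, and it is precisely what \cite[Appendix 7.2]{Lee_Taubes_hp_swf} supplies. Your sketch treats this as obvious, which it is not; without it, neither the identification of differentials nor your transfer of the monotone-$f$ compactness argument is justified.
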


This is elaborated in \cite[Appendix 7.2]{Lee_Taubes_hp_swf}.

\begin{lemma}\label{sfh_and_N}
		Let $S$ be a connected, closed, orientable, smooth surface, 
		equipped with an area form $\omega_S$.
		Let $f\colon S\to S$ be an area-preserving diffeomorphism
		with only non-degenerate fixed points.
		If $f$ is monotone, then the following inequality holds.
		$$\mathrm{dim}_\Rational\,\Rational\otimes_\Integral \mathrm{HF}(f)\geq N(f),$$
		where $N(f)$ denotes the Nielsen number of $f$.
\end{lemma}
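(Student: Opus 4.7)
The plan is to exploit the splitting of $\mathrm{HF}(f)$ as a direct sum indexed by Nielsen equivalence classes, and to compare each summand against the associated fixed point class index via an Euler characteristic argument. The grading structure gives us precisely the handle we need: since the differential $\partial$ switches the $\Integral/2\Integral$--grading, each subcomplex $\mathrm{CF}(f,\mathbf{q})$ has a well-defined Euler characteristic that is preserved in passing to homology.

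First, I would compute the Euler characteristic of $\mathrm{CF}(f,\mathbf{q})$ directly from the generators. By definition, $(-1)^{\epsilon(p)}=\mathrm{ind}(f;p)\in\{-1,+1\}$ for each fixed point $p$, so
$$\chi\bigl(\mathrm{CF}(f,\mathbf{q})\bigr)=\sum_{p\in\mathbf{q}}(-1)^{\epsilon(p)}=\sum_{p\in\mathbf{q}}\mathrm{ind}(f;p)=\mathrm{ind}(f;\mathbf{q}),$$
which is exactly the fixed point class index defined in Section~\ref{Subsec-monodromy_entropy}. Since $\partial$ flips the $\Integral/2\Integral$--grading, the standard rank-nullity argument shows
$$\chi\bigl(\mathrm{HF}(f,\mathbf{q})\bigr)=\chi\bigl(\mathrm{CF}(f,\mathbf{q})\bigr)=\mathrm{ind}(f;\mathbf{q}).$$

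Next, I would pass to rational coefficients and use the obvious lower bound of total dimension by absolute value of Euler characteristic: for any $\Integral/2\Integral$--graded $\Integral$--module $A$,
$$\dim_\Rational\bigl(\Rational\otimes_\Integral A\bigr)\geq\bigl|\chi(\Rational\otimes_\Integral A)\bigr|.$$
Applying this with $A=\mathrm{HF}(f,\mathbf{q})$, we see that whenever $\mathbf{q}$ is essential (i.e.\ $\mathrm{ind}(f;\mathbf{q})\neq 0$), the rational dimension of $\mathrm{HF}(f,\mathbf{q})$ is at least $1$.

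Finally, summing over all fixed point classes and using the direct sum decomposition,
$$\dim_\Rational\bigl(\Rational\otimes_\Integral\mathrm{HF}(f)\bigr)=\sum_{\mathbf{q}\in\mathscr{F}\mathrm{ix}(f)}\dim_\Rational\bigl(\Rational\otimes_\Integral\mathrm{HF}(f,\mathbf{q})\bigr)\geq\#\bigl\{\mathbf{q}\colon\mathrm{ind}(f;\mathbf{q})\neq 0\bigr\}=N(f),$$
as desired. There is no substantial obstacle here: the only point that warrants care is the verification that the chain-level splitting by Nielsen class is respected by $\partial$, which is built into the construction of $\mathrm{HF}(f)$ via the mapping torus correspondence between free homotopy classes of $1$--periodic trajectories and Nielsen classes of fixed points recalled just before the lemma.
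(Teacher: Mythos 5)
Your argument matches the paper's proof essentially verbatim: both identify $\chi(\mathrm{HF}(f,\mathbf{q}))=\chi(\mathrm{CF}(f,\mathbf{q}))=\mathrm{ind}(f;\mathbf{q})$, bound the rank of each Nielsen summand from below by the absolute value of its Euler characteristic, and sum over the essential classes to obtain $N(f)$. No discrepancy to report.
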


This is analogous to the usual Morse inequality.
In fact, the Euler characteristic of $\mathrm{HF}(f,\mathbf{q})$
(that is, the free rank of $\mathrm{HF}_{\mathrm{even}}(f,\mathbf{q})$ minus
the free rank of $\mathrm{HF}_{\mathrm{odd}}(f,\mathbf{q})$) 
can be identified with the fixed point class index of $f$ at $\mathbf{q}$
(see Section \ref{Subsec-monodromy_entropy}):
$$\chi(\mathrm{HF}(f,\mathbf{q}))=\chi(\mathrm{CF}(f,\mathbf{q}))=\mathrm{ind}(f;\mathbf{q}).$$
The absolute value of the Euler characteristic 
is a lower bound for the free rank of $\mathrm{HF}(f,\mathbf{q})$.
So, the asserted inequality in Lemma \ref{sfh_and_N} follows immediately from 
the defining expression (\ref{Nielsen_number_def}) of the Nielsen number $N(f)$.

\begin{remark}\label{sfh_and_N_remark}
With coefficients modulo $2$,
Cotton-Clay \cite{Cotton-Clay_sfh} 
shows that
the symplectic Floer homology can be algorithmically computed,
based on the Nielsen--Thurston normal forms 
due to Jiang and Guo \cite{Jiang_Guo_fixed_points}. 
\end{remark}

\subsection{Summary}\label{Subsec-next_to_top}
Linking up the above recalled facts, 
we prove the statement (4)
of Proposition \ref{constraint_HF} as follows.

Let $M$ be a connected, closed, oriented $3$--manifold.
Suppose that $\phi\in H^1(M;\Integral)$ is a primitive fibered class.
Denote by $S\subset M$ 
an embedded connected closed oriented subsurface of genus $g\geq3$,
representing $\mathrm{PD}(\phi)\in H_2(M;\Integral)$.
Denote by $[f]\in\mathrm{Mod}(S)$ the monodromy of $(M,\phi)$.
We can rewrite $M$ as the mapping torus $M_f$.

For any $\Gamma\in H_1(M_f;\Integral)$,
the condition $\langle \mathrm{PD}(\Gamma),[S]\rangle=1$
holds if and only if 
$\langle c_1(\mathfrak{s}_\Gamma),[S]\rangle=2g-4$,
by (\ref{s_Gamma_chern}).
Moreover, 
for $[f]$ to admit an area-preserving diffeomorphic representative
with only non-degenerate fixed points,
such that $\Gamma$ is positively monotone with respect to $(f,S,\omega_S)$,
the condition $g-2>0$ is sufficient and necessary.
In fact,
the necessity follows directly from the fact $\langle [w_f],[S]\rangle>0$;
the sufficiency can be shown by perfoming Hamiltonian isotopy within $[f]$, 
(see \cite[Section 1.1]{Lee_Taubes_hp_swf}).
Therefore, the aforementioned lemmas are all applicable under our assumption $g\geq3$.

Applying Lemmas \ref{perturbed_hm_to}, \ref{hm_and_hf}, \ref{hp_and_hm}, and \ref{sfh_and_hp},
we deduce
$$\mathrm{HF}(f)\cong\mathrm{HF}^+(M_f,\mathrm{PD}([S]),g-2)
=\mathrm{HF}^+(M,\phi,g-2).$$

Applying Lemma \ref{sfh_and_N},
we conclude
$$\dim_\Rational\,\Rational\otimes_\Integral\mathrm{HF}^+(M,\phi,g-2)\geq N(f).$$

This establishes the asserted inequality in Proposition \ref{constraint_HF} (4).

\bibliographystyle{amsalpha}


\end{document}